\newcommand{\MM}{\mathcal{M}}
\newcommand{\smallcup}{\operatorname{\smallsmile}}
\newcommand{\Ad}{\operatorname{Ad}}
\newcommand{\tr}{\operatorname{tr}}
\newcommand{\Ker}{\operatorname{Ker}}
\newcommand{\bock}{\operatorname{\mathbf{b}}}
\renewcommand{\hom}{\operatorname{Hom}}
\newcommand{\SL}{\mathrm{SL}_2}
\def\co{\colon}
\theoremstyle{change}
\newtheorem{thm}{Theorem}[section]
\newtheorem{prop}[thm]{Proposition}
\newtheorem{lemma}[thm]{Lemma}
\newtheorem{cor}[thm]{Corollary}
\theoremstyle{definition}
       \newtheorem{definition}[thm]{Definition}
       \newtheorem{remark}[thm]{Remark}
       \newtheorem{example}[thm]{Example}
       \newtheorem{question}[thm]{Question}
\newcommand{\new}[1]{{\color{red} #1}}
\title{Representations of knot groups into $\mathrm{SL}_n(\mathbf{C})$ and twisted Alexander polynomials
}
\author{Michael Heusener and 
Joan Porti\thanks{Both authors partially supported
by Mineco through grant MTM2012-34834}}
\date{}
\begin{document}

\maketitle

\begin{abstract}
 Let $\Gamma$ be the fundamental group of the exterior of a knot in the 
three-sphere.
We study deformations of  representations  
 of $\Gamma$ into $\mathrm{SL}_n(\mathbf{C})$ which are the
 sum of two irreducible representations. For such representations we give a 
necessary  condition, in terms of the twisted Alexander polynomial, for the 
existence of irreducible deformations. We also give a more restrictive 
sufficient condition for the existence of irreducible deformations.
 We also prove a duality theorem for twisted Alexander polynomials and
 we describe the local structure of the representation and character varieties.

 \medskip
 \noindent \emph{MSC:} 57M25; 57M05; 57M27\\
 \emph{Keywords:} variety of representations; character variety; twisted Alexander polynomial; deformations.
\end{abstract}

\section{Introduction}
\label{sec:intro}

Let $K\subset S^3$ be an oriented knot in the three-sphere. Its exterior is the compact
three-manifold $X=S^3\setminus {\mathcal{N}}(K)$. 
Set $\Gamma=\pi_1(X)$ and let $\varphi\co \Gamma \twoheadrightarrow \mathbf Z$
denote
the abelianization morphism, so that $\varphi(\gamma)$ is the linking number in
$S^3$ between any loop realizing $\gamma\in \Gamma$ and $K$.  Let
$$
\alpha\co\Gamma\to \mathrm{SL}_a(\mathbf C) \quad\textrm{ and }\quad 
\beta\co\Gamma\to \mathrm{SL}_b(\mathbf C)
$$
be \emph{irreducible} and \emph{infinitesimally regular} representations.

\begin{definition}
\label{def:irreductible}
 A representation $\alpha\co\Gamma\to \mathrm{SL}_a(\mathbf C) $ is called 
\emph{reducible} when it preserves a proper subspace of $\mathbf C^a$, otherwise
it is called
\emph{irreducible}. The representation $\alpha$ is called \emph{semi-simple} or 
\emph{completely reducible} if $\alpha$ is a direct sum of irreducible
representations. 
\end{definition}

In what follows we  call  a representation $\alpha\co\Gamma\to
\mathrm{SL}_a(\mathbf C) $
\emph{infinitesimally regular } if 
 $H^1(\Gamma ; \mathfrak{sl}_a(\mathbf{C})_{\Ad\alpha}))\cong \mathbf C^{a-1}$.

As we assume that $\alpha$ is irreducible and infinitesimally regular, 
its character is a regular point of the character variety of $\Gamma$ in $\mathrm{SL}_a(\mathbf C) $ 
(Proposition~\ref{prop:infinitesimalregularitycharacter}).
When $b=1$, then $\beta$ is trivial  and hence it is  {infinitesimally regular}.

For a given nonzero complex number $\lambda\in \mathbf C^*$ we  consider the  representation
$ \rho_\lambda= (\lambda^{b\varphi}\otimes\alpha)\oplus (\lambda ^{-a\varphi}\otimes\beta)$, namely for all $\gamma\in\Gamma$
\begin{equation}\label{eq:rho_lambda}
\rho_\lambda(\gamma)=
\begin{pmatrix} 
\lambda^{b\varphi(\gamma)}\alpha(\gamma) & \mathbf 0\\ 
\mathbf 0 & \lambda ^{-a\varphi(\gamma)}\beta(\gamma)
\end{pmatrix}\in \mathrm{SL}_n(\mathbf{C})\, ,
\end{equation}
where $a+b=n$. 
The representation $\rho_\lambda\co\Gamma\to \mathrm{SL}_n(\mathbf{C})$ is reducible and the following question then arises:
\begin{question}
When can  $ \rho_{\lambda}$ be  deformed to irreducible
representations?
\end{question}

We  give  necessary and sufficient conditions in terms of twisted Alexander polynomials. For this purpose we consider the representations
$$
\alpha\otimes\beta^*: \Gamma\to \operatorname{Aut} (M_{a\times b}(\mathbf C) )
$$
defined by 
$
(\alpha\otimes\beta^*) (\gamma) (A)= \alpha(\gamma) A \beta(\gamma^{-1})
$,
for $\gamma\in \Gamma$ and $A\in M_{a\times b}(\mathbf C)$. Similarly, consider  
$$
\beta\otimes\alpha^*: \Gamma\to \operatorname{Aut} (M_{b\times a}(\mathbf C) ).
$$
The corresponding twisted Alexander polynomials of degree $i$ are denoted by
$$
\Delta_i^{+}(t)= \Delta_i^{\alpha\otimes \beta^*}(t)
\quad\textrm{ and }\quad
\Delta_i^{-}(t)= \Delta_i^{\beta\otimes \alpha^*}(t)\,.
$$
Recall that the twisted Alexander polynomial is a generator of the order ideal of the 
twisted Alexander module and hence it is unique up to multiplication with an invertible element 
 of the group ring 
$\mathbf{C}[\mathbf{Z}]\cong\mathbf C [t^{\pm 1}]$, i.e.~$c\, t^k$,  with $c\in\mathbf C^*$ and $k\in\mathbf Z$ (see Definition~\ref{def:Alexander} for more details).
We have $\Delta_i^{\pm}(t)=1$ for $i>2 $ and $\Delta_2^{\pm}(t)\in\{0,1\}$. 
We  prove in Corollary~\ref{cor:reductivity} that $\alpha\otimes \beta^*$ is a \emph{semi-simple}
representation, hence by Theorem~\ref{thm:duality} 
we obtain the duality formula (Corollary~\ref{cor:Alex-sym}):
$$
\Delta_i^+(t) \doteq \Delta_i^-(1/t)\,.
$$
Here $p \doteq q$ means that  $p$ and $q$ are \emph{associated elements} in 
$\mathbf C[\mathbf Z]$, i.e.\ there exists some unit 
$c\, t^k\in \mathbf C[\mathbf Z]\cong\mathbf C[t^{\pm 1}]$, 
with $ c\in\mathbf C^*$ and $k\in\mathbf Z$,  such that $p=c\,t^k\,q$.
This duality formula is a particular case of
 Theorem~\ref{thm:duality},
where we establish a duality formula for twisted Alexander polynomials provided that the twisting representation is semi-simple.
This duality formula can also be deduced from results of Friedl, Kim, and Kitayama \cite{FTT12}.
 
The following theorem gives a necessary condition for the deformability of $\rho_\lambda$ 
to irreducible representations:
\begin{thm}
 \label{thm:nec}
 Assume that $\rho_{\lambda}$ can be deformed to irreducible representations. Then 
 $$
 \Delta_1^+(\lambda^n)= \Delta_1^-(\lambda^{-n})=0.
 $$ 
\end{thm}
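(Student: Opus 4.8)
The plan is to translate the hypothesis into the non-vanishing of an off-diagonal twisted cohomology group of $\Gamma$ and then to recognize that group via the twisted Alexander polynomials. First I would decompose $\mathfrak{sl}_n(\mathbf{C})$ as an $\Ad\rho_\lambda$-module. In $(a,b)$-block form it splits as a direct sum of $\Gamma$-modules $\mathfrak{sl}_n(\mathbf{C})_{\Ad\rho_\lambda}=\mathfrak{g}_0\oplus V^{+}\oplus V^{-}$, where $\mathfrak{g}_0$ is the trace-zero block-diagonal part (with $\Ad\rho_\lambda$ acting as $\Ad\alpha\oplus\Ad\beta$ on the $\mathfrak{sl}_a(\mathbf{C})\oplus\mathfrak{sl}_b(\mathbf{C})$ summand and trivially on the remaining line), $V^{+}=M_{a\times b}(\mathbf{C})$ with $\gamma\cdot B=\lambda^{n\varphi(\gamma)}\alpha(\gamma)B\beta(\gamma^{-1})$, and $V^{-}=M_{b\times a}(\mathbf{C})$ with $\gamma\cdot C=\lambda^{-n\varphi(\gamma)}\beta(\gamma)C\alpha(\gamma^{-1})$; thus $V^{+}\cong(\alpha\otimes\beta^{*})\otimes\lambda^{n\varphi}$ and $V^{-}\cong(\beta\otimes\alpha^{*})\otimes\lambda^{-n\varphi}$. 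Accordingly $H^1(\Gamma;\mathfrak{sl}_n(\mathbf{C})_{\Ad\rho_\lambda})=H^1(\Gamma;\mathfrak{g}_0)\oplus H^1(\Gamma;V^{+})\oplus H^1(\Gamma;V^{-})$, with $\dim H^1(\Gamma;\mathfrak{g}_0)=(a-1)+(b-1)+1=n-1$ by infinitesimal regularity of $\alpha,\beta$ and $H^1(X;\mathbf{C})\cong\mathbf{C}$. By the dictionary between twisted Alexander polynomials and twisted cohomology (Definition~\ref{def:Alexander} and the surrounding results), $\Delta_1^{+}(\lambda^n)=0$ is equivalent to $H^1(\Gamma;V^{+})\neq0$, and $\Delta_1^{-}(\lambda^{-n})=0$ to $H^1(\Gamma;V^{-})\neq0$ — at least when $\alpha\not\cong\lambda^{-n\varphi}\otimes\beta$, the remaining (degenerate) case needing a separate, minor argument. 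Since $\lambda\in\mathbf{C}^{*}$ and $\Delta_1^{+}(t)\doteq\Delta_1^{-}(1/t)$ by Corollary~\ref{cor:Alex-sym}, these two vanishings are equivalent to one another, so it is enough to prove $H^1(\Gamma;V^{+})\neq0$ (equivalently $H^1(\Gamma;V^{-})\neq0$, which one can also reach by the mirror argument interchanging $V^{+}$ and $V^{-}$).

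Next I would prove $H^1(\Gamma;V^{+})\neq0$ by contraposition. Assume $H^1(\Gamma;V^{+})=0$, and let $P^{-}\subset\mathrm{SL}_n(\mathbf{C})$ be the parabolic of block-lower-triangular matrices (the stabilizer of $\{0\}\oplus\mathbf{C}^{b}$), so that $\rho_\lambda$ takes values in $P^{-}$, $\mathrm{Lie}(P^{-})=\mathfrak{g}_0\oplus V^{-}$, and $\mathfrak{sl}_n(\mathbf{C})/\mathrm{Lie}(P^{-})\cong V^{+}$ as $\Gamma$-modules. I claim the conjugation map $\Phi\colon\mathrm{SL}_n(\mathbf{C})\times R(\Gamma,P^{-})\to R(\Gamma,\mathrm{SL}_n(\mathbf{C}))$, $(g,\rho')\mapsto g\rho'g^{-1}$, on the representation varieties, is a submersion at $(\mathrm{Id},\rho_\lambda)$; since its image consists only of \emph{reducible} representations (those conjugate into $P^{-}$, hence preserving a $b$-dimensional subspace), this image would then contain a neighbourhood of $\rho_\lambda$ in $R(\Gamma,\mathrm{SL}_n(\mathbf{C}))$, contradicting the assumption that $\rho_\lambda$ is a limit of irreducible representations; hence $H^1(\Gamma;V^{+})\neq0$. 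To justify the claim I would first check that $\rho_\lambda$ is a smooth point of $R(\Gamma,P^{-})$: the projection to the block-diagonal subgroup $L$ exhibits $R(\Gamma,P^{-})$ near $\rho_\lambda$ as a vector bundle over $R(\Gamma,L)$ with fibre the cocycle space of the off-diagonal block (of locally constant dimension $ab$, using $\alpha\not\cong\lambda^{-n\varphi}\otimes\beta$), and $R(\Gamma,L)$ is smooth at $\rho_\lambda$ by infinitesimal regularity of $\alpha$ and $\beta$ (the abelian $\det$-factor contributing no obstruction). The differential of $\Phi$ at $(\mathrm{Id},\rho_\lambda)$ has image $B^1(\Gamma;\mathfrak{sl}_n(\mathbf{C}))+Z^1(\Gamma;\mathrm{Lie}(P^{-}))$ inside $Z^1(\Gamma;\mathfrak{sl}_n(\mathbf{C}))$; because $\mathfrak{sl}_n(\mathbf{C})=\mathrm{Lie}(P^{-})\oplus V^{+}$ as $\Gamma$-modules, the failure of this sum to be all of $Z^1(\Gamma;\mathfrak{sl}_n(\mathbf{C}))$ is measured precisely by $H^1(\Gamma;V^{+})$, which vanishes; hence $d\Phi$ surjects onto $Z^1(\Gamma;\mathfrak{sl}_n(\mathbf{C}))$, which in turn forces $\rho_\lambda$ to be a smooth point of $R(\Gamma,\mathrm{SL}_n(\mathbf{C}))$ with tangent space $Z^1(\Gamma;\mathfrak{sl}_n(\mathbf{C}))$, so $\Phi$ is indeed submersive there.

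This contradiction gives $H^1(\Gamma;V^{+})\neq0$, and therefore $\Delta_1^{+}(\lambda^n)=\Delta_1^{-}(\lambda^{-n})=0$ by the first paragraph. The main obstacle is the smoothness input used above: the linearization of $\Phi$ is routine cohomological bookkeeping, but controlling the higher-order terms — equivalently, showing that $\rho_\lambda$ is a smooth point of $R(\Gamma,P^{-})$ and that the reducible representations near $\rho_\lambda$ fill out exactly the family predicted by the cocycle spaces — is the heart of the matter, and it is precisely there that the infinitesimal regularity of $\alpha$ and $\beta$ is essential.
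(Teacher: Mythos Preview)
Your approach is genuinely different from the paper's and, modulo one gap I flag below, it works. The paper argues by dimension counting: it first shows (Lemma~\ref{lem:necessary_dimZ1}) that if $\rho_\lambda$ lies in a component with irreducible representations then $\dim Z^1(\Gamma;\mathfrak{sl}_n)\ge n^2+n-2$, using Poincar\'e duality with the peripheral torus (the ``half-lives, half-dies'' inequality~\eqref{eqn:dimH^1partialX} combined with Lemma~\ref{lem:H1-Z+Z}). Comparing this with the explicit formula~\eqref{eq:dimZ1} forces $\dim H^1(\MM^\pm_{\lambda^{\pm n}})>\dim H^0(\MM^\pm_{\lambda^{\pm n}})$ for at least one sign, and then Lemma~\ref{lem:delta10} plus Corollary~\ref{cor:Alex-sym} finish. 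Your route avoids the boundary entirely and is instead a direct geometric/openness argument via the parabolic $P^-$: under the contrapositive hypothesis you show that conjugates of $R(\Gamma,P^-)$ already fill a full neighbourhood of $\rho_\lambda$. Both approaches ultimately rely on the same dictionary between $H^1(\Gamma;V^\pm)$ and the roots of $\Delta_1^\mp$ (Lemma~\ref{lem:delta10}) and on the duality $\Delta_1^+(t)\doteq\Delta_1^-(1/t)$; what you trade away is the Poincar\'e-duality lower bound, and what you trade in is a smoothness analysis of $R(\Gamma,P^-)$ at $\rho_\lambda$.

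The gap: when you assert that the fibre of $R(\Gamma,P^-)\to R(\Gamma,L)$ is $Z^1(\Gamma;V^-)$ of ``locally constant dimension $ab$'', you are using not only $H^0(\Gamma;V^-)=0$ (which does follow from $\alpha\not\cong\lambda^{-n\varphi}\otimes\beta$) but also $H^1(\Gamma;V^-)=0$, since $\dim Z^1(V^-)=ab+\dim H^1(V^-)-\dim H^0(V^-)$. Your contrapositive hypothesis is only $H^1(\Gamma;V^+)=0$. The fix is easy and already implicit in your first paragraph: by the equivalence you set up (which is exactly Lemma~\ref{lem:delta10} under $H^0=0$) together with Corollary~\ref{cor:Alex-sym}, one has $H^1(\Gamma;V^+)=0\Leftrightarrow\Delta_1^-(\lambda^{-n})\neq0\Leftrightarrow\Delta_1^+(\lambda^n)\neq0\Leftrightarrow H^1(\Gamma;V^-)=0$; alternatively, simply phrase the contrapositive as ``assume both $H^1(\Gamma;V^\pm)=0$'' from the outset. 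Once this is said, your vector-bundle description of $R(\Gamma,P^-)$ over the (scheme-smooth, by Corollary~\ref{cor:infinitesimalregularity}) base $R(\Gamma,L)$ goes through, and your submersion argument---that surjectivity of $d\Phi$ onto $Z^1(\Gamma;\mathfrak{sl}_n)$ forces $\rho_\lambda$ to be a regular point and then that $\Phi$ is open there---is correct. The ``degenerate'' case $a=b$, $\alpha\cong\lambda^{-n\varphi}\otimes\beta$ you set aside is genuinely special (then $H^0(\Gamma;V^\pm)\cong\mathbf C$) and deserves a line, but it is not where the main content lies.
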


 The theorem also applies when $\alpha$ or $\beta$ (or both) is trivial. When both $\alpha$ and $\beta$ are trivial, this is a result obtained in 1967
 independently by Burde \cite{Burde1967} and de Rham \cite{dR1967}.
 The key idea is to look at the dimension of the fibre of the algebraic quotient 
 $R(\Gamma,\mathrm{SL}_n(\mathbf C))\to X(\Gamma,\mathrm{SL}_n(\mathbf C))$.
 When $\rho_\lambda$ can be  deformed to irreducible representations, this dimension jumps among characters of reducible representations, and this 
 translates to the twisted Alexander polynomial, by means of the tangent space and cohomology with twisted coefficients.

The next result is a sufficient condition for the deformability of $\rho_\lambda$ to irreducible representations:
\begin{thm}
 \label{thm:suf}
Assume that  $\Delta_0^+(\lambda^n)\neq0$ and that $\lambda ^n$ is a simple root of 
$\Delta_1^+(t)$. Then 
$\rho_{\lambda}$ can be deformed  to irreducible representations.
\end{thm}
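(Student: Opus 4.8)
The plan is to construct an irreducible deformation explicitly via an obstruction-theoretic argument, starting from the formal tangent vectors and showing that a suitable tangent direction is integrable. Write $\g=\Sl_n(\CC)$ and decompose $\Sl_n(\CC)_{\Ad\rho_\lambda}$ according to the block structure of $\rho_\lambda$: we get a direct sum of $\Sl_a(\CC)_{\Ad\alpha}$, $\Sl_b(\CC)_{\Ad\beta}$, a one-dimensional piece coming from the diagonal torus of the two blocks, and the two off-diagonal pieces $M_{a\times b}(\CC)$ and $M_{b\times a}(\CC)$, on which $\Gamma$ acts by $\lambda^{n\varphi}\otimes(\alpha\otimes\beta^*)$ and $\lambda^{-n\varphi}\otimes(\beta\otimes\alpha^*)$ respectively. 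First I would compute $H^1(\Gamma;\Sl_n(\CC)_{\Ad\rho_\lambda})$ using this decomposition: infinitesimal regularity of $\alpha$ and $\beta$ controls the diagonal summands, while the hypotheses $\Delta_0^+(\lambda^n)\neq0$ and that $\lambda^n$ is a simple root of $\Delta_1^+(t)$ translate, via the standard identification of twisted Alexander polynomials with the order of $H_1$ of the corresponding twisted chain complex, into $H^0$ of the $+$ off-diagonal piece vanishing and $H^1$ of it being exactly one-dimensional; by the duality $\Delta_i^+(t)\doteq\Delta_i^-(1/t)$ (Corollary of Theorem~\ref{thm:duality}, using semi-simplicity of $\alpha\otimes\beta^*$) the same holds for the $-$ piece. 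This pins down a cocycle $u\in Z^1(\Gamma;\Sl_n(\CC)_{\Ad\rho_\lambda})$ whose off-diagonal components are both nonzero.

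Next I would run the deformation/obstruction calculus. A formal deformation $\rho_t=\exp(tu_1+t^2u_2+\cdots)\rho_\lambda$ exists to all orders iff a sequence of obstruction classes in $H^2(\Gamma;\Sl_n(\CC)_{\Ad\rho_\lambda})$ vanishes; the first obstruction is the cup product $[u\smallcup u]$ using the Lie bracket on $\g$. The point of choosing $u$ with nonzero components in \emph{both} off-diagonal blocks is that the leading deformation already mixes the two invariant summands $\CC^a$ and $\CC^b$, so that the deformed representation cannot preserve either, hence is irreducible — provided the formal deformation converges, which follows because by a Goldman/Artin-type argument the relevant component of the representation variety is cut out (near $\rho_\lambda$) by the quadratic cone and the simple-root hypothesis forces that cone to be smooth of the expected dimension in the off-diagonal directions. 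Concretely I expect that the simplicity of the root $\lambda^n$ makes the obstruction map nondegenerate in exactly the right way: the Poincar\'e duality pairing $H^1\times H^1\to H^2$ restricted to the off-diagonal summands is, by the duality theorem, a perfect pairing between the $+$ and $-$ lines, and this both identifies the obstruction and shows it can be killed at second order by a correction term, after which higher obstructions vanish for dimension reasons.

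An alternative, perhaps cleaner, route is to study the algebraic quotient $R(\Gamma,\SL_n(\CC))\to X(\Gamma,\SL_n(\CC))$ directly near $\chi_{\rho_\lambda}$, as in the discussion following Theorem~\ref{thm:nec}: show that under the hypotheses the fibre over $\chi_{\rho_\lambda}$ has strictly smaller dimension than the fibre over a nearby character, which forces the existence of nearby irreducible characters. Here one uses that $\dim H^1$ computations above, together with the vanishing $\Delta_0^+(\lambda^n)\neq0$ (which kills $H^0$ of the off-diagonal piece, i.e.\ prevents extra invariant vectors that would enlarge the stabilizer), give the precise jump in fibre dimension predicted by the simple root. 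I would likely present the first approach but invoke this second viewpoint to verify irreducibility. The main obstacle, and where the real work lies, is the second-order obstruction computation: one must show that the cup-square $[u\smallcup u]$ either vanishes or is compensated, and it is precisely the \emph{simplicity} of the root (as opposed to $\lambda^n$ merely being a root, which only gives the necessary condition of Theorem~\ref{thm:nec}) that should make this work — I expect the simple-root hypothesis to be exactly equivalent to the nondegeneracy of the quadratic form governing this obstruction, via the duality pairing between $\Delta_1^+$ and $\Delta_1^-$.
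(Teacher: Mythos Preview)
Your approach attempts to work directly at $\rho_\lambda$, and this is where the argument has a genuine gap. The representation $\rho_\lambda$ is \emph{not} a smooth point of $R(\Gamma,\mathrm{SL}_n(\mathbf C))$: under the hypotheses it lies in the intersection of two components (of dimensions $n^2+n-2$ and $n^2+n-3$), and $\dim Z^1(\Gamma;\mathfrak{sl}_n(\mathbf C)_{\Ad\rho_\lambda})=n^2+n-1$ strictly exceeds both. So your claim that ``higher obstructions vanish for dimension reasons'' has no justification: the standard argument (Zariski tangent space equals local dimension $\Rightarrow$ smooth $\Rightarrow$ all obstructions vanish) simply does not apply at this point. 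Concretely, for your cocycle $u$ supported in the two off-diagonal blocks, the cup-bracket $[u\smallcup u]$ lands in $H^2(\Gamma;\mathfrak{sl}_a(\mathbf C)\oplus\mathfrak{sl}_b(\mathbf C)\oplus\mathbf C)$, a space of dimension $n-2$, and you give no mechanism for killing it. The Poincar\'e pairing between the $+$ and $-$ off-diagonal lines that you invoke is not the relevant obstruction; the actual role of the simple-root hypothesis is to force the \emph{different} cup product $\varphi\smallcup d_\pm\in H^2(\Gamma;\MM^\pm_{\lambda^{\pm n}})$ to be nonzero, via a Bockstein computation over the dual numbers $\mathbf C[\varepsilon]/\varepsilon^2$.

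The paper's key idea, which you are missing, is to \emph{change the basepoint}. Rather than deform $\rho_\lambda$ itself, one passes to the non-completely-reducible representation
\[
\rho^+=\begin{pmatrix}\operatorname{Id}_a & d_+\\ 0 & \operatorname{Id}_b\end{pmatrix}\rho_\lambda
\]
in the same fibre of $R\to X$. The non-vanishing of $\varphi\smallcup d_+$ (encoding the simple-root hypothesis) is precisely what forces $\dim H^1(\Gamma;\mathfrak{sl}_n(\mathbf C)_{\Ad\rho^+})=n-1$, so that $\rho^+$ is a \emph{regular} point of $R(\Gamma,\mathrm{SL}_n(\mathbf C))$ of local dimension $n^2+n-2$, by the criterion of Proposition~\ref{prop:MichaelOuardia}. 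One then shows $\rho^+$ is also smooth in the parabolic variety $R(\Gamma,P^+)$, and a dimension count (the $\mathrm{SL}_n(\mathbf C)$-saturation of the component of $R(\Gamma,P^+)$ through $\rho^+$ has dimension at most $n^2+n-3$) forces the component of $R(\Gamma,\mathrm{SL}_n(\mathbf C))$ through $\rho^+$ to contain irreducible representations. Since $\rho_\lambda$ lies in the closure of the $\mathrm{SL}_n(\mathbf C)$-orbit of $\rho^+$, it lies in the same component. Your obstruction analysis at $\rho_\lambda$ only becomes tractable \emph{a posteriori}, once smoothness of this component has already been established by the detour through $\rho^+$.
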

Again this theorem and the next one apply for  $\alpha$ and/or $\beta$ trivial.
Theorems~\ref{thm:suf} and \ref{thm:str} are due to \cite{HPS2001} when both
$\alpha$ and $\beta$ are
trivial, and also related results were obtained in 
\cite{Shors1991},
\cite{FK1991}, 
\cite{HK1997},
\cite{HK1998},
\cite{BenAbdelghani2000},  
\cite{bAL2002}, 
\cite{HP05},
\cite{bAHH2010}, and
\cite{HM14}.

The outline of the proof of Theorem~\ref{thm:suf} is the following: 
the hypothesis  implies that there exists a 
representation $\rho^+\in
R(\Gamma,\mathrm{SL}_n(\mathbf C))$
with the same character as $\rho_\lambda$ but not conjugate to it 
(see Corollary~\ref{cor:Burde-deRham}). An
analysis of the cohomology groups allows us to prove that $\rho^+$ is a smooth
point of  $ R(\Gamma,\mathrm{SL}_n(\mathbf C))$.
Among other tools, this uses the vanishing of obstructions to integrability of
Zariski tangent vectors, due to \cite{Goldman1984}, a smoothness result of the
variety of representations  due to \cite{HM14}, and
the non-vanishing of certain cup product (following the ideas of 
\cite{BenAbdelghani2000}).
Once this smoothness result is established, we realize that the dimension of the
space of reducible representations is less than the dimension of the
component of $R(\Gamma,\mathrm{SL}_n(\mathbf C))$
containing $\rho^+$.

Our next result concerns the local structure of the character variety.
Let $\chi_\lambda\in X(\Gamma,\mathrm{SL}_n(\mathbf C))$ denote the character of 
$\rho_\lambda$.
\begin{thm}
 \label{thm:str} Under the hypothesis of Theorem~\ref{thm:suf}, $\chi_{\lambda}$
belongs to precisely two components $Y$ and $Z$ of
$X(\Gamma,\mathrm{SL}_n(\mathbf C))$, that have dimension
 $n-1$ and meet transversally at $\chi_{\lambda}$ along a subvariety of
dimension {$n-2$}. The component $Y$ contains characters of 
irreducible representations and $Z$
consists only of characters of reducible ones. 
\end{thm}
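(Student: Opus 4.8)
The plan is to deduce Theorem~\ref{thm:str} from the smoothness and dimension-count information already assembled in the proofs of Theorems~\ref{thm:suf} and \ref{thm:str}'s hypotheses. First I would recall that, by the analysis underlying Theorem~\ref{thm:suf}, there is a representation $\rho^+$ with the same character as $\rho_\lambda$, not conjugate to it, which is a \emph{smooth} point of $R(\Gamma,\mathrm{SL}_n(\mathbf C))$ lying on a unique component $R^+$ of dimension $\dim R^+ = \dim Z_{\mathrm{red}} + (n-1)$, where $Z_{\mathrm{red}}$ is the (quasi-affine) space of reducible representations through $\rho_\lambda$; its image in $X(\Gamma,\mathrm{SL}_n(\mathbf C))$ is a component $Y$ of dimension $n-1$ containing irreducible characters. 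Separately, the reducible representations of the form $\rho_\mu$ (with conjugation orbits) sweep out a subvariety whose closure is a component $Z$ consisting only of characters of reducible representations; one computes $\dim Z = n-1$ as well, using the infinitesimal regularity of $\alpha$ and $\beta$ together with the $\mathbf C^*$-parameter $\mu$ and the $\mathbf C^{a-1}\oplus\mathbf C^{b-1}$ of deformations of $\alpha,\beta$ individually — this is exactly the cohomological bookkeeping $H^1(\Gamma;\mathfrak{sl}_n(\mathbf C)_{\Ad\rho_\lambda})$ decomposed along the $\mathrm{SL}_a\times\mathrm{SL}_b\times\mathbf C^*$-isotypic pieces, where the two diagonal blocks contribute $Z$ and the off-diagonal blocks $M_{a\times b}$ and $M_{b\times a}$ contribute the directions transverse to $Z$ inside $Y$.

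Next I would prove that these are the \emph{only} two components through $\chi_\lambda$. For this I would compute $\dim_{\mathbf C} H^1(\Gamma;\mathfrak{sl}_n(\mathbf C)_{\Ad\rho_\lambda})$ exactly: the diagonal part gives $(a-1)+(b-1)+1 = n-1$ (the $+1$ from the central/$\varphi$-twisting direction and using infinitesimal regularity of $\alpha$, $\beta$), while the off-diagonal part is governed by $H^1(\Gamma; M_{a\times b}(\mathbf C)_{\lambda^{n}\varphi\otimes(\alpha\otimes\beta^*)})$ and its mirror. By the interpretation of twisted Alexander polynomials via twisted homology/cohomology, the hypothesis that $\lambda^n$ is a \emph{simple} root of $\Delta_1^+(t)$ and $\Delta_0^+(\lambda^n)\neq 0$ forces each of these off-diagonal $H^1$'s to be $1$-dimensional (here I would invoke the duality Corollary~\ref{cor:Alex-sym} to handle the $-$ side, and the semi-simplicity Corollary~\ref{cor:reductivity}). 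Hence $\dim H^1 = (n-1) + 1 + 1 = n+1$, and since the Zariski tangent space to $X$ at $\chi_\lambda$ injects into (a quotient of) this $H^1$ — more precisely the tangent cone is controlled by it — the two $(n-1)$-dimensional components $Y$ and $Z$ already account for a tangent space of the expected size, leaving no room for a third component.

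Then I would establish transversality. The intersection $Y\cap Z$ contains the characters $\chi_\mu$ of the reducible representations $\rho_\mu$ that are limits of irreducible ones; by the deformation argument of Theorem~\ref{thm:suf} applied along this family, and the fact that $\Delta_1^+(\mu^n)=0$ only at $\mu=\lambda$ near $\lambda$ (simplicity of the root), one sees $Y\cap Z$ is (locally) the $(n-2)$-dimensional subvariety of characters of reducible representations $\alpha\oplus\beta$-type with the $\mathbf C^*$ and $\mathbf C^{a-1}\oplus\mathbf C^{b-1}$ parameters but \emph{without} the extra off-diagonal direction, i.e. $\dim(Y\cap Z) = (n-1) - 1 = n-2$. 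Transversality then follows from the tangent-space computation: $T_{\chi_\lambda}Y + T_{\chi_\lambda}Z$ has dimension $(n-1)+(n-1)-(n-2) = n$, and one checks this equals the dimension of $T_{\chi_\lambda}X$ coming from the two "good'' directions, so the sum is direct modulo $T(Y\cap Z)$. Finally, $Y$ contains irreducible characters by construction (it is the image of $R^+$, which contains irreducibles near $\rho^+$), and $Z$ consists only of reducible characters because every representation with character in $Z$ is conjugate to some $\rho_\mu$, which is reducible; this last point uses that $Z$ is the image of the map $(\mu, [\alpha'],[\beta'])\mapsto \rho$ whose generic — hence every — fibre consists of reducibles, together with the fact that the character determines the semisimplification.

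The main obstacle I anticipate is the precise identification of $Y\cap Z$ and the verification that the two components meet \emph{transversally} rather than merely with an $(n-2)$-dimensional intersection: this requires showing that the single "extra'' tangent direction at $\chi_\lambda$ — the class in $H^1$ coming from the simple root of $\Delta_1^+$ — is genuinely integrable into $Y$ and is \emph{not} tangent to $Z$, which is where the non-vanishing cup-product argument (in the spirit of \cite{BenAbdelghani2000}) and the obstruction-vanishing results of \cite{Goldman1984} do the real work; everything else is bookkeeping with the long exact sequences and the already-established smoothness of $\rho^+$.
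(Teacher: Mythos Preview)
Your outline captures the cast of characters correctly --- the component $Y$ coming from the irreducible deformations of $\rho^+$, the component $Z$ swept out by $(\alpha',\beta',\lambda')$, the decomposition of $H^1(\Gamma;\mathfrak{sl}_n(\mathbf C)_{\Ad\rho_\lambda})$ into diagonal and off-diagonal pieces, and the dimension $n+1$ of that $H^1$ --- but the logical structure of the argument has a genuine gap in the middle.

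The step ``two $(n-1)$-dimensional components already account for a tangent space of the expected size, leaving no room for a third component'' is not valid as stated. Knowing $\dim T_{\chi_\lambda}X(\Gamma,\mathrm{SL}_n(\mathbf C))=n$ (which is what one gets from $\dim H^1=n+1$ after the $\mathbf C^*$-quotient via Luna's slice) and exhibiting two $(n-1)$-dimensional subvarieties through $\chi_\lambda$ does \emph{not} by itself exclude further components, nor does it prove that $Y$ and $Z$ are smooth at $\chi_\lambda$, nor that their tangent spaces are distinct hyperplanes. Likewise your transversality paragraph is circular: you compute $\dim(Y\cap Z)=n-2$ in order to deduce transversality, but the local dimension of $Y\cap Z$ is exactly what transversality would give you, not an independent input.

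What the paper actually does --- and what you correctly flag in your last paragraph as ``the real work'' but do not carry out --- is compute the \emph{quadratic cone} at $\rho_\lambda$ explicitly. One evaluates the cup-bracket $[\vartheta\smallcup\vartheta]$ for a general cocycle $\vartheta$ written in block form, projects to $H^2(\Gamma;\MM^\pm_{\lambda^{\pm n}})$, and identifies the resulting obstructions as $u_\pm\,(-l_\pm(z_a,z_b)\pm zn)\cdot d_\pm\smallcup\varphi$, where $l_+(z_a,z_b)$ is the logarithmic derivative of the root of $\Delta_1^{\alpha_s\otimes\beta_s^*}$ along a path tangent to $(z_a,z_b)$. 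This last identification is not formal: it is obtained by integrating the cocycle inside $R(\Gamma,P^+)$ (using the smoothness of $\rho^+$ there) and reading off the constraint $\Delta_1^{\alpha_s\otimes\beta_s^*}(\lambda_s^n)=0$. The upshot is that the quadratic cone in $Z^1$ is the union of the two linear subspaces $\{u_+=u_-=0\}$ and $\{-l_+(z_a,z_b)+zn=0\}$; matching dimensions against $T$ and $S$ then proves simultaneously that $\rho_\lambda$ is a smooth point of each, that they are the only components through $\rho_\lambda$, and that they meet transversally. One then passes to $X$ via Luna's slice (using $u=u_+u_-$ as the $\mathbf C^*$-invariant coordinate) to obtain the same conclusions for $Y$ and $Z$ at $\chi_\lambda$. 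Your proposal should be reorganized so that this obstruction computation is the engine of the proof rather than an afterthought; the dimension bookkeeping you describe is then a consequence, not a substitute.
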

 
 As in  \cite{HPS2001} and \cite{HP05} for $\mathrm{SL}_2(\mathbf C)$ and $
\mathrm{PSL}_2(\mathbf C)$ respectively, the key idea for Theorem~\ref{thm:str} 
is to study the quadratic cone of the representation $\rho_\lambda$, by
identifying certain obstructions to integrability.
 Here we also use Luna's slice theorem, as in \cite{BenAbdelghani2002}.

 We conclude the paper by an explicit description of the component of the variety of 
 irreducible characters
 of the trefoil knot in $\mathrm{SL}_3(\mathbf C)$ that illustrates our results.
 
 \medskip
 
 The paper is organized as follows. Section~\ref{sec:twistedAlex} is devoted to
twisted Alexander modules, and in particular to the duality theorem, 
Theorem~\ref{thm:duality}.
 In Section~\ref{sec:varieties_of_reps} we review some preliminaries on the
 representation
varieties  and in Section~\ref{sec:twisted-cohomology} some
further preliminaries on twisted cohomology and twisted invariants.
 Then in Section~\ref{section:necessary} we prove Theorem~\ref{thm:nec}. The
proof of the sufficient condition, Theorem~\ref{thm:suf}, splits in
Sections~\ref{sec:inf_def_cup} and \ref{sec:regrho}. 
 Theorem~\ref{thm:str} is proved in Section~\ref{sec:nbhood}. Finally in
Section~\ref{section:example} we compute $X(\Gamma,\operatorname{SL}_3(\mathbf
C))$ for $\Gamma$ the fundamental group
 of the trefoil knot exterior.

\paragraph*{Acknowledgements} We are indebted to
Julien Bichon for helpful discussions and we like to thank Simon Riche for pointing out Nagata's result to us \cite{Nagata1961}. 
We also like to thank Stefan Friedl for providing us with the references \cite{FTT12,HSW10}. 
We  are particular thankful to the anonymous referee for his/her thorough review and for having pointed out 
an inaccuracy in the statement of a preliminary version of Theorem~\ref{thm:str}. His/her remarks led to Lemma~\ref{lemma:Z}.
 The first author was supported by the ANR projects ModGroup and SGT (\emph{Structures Géométriques Triangulées}).
 
 \section{Twisted Alexander modules}
 \label{sec:twistedAlex}

  The aim of this section is to introduce twisted Alexander modules and Alexander
polynomials, together with their
main properties.  
 We also give a new result that we will require later:
 a duality theorem for Alexander polynomials twisted by semi-simple
representations.  It relies on Milnor-Franz duality for Reidemeister torsion,
but it is different, as the torsion  is the ratio of the Alexander polynomials.
For further background about twisted Alexander polynomials see  \cite{KirkLivingston99}.
 
A representation of a group $\Gamma$ in a finite-dimensional 
complex vector space $V$ is a homomorphism $\rho\co\Gamma\to \mathrm{GL}(V)$. 
We say that such a map gives $V$ the structure of a $\Gamma$-module. If there is
no ambiguity about the map $\rho$ we call $V$ itself a representation
 of $\Gamma$ and we will often suppress the symbol $\rho$ and write $\gamma\cdot
v$ or $\gamma\,v$ for $\rho(\gamma)(v)$. {Two representations
$\rho\co\Gamma\to \mathrm{GL}(V)$ and $\varrho\co\Gamma\to \mathrm{GL}(W)$ are
called \emph{equivalent} if there exits an isomorphism $T\co V\to W$ such that 
$\varrho(\gamma)\circ T = T\circ\rho(\gamma)$ for all $\gamma\in\Gamma$ i.e.\ if
the $\Gamma$-modules $V$ and $W$ are isomorphic.}

 Our main reference for group cohomology is \cite{Brown1982}. Since we work with left-modules, 
for defining homology consider the right action of the inverse, as in \cite[(2.1)]{KirkLivingston99}.
As the knot exterior $X$ is an Eilenberg--MacLane space,  (co-)homology groups of 
$\Gamma$ and $X$ are naturally identified. In what follows, we will not distinguish between 
$H_i(\Gamma; V)$ and $H_i(X; V)$.

We  give an interpretation of the low dimensional (co-)homology groups. 
The cohomology group in dimension zero is the module of invariants, i.e.\ 
$$
H^0(\Gamma; V)\cong V^\Gamma=\{v\in V\mid \gamma v = v \text{ for all $\gamma\in\Gamma$}\}.
$$
The homology group in dimension zero is the  co-invariant module:
\[
H_0(\Gamma; V)\cong \mathbf Z\otimes_{\mathbf Z[\Gamma]} V\cong V/I V 
\]
where $I\subset \mathbf Z [\Gamma]$ is the augmentation ideal and
$IV\subset V$ is the subspace generated by 
$\{ \gamma \, v-v \mid v \in V,\  \gamma\in\Gamma \}$.

We will make use of  the interpretation of  $H^1(\Gamma;V)$ by means of crossed morphisms, it is well suited for our purpose. 
 A \emph{crossed morphism}  $d\co\Gamma\to V$ is a map that satisfies
$d(\gamma_1\gamma_2)= d(\gamma_1)+\gamma_1 d(\gamma_2) 
$,
$\forall \gamma_1,\gamma_2\in \Gamma$. A crossed morphism $d$ is called  {\emph{principal}} if there exists $v\in V$ satisfying
$
d(\gamma)=\gamma\, v-v
$, 
$\forall \gamma\in\Gamma$.
Crossed morphisms are precisely the \emph{cocycles} of the standard or bar resolution of the $\Gamma$-module $V$, and the  {principal} ones are the \emph{coboundaries}.
Thus the set of crossed morphisms or cocycles is denoted by $Z^1(\Gamma; V)$ and the set of  {principal} crossed morphisms or coboundaries by $B^1(\Gamma; V)$. In particular
the first  cohomology group is
\begin{equation}
 \label{eqn:cohomology}
H^1(\Gamma;V)\cong Z^1(\Gamma; V)/B^1(\Gamma; V).
 \end{equation}

 Let {$\rho\co\Gamma\to\mathrm{GL}(V)$} be a finite dimensional representation of $\Gamma$.
If $ X_\infty \to X$ denotes the infinite cyclic covering, then
$$
H_i( X_\infty; V)
$$
is a finitely generated $\mathbf{C}[\mathbf Z]$-module,
because $X$ is compact and $V$ is finite dimensional.
Here $\mathbf Z$ is the group of deck transformations of the covering
$  X_\infty \to X$. We will sometimes interpret  the elements of $\mathbf{C}[\mathbf Z]$ as Laurent polynomials, by using the isomorphism  $\mathbf C[\mathbf Z]\xrightarrow\cong \mathbf C[t^{\pm 1}]$
that maps the generator $1$ of $\mathbf Z$ to $t$.

\begin{definition}
\label{def:Alexander}
The homology groups  
$
H_i( X_\infty; V)
$ are 
called the 
 \emph{twisted Alexander modules}, 
viewed as $\mathbf C[\mathbf Z]\cong \mathbf C[t^{\pm 1}]$-modules.  
The corresponding orders are the \emph{twisted Alexander polynomials}
$$
\Delta_i^{\rho}(t)\in\mathbf C [t^{\pm 1}]\ ,
$$
they are unique up to multiplication by a unit
$c\, t^k\in \mathbf C[t^{\pm 1}]$,  $k\in\mathbf Z$, $c\in \mathbf C^*$.
\end{definition}
 Recall that the \emph{order} of a finitely generated
$\mathbf C[t^{\pm 1}]$-module 
$$M=\bigoplus_i \mathbf C[t^{\pm 1}]\Big/ p_i(t) \mathbf C[t^{\pm 1}]$$ 
is
$\prod_i p_i(t) $. In particular the order is nonzero if and only if $M$ is a torsion module.
Notice that this is not the same convention as in \cite{KirkLivingston99}.

Due to the indeterminacy in the definition of twisted Alexander polynomials, we shall write
$$
p(t)\doteq q(t)
$$
to denote that the polynomials $p(t), q(t)\in \mathbf C[\mathbf Z]$  are \emph{associated} i.e.\ they are equal  up to multiplication with an element
$\pm t^k\in \mathbf C[\mathbf Z]$,  $k\in\mathbf Z$., $c\in \mathbf C^*$.

\begin{remark}
It follows from a result of M.~Wada \cite[Theorem~2]{Wada94} 
that the twisted Alexander polynomial of
a link exterior twisted by a representation in 
$\mathrm{SL}_n(\mathbf C)$ is well defined up to powers of $\pm t^k$. 
It is also well known that for $n$ even there is  no sign ambiguity.
We shall not need those facts, as we use essentially  the structure 
of the Alexander module.
\end{remark}

Let 
\begin{equation*}
V[\mathbf{Z}]= V\otimes_{\mathbf C[ \Gamma ]} \mathbf C[\mathbf Z]
\end{equation*}
denote the $\Gamma$-module via the representation $\rho\otimes t^\varphi$. Then we have a natural isomorphism
of $\mathbf{C}[\mathbf Z]$-modules
\begin{equation}
\label{eqn:V[Z]}
 H_i(X; V[\mathbf{Z}]) \cong H_i(X_\infty; V)
\end{equation}
(see \cite[Theorem~2.1]{KirkLivingston99}). 
Notice that equivalent representations give rise to isomorphic $\Gamma$-modules and hence to associated Alexander polynomials.

The \emph{dual representation} $\rho^*\co\Gamma\to\mathrm{GL}(V^*)$ is defined in the usual way by
\[
\rho^*(\gamma) (f)  = f\circ \rho(\gamma)^{-1} \text{ for all $\gamma\in\Gamma$ and $f\in V^*=\hom(V,\mathbf C)$.} 
\]
The following lemma is straightforward.
\begin{lemma}
\label{lem:bilinear_dual}
The representations $\rho$ and $\rho^*$ are equivalent
if and only if there exists a non-degenerate bilinear form
$V\otimes V\to\mathbf C$ which is  $\Gamma$-invariant.
\end{lemma}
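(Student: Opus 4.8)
The plan is to prove both implications of Lemma~\ref{lem:bilinear_dual} by unwinding the definitions, using the standard correspondence between bilinear forms on $V$ and linear maps $V\to V^*$.

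First I would set up the dictionary. A bilinear form $B\co V\otimes V\to\mathbf C$ corresponds to a linear map $T_B\co V\to V^*$ via $T_B(v)(w)=B(v,w)$, and $B$ is non-degenerate precisely when $T_B$ is an isomorphism. So the content of the lemma is that the $\Gamma$-invariance of $B$ is equivalent to $T_B$ being a $\Gamma$-equivariant map from $(V,\rho)$ to $(V^*,\rho^*)$, i.e.\ $\rho^*(\gamma)\circ T_B=T_B\circ\rho(\gamma)$ for all $\gamma$.

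Then I would carry out the computation in both directions at once. By definition $\Gamma$-invariance of $B$ means $B(\gamma v,\gamma w)=B(v,w)$ for all $\gamma\in\Gamma$, $v,w\in V$; replacing $v$ by $\rho(\gamma)^{-1}v$ and $w$ by $\rho(\gamma)^{-1}w$ this is equivalent to $B(v,w)=B(\rho(\gamma)^{-1}v,\rho(\gamma)^{-1}w)$. On the other hand, for $f\in V^*$ the dual representation gives $\rho^*(\gamma)(f)=f\circ\rho(\gamma)^{-1}$, so
\[
\bigl(\rho^*(\gamma)\circ T_B\bigr)(v)(w)=T_B(v)\bigl(\rho(\gamma)^{-1}w\bigr)=B\bigl(v,\rho(\gamma)^{-1}w\bigr),
\]
while $\bigl(T_B\circ\rho(\gamma)\bigr)(v)(w)=B(\rho(\gamma)v,w)$. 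Hence $T_B$ is $\Gamma$-equivariant iff $B(\rho(\gamma)v,w)=B(v,\rho(\gamma)^{-1}w)$ for all $\gamma,v,w$, which upon substituting $\gamma\mapsto\gamma$ and replacing $v$ by $\rho(\gamma)v$ is exactly the invariance condition above. This shows that $B$ is $\Gamma$-invariant and non-degenerate if and only if $T_B\co V\to V^*$ is a $\Gamma$-equivariant isomorphism, i.e.\ $\rho\cong\rho^*$. Conversely, any equivalence $T\co V\to V^*$ defines $B(v,w)=T(v)(w)$, which by the same computation is non-degenerate and $\Gamma$-invariant.

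There is essentially no obstacle here; the only point requiring a little care is keeping track of the inverses introduced by the dual representation, and checking that the substitutions used to pass between $B(\gamma v,\gamma w)=B(v,w)$ and $B(\gamma v,w)=B(v,\gamma^{-1}w)$ are legitimate (they are, since $\rho(\gamma)$ is invertible and $\gamma$ ranges over all of $\Gamma$). This is why the paper calls the lemma straightforward.
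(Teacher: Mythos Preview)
Your proof is correct and is exactly the standard argument the paper has in mind; the paper in fact gives no proof at all, merely stating that the lemma is straightforward, and your correspondence between non-degenerate $\Gamma$-invariant bilinear forms and $\Gamma$-equivariant isomorphisms $V\to V^*$ is the natural way to unpack this.
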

\begin{example}
\label{ex:sldual}
For any representation $\rho\co\Gamma\to\operatorname{SL}_2(\mathbf C)$, the module $V=\mathbf C^2$ has a skew-symmetric 
non-degenerate bilinear form defined by the determinant. Namely, the vectors $(x_1,x_2)$ and $(y_1,y_2)\in\mathbf{C}^2$
are mapped to
\[\det
  \begin{pmatrix}
   x_1 & y_1 \\ x_2 & y_2
  \end{pmatrix}
.
\]
In view of Lemma~\ref{lem:bilinear_dual}, $ \rho^*$ and $\rho$ are equivalent and hence 
$\Delta_i^\rho \doteq \Delta_i^{\rho^*}$.
\end{example}

Recall from the introduction (see Definition~\ref{def:irreductible}) that a
representation $\rho\co \Gamma\to \mathrm{GL}(V)$ is called semisimple or 
completely reducible
if $\rho$ is the direct sum of irreducible
representations. 

\begin{remark}
A representation $\rho$ is  completely reducible  if and only if each subspace of $V$ stable under $\rho(\Gamma)$ has a $\rho(\Gamma)$-invariant complement.
\end{remark}

 \begin{thm} \label{thm:duality}
 Let $\rho\co\Gamma\to\mathrm{GL}(V)$ be a completely reducible representation. Then
 \[  \Delta_i^\rho (t^{-1}) \doteq \Delta_i^{\rho^*} (t) \,.\] 
 \end{thm}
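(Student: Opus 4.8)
The plan is to derive Theorem~\ref{thm:duality} from Milnor--Franz duality for Reidemeister torsion, exploiting the fact that (as the introduction notes) the Alexander polynomials are, up to units, the \emph{ratio} of torsions. First I would recall the setup: $X$ is a compact $3$-manifold with boundary a torus, $\Gamma=\pi_1(X)$, and $\varphi\co\Gamma\surto\mathbf Z$ the abelianization. Given $\rho\co\Gamma\to\mathrm{GL}(V)$, the twisted chain complex $C_*(X;V[\mathbf Z])$ over the ring $\mathbf C[\mathbf Z]$ — or better, after tensoring with the quotient field $\mathbf C(t)$ of $\mathbf C[t^{\pm1}]$ — computes $H_*(X_\infty;V)\otimes\mathbf C(t)$ by \eqref{eqn:V[Z]}. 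The key point is that the Euler characteristic of $X$ vanishes, so $H_*(X_\infty;V)$ is $\mathbf C[t^{\pm1}]$-torsion if and only if $H_*(X;V\otimes\mathbf C(t))=0$, in which case the Reidemeister torsion $\tau(X;V\otimes\mathbf C(t))\in\mathbf C(t)^*/(\pm t^{\mathbf Z})$ is defined and equals, up to units,
\[
\tau(X;V\otimes\mathbf C(t))\doteq \prod_i \Delta_i^\rho(t)^{(-1)^{i+1}} .
\]
In the degenerate case where some $H_i(X_\infty;V)$ is not torsion, both sides of the theorem are the zero polynomial in that degree, and one has to track which degrees are affected separately; I would handle that by a short argument using that $\Delta_i^\rho=0$ iff $H_i(X_\infty;V)$ has positive rank, together with the symmetry of ranks established below, so the set of "bad" degrees is itself symmetric.

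Next I would invoke Milnor--Franz duality. For the compact oriented $3$-manifold $X$ with toral boundary, Poincar\'e--Lefschetz duality for the twisted complex gives a chain homotopy equivalence between $C_*(X;V\otimes\mathbf C(t))$ and the dual of $C_*(X,\partial X;V^*\otimes\mathbf C(t^{-1}))$, where the appearance of $t^{-1}$ in the coefficient system comes from the fact that the cap product with the fundamental class intertwines $t$ with its inverse on the deck group. Combined with the long exact sequence of the pair $(X,\partial X)$ and the fact that $\partial X$ is a torus (whose twisted torsion contributes only a unit, or whose homology enters symmetrically), this yields
\[
\tau(X;V\otimes\mathbf C(t))\doteq \tau(X;V^*\otimes\mathbf C(t^{-1}))^{\pm1},
\]
i.e.\ the product $\prod_i\Delta_i^\rho(t)^{(-1)^{i+1}}$ is associated to $\prod_i\Delta_i^{\rho^*}(t^{-1})^{(-1)^{i+1}}$ (possibly with an overall inversion in $t$ that is absorbed into $\doteq$). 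This is the standard symmetry of the Reidemeister torsion of a knot exterior, now with twisted coefficients.

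The remaining — and main — difficulty is to pass from this \emph{product} identity to the \emph{degreewise} identity $\Delta_i^\rho(t^{-1})\doteq\Delta_i^{\rho^*}(t)$. This is exactly where complete reducibility of $\rho$ is used. The idea is: since $\rho$ is semisimple, so is $\rho^*$, and moreover $V\cong V^{**}$ as $\Gamma$-modules; more to the point, I would argue that for $i=0$ and $i=2$ the polynomials $\Delta_i^\rho$ are controlled directly by invariants and coinvariants — $\Delta_0^\rho$ and $\Delta_2^\rho$ are determined by $H_0(X_\infty;V)$ and $H_2(X_\infty;V)$, which for a knot exterior are themselves governed by $H^0$ and $H^0$ of $\partial X$ via Poincar\'e duality — and that semisimplicity forces $H^0(\Gamma;V)$ and $H^0(\Gamma;V^*)$ to have matching dimensions (indeed $\dim_{\mathbf C}H^0(\Gamma;V)=\dim_{\mathbf C}H^0(\Gamma;V^*)$ for any $V$, and for the torsion coefficient system $V\otimes\mathbf C(t)$ the corresponding invariants vanish). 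Concretely I expect to show $\Delta_0^\rho(t)\doteq\Delta_0^{\rho^*}(t^{-1})$ and $\Delta_2^\rho(t)\doteq\Delta_2^{\rho^*}(t^{-1})$ by a direct computation from the definition of the $0$-th and $2$-nd Alexander modules (using the two-variable presentation coming from a CW-structure with one $0$-cell, and Poincar\'e duality $H_2(X_\infty;V)\cong H^1(X_\infty,\partial;V)$ relating top homology to cohomology of $\partial X$), and then divide these out of the product identity to isolate $i=1$, obtaining $\Delta_1^\rho(t^{-1})\doteq\Delta_1^{\rho^*}(t)$ as well. The verification of the $i=0,2$ cases, and the bookkeeping of units and of the non-torsion degrees, is the technical heart; semisimplicity enters precisely to guarantee that $H^0(\Gamma;V)$ and its dual behave dually, so that no asymmetry sneaks in between the two ends of the complex.
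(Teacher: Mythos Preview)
Your strategy matches the paper's almost exactly: Franz--Milnor duality for the Reidemeister torsion of $X$ with $V\otimes\mathbf C(t)$ coefficients gives the alternating-product identity, and one then isolates $\Delta_1$ by controlling $\Delta_0$ and $\Delta_2$ directly. Two points where the paper is sharper than your sketch are worth noting. First, $\Delta_2$ requires no Poincar\'e-duality computation: since $X$ has the homotopy type of a $2$-complex, $H_2(X_\infty;V)$ is a free $\mathbf C[t^{\pm1}]$-module, so $\Delta_2^\rho\in\{0,1\}$; the case $\Delta_2^\rho=0$ forces $\Delta_1^\rho=0$ by Euler characteristic, and one then transfers both vanishings to $\rho^*$ via duality over $\mathbf C(t)$. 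Second, your description of where semisimplicity enters is off-target: the equality $\dim H^0(\Gamma;V)=\dim H^0(\Gamma;V^*)$ holds for \emph{any} $V$, so that is not the issue. What fails without semisimplicity is the $\mathbf C[t^{\pm1}]$-module symmetry of $H_0(X_\infty;V)$ itself (cf.\ Example~\ref{ex:reducible-non-sym}); the paper's Lemma~\ref{prop:dualityDelta0} uses complete reducibility to decompose into irreducibles and then shows that for an irreducible summand of dimension $>1$ one has $\Delta_0\doteq 1$ (because $IV=V$), while one-dimensional summands are handled by an explicit abelian computation.
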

 
 Example~\ref{ex:reducible-non-sym} below shows that 
 the hypothesis of complete reducibility
is necessary in Theorem~\ref{thm:duality}. 
This duality formula can also be deduced from results of Friedl, Kim, and Kitayama \cite{FTT12}.

The first step in the proof of Theorem~\ref{thm:duality} is the following:

\begin{lemma}
 \label{prop:dualityDelta0} 
 Let $\rho\co\Gamma\to\mathrm{GL}(V)$ be a completely reducible representation.
 The modules $H_0(X_\infty ; V)$ and $H_0(X_\infty ; V^*)$ are finitely
generated torsion modules. 
 In addition,
\[ 
\Delta_0^\rho(t^{-1})\doteq\Delta_0^{\rho^*}(t). 
\]
\end{lemma}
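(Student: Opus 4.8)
The plan is to compute both $H_0(X_\infty;V)$ and $H_0(X_\infty;V^*)$ explicitly in terms of the coinvariants of $V$ under a single generator of $\Gamma$ acting on the infinite cyclic cover, and then to exploit complete reducibility to split $V$ into pieces on which each of the two Alexander polynomials $\Delta_0^\rho$ and $\Delta_0^{\rho^*}$ is either a nonzero constant or visibly dual to the other under $t\mapsto t^{-1}$. Concretely, I would first recall that, by the isomorphism \eqref{eqn:V[Z]}, $H_0(X_\infty;V)\cong H_0(X;V[\mathbf Z])\cong V[\mathbf Z]/I\,V[\mathbf Z]$, the coinvariants of $V[\mathbf Z]=V\otimes_{\mathbf C[\Gamma]}\mathbf C[\mathbf Z]$ under the $\Gamma$-action $\rho\otimes t^\varphi$. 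Choosing $\mu\in\Gamma$ with $\varphi(\mu)=1$, the coinvariant module is a cyclic $\mathbf C[t^{\pm1}]$-module, namely the cokernel of $t\,\rho(\mu)-\mathrm{Id}$ on $V\otimes\mathbf C[t^{\pm1}]$ after quotienting by the other relations; working this out, $\Delta_0^\rho(t)\doteq\det(t\,\rho(\mu)-\mathrm{Id})$ up to a unit, and similarly $\Delta_0^{\rho^*}(t)\doteq\det(t\,\rho^*(\mu)-\mathrm{Id})=\det(t\,\rho(\mu)^{-1}-\mathrm{Id})$ since $\rho^*(\mu)=(\rho(\mu)^{-1})^{T}$ and determinant is transpose-invariant. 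From here a direct determinant manipulation, $\det(t\,\rho(\mu)^{-1}-\mathrm{Id})=\det(\rho(\mu)^{-1})\det(t\,\mathrm{Id}-\rho(\mu))=\pm t^{\dim V}\det(t^{-1}\rho(\mu)-\mathrm{Id})\cdot(\text{unit})$, would give $\Delta_0^{\rho^*}(t)\doteq\Delta_0^\rho(t^{-1})$ once one knows both modules are torsion.

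The subtlety is that the above naive computation is only correct when $H_0(X_\infty;V)$ is actually torsion, equivalently when $\det(t\rho(\mu)-\mathrm{Id})$ is not identically zero, equivalently when $1$ is not an eigenvalue of $\rho(\mu)$ — but a priori $\rho(\mu)$ can have $1$ as an eigenvalue, and more to the point the coinvariant module can have a free $\mathbf C[t^{\pm1}]$-summand coming from a trivial subrepresentation, so one must justify that the order is nonzero. This is exactly where complete reducibility enters: I would decompose $V=\bigoplus_j V_j$ into irreducibles. On a nontrivial irreducible summand $V_j$, the $\Gamma$-coinvariants $V_j/IV_j$ vanish (any $\Gamma$-invariant quotient of an irreducible is $0$ or the whole thing, and it cannot be the whole thing by irreducibility unless $V_j$ is trivial), and by the same token $H_0(X_\infty;V_j)$ is a torsion $\mathbf C[t^{\pm1}]$-module — here one should note that $H_0(X;-)$ is additive, $H_0(X;V[\mathbf Z])$ being the direct sum over $j$, so it suffices to treat each $V_j$ and the trivial summands separately. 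On a trivial summand the module is $\mathbf C[t^{\pm1}]/(t-1)$, which is torsion with order $t-1\doteq t^{-1}-1$, manifestly self-dual. Hence $H_0(X_\infty;V)$ is torsion, and the same argument applies verbatim to $V^*$ since $V^*=\bigoplus V_j^*$ is again completely reducible with the trivial summands matching up.

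With torsion established, the remaining step is the determinant identity relating $\Delta_0^\rho$ and $\Delta_0^{\rho^*}$, which I would phrase summand-by-summand: on each nontrivial irreducible $V_j$, $\Delta_0^{\rho_j}(t)\doteq\det(t\,\rho_j(\mu)-\mathrm{Id})$ and $\Delta_0^{\rho_j^*}(t)\doteq\det(t\,\rho_j(\mu)^{-1}-\mathrm{Id})$, and the elementary factorization above gives $\Delta_0^{\rho_j^*}(t)\doteq\Delta_0^{\rho_j}(t^{-1})$; on trivial summands both polynomials are $\doteq t-1\doteq t^{-1}-1$; multiplying over all summands (the order of a direct sum is the product of the orders) yields $\Delta_0^{\rho}(t^{-1})\doteq\Delta_0^{\rho^*}(t)$. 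The one point requiring a little care, and the main obstacle I anticipate, is the clean identification $\Delta_0^\rho(t)\doteq\det(t\rho(\mu)-\mathrm{Id})$: one must show that $H_0(X;V[\mathbf Z])$, computed from a presentation of $\Gamma$, really is the cokernel of the single matrix $t\rho(\mu)-\mathrm{Id}$ — i.e. that the other relators of $\Gamma$ contribute nothing new to $H_0$, which holds because $H_0$ only depends on the abelianization-type data $\gamma v - v$ and a Wirtinger or more general presentation of a knot group makes this transparent once one passes to $H_0$. Alternatively one can avoid choosing a presentation by noting $V[\mathbf Z]/IV[\mathbf Z]=(V\otimes\mathbf C[t^{\pm1}])_{\Gamma}$ and that the $\Gamma$-action factors through $\mathbf Z=\langle t\rangle$ on the trivial part and has no invariants or coinvariants on the rest, so only the $\langle\mu\rangle$-coinvariants survive; I would use whichever of these two routes is shorter given the conventions already fixed in the paper.
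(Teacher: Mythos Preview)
There is a genuine gap: your key identification $\Delta_0^{\rho_j}(t)\doteq\det\!\big(t\,\rho_j(\mu)-\mathrm{Id}\big)$ on a nontrivial irreducible summand $V_j$ is false whenever $\dim V_j>1$. The coinvariant module $H_0(\Gamma;V_j[\mathbf Z])=V_j[\mathbf Z]/I\,V_j[\mathbf Z]$ is \emph{not} merely the cokernel of $t\rho_j(\mu)-\mathrm{Id}$: the elements $\gamma'\in\ker\varphi$ impose the additional relations $\rho_j(\gamma')v=v$, and these genuinely cut the module down further. Concretely, the paper proves that for an irreducible $\rho_j$ with $\dim V_j>1$ one has $\Delta_0^{\rho_j}\doteq 1$, whereas $\det(t\rho_j(\mu)-\mathrm{Id})$ is a polynomial of degree $\dim V_j>1$ in $t$ and is therefore not a unit. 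So the ``other relators contribute nothing new to $H_0$'' step fails, and the determinant manipulation that follows, while algebraically valid, is being applied to the wrong polynomial. Your closing alternative (``the $\Gamma$-action \dots\ has no invariants or coinvariants on the rest, so only the $\langle\mu\rangle$-coinvariants survive'') conflates $\Gamma$-coinvariants of $V_j$ with $\Gamma$-coinvariants of $V_j[\mathbf Z]$ and does not rescue the argument.

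The paper's route avoids this by treating the two cases separately after decomposing into irreducibles. For $\dim V_j=1$ it computes directly $\Delta_0^{\lambda^\varphi}(t)\doteq t-\lambda^{-1}$ and $\Delta_0^{(\lambda^\varphi)^*}(t)\doteq t-\lambda$, visibly dual. For an irreducible $V_j$ with $\dim V_j>1$ the paper observes that the twist $V_{j,\lambda}$ (action $\rho_j\otimes\lambda^\varphi$) is still irreducible and nontrivial for every $\lambda\in\mathbf C^*$, hence $H_0(\Gamma;V_{j,\lambda})=0$; feeding this into the long exact sequence of $0\to V_j[\mathbf Z]\xrightarrow{(t-\lambda)}V_j[\mathbf Z]\to V_{j,\lambda}\to 0$ shows $H_0(\Gamma;V_j[\mathbf Z])$ has no $(t-\lambda)$-torsion for any $\lambda$ and is therefore zero, giving $\Delta_0^{\rho_j}\doteq 1\doteq\Delta_0^{\rho_j^*}$. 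Your decomposition into irreducibles and your handling of the torsion assertion are fine; what is missing is precisely this dichotomy and the argument that $\Delta_0\doteq 1$ on higher-dimensional irreducible pieces.
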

\begin{proof} 
First notice that if $\rho$ is irreducible or completely reducible then the dual representation $\rho^*$ is also irreducible respectively completely reducible since
each proper invariant subspace of $\rho$ corresponds to 
a proper invariant subspace of $\rho^*$ by the \emph{orthogonality} relation.

We have that $H_0( X_\infty, V)\cong V/ \tilde I V $, 
where $\tilde I\subset \mathbf C [\pi_1( X_\infty)]$ is the augmentation ideal.
Hence, $H_0(X_\infty ; V)$ is a finite dimensional 
$\mathbf C$-vector space and as $\mathbf C[t^{\pm 1}]$-module it cannot have a
free summand. This proves that $H_0( X_\infty, V)$ is a finitely generated torsion module.

In order to prove the symmetry relation it is sufficient to prove it  for irreducible representations
since for $\rho_1\co\Gamma\to\mathrm{GL}(V_1)$ and
$\rho_2\co\Gamma\to\mathrm{GL}(V_2)$ we have
\[
(\rho_1\oplus\rho_2)^* = \rho_1^*\oplus \rho_2^*
\ \text{ and }\  
\Delta^{\rho_1\oplus\rho_2}_i \doteq \Delta^{\rho_1}_i\cdot \Delta^{\rho_2}_i.
\]

First we will prove that for every irreducible representation 
$\rho\co\Gamma\to\mathrm{GL}(V)$ with $\dim V>1$ we have
\begin{equation}\label{eq:irred-Delta0}
 \Delta_0^\rho \doteq 1 \doteq \Delta_0^{\rho^*} \,.
\end{equation}
The irreducibility of $\rho$ and $\dim V>1$ imply that 
$IV\subset V$ is a non-trivial $\Gamma$-invariant subspace, and hence 
$IV= V$. It follows that $H_0(\Gamma ; V)=0$.   
Now, for any complex number $\lambda\in\mathbf{C}^*$ the vector space
$V$ becomes a $\Gamma$-module via $\rho\otimes\lambda^\varphi$, i.e.\
for $\gamma\in \Gamma$ and for $ v\in V$ we have
$\rho(\gamma)\otimes\lambda^{\varphi(\gamma)}(v) = \lambda^{\varphi(\gamma)} \rho(\gamma)v$.
This $\Gamma$-module will be denoted by $V_\lambda$. Notice that $V_\lambda$ is also an irreducible $\Gamma$-module since the map  $v\mapsto \lambda^{\varphi(\gamma)} v$ is a homothety of $V$. Moreover, $V_\lambda$ is a non-trivial $\Gamma$-module  and
hence $H_0(\Gamma ; V_\lambda)=0$ for all $\lambda\in\mathbf{C}^*$.
Next, the short exact sequence of $\Gamma$-modules 
\[
 0\to V[\mathbf Z] \xrightarrow{(t-\lambda)\cdot} V[\mathbf Z] \to V_\lambda \to 0
\]
induces a long exact sequence in homology \cite[III.\S6]{Brown1982}:
\[
    \cdots\to
    H_0(\Gamma;V[\mathbf Z]) \xrightarrow{(t-\lambda)\cdot }
    H_0(\Gamma;V[\mathbf Z]) \rightarrow 
    H_0(  \Gamma;V_{\lambda})  \rightarrow 0\,,
\]
and $H_0(  \Gamma;V_{\lambda})=0$ implies that the multiplication by $(t-\lambda)$
is surjective. Hence for all $\lambda\in \mathbf{C}^*$, the module
$H_0(\Gamma;V[\mathbf Z])$ as no $(t-\lambda)$-torsion.
Therefore, $H_0(\Gamma;V[\mathbf Z]) = 0$ and $\Delta_0^\rho =1$.
Finally, $\rho^*$ is also irreducible and $\dim V^* =\dim V>1$. This implies 
in the same way that $\Delta_0^{\rho^*} =1$

Now suppose that $\dim V =1$, i.e.\ $\rho\co\Gamma\to \mathrm{GL}(V)\cong \mathbf C^*$.
Hence $\rho$ is abelian and completely determined by a non-zero-complex number
$\lambda$, 
meaning that 
$\forall\gamma\in\Gamma$ and $\forall v\in V$ we have 
$\rho(\gamma)(v)= \lambda^{\varphi(\gamma)} v$. 
So we write $\rho=\lambda^\varphi$.
Now 
\[ H_0(\Gamma;V[\mathbf Z])\cong V[\mathbf Z]/ I V[\mathbf Z] \cong V[t^{\pm 1}]/(\lambda\,t-1),\]
since $\lambda^\varphi$ is an abelian representation and factors through $\mathbf Z$.
Therefore $\Delta_0^{\lambda^\varphi}(t) \doteq t-\lambda^{-1}$. 
The dual representation $(\lambda^\varphi)^*$ is  $\lambda^{-\varphi} $, as
$(\lambda^\varphi)^*(\gamma)(f)= f\circ (\lambda^{\varphi( \gamma)})^{-1}  =  \lambda^{-\varphi(\gamma)} f$, where
$\gamma\in\Gamma$ and $f\in V^*$. The same calculation as above shows that
$H_0(\Gamma;V^*[\mathbf Z])\cong V[t^{\pm 1}]/(\lambda^{-1}\,t-1)$ and hence
$\Delta_0^{(\lambda^\varphi)^*}(t) \doteq t-\lambda$. 
We obtain $\Delta_0^{(\lambda^\varphi)*}(t) \doteq \Delta_0^{\lambda^\varphi}(t^{-1})$, which proves the lemma.
\end{proof}

\begin{proof}[Proof of Theorem~\ref{thm:duality}]
The knot exterior $X$ has the homotopy type of a $2$-dimensional complex. Therefore $H_i(X_\infty ; V)=0$ for $i>2$ and
 $H_2(X_\infty ; V)$ is a free $\mathbf C[\mathbf Z]$-module.
This implies that $\Delta^\rho_i\doteq 1$ for $i>2$ and $\Delta^\rho_2\in\{0,1\}$.
According to the value of $\Delta^\rho_2$ there are two cases to study.

Assume first that $\Delta^\rho_2=0$. This is equivalent to $H_2(X_\infty ; V)$ being a nontrivial, free $\mathbf C[\mathbf Z]$-module.
By an Euler characteristic argument, $H_1(X_\infty ; V)$ contains also a non-trivial free factor of the same rank.
In particular $\Delta^\rho_1=0$. Since  $H_i(X_\infty ; V)\cong H_i(X ; V[\mathbf Z]) $, the universal coefficient theorem yields that 
$H_i(X; V[\mathbf Z]\otimes_{\mathbf C[\mathbf Z]}\mathbf C(t))\neq 0$ for $i=1,2$. 
Notice also that the natural pairing $V\times V^*\to \mathbf C$ extends to a non-degenerate
$\mathbf C(t)$-bilinear form
\begin{equation}
 \label{eqn:blinear_extended} 
(V[\mathbf Z]\otimes_{\mathbf C[\mathbf{Z}]} \mathbf C(t))
\times 
(V^*[\mathbf Z]\otimes_{\mathbf C[\mathbf{Z}]} \mathbf C(t))
\to \mathbf C(t) \, .
\end{equation}
Using this bilinear form and Poincar\'e duality,
$H_i(X,\partial X; V^*[\mathbf Z]\otimes_{\mathbf C[\mathbf Z]}\mathbf C(t))\neq 0$ for $i=1,2$.
Since the homology of the 2-torus $\partial X$ with coefficients 
$V^*[\mathbf Z]\otimes_{\mathbf C[\mathbf Z]}\mathbf C(t)$ vanishes \cite[\S 3.3]{KirkLivingston99},
$H_i(X; V^*[\mathbf Z]\otimes_{\mathbf C[\mathbf Z]}\mathbf C(t))\neq 0$ for $i=1,2$. 
Hence $\Delta^{\rho^*}_1=\Delta^{\rho^*}_2=0$.

Next we deal with the case $\Delta^\rho_2\doteq 1$. Since this is equivalent to $H_2(X_\infty ; V)=0$, the homology 
argument in the previous paragraph gives $\Delta^{\rho^*}_2\doteq 1$. For the first Alexander polynomials we shall
use Reidemeister torsion and Franz-Milnor duality. 
By Kitano's theorem \cite{Kitano96}
the  torsion of $X$ with coefficients $ V[\mathbf Z]\otimes_{\mathbf{C}[\mathbf Z]}\mathbf C(t)$ is the ratio of
Alexander polynomials:
 $$
  \operatorname{TOR}(X; V[\mathbf Z]\otimes_{\mathbf{C}[\mathbf Z]}\mathbf C(t))
  \doteq  \frac{\Delta_1^\rho }{\Delta_0^\rho },
 $$
see \cite[Thm~3.4]{KirkLivingston99} for this precise statement (this is  a version of Milnor's theorem \cite{Milnor62}, cf.~\cite{Turaev86}).

Using the bilinear form  \eqref{eqn:blinear_extended},
Franz-Milnor's duality for Reidemeister torsion \cite{Milnor62,Franz37} gives
 \begin{align*}
  \operatorname{TOR}(X; V[\mathbf Z]\otimes_{\mathbf{C}[\mathbf Z]}\mathbf C(t) )(t) &\doteq \operatorname{TOR}(X,\partial X; V^*[\mathbf Z]\otimes_{\mathbf{C}[\mathbf Z]}\mathbf C(t))(\frac 1t)
   \\&\doteq 
\frac{\operatorname{TOR}(X;V^*[\mathbf Z]\otimes_{\mathbf{C}[\mathbf Z]}\mathbf C(t))(\frac 1t)}
  {\operatorname{TOR}(\partial X;V^*[\mathbf Z]\otimes_{\mathbf{C}[\mathbf Z]}\mathbf C(t))(\frac 1t)},
 \end{align*}
  see \cite[\S 5.1]{KirkLivingston99}.
 Since $\partial X\cong S^1\times S^1$, 
 $\operatorname{TOR}(\partial X;V^*[\mathbf Z]\otimes_{\mathbf{C}[\mathbf Z]}\mathbf C(t))\doteq 1$ \cite[\S 3.3]{KirkLivingston99}.
 Then the theorem follows from Lemma~\ref{prop:dualityDelta0}.
\end{proof}

 \begin{remark}
 Note that every representation $\rho\co\Gamma\to \mathrm{O}(n)$ is completely
reducible since for each stable subspace $W$ the orthogonal complement $W^\bot$
is also stable. Moreover, we have $\rho^* = \rho$ and hence 
 $\Delta_i^\rho (t^{-1}) \doteq \Delta_i^{\rho} (t)$ is symmetric (see
\cite[Theorem B]{Kitano96}). It follows also from the proof of 
 Lemma~\ref{prop:dualityDelta0} that  $\Delta_0^{\rho} (t) = (t-1)^{k_+}
(t+1)^{k_-}$ where
 $k_{+}=\dim \{ v\in \mathbf R^n \mid \rho(\gamma) v =  v, \   \forall
\gamma\in\Gamma \}$ and
  $k_{-}=\dim \{ v\in \mathbf R^n \mid \rho(\gamma) v = 
(-1)^{\varphi(\gamma)}v, \   \forall \gamma\in\Gamma \}$.  

It was proved in Hillman, Silver, and Williams \cite{HSW10} that 
 $\Delta_i^\rho (t^{-1}) \doteq \Delta_i^{\rho} (t)$ holds if $\rho^*$ and  $\rho$ are conjugates.
 \end{remark}

We finish this section with an example to show that the hypothesis of complete reducibility is needed
  in Theorem~\ref{thm:duality}:
  \begin{example}
 \label{ex:reducible-non-sym}
 We exhibit representations that are not completely reducible and such that 
the conclusion of Theorem~\ref{thm:duality}
fails.
  In order to construct such a representation, we take
 $\rho\co \Gamma\to \operatorname{SL}_2(\mathbf C)$ of the form
   $$
 \rho =  \begin{pmatrix}
          1 & d \\
          0 & 1
         \end{pmatrix}
 \begin{pmatrix} \lambda^\varphi & 0 \\ 0 & \lambda^{-\varphi}\end{pmatrix}
 $$
 that is not abelian. It is a representation if $d\in Z^1(\Gamma,\mathbf C_{\lambda^2})$ 
  and
it is non abelian if $\lambda\neq \pm 1$ and $d\nin B^1(\Gamma,\mathbf C_{\lambda^2})$,
where  $\mathbf C_{\lambda^2}$ denotes the $\Gamma$-module  given by
$\gamma\cdot z = \lambda^{2\varphi(\gamma)} z$ for $\gamma\in\Gamma$, $z\in \mathbf C$, cf.\
Lemma~\ref{lem:repcocycle}.
  Such a representation exists if and only if 
  $\lambda^2$ is a root of the untwisted Alexander polynomial (in particular
$\lambda\neq \pm 1$), see   \cite{Burde1967, dR1967,HPS2001} for instance, or
Lemma~\ref{lem:repcocycle}.
  As $\rho$ is not abelian,  its restriction to $\pi_1(X_\infty)$  is nontrivial
 but 
 $$
 \rho(\pi_1(X_\infty))\subset \left\{\begin{pmatrix} 1 & c \\ 0 & 1 \end{pmatrix} \ \Big|\  c\in \mathbf C\right\}.
 $$
The cohomology module $H_0( X_\infty; \mathbf C^2)$ is isomorphic to ${\mathbf C}^2/I {\mathbf C}^2$. Here 
the subspace $I  {\mathbf C}^2 \subset{\mathbf C}^2$ is generated by  elements 
of the form $v-\rho(\gamma)v$, with $\gamma\in\pi_1(X_\infty)$ and $v\in {\mathbf C}^2$,
i.e.~$I{\mathbf C}^2= \big\{ {c\choose 0} \mid c\in\mathbf C\big\}$.
So, the linear projection $\mathbf C^2\to \mathbf C$ onto the second coordinate induces a linear isomorphism 
${\mathbf C}^2/I{\mathbf C}^2\xrightarrow{\cong}\mathbf C$. The action of a meridian $m\in\Gamma$
on ${\mathbf C}^2/I{\mathbf C}^2$ is multiplication by
$\lambda^{-1}$ and hence
 $
 H_0( X_\infty;{\mathbf C}^2 )\cong \mathbf{C}[t^{\pm1}]/(t-\lambda^{-1})
 $
 as $\mathbf C[\mathbf Z]$-modules.
 Therefore, 
 $\Delta_0^{\rho}(t)=t-\lambda^{-1}$.
 On the other hand, using that every representation in
$\operatorname{SL}_2(\mathbf C)$ is equivalent to its dual, see Example~\ref{ex:sldual},
 $\Delta_0^{\rho^*}(t)\doteq t-\lambda^{-1}$, and 
 $$
 \Delta_0^\rho (t^{-1}) \doteq (t-\lambda) \text{ and }
 \Delta_0^{\rho^*} (t) \text{ are not associated.}
$$

Notice that  if
$\Delta_2^\rho\doteq 1$ then  Franz-Milnor's duality (used in the proof of
Theorem~\ref{thm:duality})
applies and it holds that $\Delta_1^\rho(t^{-1})/ \Delta_0^\rho(t^{-1}) \doteq
\Delta_1^{\rho^*}(t)/\Delta_0^{\rho^*}(t)$. In particular
$\Delta_1^\rho(t^{-1})$ and  $\Delta_1^{\rho^*}(t)$ are not associated either.
\end{example}

 \section{Varieties of representations}
 \label{sec:varieties_of_reps}

 In this section we recall some preliminaries on the varieties of
representations, we discuss representations of 
  the peripheral subgroup $\pi_1(\partial X)\cong\mathbf Z\oplus\mathbf Z$,
 and we state a regularity result,
  Proposition~\ref{prop:MichaelOuardia} due to \cite{HM14}.
 We also show that infinitesimal regularity implies regularity of the
representation (Corollary~\ref{cor:infinitesimalregularity}) and its character
(Proposition~\ref{prop:infinitesimalregularitycharacter}).

Recall that the set of all representations of $\Gamma$ in $ \mathrm{SL}_n(
\mathbf C)$ is called  the \emph{variety of representations} or the
$\mathrm{SL}_n(\mathbf C)$-\emph{representation variety}:
 $$
 R( \Gamma, \mathrm{SL}_n( \mathbf C))=\hom(\Gamma,\mathrm{SL}_n( \mathbf C)).  
 $$
 It is an affine algebraic set (possibly with several components), as $\Gamma$
is finitely generated.
 More precisely, $ R( \Gamma, \mathrm{SL}_n( \mathbf C))$ embeds in a Cartesian
product
 $ \mathrm{SL}_n( \mathbf C)\times\cdots\times  \mathrm{SL}_n( \mathbf C)$ by
mapping each representation to the image of a generating set, and
$\mathrm{SL}_n( \mathbf C)$
 is an algebraic group in $\mathbf C^{n^2}$. The group relations of a
presentation of $\Gamma$ induce the algebraic equations defining $ R( \Gamma,
\mathrm{SL}_n( \mathbf C))$. 
 Different presentations give isomorphic algebraic sets (see \cite{LM85}, for
instance). 
 
 The group $\mathrm{SL}_n( \mathbf C)$ acts on $R( \Gamma, \mathrm{SL}_n(
\mathbf C))$ by conjugation. The 
  algebraic quotient by this action is the \emph{variety of characters} or
  $\mathrm{SL}_n(\mathbf C)$-\emph{character variety}
 $$
 X( \Gamma, \mathrm{SL}_n( \mathbf C))= 
 R( \Gamma, \mathrm{SL}_n( \mathbf C)) \sslash \mathrm{SL}_n( \mathbf C).
 $$
 Recall that the GIT quotient exists since  $ \mathrm{SL}_n( \mathbf C)$ is
\emph{reductive} and the representation variety is an affine algebraic set. (For
more details see \cite[3.\S3]{Newstead1978} or
 \cite{AlgebraicGeometryIV}.)
 
 To describe the Zariski tangent space to $R( \Gamma, \mathrm{SL}_n( \mathbf
C))$ and $X( \Gamma, \mathrm{SL}_n( \mathbf C))$ we use crossed morphisms or
cocycles.

An \emph{infinitesimal deformation} of a representation is the same as a Zariski
tangent vector to $ R( \Gamma, \mathrm{SL}_n( \mathbf C))$. We  use 
Andr\'e Weil's construction, that identifies 
$Z^1(\Gamma;\mathfrak{sl}_n(\mathbf C) _{\Ad\rho} )$ 
with the  Zariski tangent space
to the scheme $\mathcal{R}( \Gamma, \mathrm{SL}_n( \mathbf C))$ at $\rho$.
Here $\mathfrak{sl}_n(\mathbf C) _{\Ad\rho}$ is a $\Gamma$-module via the adjoint action i.e.
$\gamma\cdot x = \Ad_{\rho(\gamma)}(x)$ for $\gamma\in\Gamma$ and 
$x\in\mathfrak{sl}_n(\mathbf C)$. 
Notice furthermore that the algebraic equations defining the representation
variety may be non-reduced, hence there is an underlying affine scheme 
$\mathcal{R}( \Gamma, \mathrm{SL}_n( \mathbf C))$ with a possible non-reduced
coordinate ring.
Weil's construction assigns to each cocycle 
$d\in Z^1(\Gamma;\mathfrak{sl}_n(\mathbf C))$ the infinitesimal deformation
 $
\gamma\mapsto  (1+ \varepsilon\, d(\gamma)) \rho(\gamma), \
\forall\gamma\in\Gamma,
 $
 which satisfies the defining equations for $R( \Gamma, \mathrm{SL}_n( \mathbf
C))$ up to terms in the ideal
$(\varepsilon^2)$ of $\mathbf C[\varepsilon]$, i.e. a Zariski tangent vector
to 
$\mathcal{R}( \Gamma, \mathrm{SL}_n( \mathbf C))$.
Weil's construction identifies $B^1(\Gamma;\mathfrak{sl}_n(\mathbf C)_{\Ad\rho}
)$ with the tangent space to the orbit by conjugation. See \cite{Weil64},  \cite{LM85}, and
\cite{BenAbdelghani2002} for more details.

Let $\dim_\rho R(\Gamma, \mathrm{SL}_n( \mathbf C))$ denote the local dimension
of 
$R(\Gamma, \mathrm{SL}_n( \mathbf C))$ at $\rho$ (i.e.\ the maximal dimension of
the irreducible components of $R(\Gamma, \mathrm{SL}_n( \mathbf C))$ containing
$\rho$ 
\cite[Ch.~II]{Shafarevich1977}). So we obtain:
\begin{equation}\label{eq:regular-point} 
\dim_\rho R(\Gamma, \mathrm{SL}_n( \mathbf C)) \leq 
\dim T_\rho(R(\Gamma, \mathrm{SL}_n( \mathbf C)))\leq 
\dim Z^1(\Gamma ; \mathfrak{sl}_n( \mathbf C)_{\Ad\rho})\,.
\end{equation}

\begin{definition}\label{def:regular}
Let $\rho\co\Gamma\to\mathrm{SL}_n(\mathbf C)$ be a representation.
We say that  $\rho$ is a \emph{regular point} of
the representation variety if 
\[ \dim_\rho R(\Gamma, \mathrm{SL}_n( \mathbf C)) =
\dim Z^1(\Gamma ; \mathfrak{sl}_n( \mathbf C)_{\Ad\rho})\,.\] 
We call $\rho$ \emph{infinitesimal regular} if 
$\dim H^1(\Gamma ; \mathfrak{sl}_n( \mathbf C)_{\Ad\rho}) = n-1$.
\end{definition}
 
It follows directly
from \eqref{eq:regular-point} that a regular point is a smooth point of the
representation variety. 
There  are representations of discrete groups which are smooth
points of the representation variety without being regular, as the scheme
$\mathcal{R}( \Gamma, \mathrm{SL}_n( \mathbf C))$ may be non reduced.
(See
\cite[Example~2.10]{LM85} for more details.)

We also make use of the Poincar\'e-Lefschetz duality theorem with twisted coefficients:
let $M$ be a connected, orientable, compact $m$-dimensional manifold with boundary $\partial M$ and let
$\rho\co\pi_{1}(M)\to \mathrm{SL}_n(\mathbf C)$ be a representation. Then the cup-product and the Killing form 
$b\co\mathfrak{sl}_n(\mathbf C)\otimes\mathfrak{sl}_n(\mathbf C)\to\mathbf C$ 
 induce a non-degenerate bilinear pairing
\begin{multline}\label{eq:poincare} H^k(M ;  \mathfrak{sl}_n(\mathbf C)_{\Ad\rho})\otimes H^{m-k}(M,\partial M ; \mathfrak{sl}_n(\mathbf C)_{\Ad\rho})\xrightarrow{\smallsmile}\\
H^{m}(M,\partial M ; \mathfrak{sl}_n(\mathbf C)_{\Ad\rho}\otimes\mathfrak{sl}_n(\mathbf C)_{\Ad\rho})\xrightarrow{b}\\
H^{m}(M,\partial M ; \mathbf C)\cong \mathbf C
\end{multline}
and hence an isomorphism 
$$H^k(M ; \mathfrak{sl}_n(\mathbf C)_{\Ad\rho})\cong H^{m-k}(M,\partial M ; \mathfrak{sl}_n(\mathbf C)_{\Ad\rho})^*,$$ 
for all $0\leq k\leq m$.
See \cite{Johnson-Millson1987,Porti} for more details.

\begin{lemma}\label{lem:H1-Z+Z}
For any representation $\varrho\co\mathbf Z\oplus\mathbf Z\to\mathrm{SL}_n(\mathbf{C})$ we have:
\[ \dim H^1(\mathbf Z\oplus\mathbf Z ;\mathfrak{sl}_n(\mathbf C)_{\Ad \varrho}) \geq 2(n-1)\,.\]
In addition, $\dim H^1(\mathbf Z\oplus\mathbf Z,\mathfrak{sl}_n(\mathbf C))= 2(n -1)$
if and only if $\varrho$ is a regular point of 
 $R(\mathbf Z\oplus\mathbf Z,  \mathrm{SL}_n(\mathbf C))$.
\end{lemma}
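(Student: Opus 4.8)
The plan is to reduce the whole statement to the single quantity $\dim H^0(\mathbf Z\oplus\mathbf Z;\mathfrak g)$, where I abbreviate $\mathfrak g=\mathfrak{sl}_n(\mathbf C)_{\Ad\varrho}$. Since the $2$-torus $T^2$ is a $K(\mathbf Z\oplus\mathbf Z,1)$, I identify $H^i(\mathbf Z\oplus\mathbf Z;\mathfrak g)\cong H^i(T^2;\mathfrak g)$. The Killing form of $\mathfrak{sl}_n(\mathbf C)$ is non-degenerate and $\Ad$-invariant, so $\mathfrak g\cong\mathfrak g^*$ as $\Gamma$-modules; applying Poincar\'e duality on the closed orientable surface $T^2$ exactly as in \eqref{eq:poincare} (with $\partial T^2=\varnothing$) gives $H^2(T^2;\mathfrak g)\cong H^0(T^2;\mathfrak g^*)^*\cong H^0(T^2;\mathfrak g)^*$, hence $\dim H^2=\dim H^0$. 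Combining this with the vanishing of the Euler characteristic of $T^2$, which forces $\dim H^0-\dim H^1+\dim H^2=0$, yields
\[
\dim H^1(\mathbf Z\oplus\mathbf Z;\mathfrak g)=2\dim H^0(\mathbf Z\oplus\mathbf Z;\mathfrak g).
\]

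Next I bound $\dim H^0$. Write $\varrho(\mathbf Z\oplus\mathbf Z)=\langle P,Q\rangle$ with $P,Q\in\mathrm{SL}_n(\mathbf C)$ and $PQ=QP$, and let $Z\subseteq\mathrm{M}_n(\mathbf C)$ be the centralizer of $\{P,Q\}$. Since $\mathrm{M}_n(\mathbf C)=\mathfrak{sl}_n(\mathbf C)\oplus\mathbf C\cdot\mathrm{Id}$ as $\Ad$-modules and $\mathrm{Id}\in Z$, we get $H^0(\mathbf Z\oplus\mathbf Z;\mathfrak g)=Z\cap\mathfrak{sl}_n(\mathbf C)$ and $\dim H^0=\dim Z-1$. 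Now the centralizer of a commuting pair of matrices has dimension at least $n$: the function $(P,Q)\mapsto\dim Z$ is upper semicontinuous on the commuting variety $\{(P,Q)\in\mathrm{M}_n(\mathbf C)^2\mid PQ=QP\}$, which is irreducible by the Motzkin--Taussky theorem, and on the dense open locus where $P$ has $n$ distinct eigenvalues one has $Z=\mathbf C[P]$, of dimension $n$; as two dense open subsets of an irreducible variety intersect, $\dim Z\geq n$ everywhere. Hence $\dim H^0\geq n-1$, and $\dim H^1\geq 2(n-1)$, which is the first assertion.

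For the equality, note that $v\mapsto(\gamma\mapsto \Ad_{\varrho(\gamma)}v-v)$ identifies $B^1(\mathbf Z\oplus\mathbf Z;\mathfrak g)$ with $\mathfrak g/H^0$, so $\dim B^1=(n^2-1)-\dim H^0$ and hence
\[
\dim Z^1(\mathbf Z\oplus\mathbf Z;\mathfrak g)=\dim H^1+\dim B^1=(n^2-1)+\dim H^0.
\]
On the other hand $R(\mathbf Z\oplus\mathbf Z,\mathrm{SL}_n(\mathbf C))=\{(P,Q)\in\mathrm{SL}_n(\mathbf C)^2\mid PQ=QP\}$ is equidimensional of dimension $n^2+n-2$ at every point: this follows from the irreducibility, of dimension $n^2+n$, of the $\mathrm{GL}_n(\mathbf C)$-commuting variety, together with the reduction $\mathrm{GL}_n(\mathbf C)\cong(\mathbf C^*\times\mathrm{SL}_n(\mathbf C))/\mu_n$, which presents the $\mathrm{GL}_n$-commuting variety as a quotient of $(\mathbf C^*)^2\times R(\mathbf Z\oplus\mathbf Z,\mathrm{SL}_n(\mathbf C))$ by a finite group permuting the components of the second factor transitively, so that they all have the common dimension $n^2+n-2$. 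Therefore $\dim_\varrho R(\mathbf Z\oplus\mathbf Z,\mathrm{SL}_n(\mathbf C))=n^2+n-2$ independently of $\varrho$, and by Definition~\ref{def:regular} together with \eqref{eq:regular-point}, $\varrho$ is a regular point precisely when $\dim Z^1=n^2+n-2$, i.e.\ precisely when $\dim H^0=n-1$, i.e.\ precisely when $\dim H^1=2(n-1)$.

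The cohomological bookkeeping and the identity $\dim H^1=2\dim H^0$ are routine. The substantive input, and the step I expect to require the most care, consists of the two facts about commuting varieties invoked above: that a commuting pair of $n\times n$ matrices has an $n$-dimensional centralizer, and that $R(\mathbf Z\oplus\mathbf Z,\mathrm{SL}_n(\mathbf C))$ has local dimension $n^2+n-2$ at every point. Either these are quoted from the literature, or the first is proved by the semicontinuity argument sketched above and the second by the orbit/reduction analysis indicated.
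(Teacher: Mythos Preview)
Your argument is correct and follows the same overall architecture as the paper's proof: reduce everything to $\dim H^0$ via Poincar\'e duality and the Euler characteristic of $T^2$, bound $\dim H^0$ from below by semicontinuity on an irreducible commuting variety, and then compare $\dim Z^1=(n^2-1)+\dim H^0$ with the dimension of the representation variety.

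The difference lies only in the external input. The paper quotes Richardson's theorem twice: once to say that diagonal representations are dense in $R(\mathbf Z\oplus\mathbf Z,\mathrm{SL}_n(\mathbf C))$ (yielding $\dim H^0\geq n-1$ by semicontinuity), and once to say directly that this variety is irreducible of dimension $(n+2)(n-1)=n^2+n-2$. You instead invoke the Motzkin--Taussky theorem on the irreducibility of the commuting variety in $\mathrm{M}_n(\mathbf C)^2$: this gives the centralizer bound $\dim Z\geq n$ (your first use), and after restricting to invertible pairs and descending along the finite \'etale cover $(\mathbf C^*)^2\times R(\mathbf Z\oplus\mathbf Z,\mathrm{SL}_n(\mathbf C))\to R(\mathbf Z\oplus\mathbf Z,\mathrm{GL}_n(\mathbf C))$, the equidimensionality of the $\mathrm{SL}_n$-commuting variety. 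Your route is arguably more elementary, since Motzkin--Taussky has a short direct proof, at the cost of the extra reduction step; the paper's citation of Richardson is more direct but leans on a deeper result about semisimple groups.

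One small remark on the reduction step: what you actually need is that $R(\mathbf Z\oplus\mathbf Z,\mathrm{SL}_n(\mathbf C))$ is equidimensional of dimension $n^2+n-2$, and this follows immediately from the cover being finite and flat over an irreducible base of that dimension. The transitivity of the $\mu_n^2$-action on components is true, but it is a consequence of equidimensionality (every component surjects onto the base, so a generic fibre, which is a single $\mu_n^2$-orbit of smooth points, meets every component) rather than the reason for it; you could streamline by omitting that clause.
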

Recall that a function $\phi\co R(\Gamma,\mathrm{SL}_n(\mathbf{C}))\to \mathbf Z$ is 
called \emph{upper semi-continuous} if for all $k\in\mathbf Z$ the set 
 $\phi^{-1}\big([k,\infty)\big)$ is closed.
Moreover, it is easy to prove that for $q=0,1$ the function
$\rho \mapsto \dim H^q(\Gamma;\mathfrak{sl}_n(\mathbf C)_{\Ad \rho})$ 
is upper semi-continuous  (see \cite[Lemma~3.2]{Heusener-Porti2011}, 
this is a particular case of the semi-continuity theorem \cite[Ch.~III, Theorem 12.8]{Hartshorne1977}).
\begin{proof}[Proof of Lemma~\ref{lem:H1-Z+Z}]
Poincar\'e duality and Euler characteristic give
\[
\frac 1 2 \dim H^1(\mathbf Z\oplus\mathbf Z;\mathfrak{sl}_n(\mathbf C)_{\Ad \varrho}) =  
\dim H^0(\mathbf Z\oplus\mathbf Z;\mathfrak{sl}_n(\mathbf C)_{\Ad \varrho})
=  \dim \mathfrak{sl}_n(\mathbf C)^{\mathbf Z\oplus\mathbf Z}\,.
\]
By a result of Richardson \cite[Thm.~C]{Richardson1979},  every representation of 
$\mathbf Z\oplus\mathbf Z$ into $\mathrm{SL}_n(\mathbf C)$ is a limit of diagonal representations,
and  for diagonal representations $\dim \mathfrak{sl}_n(\mathbf C)^{\mathbf Z\oplus\mathbf Z}\geq n-1$.
The upper semi-continuity  of the function
$\varrho \mapsto \dim H^0(\mathbf Z\oplus\mathbf Z;\mathfrak{sl}_n(\mathbf C)_{\Ad \varrho})$ gives the inequality in general.

For the second statement, in the same paper 
Richardson \cite[Thm.~C]{Richardson1979} also proved that the 
representation variety $R(\mathbf Z\oplus\mathbf Z,  \mathrm{SL}_n(\mathbf C))$ is an irreducible algebraic variety of dimension 
$(n+2)(n-1)$. It follows that $\varrho\in R(\mathbf Z\oplus\mathbf Z , \mathrm{SL}_n(\mathbf C))$ is a regular point iff
$\dim Z^1(\mathbf Z\oplus\mathbf Z,\mathfrak{sl}_n(\mathbf C))= (n+2)(n-1)$.

On the other hand, 
\begin{align*}
 \dim Z^1(\mathbf Z\oplus\mathbf Z,\mathfrak{sl}_n(\mathbf C)) &=
\dim H^1(\mathbf Z\oplus\mathbf Z,\mathfrak{sl}_n(\mathbf C)) + \dim B^1(\mathbf Z\oplus\mathbf Z,\mathfrak{sl}_n(\mathbf C)) ;\\
 \dim B^1(\mathbf Z\oplus\mathbf Z,\mathfrak{sl}_n(\mathbf C)) &= n^2-1 - 
 \dim H^0(\mathbf Z\oplus\mathbf Z,\mathfrak{sl}_n(\mathbf C)) ;\\ 
\dim H^0(\mathbf Z\oplus\mathbf Z,\mathfrak{sl}_n(\mathbf C)) &= 
\frac 1 2 \dim H^1(\mathbf Z\oplus\mathbf Z,\mathfrak{sl}_n(\mathbf C))\,.
\end{align*}
Hence 
\[ \dim Z^1(\mathbf Z\oplus\mathbf Z,\mathfrak{sl}_n(\mathbf C))=
\frac 1 2 \dim H^1(\mathbf Z\oplus\mathbf Z,\mathfrak{sl}_n(\mathbf C)) + n^2 -1\,.\]
Thus the lemma follows. (See also \cite{Popov2008}.)
 \end{proof}

%
We will require the following proposition \cite[Proposition 3.3]{HM14}:
\begin{prop}[{\cite{HM14}}]
 \label{prop:MichaelOuardia}
 Let $\alpha\in R(\Gamma, \mathrm{SL}_a(\mathbf C))$ be an infinitesimally regular representation.
 Then $\alpha$ is a regular point of the $\mathrm{SL}_a(\mathbf C)$-representation variety $ R(\Gamma, \mathrm{SL}_a(\mathbf C))$ and it belongs to a unique component of dimension 
 $a^2+a-2-\dim H^0(\Gamma;\mathfrak{sl}_a(\mathbf C) )$.
 \end{prop}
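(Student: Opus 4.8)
The plan is to verify the two claims of Proposition~\ref{prop:MichaelOuardia} — that $\alpha$ is a regular point and that the unique component through $\alpha$ has dimension $a^2+a-2-\dim H^0(\Gamma;\mathfrak{sl}_a(\mathbf C))$ — by combining the infinitesimal regularity hypothesis with the standard dimension estimates of Weil's construction, together with a lower bound on the local dimension coming from the orbit of $\alpha$ under conjugation and a ``vanishing of obstructions'' argument. First I would compute the dimension of the Zariski tangent space: by Weil's identification $T_\alpha(\mathcal R(\Gamma,\mathrm{SL}_a(\mathbf C)))\cong Z^1(\Gamma;\mathfrak{sl}_a(\mathbf C)_{\Ad\alpha})$, and the exact sequence $0\to B^1\to Z^1\to H^1\to 0$ gives
\[
\dim Z^1(\Gamma;\mathfrak{sl}_a(\mathbf C)_{\Ad\alpha})
= \dim B^1 + \dim H^1
= \big(a^2-1-\dim H^0(\Gamma;\mathfrak{sl}_a(\mathbf C))\big) + (a-1),
\]
where I used $\dim B^1 = \dim\mathfrak{sl}_a(\mathbf C) - \dim H^0 = (a^2-1)-\dim H^0$ and the infinitesimal regularity hypothesis $\dim H^1 = a-1$. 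This rearranges to $a^2+a-2-\dim H^0(\Gamma;\mathfrak{sl}_a(\mathbf C))$, which is exactly the claimed dimension; so by~\eqref{eq:regular-point} it suffices to show that the local dimension $\dim_\alpha R(\Gamma,\mathrm{SL}_a(\mathbf C))$ attains this upper bound.

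For the matching lower bound, I would argue that all obstructions to integrating a cocycle in $Z^1(\Gamma;\mathfrak{sl}_a(\mathbf C)_{\Ad\alpha})$ vanish. The obstructions live in $H^2(\Gamma;\mathfrak{sl}_a(\mathbf C)_{\Ad\alpha})$ (Goldman's theory, as invoked in the outline of Theorem~\ref{thm:suf}), and since $X$ has the homotopy type of a $2$-complex with one boundary torus, Poincar\'e–Lefschetz duality~\eqref{eq:poincare} together with the half-lives-half-dies principle gives $\dim H^2(\Gamma;\mathfrak{sl}_a(\mathbf C)_{\Ad\alpha})$ controlled by the peripheral cohomology; an Euler-characteristic count ($\chi(X)=0$, so $\dim H^0-\dim H^1+\dim H^2=0$) together with $\dim H^0=\dim H^2$ (duality for the closed-up manifold, or directly from the long exact sequence of the pair and the torus boundary) and $\dim H^1=a-1$ forces $\dim H^2 = \dim H^0$. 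One then checks that the image of each obstruction class actually vanishes: for $\alpha$ irreducible this follows because the relevant cup-product pairing with $H^0$ detects the obstruction and the second obstruction is a cup-square which one shows lies in the image of a coboundary; more efficiently, one cites that a smooth point of expected dimension is forced once $\dim_\alpha R \ge \dim Z^1$, and the orbit map plus the quadratic cone argument of \cite{Goldman1984} (quadratic cone $=$ Zariski tangent space when $H^2$ obstructions vanish) supplies this. Thus $\dim_\alpha R(\Gamma,\mathrm{SL}_a(\mathbf C)) = \dim Z^1(\Gamma;\mathfrak{sl}_a(\mathbf C)_{\Ad\alpha})$, i.e.\ $\alpha$ is a regular point.

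Finally, uniqueness of the component: a regular point lies on a single irreducible component, since near a regular (hence smooth) point the analytic germ of $R(\Gamma,\mathrm{SL}_a(\mathbf C))$ is irreducible of dimension equal to $\dim Z^1$; two distinct components through $\alpha$ would make the local ring non-regular, contradicting $\dim_\alpha R = \dim T_\alpha R$. This gives the stated unique component of dimension $a^2+a-2-\dim H^0(\Gamma;\mathfrak{sl}_a(\mathbf C))$. The main obstacle I anticipate is the obstruction-vanishing step: showing rigorously that the quadratic (and higher) obstructions in $H^2$ vanish for an arbitrary infinitesimally regular — but not necessarily irreducible in the ambient $\mathrm{SL}_a$ sense, though here it is irreducible — representation, rather than merely bounding $\dim H^2$; this is precisely where one must appeal to \cite{HM14}'s smoothness input or to an explicit cup-product computation, and it is the technical heart of the argument.
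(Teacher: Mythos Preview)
The paper does not prove this proposition; it is quoted as \cite[Proposition~3.3]{HM14} and used as a black box. So there is no in-paper argument to compare against directly. That said, your overall architecture is the right one --- compute $\dim Z^1$ from $\dim H^1=a-1$ and $\dim B^1=(a^2-1)-\dim H^0$, then show the local dimension attains this upper bound --- but the obstruction step contains an actual error and omits the key mechanism.

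The assertion $\dim H^0=\dim H^2$ is wrong. Since $\chi(X)=0$ and $X$ has the homotopy type of a $2$-complex, $\dim H^2(\Gamma;\mathfrak{sl}_a)=\dim H^1-\dim H^0=(a-1)-\dim H^0$; for irreducible $\alpha$ (the case of interest, Remark~\ref{rem:irreducibleH0}) this gives $\dim H^0=0$ and $\dim H^2=a-1>0$. So $H^2$ is genuinely nonzero and no dimension count alone kills the obstructions. The argument of \cite{HM14} (which this paper later reuses in spirit, e.g.\ in Lemma~\ref{lemma:p+smooth}) goes through the boundary: from $\dim H^1(\Gamma;\mathfrak{sl}_a)=a-1$ together with \eqref{eqn:dimH^1partialX} and Lemma~\ref{lem:H1-Z+Z} one gets $\dim H^1(\partial X;\mathfrak{sl}_a)=2(a-1)$, so the restriction $\alpha\vert_{\pi_1(\partial X)}$ is a \emph{regular} point of $R(\mathbf Z\oplus\mathbf Z,\mathrm{SL}_a(\mathbf C))$ and all of its Goldman obstructions vanish. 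A Poincar\'e-duality chase on the long exact sequence of $(X,\partial X)$ then shows that $H^2(X;\mathfrak{sl}_a)\to H^2(\partial X;\mathfrak{sl}_a)$ is injective (the map $H^1(\partial X)\to H^2(X,\partial X)\cong H^1(X)^*$ is onto for rank reasons). By naturality of the obstruction classes, the obstructions for $\alpha$ itself therefore vanish in $H^2(X;\mathfrak{sl}_a)$, and Artin's theorem upgrades formal smoothness to actual smoothness. You correctly flag the obstruction step as ``the technical heart'', but the duality/Euler-characteristic remarks you offer in its place do not supply it; the boundary-injectivity plus peripheral-regularity argument is the missing idea.
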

\begin{remark}
 \label{rem:irreducibleH0}
For an irreducible representation  $\alpha\co\Gamma\to \mathrm{SL}_a(\mathbf C)$,  it holds that
$
H^0(\Gamma ; \mathfrak{sl}_a(\mathbf{C})_{\Ad\alpha}))=0
$. 
 Indeed, if $X\in\mathfrak{sl}_a(\mathbf{C})$  commutes with  $\alpha(\gamma)$ for all 
 $\gamma\in\Gamma$, then Schur's lemma implies that $X$ is a scalar matrix and hence $X=0$.
 \end{remark}
As a corollary we obtain from Proposition~\ref{prop:MichaelOuardia} and Remark~\ref{rem:irreducibleH0}: 
\begin{cor}
\label{cor:infinitesimalregularity}
If an \emph{irreducible} representation  $\alpha\co\Gamma\to \mathrm{SL}_a(\mathbf C)$ is infinitesimally regular then it is a regular point of $R(\Gamma,\mathrm{SL}_a(\mathbf C))$
of local dimension $a^2+a-2$.
\end{cor}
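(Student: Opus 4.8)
The plan is to read this off directly from Proposition~\ref{prop:MichaelOuardia} together with Remark~\ref{rem:irreducibleH0}, so the argument is essentially one of bookkeeping. Since $\alpha$ is assumed infinitesimally regular, Proposition~\ref{prop:MichaelOuardia} applies verbatim: it tells us that $\alpha$ is a regular point of $R(\Gamma,\mathrm{SL}_a(\mathbf C))$ and that $\alpha$ lies on a \emph{unique} irreducible component, whose dimension is $a^2+a-2-\dim H^0(\Gamma;\mathfrak{sl}_a(\mathbf C)_{\Ad\alpha})$.

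It then remains only to evaluate the correction term $\dim H^0(\Gamma;\mathfrak{sl}_a(\mathbf C)_{\Ad\alpha})$, and this is exactly where the irreducibility hypothesis enters. By Remark~\ref{rem:irreducibleH0}, any $X\in\mathfrak{sl}_a(\mathbf C)$ fixed by the adjoint action of $\alpha(\Gamma)$ commutes with every $\alpha(\gamma)$; Schur's lemma forces $X$ to be a scalar matrix, and tracelessness forces $X=0$. Hence $H^0(\Gamma;\mathfrak{sl}_a(\mathbf C)_{\Ad\alpha})=0$, and the dimension of the component through $\alpha$ is $a^2+a-2$.

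Finally, because $\alpha$ is a regular point (in the sense of Definition~\ref{def:regular}) of this unique component, its local dimension $\dim_\alpha R(\Gamma,\mathrm{SL}_a(\mathbf C))$ coincides with that component's dimension, namely $a^2+a-2$, which completes the proof. I do not expect any genuine obstacle here: the only point requiring a moment's care is that "regular point" has the precise meaning of Definition~\ref{def:regular} (equality of local dimension with $\dim Z^1$), and this is precisely the conclusion supplied by Proposition~\ref{prop:MichaelOuardia}, so no additional estimate beyond \eqref{eq:regular-point} is needed.
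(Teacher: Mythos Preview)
Your proof is correct and follows exactly the paper's approach: the corollary is stated immediately after Proposition~\ref{prop:MichaelOuardia} and Remark~\ref{rem:irreducibleH0} as a direct consequence of the two, with no further argument given. Your bookkeeping is precisely the intended one.
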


One has furthermore:

\begin{prop}
\label{prop:infinitesimalregularitycharacter}
If an irreducible representation  $\alpha\co\Gamma\to \mathrm{SL}_a(\mathbf C)$ is infinitesimally regular, then its character is a  {smooth} point of 
$X(\Gamma,\mathrm{SL}_a(\mathbf C))$ of local dimension $a-1$.
\end{prop}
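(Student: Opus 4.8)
The plan is to deduce this from Corollary~\ref{cor:infinitesimalregularity} together with an analysis of the quotient map $R(\Gamma,\mathrm{SL}_a(\mathbf C))\to X(\Gamma,\mathrm{SL}_a(\mathbf C))$ near $\alpha$. By Corollary~\ref{cor:infinitesimalregularity}, $\alpha$ is a regular (hence smooth) point of $R(\Gamma,\mathrm{SL}_a(\mathbf C))$ lying on a unique component $R_0$ of dimension $a^2+a-2$. Since $\alpha$ is irreducible, its $\mathrm{SL}_a(\mathbf C)$-orbit $\cO_\alpha$ is closed (closed orbits correspond to semisimple representations) and has trivial stabilizer modulo the centre: the stabilizer of $\alpha$ is $\{M\in\mathrm{SL}_a(\mathbf C)\mid M\alpha(\gamma)M^{-1}=\alpha(\gamma)\ \forall\gamma\}$, which by Schur's lemma is the finite centre $Z(\mathrm{SL}_a(\mathbf C))$ of order $a$ (cf.\ Remark~\ref{rem:irreducibleH0}, which records the infinitesimal version $H^0(\Gamma;\mathfrak{sl}_a(\mathbf C)_{\Ad\alpha})=0$). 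Hence $\dim\cO_\alpha=\dim\mathrm{SL}_a(\mathbf C)=a^2-1$, and the tangent space to the orbit is exactly $B^1(\Gamma;\mathfrak{sl}_a(\mathbf C)_{\Ad\alpha})$, of dimension $a^2-1$.

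Next I would use that the set of irreducible representations is Zariski open in $R(\Gamma,\mathrm{SL}_a(\mathbf C))$, so $\alpha$ has a neighbourhood $U$ consisting of irreducible representations, all of which lie on $R_0$ and all of which have closed orbits with finite stabilizer. On this locus the quotient map $\pi\co R_0\to X(\Gamma,\mathrm{SL}_a(\mathbf C))$ restricted to $U$ is a geometric quotient: its fibres are single orbits, each of dimension $a^2-1$. Therefore $\pi(U)$ is a neighbourhood of the character $\chi_\alpha$ in $X(\Gamma,\mathrm{SL}_a(\mathbf C))$ of dimension $(a^2+a-2)-(a^2-1)=a-1$. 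To get \emph{smoothness} of $\chi_\alpha$, not merely the dimension count, I would invoke Luna's slice theorem (already used elsewhere in the paper, cf.\ the remarks around Theorem~\ref{thm:str}): since $\alpha$ is a smooth point of $R_0$ with closed orbit and reductive (finite) stabilizer $Z$, étale-locally near $\chi_\alpha$ the character variety is modelled on $(T_\alpha R_0/T_\alpha\cO_\alpha)/Z$, and $T_\alpha R_0/T_\alpha\cO_\alpha\cong H^1(\Gamma;\mathfrak{sl}_a(\mathbf C)_{\Ad\alpha})\cong\mathbf C^{a-1}$ is a smooth slice on which the finite group $Z$ acts. The key point making the quotient smooth is that $Z$ is contained in the centre of $\mathrm{SL}_a(\mathbf C)$, so it acts trivially by conjugation on the slice; hence the local model is simply $\mathbf C^{a-1}$, and $\chi_\alpha$ is a smooth point of local dimension $a-1$.

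The main obstacle is the passage from the orbit-dimension bookkeeping to genuine smoothness of the character variety at $\chi_\alpha$: one must be careful that the local model supplied by Luna's slice theorem is the \emph{reduced} character variety and that the finite stabilizer acts trivially on the slice. This is exactly where irreducibility of $\alpha$ is essential — it forces the stabilizer into the centre and thereby kills any potential quotient singularity — and it is the one spot where a reader could worry about scheme-theoretic subtleties (non-reducedness of $\mathcal R$, identification of the slice with $H^1$). Everything else — closedness of the orbit, openness of the irreducible locus, the dimension formula from Corollary~\ref{cor:infinitesimalregularity}, and the computation $\dim B^1=a^2-1$ — is routine given the results already established in this section.
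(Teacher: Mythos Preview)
Your proposal is correct and follows essentially the same route as the paper: both start from Corollary~\ref{cor:infinitesimalregularity}, compute the orbit/fibre dimension $a^2-1$ via Schur's lemma, and invoke Luna's slice theorem to conclude. The only difference in emphasis is that the paper argues by a sandwich --- upper semi-continuity of fibre dimensions gives $\dim_{\chi_\alpha}X\geq a-1$, while Luna's slice bounds $\dim T_{\chi_\alpha}X\leq \dim H^1=a-1$, forcing equality and hence smoothness --- whereas you use Luna's slice more explicitly to identify the local model with $\mathbf C^{a-1}$ after observing that the central stabilizer acts trivially on the slice; both arguments are valid and rest on the same ingredients.
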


\begin{proof}
  By Corollary~\ref{cor:infinitesimalregularity}, $\alpha$ is a regular point of  $R(\Gamma,\mathrm{SL}_a(\mathbf C))$ of local dimension $a^2+a-2$. 
As $\alpha$ is irreducible, the fibre of the projection $R(\Gamma, \mathrm{SL}_a(\mathbf C))\to X(\Gamma,  \mathrm{SL}_a(\mathbf C))$ at $\alpha$ has dimension
  $a^2-1$. The dimension of this fibre is an upper semi-continuous function, therefore 
  the dimension of $X(\Gamma,\mathrm{SL}_a(\mathbf C))$ at $\alpha$ is at least $a-1$. On the other hand,
  the dimension of the Zariski tangent space of  $X(\Gamma,\mathrm{SL}_a(\mathbf C))$ at $\alpha$ is at most $\dim H^1(\Gamma; \mathfrak{sl}_a(\mathbf C)_{\Ad\alpha} )$
  (this follows from Luna's slice as $\alpha$ is irreducible, cf.~\cite[Theorem~2.15]{LM85}). Hence we have equality of dimensions and the proposition follows. 
\end{proof}

 \section{Twisted cohomology and twisted polynomials}
 \label{sec:twisted-cohomology}
 
 In this section we prove that $\alpha\otimes\beta^*$ and $\beta\otimes\alpha^*$
are completely reducible representations, so that the duality theorem (Thm.~\ref{thm:duality}) applies to them. {Our assumption that $\alpha\co\Gamma\to\mathrm{SL}_a(\mathbf C)$ and
$\beta\co\Gamma\to\mathrm{SL}_b(\mathbf C)$ are irreducible will be crucial for the conclusion.}

 \paragraph{Decomposition of $\mathfrak{sl}_n(\mathbf C)$.}
Consider the action of $\Gamma$ on the space of matrices with $a$ rows and $b$
columns $M_{a\times b}(\mathbf C)$:
\begin{equation}\label{eq:M^+}
\begin{array}{ccl}
 \Gamma\times M_{a\times b}(\mathbf C) & \to & M_{a\times b}(\mathbf C) \\[1ex]
     (\gamma, A) & \mapsto & \lambda^{n\varphi(\gamma)}\alpha(\gamma) A
\beta(\gamma^{-1})\,.
\end{array}
\end{equation}
The corresponding $\Gamma$-module is denoted by:
$$
\MM^+_{\lambda^n}= M_{a\times b}(\mathbf
C)_{\alpha\otimes\beta^*\otimes\lambda^{n\varphi}}.
$$
Similarly, we consider the module
$$
\MM^-_{\lambda^{-n}}= M_{b\times a}(\mathbf
C)_{\beta\otimes\alpha^*\otimes\lambda^{-n\varphi}} .
$$ 
Notice that those modules occur as factors in the decomposition of
$\mathfrak{sl}_n(\mathbf C)$ as $\Gamma$-modules via the adjoint action
$\Ad\rho_\lambda$:
$$
\mathfrak{sl}_n(\mathbf C)_{\Ad\rho_{\lambda}}= \mathfrak{sl}_a(\mathbf
C)_{\Ad\alpha}\oplus \mathfrak{sl}_b(\mathbf C)_{\Ad\beta}\oplus \mathbf C 
\oplus  \MM^+_{\lambda^{n}}\oplus  
\MM^-_{\lambda^{-n}}.
$$
This can be visualized as
$$
\mathfrak{sl}_n(\mathbf C)_{\Ad\rho_\lambda} \cong
\begin{pmatrix}
  \mathfrak{sl}_a(\mathbf C)_{\Ad\alpha} & \MM^+_{\lambda^{n}} \\
   \MM^-_{\lambda^{-n}} & \mathfrak{sl}_b(\mathbf C)_{\Ad\beta}
\end{pmatrix} \oplus 
\mathbf C 
\begin{pmatrix}
    b \operatorname{Id}_a & 0 \\
    0 & -a \operatorname{Id}_b
\end{pmatrix}\,.
$$

\paragraph{Duality.} For every $\lambda\in\mathbf C^*$ we have a non-degenerate bilinear form:
\begin{equation}
\label{eqn:bilinearform}
\begin{array}{rcl}
  \Psi: \MM^+_{\lambda^{n}}\times \MM^-_{\lambda^{-n}} & \to & \mathbf C \\
   (A,B) &  \mapsto & \tr (AB)
   \end{array}
\end{equation}
which is $\Gamma$-invariant:
$
\Psi(A,B)=\Psi(\gamma A,\gamma B)
$, $\forall\gamma\in \Gamma$.
As an immediate consequence, we have Poincar\' e and Kronecker dualities:
\begin{eqnarray}
H_i(X;\MM^{\pm}_{\lambda^{\pm n}}) & \cong  & 
H_{3-i}(X,\partial X;\MM^{\mp}_{\lambda^{\mp n}})^*; \\
H^i(X;\MM^{\pm}_{\lambda^{\pm n}}) & \cong  & H^{3-i}(X,\partial X;\MM^{\mp}_{\lambda^{\mp n}})^*; \\
H_i(X;\MM^{\pm}_{\lambda^{\pm n}}) & \cong  & H^{i}(X;\MM^{\mp}_{\lambda^{\mp n}})^*. \label{eqn:Kronecker}
 \end{eqnarray}

  The $i$-th twisted Alexander polynomials of the $\Gamma$-modules $\MM^{\mp}_1$ 
  are denoted $\Delta^{\pm}_{i}$. Namely
  \begin{equation*}
   \Delta^+_i  =  \Delta^{\alpha\otimes\beta^*}_i \quad\text{ and }\quad
   \Delta^-_i  =  \Delta^{\beta\otimes\alpha^*}_i \, .
  \end{equation*}
 Taking $\rho=\alpha\otimes\beta^*$, then $\rho^*=\beta\otimes\alpha^*$ by \eqref{eqn:bilinearform}.
In order to apply Theorem~\ref{thm:duality} to those polynomials, we need to show 
that $\rho=\alpha\otimes\beta^*$ is completely reducible; this motivates 
the next paragraph.

\paragraph{Linear algebraic groups.} 
We follow  Humphreys' book \cite{Humphreys1975} as   general reference for linear algebraic groups.
A linear algebraic group $G$ contains a unique largest normal solvable subgroup, which is automatically closed.
Its identity component is then the largest connected normal solvable subgroup of $G$; it is called the \emph{radical} of $G$, 
denoted by $R(G)$. The subgroup of unipotent elements in $R(G)$ is normal in both $R(G)$ and $G$; 
it is called the \emph{unipotent radical} of $G$, denoted by $R_u(G)$. We have that 
$R(G)/R_u(G)$ is a torus. Hence $R(G)$ is a torus if and only if $R_u(G)$ is trivial.

Recall that a representation  $\rho\co\Gamma\to\mathrm{SL}(V)$ is called completely reducible if it is a direct sum of 
irreducible representations, see Definition~\ref{def:irreductible}.
In what follows we will make use of the following result of M.~Nagata:
\begin{thm}[Nagata {\cite[Thm.~3]{Nagata1961}}]\label{thm:Nagata}
Let $G\subset \mathrm{GL}_n(\mathbf C)$ be an algebraic group. 
Then $R_u(G)$ is trivial
if and only if each rational representation of $G$ is completely reducible.\hfill$\Box$
\end{thm}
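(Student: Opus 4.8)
The plan is to prove the two implications of the equivalence separately: the forward implication (linear reducibility forces $R_u(G)=1$) via Kolchin's fixed-vector theorem, and the converse via the ``unitarian trick'' together with an averaging argument over the component group $G/G^\circ$.

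For the forward implication, suppose every rational representation of $G$ is completely reducible, and use the given faithful representation on $V=\mathbf C^n$. Write $U=R_u(G)$ and assume for contradiction that $U\neq 1$. Since $U$ is a unipotent group, Kolchin's theorem gives a nonzero $U$-fixed vector in every nonzero $U$-module, so $V^U\neq 0$; and since $U$ is normal in $G$, the subspace $V^U$ is $G$-invariant. By hypothesis $V=V^U\oplus W$ for some $G$-submodule $W$. If $W\neq 0$ then Kolchin again forces $0\neq W^U\subseteq W\cap V^U=0$, a contradiction; hence $W=0$, so $V=V^U$ and $U$ acts trivially on $V$, contradicting faithfulness. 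Therefore $U=1$.

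For the converse, assume $R_u(G)=1$. First I would reduce to the connected case: the component group $G/G^\circ$ is finite and $R_u(G)=R_u(G^\circ)$, so it suffices to show that if $G^\circ$ is linearly reductive then so is $G$. Given a $G$-module $V$ and a $G$-submodule $W$, pick a $G^\circ$-equivariant projection $p\co V\to W$ (it exists by linear reductivity of $G^\circ$) and form $\bar p=\frac{1}{|G/G^\circ|}\sum_{\bar g}\, g\, p\, g^{-1}$, the sum over coset representatives; this is well defined (changing a lift by an element of $G^\circ$ does not alter the corresponding summand, since $p$ is $G^\circ$-equivariant), it is again a projection onto $W$, and it is $G$-equivariant, so $V=W\oplus\Ker\bar p$ as $G$-modules. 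Division by $|G/G^\circ|$ is legitimate in characteristic zero. Now let $G$ be connected; then $R_u(G)=1$ means precisely that $G$ is reductive, and over $\mathbf C$ I would invoke the unitarian trick. The group $G$ contains a maximal compact subgroup $K$ which is Zariski dense in $G$. Given a rational representation on $V$ and a $G$-submodule $W$, average an arbitrary Hermitian inner product on $V$ over $K$ with respect to Haar measure to obtain a $K$-invariant one; then the orthogonal complement $W^{\perp}$ is $K$-invariant, and because the stabiliser of a subspace in $\mathrm{GL}(V)$ is Zariski closed while $K$ is Zariski dense, $W^{\perp}$ is in fact $G$-invariant. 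Thus $V=W\oplus W^{\perp}$ as $G$-modules, which proves complete reducibility.

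The main obstacle is the structural input underlying the unitarian trick, namely the existence of a compact real form $K\subset G$ for a connected reductive complex group together with its Zariski density in $G$; this is exactly the point where reductivity is used in an essential (non-formal) way. An alternative route that avoids compact groups would replace this step by a Lie-algebraic argument: a reductive Lie algebra is the direct sum of its centre, which is a torus and hence acts semisimply in every rational representation, and a semisimple ideal, for which Whitehead's first lemma ($H^1(\g,M)=0$ for all finite-dimensional $M$, proved via the Casimir element) yields complete reducibility of all finite-dimensional modules; one then transfers this to $G$-modules using the characteristic-zero dictionary between $G$-submodules and $\g$-submodules for connected $G$. Either way some nontrivial structure theory of reductive groups is unavoidable, whereas the remaining ingredients (Kolchin's theorem and the two averaging steps) are elementary.
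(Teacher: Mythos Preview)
The paper does not prove this theorem; it is quoted from Nagata's 1961 paper as a black box, with the $\Box$ symbol placed immediately after the statement to indicate that no proof is given. So there is no ``paper's own proof'' to compare against.

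Your outline is correct as a proof over $\mathbf C$. The forward direction via Kolchin's fixed-vector theorem is clean and complete: the key observations that $V^U$ is $G$-invariant (by normality of $U$) and that a nonzero $G$-complement $W$ would again have $W^U\neq 0$ are exactly right. For the converse, the reduction to $G^\circ$ is sound (you correctly use $R_u(G)=R_u(G^\circ)$, which holds because $R_u(G)$ is connected and normal, hence lies in $G^\circ$), and the Maschke-type averaging over the finite group $G/G^\circ$ is carried out correctly, including the check that the average is independent of coset representatives. The connected reductive case via the unitarian trick is the standard route over $\mathbf C$, and you are honest that the existence and Zariski density of a compact real form is the genuine structural input. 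Your alternative Lie-algebraic route (centre acts semisimply because tori are diagonalisable in rational representations; semisimple part via Whitehead's first lemma and the Casimir element) is also a valid outline and is closer in spirit to Nagata's original purely algebraic argument, which works in arbitrary characteristic zero without transcendental methods.
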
 
Here a representation $\rho\co G\to\mathrm{GL}(V)$  is called \emph{rational} if with respect to a basis of $V$ the matrix entries of $\rho(g)$ are 
{polynomial functions in the $n^2+1$ coordinate 
functions $x_{ij}$, $1\leq i,j\leq n$ and $1/\det$ of $\mathrm{GL}_n(\mathbf C)$.}

\begin{remark}
A non-trivial, connected algebraic group $G$ is called \emph{reductive} if $R_u(G)$ is trivial.
Since the Zariski closure of a matrix group  is in general not connected we will avoid the term reductive in what follows.  
\end{remark}

\begin{lemma}
\label{lem:reductive1}
Let $\Gamma$ be a group and let $\rho\co\Gamma\to \mathrm{SL}_n(\mathbf C)$ be an irreducible representation.
Then the unipotent radical $R_u(G)$ of the Zariski closure 
$G$ of $\rho(\Gamma)\subset \mathrm{SL}_n(\mathbf C)$ is trivial. 
\end{lemma}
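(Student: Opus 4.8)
The plan is to argue by contradiction: suppose $R_u(G)$ is non-trivial, and use the non-trivial unipotent elements to produce a proper $\rho(\Gamma)$-invariant subspace of $\mathbf C^n$, contradicting irreducibility. The key observation is that $R_u(G)$ is a normal subgroup of $G$, and $G$ is (by definition) the Zariski closure of $\rho(\Gamma)$, so $\rho(\Gamma)$ normalizes $R_u(G)$. Hence the fixed-point subspace $W = (\mathbf C^n)^{R_u(G)} = \{ v \in \mathbf C^n \mid u\,v = v \text{ for all } u \in R_u(G)\}$ is invariant under $\rho(\Gamma)$: indeed, if $v \in W$ and $\gamma \in \Gamma$, then for any $u \in R_u(G)$ we have $u\,\rho(\gamma)\,v = \rho(\gamma)\,(\rho(\gamma)^{-1} u \rho(\gamma))\,v = \rho(\gamma)\,v$, because $\rho(\gamma)^{-1} u \rho(\gamma) \in R_u(G)$ again fixes $v$.

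Next I would invoke the standard fact that a non-trivial group of unipotent matrices acting on a non-zero vector space has a non-zero common fixed vector (this is the Lie--Kolchin theorem for unipotent groups, or Engel's theorem at the Lie-algebra level; one can also see it directly since a single unipotent $u \neq 1$ has $1$ as its only eigenvalue, so $\Ker(u - \mathrm{Id}) \neq 0$, and for a group one intersects these subspaces using that they are permuted appropriately). Therefore, if $R_u(G) \neq \{1\}$, then $W \neq 0$. On the other hand $W \neq \mathbf C^n$, since otherwise $R_u(G)$ would act trivially on $\mathbf C^n$, i.e.\ $R_u(G) = \{1\}$. Thus $W$ is a proper non-zero $\rho(\Gamma)$-invariant subspace, contradicting the irreducibility of $\rho$. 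Hence $R_u(G) = \{1\}$, as claimed.

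The only slightly delicate point is the claim that $W \neq 0$ when $R_u(G)$ is non-trivial, i.e.\ that a non-trivial (possibly infinite, but algebraic) group of unipotent matrices has a non-zero common fixed vector; this is where one uses that unipotent algebraic groups are connected and can be conjugated into the strictly upper-triangular matrices (Lie--Kolchin / Kolchin's theorem, see \cite[\S17]{Humphreys1975}), which immediately exhibits $e_1$ as a common fixed vector. Everything else is the routine normality bookkeeping above. Note the lemma is exactly the hypothesis needed to apply Nagata's theorem (Theorem~\ref{thm:Nagata}) to $G$, which is how it will be used to deduce complete reducibility of $\alpha\otimes\beta^*$.
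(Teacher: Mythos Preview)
Your proof is correct and follows essentially the same route as the paper's: both argue by contradiction, invoke the fact that a non-trivial unipotent subgroup of $\mathrm{GL}_n(\mathbf C)$ has a non-zero common fixed vector (the paper cites \cite[17.5]{Humphreys1975}), and use normality of $R_u(G)$ in $G$ to conclude that the fixed-point subspace is $\rho(\Gamma)$-invariant. Your version is slightly more detailed in spelling out the normality computation and the observation that $W\neq\mathbf C^n$, but the argument is the same.
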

\begin{proof}
Suppose that  $R_u(G)\subset \mathrm{SL}_n(\mathbf C)$ is non-trivial.
Every unipotent subgroup of $\mathrm{GL}_n(\mathbf C)$ has a nonzero vector fixed by all elements of the group (see \cite[17.5]{Humphreys1975}).
 Then  the subspace $W\subset \mathbf C^n$ of fixed vectors of $R_u(G)$ is nonzero. 
 By normality, this subspace is preserved by $G$, hence by $\rho(\Gamma)$, which contradicts the irreducibility of $\rho$.
 \end{proof}

\begin{lemma}
\label{lem:reductive2}
 Let
$\alpha\co\Gamma\to \mathrm{SL}_a(\mathbf C)$ and 
$\beta\co\Gamma\to \mathrm{SL}_b(\mathbf C)$
be irreducible. 
Then the unipotent radical $R_u(G)$ of the Zariski closure
$G$ of $(\alpha\oplus\beta)(\Gamma)\subset \mathrm{SL}_a(\mathbf C)\times \mathrm{SL}_b(\mathbf C)$ is trivial.
\end{lemma}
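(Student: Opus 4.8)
The plan is to reduce to Lemma~\ref{lem:reductive1} by projecting onto the two factors. Write $G_1\subset\mathrm{SL}_a(\mathbf C)$ for the Zariski closure of $\alpha(\Gamma)$ and $G_2\subset\mathrm{SL}_b(\mathbf C)$ for the Zariski closure of $\beta(\Gamma)$; since $\alpha$ and $\beta$ are irreducible, Lemma~\ref{lem:reductive1} gives $R_u(G_1)=R_u(G_2)=\{1\}$. Let $p_1,p_2$ be the two projections of $\mathrm{SL}_a(\mathbf C)\times\mathrm{SL}_b(\mathbf C)$ onto its factors. As $p_i$ is a morphism of algebraic groups, $p_i(G)$ is a closed subgroup; it contains $\alpha(\Gamma)$ (resp.\ $\beta(\Gamma)$) and, by continuity of $p_i$ in the Zariski topology, is contained in $\overline{\alpha(\Gamma)}=G_1$ (resp.\ $\overline{\beta(\Gamma)}=G_2$). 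Hence $p_1(G)=G_1$ and $p_2(G)=G_2$, so the restrictions $p_i|_G\colon G\to G_i$ are surjective homomorphisms of algebraic groups.

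Next I would push $R_u(G)$ through these surjections. Set $N=R_u(G)$, a closed connected normal unipotent subgroup of $G$. Then $p_i(N)$ is closed (image of a morphism of algebraic groups), connected (image of a connected set), normal in $G_i$ (image of a normal subgroup under a surjective homomorphism), and consists of unipotent elements (a homomorphism of algebraic groups respects the Jordan decomposition). A connected normal unipotent subgroup is solvable, hence lies in the radical $R(G_i)$, hence in $R_u(G_i)$ by definition. Therefore $p_i(N)\subset R_u(G_i)=\{1\}$ for $i=1,2$.

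Finally, an element of $\mathrm{SL}_a(\mathbf C)\times\mathrm{SL}_b(\mathbf C)$ whose images under both $p_1$ and $p_2$ are trivial is the identity, so $R_u(G)=N\subset\ker(p_1|_G)\cap\ker(p_2|_G)=\{1\}$, which is the assertion. I do not expect a genuine obstacle here: the argument rests only on standard facts about linear algebraic groups (images of morphisms are closed, morphisms preserve unipotency and connectedness, and the unipotent radical is the largest connected normal unipotent subgroup), all available in \cite{Humphreys1975}. As an alternative one could invoke Nagata's theorem (Theorem~\ref{thm:Nagata}) and check directly that every rational representation of $G$ is completely reducible, but since $\alpha\oplus\beta$ itself is reducible this route is less transparent than the projection argument above.
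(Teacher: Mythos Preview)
Your argument is correct and follows essentially the same route as the paper: project $R_u(G)$ to each factor, use Lemma~\ref{lem:reductive1} to conclude each projection is trivial, and deduce $R_u(G)=\{1\}$. You supply more detail than the paper does (verifying closedness, connectedness, and normality of $p_i(R_u(G))$ before concluding it lies in $R_u(G_i)$), whereas the paper simply asserts $p_a(R_u(G))\subset R_u(G_a)$ citing only preservation of unipotency; your version is the more careful of the two.
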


\begin{proof}
    Let $p_a\co \mathrm{SL}_a(\mathbf C)\times \mathrm{SL}_b(\mathbf C)\to \mathrm{SL}_a(\mathbf C)$ denote the projection.
        Then $p_a((\alpha\oplus\beta)(\Gamma)) = \alpha(\Gamma)$ and therefore
 $p_a(R_u(G))$ is contained in the unipotent radical $R_u(G_a)$ of the Zariski closure $G_a$ of 
 $\alpha(\Gamma)$ in $\mathrm{SL}_a(\mathbf C)$. 
(The image of an unipotent element under a morphism of algebraic groups is unipotent 
\cite[15.3]{Humphreys1975}.) 
Now, $R_u(G_a)$ is trivial by Lemma~\ref{lem:reductive1} and hence  $p_a(R_u(G))$ is trivial.
It follows in the same way that $p_b(R_u(G))$ is trivial and hence $R_u(G)=\{1\}$.
 \end{proof}

 \begin{remark}
   The same argument of Lemma~\ref{lem:reductive2}  proves that the Zariski closure of a completely reducible linear representation has trivial unipotent radical.
 \end{remark}

 \begin{cor}
  \label{cor:reductivity} The $\Gamma$-modules $\MM^\pm_{\lambda^{\pm n}}$ are completely reducible. 
 \end{cor}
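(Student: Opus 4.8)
The plan is to deduce Corollary~\ref{cor:reductivity} from Nagata's theorem (Theorem~\ref{thm:Nagata}) together with the triviality of the unipotent radical established in Lemma~\ref{lem:reductive2}. The module $\MM^+_{\lambda^n}$ is $M_{a\times b}(\mathbf C)$ with the $\Gamma$-action $(\gamma,A)\mapsto \lambda^{n\varphi(\gamma)}\alpha(\gamma)A\beta(\gamma^{-1})$, and similarly for $\MM^-_{\lambda^{-n}}$. The first observation is that this $\Gamma$-action factors through the Zariski closure $G$ of $(\alpha\oplus\beta)(\Gamma)$ inside $\mathrm{SL}_a(\mathbf C)\times\mathrm{SL}_b(\mathbf C)$ — or rather through a slightly larger group once the scalar factor $\lambda^{n\varphi}$ is taken into account.

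Concretely, I would proceed as follows. First, consider the homomorphism $\Gamma\to \mathrm{SL}_a(\mathbf C)\times\mathrm{SL}_b(\mathbf C)\times \mathbf C^*$ given by $\gamma\mapsto(\alpha(\gamma),\beta(\gamma),\lambda^{\varphi(\gamma)})$, and let $H$ denote the Zariski closure of its image; this is an algebraic subgroup of $\mathrm{GL}_{a+b+1}(\mathbf C)$ (embedding the last $\mathbf C^*$ factor diagonally). The projection $H\to G$ onto the first two factors is a surjective morphism of algebraic groups, and since $\mathbf C^*$ is a torus (hence reductive), one checks that $R_u(H)$ maps isomorphically onto a subgroup of $R_u(G)$; as $R_u(G)$ is trivial by Lemma~\ref{lem:reductive2}, we get $R_u(H)=\{1\}$ as well. (Alternatively, and more directly: the projection $H\to \mathbf C^*$ has image a torus, and the kernel of $H\to G\times\mathbf C^*$ is trivial, so $R_u(H)$ injects into $R_u(G)$.) Next, the rule $(g_a,g_b,s)\cdot A = s^{n}\, g_a A g_b^{-1}$ defines a rational representation of $H$ on $M_{a\times b}(\mathbf C)$ whose restriction along $\Gamma\to H$ is exactly the $\Gamma$-module structure on $\MM^+_{\lambda^n}$: the matrix entries are polynomial in the coordinate functions and in $1/\det$, so rationality holds. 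Now Nagata's theorem (Theorem~\ref{thm:Nagata}) applies, since $R_u(H)$ is trivial: every rational representation of $H$ is completely reducible as an $H$-module, hence in particular $\MM^+_{\lambda^n}$ decomposes as a direct sum of $H$-irreducible submodules. Finally, one notes that an $H$-submodule of $M_{a\times b}(\mathbf C)$ is the same thing as a $\Gamma$-submodule, because $\Gamma$ is Zariski-dense in $H$ (a subspace stable under $\rho(\Gamma)$ is cut out by linear — hence Zariski-closed — conditions, so it is stable under the closure). Therefore the $H$-irreducible summands are $\Gamma$-irreducible, and $\MM^+_{\lambda^n}$ is completely reducible as a $\Gamma$-module. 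The argument for $\MM^-_{\lambda^{-n}}$ is identical with the roles of $\alpha,\beta$ and the exponents exchanged.

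The main obstacle, such as it is, is the bookkeeping around the scalar twist $\lambda^{n\varphi}$: one must be careful that enlarging $G$ to accommodate this factor does not reintroduce a unipotent radical. The cleanest way around this is the remark already recorded after Lemma~\ref{lem:reductive2}, namely that the Zariski closure of any completely reducible linear representation has trivial unipotent radical — applied here to the completely reducible representation $\alpha\oplus\beta\oplus\lambda^\varphi$ of $\Gamma$ (it is a direct sum of irreducibles: $\alpha$, $\beta$, and the one-dimensional $\lambda^\varphi$). That immediately gives $R_u(H)=\{1\}$ without any diagram chase, and the rest of the argument is then routine. A second, entirely minor point is the identification of $H$-submodules with $\Gamma$-submodules, which is the standard density argument and needs no more than a sentence.
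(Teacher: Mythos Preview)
Your argument is correct and follows the same overall strategy as the paper --- Nagata's theorem applied to the Zariski closure, then density to pass back to $\Gamma$ --- but you handle the scalar twist $\lambda^{n\varphi}$ differently. You enlarge the ambient group to $H\subset \mathrm{SL}_a(\mathbf C)\times\mathrm{SL}_b(\mathbf C)\times\mathbf C^*$ so that the full action on $\MM^+_{\lambda^n}$ becomes the restriction of a rational $H$-representation, and then you must check (as you do, via the remark after Lemma~\ref{lem:reductive2}) that $R_u(H)$ is still trivial. The paper instead works only with $G\subset\mathrm{SL}_a(\mathbf C)\times\mathrm{SL}_b(\mathbf C)$ and the untwisted action $(A,B)\cdot X=AXB^{-1}$, obtaining that $\MM^+_1$ is completely reducible; it then observes that for each $\gamma\in\Gamma$ the action on $\MM^+_{\lambda^n}$ differs from the action on $\MM^+_1$ by the homothety $\lambda^{n\varphi(\gamma)}$, so the two $\Gamma$-modules have identical lattices of invariant subspaces, and complete reducibility transfers for free. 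The paper's route is slightly shorter since it sidesteps the need to control $R_u(H)$; your route is a touch more systematic and would adapt unchanged to twists by characters into larger tori.
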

 \begin{proof}
 By Lemma~\ref{lem:reductive2} the unipotent radical $R_u(G)$ of the Zariski closure
$G$ of $(\alpha\oplus\beta)(\Gamma)\subset \mathrm{SL}_a(\mathbf C)\times \mathrm{SL}_b(\mathbf C)$ is trivial. Hence 
Nagata's theorem (Thm.~\ref{thm:Nagata}) implies that every rational representation of $G$ is completely reducible. In 
particular, the restriction to $G$  of the rational representation
$\mathrm{SL}_a(\mathbf C)\times \mathrm{SL}_b(\mathbf C)\to 
\mathrm{GL}(M_{a\times b}(\mathbf C))$, given by
\[ (A,B)\cdot X = A\,X\,B^{-1}, 
\]
$\forall (A , B)\in \mathrm{SL}_a(\mathbf C)\times \mathrm{SL}_b(\mathbf C)$,
$\forall X\in M_{a\times b}(\mathbf C)$,
is completely reducible.

Since $(\alpha\oplus\beta)(\Gamma)$ is Zariski dense in $G$, we obtain
 that $\MM^+_1$ is a completely reducible $\Gamma$-module.
Finally,  the
action of $\gamma\in\Gamma$ on  $X\in\MM^+_{\lambda^{n}}$, 
given by Equation~\eqref{eq:M^+}, and the
action $\gamma\cdot X =  \alpha(\gamma) \,X\,\beta(\gamma^{-1})$ 
differ only by a homothety. Therefore, $\MM^+_{\lambda^{n}}$ is a completely reducible 
$\Gamma$-module. The proof for $\MM^-_{\lambda^{-n}}$ is similar.
 \end{proof}

\begin{cor}\label{cor:Alex-sym}
We have
\[ \Delta_i^+(t) \doteq \Delta_i^-(t^{-1})\,.\]
\end{cor}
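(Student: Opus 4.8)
The plan is to combine the duality theorem with the complete reducibility result just established. First I would recall that by Corollary~\ref{cor:reductivity} the $\Gamma$-modules $\MM^\pm_1$ are completely reducible, so in particular the representation $\rho = \alpha\otimes\beta^*$ (which corresponds to $\MM^+_1$) is completely reducible. Therefore Theorem~\ref{thm:duality} applies and gives $\Delta_i^\rho(t^{-1}) \doteq \Delta_i^{\rho^*}(t)$.

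Next I would identify $\rho^*$ with $\beta\otimes\alpha^*$. This is exactly the content of the $\Gamma$-invariant non-degenerate bilinear form $\Psi$ of \eqref{eqn:bilinearform}, namely $\Psi(A,B) = \tr(AB)$ on $\MM^+_1\times\MM^-_1$; by Lemma~\ref{lem:bilinear_dual} (applied to the bilinear pairing, or directly since the pairing exhibits $\MM^-_1$ as the dual $\Gamma$-module of $\MM^+_1$) we have $(\alpha\otimes\beta^*)^* \cong \beta\otimes\alpha^*$ as $\Gamma$-modules. Hence $\Delta_i^{\rho^*}(t) \doteq \Delta_i^{\beta\otimes\alpha^*}(t) = \Delta_i^-(t)$, while $\Delta_i^\rho(t) = \Delta_i^+(t)$ by definition.

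Putting these together, $\Delta_i^+(t^{-1}) = \Delta_i^\rho(t^{-1}) \doteq \Delta_i^{\rho^*}(t) = \Delta_i^-(t)$, and replacing $t$ by $t^{-1}$ (the relation $\doteq$ is clearly preserved under $t\mapsto t^{-1}$, since units go to units) yields $\Delta_i^+(t) \doteq \Delta_i^-(t^{-1})$, which is the claim.

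There is essentially no obstacle here, as the corollary is a direct assembly of results already in place; the only point requiring a line of justification is the identification $(\alpha\otimes\beta^*)^* \cong \beta\otimes\alpha^*$ as $\Gamma$-modules, which follows from the non-degeneracy and $\Gamma$-invariance of $\Psi$ noted right after \eqref{eqn:bilinearform}, together with the remark preceding the corollary that equivalent representations have associated Alexander polynomials.
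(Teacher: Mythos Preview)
Your proof is correct and follows exactly the paper's approach: the paper's proof is the single sentence ``The corollary follows directly from Theorem~\ref{thm:duality} and Corollary~\ref{cor:reductivity},'' and you have simply unpacked this, including the identification $(\alpha\otimes\beta^*)^*\cong\beta\otimes\alpha^*$ via $\Psi$ which the paper records just before stating the corollary.
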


\begin{proof}
The corollary follows directly from Theorem~\ref{thm:duality} and Corollary~\ref{cor:reductivity}.
\end{proof}
 
 \section{Necessary condition}
\label{section:necessary}
 
 The goal of this section is to prove Theorem~\ref{thm:nec}. 
 More precisely, we will prove that 
if the representation
$ \rho_\lambda= (\lambda^{b\varphi}\otimes\alpha)\oplus (\lambda ^{-a\varphi}\otimes\beta)$, as defined in \eqref{eq:rho_lambda},
 can be deformed to irreducible representations, then 
 $\Delta_1^+(\lambda^n)=0$.
 Recall that throughout the paper we assume that $\alpha$ and $\beta$ are irreducible and infinitesimally regular.

 \begin{lemma}
\label{lem:necessary_dimZ1} Assume that $\rho_{\lambda}$ belongs to a component of 
 $R(\Gamma,\mathrm{SL}_n(\mathbf C))$ that contains irreducible representations. Then
 \[ \dim Z^1(\Gamma ; \mathfrak{sl}_n(\mathbf C)_{\Ad \rho_\lambda}) \geq n^2 + n -2\,.\]
 \end{lemma}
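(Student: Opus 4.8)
The plan is to bound the local dimension of $R(\Gamma,\mathrm{SL}_n(\mathbf{C}))$ at $\rho_\lambda$ from below by $n^2+n-2$ and then invoke the chain of inequalities \eqref{eq:regular-point}, which gives $\dim Z^1(\Gamma;\mathfrak{sl}_n(\mathbf{C})_{\Ad\rho_\lambda})\geq \dim_{\rho_\lambda}R(\Gamma,\mathrm{SL}_n(\mathbf{C}))$. So everything reduces to showing that the component $R_0$ of $R(\Gamma,\mathrm{SL}_n(\mathbf{C}))$ containing $\rho_\lambda$ (which by hypothesis contains irreducible representations) has dimension at least $n^2+n-2$.

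To get the lower bound on $\dim R_0$, I would first estimate the dimension of the character variety near an irreducible $\rho$ in $R_0$, then add back the dimension of the conjugation orbit. Concretely: pick an irreducible representation $\rho$ in $R_0$. Its orbit under $\mathrm{SL}_n(\mathbf{C})$ has dimension $n^2-1$ (the stabilizer of an irreducible representation is the center, by Schur, hence finite). So $\dim R_0 \geq \dim_{\chi_\rho} X(\Gamma,\mathrm{SL}_n(\mathbf{C})) + (n^2-1)$, and it suffices to show that the character variety has dimension at least $n-1$ at the character of some irreducible representation on $R_0$. This is a standard fact: the character variety of a knot group in $\mathrm{SL}_n(\mathbf{C})$ has dimension at least $n-1$ at any irreducible character, which follows from a half-lives-half-dies argument using Poincar\'e--Lefschetz duality \eqref{eq:poincare} together with the computation of $\dim H^1$ of the peripheral torus (Lemma~\ref{lem:H1-Z+Z}): restriction $H^1(X;\mathfrak{sl}_n(\mathbf{C})_{\Ad\rho})\to H^1(\partial X;\mathfrak{sl}_n(\mathbf{C})_{\Ad\rho})$ has image of dimension $\tfrac12\dim H^1(\partial X)\geq n-1$, so $\dim H^1(X;\mathfrak{sl}_n(\mathbf{C})_{\Ad\rho})\geq n-1$, and for irreducible $\rho$ this bounds the tangent space to the character variety from below (Luna's slice, as in Proposition~\ref{prop:infinitesimalregularitycharacter}), actually one needs the lower bound on the variety itself, which comes from the same $H^1(X)\geq n-1$ estimate combined with the fact that an irreducible representation is a smooth point of $R$ when... — here I would instead argue directly that $\dim_\rho R_0\geq \dim H^1(X;\mathfrak{sl}_n)+\dim B^1 = \dim H^1(X;\mathfrak{sl}_n)+(n^2-1-\dim H^0)\geq (n-1)+(n^2-1)=n^2+n-2$, using $H^0=0$ for irreducible $\rho$ and the standard lower bound $\dim_\rho R\geq \dim H^1$ coming from the fact that the obstructions beyond the first are the only ones (or, more safely, the inequality $\dim_\rho R \ge \dim H^1(\Gamma;\mathfrak{sl}_n) - \dim H^2_{\text{obstr}}$ combined with Poincar\'e duality controlling $H^2$).

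Cleanest route: for any $\rho\in R_0$ the local dimension $\dim_\rho R_0$ is the same (irreducible components are equidimensional along themselves is false in general, but the maximal-dimension component is what we want), so take the irreducible $\rho$. We have $\dim T_\rho R \geq \dim_\rho R \geq \dim H^1(\Gamma;\mathfrak{sl}_n(\mathbf{C})_{\Ad\rho}) + \dim B^1(\Gamma;\mathfrak{sl}_n(\mathbf{C})_{\Ad\rho})$ — the second inequality because $\rho$, being irreducible with $H^0=0$, has its orbit of dimension $\dim B^1 = n^2-1$, and the quotient $R_0/\!/\mathrm{SL}_n$ has dimension at least $\dim H^1 - (\text{obstructions})$; but in fact the sharp statement I want is simply $\dim_\rho R\geq n^2-1+(n-1)$, which I would get from: the orbit map gives $\dim_\rho R\geq \dim(\text{orbit})+\dim_{\chi_\rho}X$, and $\dim_{\chi_\rho}X\geq n-1$ by the half-lives-half-dies estimate above. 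The main obstacle is this last inequality $\dim_{\chi_\rho}X(\Gamma,\mathrm{SL}_n(\mathbf{C}))\geq n-1$ at an irreducible character: it is classical for knot groups but requires care because $\rho$ need not be infinitesimally regular, so one cannot directly cite Proposition~\ref{prop:infinitesimalregularitycharacter}; instead one uses that $H^1(X;\mathfrak{sl}_n)$ surjects onto a half-dimensional subspace of $H^1(\partial X;\mathfrak{sl}_n)$ (Poincar\'e--Lefschetz duality \eqref{eq:poincare} plus the long exact sequence of the pair $(X,\partial X)$), Lemma~\ref{lem:H1-Z+Z} giving $\dim H^1(\partial X;\mathfrak{sl}_n)\geq 2(n-1)$, hence $\dim H^1(X;\mathfrak{sl}_n)\geq n-1$, and finally the general fact that at an irreducible representation the local dimension of $R$ is at least $\dim H^1 + \dim B^1$ because the only obstruction that matters cuts out nothing extra at a point where the representation deforms (here I would cite that $\rho$ deforms to irreducibles, so the component $R_0$ genuinely has irreducible points and its dimension is governed by cocycles at such a point; alternatively, use upper semicontinuity of $\dim Z^1$ and work at the generic irreducible point of $R_0$ where $R_0$ is smooth, so $\dim_\rho R_0 = \dim Z^1 \geq \dim H^1 + \dim B^1 \geq (n-1)+(n^2-1) = n^2+n-2$, and then semicontinuity of $\dim Z^1$ transports the inequality back to $\rho_\lambda$).
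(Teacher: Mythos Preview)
Your final parenthetical is essentially the paper's proof, but you have buried it under a detour that is both unnecessary and contains an unjustified step.

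The paper never touches $\dim_{\rho_\lambda} R$ or the character variety at all. It argues directly on $Z^1$: by upper semicontinuity of $\rho\mapsto\dim Z^1(\Gamma;\mathfrak{sl}_n(\mathbf C)_{\Ad\rho})$ and Zariski-openness of irreducibility, it suffices to prove the bound at an arbitrary irreducible $\rho$ in the component. There one has the identity $\dim Z^1=\dim H^1+\dim B^1$, with $\dim B^1=n^2-1$ since $H^0=0$ (Schur). The bound $\dim H^1(X;\mathfrak{sl}_n)\geq n-1$ comes from Poincar\'e--Lefschetz duality applied to the pair $(X,\partial X)$ (the two maps in the long exact sequence are dual, so the image of restriction to $\partial X$ is half-dimensional) together with Lemma~\ref{lem:H1-Z+Z}. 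Adding gives $\dim Z^1\geq n^2+n-2$, and semicontinuity carries this back to $\rho_\lambda$.

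Your route through $\dim_{\rho_\lambda} R_0$ introduces the claim ``$\dim_\rho R_0=\dim Z^1$ at a generic smooth irreducible point'', which conflates smoothness of the \emph{variety} with regularity in the sense of Definition~\ref{def:regular} (reducedness of the scheme there); this is exactly the subtlety flagged after \eqref{eq:regular-point}. You do not need this equality: you already have $\dim Z^1=\dim H^1+\dim B^1$ as an identity at every $\rho$, so once you have the $H^1$ and $B^1$ estimates at an irreducible $\rho$ you are done, without ever invoking $\dim R_0$, orbits, or Luna's slice. Strip out the first two-thirds of your argument and keep only: semicontinuity $\Rightarrow$ work at irreducible $\rho$; there $\dim Z^1=\dim H^1+(n^2-1)$; half-lives-half-dies plus Lemma~\ref{lem:H1-Z+Z} give $\dim H^1\geq n-1$.
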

 \begin{proof}
It is sufficient to prove the inequality for an irreducible representation 
$\rho\in R(\Gamma,\mathrm{SL}_n(\mathbf C))$, because the dimension of 
$Z^1(\Gamma ; \mathfrak{sl}_n(\mathbf C)_{\Ad \rho})$ is an upper semi-continuous  
function on  $\rho$
and because irreducibility is a Zariski open condition.
 We have 
 \[
 \dim Z^1(\Gamma ; \mathfrak{sl}_n(\mathbf C)_{\Ad \rho}) =
 \dim H^1(\Gamma ; \mathfrak{sl}_n(\mathbf C)_{\Ad \rho}) +
 \dim B^1(\Gamma ; \mathfrak{sl}_n(\mathbf C)_{\Ad \rho})\,.
 \]
 Now, $ \dim B^1(\Gamma ; \mathfrak{sl}_n(\mathbf C)_{\Ad \rho}) = n^2-1$ because $\rho$ is irreducible.
 
Next we apply Poincar\' e duality  to the long exact sequence of the pair $(X,\partial X)$:
\begin{equation}
 \label{eqn:pair}
H^1(X;\mathfrak{sl}_n(\mathbf C)_{\Ad \rho}) \to H^1(\partial X;\mathfrak{sl}_n(\mathbf C)_{\Ad \rho})
\to H^2( X,\partial X;\mathfrak{sl}_n(\mathbf C)_{\Ad \rho}). 
\end{equation}
{Poincar\'e duality \eqref{eq:poincare} implies that $H^1( X;\mathfrak{sl}_n(\mathbf C)_{\Ad \rho})$ and the dual space 
$H^2(X,\partial X;\mathfrak{sl}_n(\mathbf C)_{\Ad \rho})^*$ are isomorphic. Moreover,
 the maps of \eqref{eqn:pair} are dual to each other.} So:
\begin{equation}\label{eqn:dimH^1partialX}
\frac 1 2 \dim H^1(\partial X;\mathfrak{sl}_n(\mathbf C)_{\Ad \rho}) \leq  \dim H^1(\Gamma;\mathfrak{sl}_n(\mathbf C)_{\Ad \rho}).
\end{equation}
The claimed inequality of the statement follows from Lemma~\ref{lem:H1-Z+Z}.
\end{proof}

\begin{lemma}
\label{lem:dimh1-dimh0}
Under the hypothesis of Lemma~\ref{lem:necessary_dimZ1} we have 
\[ \dim H^1(\Gamma ; \MM^+_{\lambda^{n}}) > \dim H^0(\Gamma ; \MM^+_{\lambda^{n}}), \]
or 
\[\dim H^1(\Gamma ; \MM^-_{\lambda^{- n}}) > \dim H^0(\Gamma ; \MM^-_{\lambda^{-n}})\,. \]
\end{lemma}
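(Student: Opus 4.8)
The strategy is to count dimensions in the decomposition of $\mathfrak{sl}_n(\mathbf C)_{\Ad\rho_\lambda}$ as a $\Gamma$-module and compare with the lower bound $\dim Z^1(\Gamma;\mathfrak{sl}_n(\mathbf C)_{\Ad\rho_\lambda})\geq n^2+n-2$ from Lemma~\ref{lem:necessary_dimZ1}. Recall the splitting
\[
\mathfrak{sl}_n(\mathbf C)_{\Ad\rho_{\lambda}}= \mathfrak{sl}_a(\mathbf C)_{\Ad\alpha}\oplus \mathfrak{sl}_b(\mathbf C)_{\Ad\beta}\oplus \mathbf C \oplus \MM^+_{\lambda^{n}}\oplus \MM^-_{\lambda^{-n}},
\]
which gives a corresponding direct sum decomposition of $Z^1$, $B^1$ and $H^1$. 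The plan is first to pin down the contributions of the three ``diagonal'' summands $\mathfrak{sl}_a(\mathbf C)_{\Ad\alpha}$, $\mathfrak{sl}_b(\mathbf C)_{\Ad\beta}$ and $\mathbf C$ using the hypotheses that $\alpha,\beta$ are irreducible and infinitesimally regular (so $H^1(\Gamma;\mathfrak{sl}_a(\mathbf C)_{\Ad\alpha})\cong \mathbf C^{a-1}$, $H^0(\Gamma;\mathfrak{sl}_a(\mathbf C)_{\Ad\alpha})=0$ by Remark~\ref{rem:irreducibleH0}, and likewise for $\beta$; for the $\mathbf C$-summand the action is trivial so $H^0(\Gamma;\mathbf C)=\mathbf C$ and $H^1(\Gamma;\mathbf C)=H^1(\Gamma;\mathbf Z)\otimes\mathbf C\cong\mathbf C$ since $\Gamma^{\mathrm{ab}}\cong\mathbf Z$). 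Then I would use $\dim Z^1=\dim H^1+\dim B^1$ together with $\dim B^1(\Gamma;V)=\dim V-\dim H^0(\Gamma;V)$ for each summand $V$.

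Carrying this out: for $V=\mathfrak{sl}_a(\mathbf C)_{\Ad\alpha}$ we get $\dim B^1=a^2-1$ and $\dim Z^1=a^2-1+(a-1)=a^2+a-2$, and similarly $b^2+b-2$ for the $\beta$-block; for $V=\mathbf C$ we get $\dim B^1=0$ and $\dim Z^1=1$. For $V=\MM^{+}_{\lambda^n}$, $\dim B^1=ab-\dim H^0(\Gamma;\MM^+_{\lambda^n})$, and likewise $\dim B^1=ab-\dim H^0(\Gamma;\MM^-_{\lambda^{-n}})$ for $\MM^-_{\lambda^{-n}}$; note $\dim \MM^{\pm}=ab$. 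Summing all five contributions,
\[
\dim Z^1(\Gamma;\mathfrak{sl}_n(\mathbf C)_{\Ad\rho_\lambda}) = (a^2+a-2)+(b^2+b-2)+1 + 2ab + \sum_{\pm}\bigl(\dim H^1(\Gamma;\MM^{\pm}_{\lambda^{\pm n}})-\dim H^0(\Gamma;\MM^{\pm}_{\lambda^{\pm n}})\bigr),
\]
and using $a^2+b^2+2ab=(a+b)^2=n^2$ and $a+b=n$ the first block collapses to $n^2+n-3$. Comparing with the inequality $\dim Z^1\geq n^2+n-2$ from Lemma~\ref{lem:necessary_dimZ1} forces
\[
\sum_{\pm}\bigl(\dim H^1(\Gamma;\MM^{\pm}_{\lambda^{\pm n}})-\dim H^0(\Gamma;\MM^{\pm}_{\lambda^{\pm n}})\bigr)\geq 1,
\]
so at least one of the two differences is strictly positive, which is exactly the claimed dichotomy.

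The only genuinely delicate point is handling the $\mathbf C$-summand correctly: its action is the trivial $\Gamma$-action (the scalar $\rho_\lambda$ preserves the line $\mathbf C\,\mathrm{diag}(b\,\mathrm{Id}_a,-a\,\mathrm{Id}_b)$ pointwise under $\Ad$), so one must be careful that $\dim H^1(\Gamma;\mathbf C)=1$ rather than $0$ — this uses that $X$ is a knot exterior so $H_1(X;\mathbf Z)\cong\mathbf Z$. Everything else is bookkeeping with the additivity of $Z^1$, $B^1$, $H^0$, $H^1$ over direct sums of coefficient modules, and the numerical identities for $a,b,n$. I would also remark that the crude bound $\dim Z^1\le \dim T_{\rho_\lambda}R(\Gamma,\mathrm{SL}_n(\mathbf C))$ is not needed here — only the \emph{lower} bound of Lemma~\ref{lem:necessary_dimZ1} is used.
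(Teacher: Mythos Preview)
Your proof is correct and follows essentially the same approach as the paper: both decompose $\mathfrak{sl}_n(\mathbf C)_{\Ad\rho_\lambda}$, compute $\dim Z^1$ summand by summand via $\dim Z^1=\dim H^1+\dim\mathfrak m-\dim H^0$, collapse the diagonal contributions to $n^2+n-3$, and compare with the bound $n^2+n-2$ from Lemma~\ref{lem:necessary_dimZ1}. One small slip in your closing remark: the inclusion actually goes the other way, $\dim T_{\rho_\lambda}R(\Gamma,\mathrm{SL}_n(\mathbf C))\le \dim Z^1$ (cf.\ \eqref{eq:regular-point}), but this is irrelevant to the argument.
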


We shall see in Remark~\ref{rem:bothinequalities} below that we get both inequalities.

 \begin{proof}
Here we use the decomposition of $\Gamma$-modules (see Section~\ref{sec:twisted-cohomology}):
\begin{equation}
\label{eqn:decomposition}
\mathfrak{sl}_n(\mathbf C)_{Ad\,\rho_\lambda}=
\mathfrak{sl}_a(\mathbf C)_{\mathrm{Ad}\,\alpha} \oplus 
\mathfrak{sl}_b(\mathbf C)_{ \mathrm{Ad}\,\beta} \oplus 
\mathbf{C}\oplus \MM^+_{\lambda^{n}}\oplus\MM^-_{\lambda^{-n}}.
\end{equation}
We aim to apply Lemma~\ref{lem:necessary_dimZ1}, so we compute the dimension of the space of $1$-cocycles 
for each $\Gamma$-module in \eqref{eqn:decomposition}.
For each $\Gamma$-module $\mathfrak m$, we use the formula
\begin{multline}
\label{eqn:dimZ1}
\dim Z^1(\Gamma ;  \mathfrak{m})=\dim H^1(\Gamma ;  \mathfrak{m})+\dim B^1(\Gamma ;  \mathfrak{m})
\\ = \dim H^1(\Gamma ;  \mathfrak{m})+ \dim \mathfrak{m}- \dim H^0(\Gamma ;  \mathfrak{m}).
\end{multline}
  Ordering the terms as they appear in \eqref{eqn:dimZ1}: 
\begin{eqnarray*}
\dim Z^1(\Gamma ;  \mathfrak{sl}_a(\mathbf C)_{\mathrm{Ad}\,\alpha}) & = & (a-1)+ (a^2-1) - 0, \\
\dim Z^1(\Gamma ;  \mathfrak{sl}_b(\mathbf C)_{\mathrm{Ad}\,\alpha}) & = & (b-1)+ (b^2-1) - 0, \\
\dim Z^1(\Gamma ;  \mathbf C ) & = & 1+ 1 - 1, \\
\dim Z^1(\Gamma ;  \MM^\pm_{\lambda^{\pm n}}) & = &    \dim H^1(\Gamma ;  
\MM^\pm_{\lambda^{\pm n}}) + a\, b  -
\dim H^0(\Gamma ;  \MM^\pm_{\lambda^{\pm n}}).
\end{eqnarray*}
The first two lines use that $\alpha$ and $\beta$ are  irreducible and infinitesimally regular, 
the last one that
$\dim \MM^\pm_{\lambda^{\pm n}}=a\, b$. Adding up the dimensions of the terms in \eqref{eqn:decomposition}
and using Lemma~\ref{lem:necessary_dimZ1} and the fact that $a+b=n$, we obtain 
\begin{multline*}
n^2+n-2 \leq n^2+n-3 + \dim H^1(\Gamma ;  \MM^+_{\lambda^{n}})  -
\dim H^0(\Gamma ;  \MM^+_{\lambda^{n}})+ \\
\dim H^1(\Gamma ;  \MM^-_{\lambda^{-n}}) -
\dim H^0(\Gamma ;  \MM^-_{\lambda^{-n}}),
\end{multline*}
which proves the lemma.
 \end{proof}
 
 For later use we remark the following computation, made during the last proof. Notice that it does not use that $\rho_{\lambda}$
 can be deformed to irreducible representations (but it uses that $\alpha$ and $\beta$ are irreducible and infinitesimally regular):
\begin{multline}\label{eq:dimZ1}
\dim Z^1(\Gamma;\mathfrak{sl}_n(\mathbf C)_{\Ad\rho_\lambda}) =n^2+n-3 + 
\dim H^1(\Gamma ;  \MM^+_{\lambda^n})  -
\dim H^0(\Gamma ;  \MM^+_{\lambda^n})+ \\
\dim H^1(\Gamma ;  \MM^-_{\lambda^{-n}}) -
\dim H^0(\Gamma ;  \MM^-_{\lambda^{-n}}).
\end{multline}

 \begin{lemma}
   \label{lem:delta10} Let $\rho_\lambda\co\Gamma\to\mathrm{SL}_n(\mathbf C)$ be given by
    $ \rho_\lambda= (\lambda^{b\varphi}\otimes\alpha)\oplus (\lambda ^{-a\varphi}\otimes\beta)$. 
Then
  $\dim H^1(\Gamma ; \MM^\pm_{\lambda^{\pm n}})> \dim H^0(\Gamma ; \MM^\pm_{\lambda^{\pm n}})$ if and only if 
  $\Delta_1^{\mp}(\lambda^{\mp n})=0$. 
    \end{lemma}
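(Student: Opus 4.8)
The plan is to relate the cohomology of the knot exterior $X$ with coefficients in $\MM^\pm_{\lambda^{\pm n}}$ to the Alexander module $H_*(X_\infty;\cdot)$ via the identification \eqref{eqn:V[Z]}, and then to extract the statement about the value $\Delta_1^\mp(\lambda^{\mp n})$ from the structure of this module over $\mathbf C[t^{\pm1}]$. The key observation is that, by definition, $\MM^+_{\lambda^n}$ is the $\Gamma$-module $M_{a\times b}(\mathbf C)$ with the action $\alpha\otimes\beta^*$ twisted additionally by $\lambda^{n\varphi}$; in the notation of Section~\ref{sec:twistedAlex}, if $V$ denotes $M_{a\times b}(\mathbf C)_{\alpha\otimes\beta^*}$, then $\MM^+_{\lambda^n}=V_{\lambda^n}$, and the short exact sequence used in the proof of Lemma~\ref{prop:dualityDelta0},
\[
0\to V[\mathbf Z]\xrightarrow{(t-\lambda^n)\cdot} V[\mathbf Z]\to \MM^+_{\lambda^n}\to 0,
\]
is the right tool. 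So first I would write down this sequence for $V=M_{a\times b}(\mathbf C)_{\alpha\otimes\beta^*}$ (and symmetrically for the minus side) and take the associated long exact sequence in homology.

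Second, using $H_i(X;V[\mathbf Z])\cong H_i(X_\infty;V)$ from \eqref{eqn:V[Z]}, the long exact sequence reads
\[
\cdots\to H_1(X_\infty;V)\xrightarrow{(t-\lambda^n)} H_1(X_\infty;V)\to H_1(X;\MM^+_{\lambda^n})\to H_0(X_\infty;V)\xrightarrow{(t-\lambda^n)}\cdots
\]
Now $H_0(X_\infty;V)$ is a finitely generated torsion $\mathbf C[t^{\pm1}]$-module (indeed $\alpha\otimes\beta^*$ is semisimple by Corollary~\ref{cor:reductivity}, so Lemma~\ref{prop:dualityDelta0} applies, giving $\Delta_0^+$ a nonzero polynomial, but more simply $H_0(X_\infty;V)$ is a finite-dimensional $\mathbf C$-vector space). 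The twisted Alexander module $H_1(X_\infty;V)$ may or may not be torsion; but in either case the standard argument identifies $\dim_{\mathbf C}H_1(X;\MM^+_{\lambda^n})$ with the sum of the dimension of the $(t-\lambda^n)$-torsion (equivalently $(t-\lambda^n)$-primary part) of $H_1(X_\infty;V)$ and the dimension of the cokernel of multiplication by $(t-\lambda^n)$ on $H_0(X_\infty;V)$; similarly $\dim_{\mathbf C}H_0(X;\MM^+_{\lambda^n})$ equals the dimension of the $(t-\lambda^n)$-torsion of $H_0(X_\infty;V)$ plus the dimension of the cokernel of multiplication by $(t-\lambda^n)$ on $H_1(X_\infty;V)$. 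The cleanest way to package this is: $\dim H^1(\Gamma;\MM^+_{\lambda^n})-\dim H^0(\Gamma;\MM^+_{\lambda^n})$ equals the difference of the multiplicities of $(t-\lambda^n)$ as elementary divisors in $H_1$ versus $H_0$ — and I will relate this to the order of $H_1(X_\infty;V)$, i.e.\ to $\Delta_1^{\alpha\otimes\beta^*}$, which by Corollary~\ref{cor:Alex-sym} (or directly by the bilinear pairing \eqref{eqn:bilinearform}) equals $\Delta_1^-$ up to $t\mapsto 1/t$ and a unit. One must be slightly careful with the indexing: $\MM^+_{\lambda^n}=M_{a\times b}(\mathbf C)_{\alpha\otimes\beta^*\otimes\lambda^{n\varphi}}$ corresponds, via the construction $V[\mathbf Z]$, to the Alexander module for the representation $\alpha\otimes\beta^*$ evaluated at $t=\lambda^n$, whereas $\Delta_1^+$ is the order of the Alexander module of $\alpha\otimes\beta^*$; and the lemma pairs $\MM^+$ with $\Delta_1^-$, so the duality $\Delta_1^+(t)\doteq\Delta_1^-(1/t)$ is exactly what converts one into the other. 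I would sort out the sign of the exponent carefully by tracking that $\rho_\lambda$ has the block $\lambda^{b\varphi}\alpha$ upper-left and $\lambda^{-a\varphi}\beta$ lower-right, so the off-diagonal $M_{a\times b}$-block transforms by $\lambda^{(b-(-a))\varphi}\alpha\otimes\beta^*=\lambda^{n\varphi}\alpha\otimes\beta^*$, matching \eqref{eq:M^+}.

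With this bookkeeping in place, the final step is the equivalence. Multiplication by $(t-\lambda^n)$ on a finitely generated $\mathbf C[t^{\pm1}]$-module $N$ is injective with finite-dimensional cokernel precisely when $\lambda^n$ is not a root of the order of $N$ (the free part contributes nothing to either kernel or a genuinely nonzero determinant condition, and on the torsion part it is injective iff $(t-\lambda^n)$ is coprime to every elementary divisor). Applying this to $N=H_1(X_\infty;V)$ and $N=H_0(X_\infty;V)$, and combining with the long exact sequence, one gets that $\dim H^1(\Gamma;\MM^+_{\lambda^n})>\dim H^0(\Gamma;\MM^+_{\lambda^n})$ holds if and only if $(t-\lambda^n)$ divides the order of $H_1(X_\infty;M_{a\times b}(\mathbf C)_{\alpha\otimes\beta^*})$ — because the $H_0$-contributions on both sides are forced to cancel (the cokernel of $(t-\lambda^n)$ on $H_0$ appears in $H_1(X;\MM^+)$ and the $(t-\lambda^n)$-torsion of $H_0$ appears in $H_0(X;\MM^+)$, and these have equal dimension since $H_0(X_\infty;V)$ is a torsion module on which multiplication by a polynomial has balanced kernel and cokernel). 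That is, the inequality is equivalent to $\Delta_1^{\alpha\otimes\beta^*}(\lambda^n)=0$, hence by the duality $\Delta_1^+(t)\doteq\Delta_1^-(1/t)$ to $\Delta_1^-(\lambda^{-n})=0$, which is exactly the claim for the plus module; the minus module is handled symmetrically. The main obstacle I anticipate is purely the careful dimension bookkeeping in the long exact sequence — in particular making the cancellation of the $H_0(X_\infty;V)$-contributions rigorous (this is where one genuinely uses that $H_0$ is torsion, equivalently that $\alpha\otimes\beta^*$ is semisimple and nontrivial on the relevant pieces via Lemma~\ref{prop:dualityDelta0}) — together with pinning down the exponent $\pm n$ and the plus/minus matching so that the statement comes out with the $\Delta_1^\mp(\lambda^{\mp n})$ pairing as written rather than its transpose.
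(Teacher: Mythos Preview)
Your overall strategy---the long exact sequence induced by $0\to V[\mathbf Z]\xrightarrow{t-\lambda^{\pm n}}V[\mathbf Z]\to \MM^\pm_{\lambda^{\pm n}}\to 0$, together with the fact that $H_0(X_\infty;V)$ is a torsion $\mathbf C[t^{\pm1}]$-module so that its kernel and cokernel contributions cancel---is exactly the paper's approach. But there is a genuine slip in how you pass from homology to cohomology, and it causes the argument to prove the wrong half of the lemma for the wrong reason.

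Concretely: your long exact sequence is in \emph{homology}, so it computes $H_i(\Gamma;\MM^+_{\lambda^n})$. After correcting a small error (your description of $\dim H_0(X;\MM^+_{\lambda^n})$ is off---from the sequence it is simply the cokernel of $(t-\lambda^n)$ on $H_0(X_\infty;V)$, with no $H_1$ contribution), the count gives
\[
\dim H_1(\Gamma;\MM^+_{\lambda^n})-\dim H_0(\Gamma;\MM^+_{\lambda^n})
=\dim\operatorname{coker}\bigl((t-\lambda^n)\ \text{on}\ H_1(X_\infty;M_{a\times b}(\mathbf C)_{\alpha\otimes\beta^*})\bigr),
\]
which is positive iff $\Delta_1^+(\lambda^n)=0$. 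You then silently replace $H_i$ by $H^i$ for the \emph{same} module $\MM^+$. That step is not free: Kronecker duality over a field gives $H_i(\Gamma;\MM^+)\cong H^i(\Gamma;(\MM^+)^*)^*\cong H^i(\Gamma;\MM^-)^*$ (this is \eqref{eqn:Kronecker}), not $H^i(\Gamma;\MM^+)$. So what you have in fact established is
\[
\dim H^1(\Gamma;\MM^-_{\lambda^{-n}})>\dim H^0(\Gamma;\MM^-_{\lambda^{-n}})\quad\Longleftrightarrow\quad\Delta_1^+(\lambda^n)=0,
\]
which is precisely the \emph{minus} case of the lemma---and no appeal to Corollary~\ref{cor:Alex-sym} is needed. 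Your use of the polynomial duality $\Delta_1^+(t)\doteq\Delta_1^-(1/t)$ is a red herring: it happens to undo the unjustified $H_i\leftrightarrow H^i$ swap, so the two errors cancel and you reach the right statement by accident. The paper's version is the clean fix: run the homology long exact sequence with $\MM^-$ from the outset, obtain the condition $\Delta_1^-(\lambda^{-n})=0$ directly, and invoke Kronecker duality \eqref{eqn:Kronecker} explicitly in the last line to convert $\dim H_i(\Gamma;\MM^-)$ to $\dim H^i(\Gamma;\MM^+)$.
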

  \begin{proof}
  Recall that $\Delta_i^\pm$ is the order of $H_i(X_\infty; \MM_1^\pm)
   \cong H_i(X; \MM^{\pm}_1[\mathbf Z]) $.
   We have a short exact sequence of $\Gamma$-modules
   \[ 0 \to \MM_1^{-}[\mathbf Z] \xrightarrow{(t-\lambda^{-n})\cdot} \MM_1^{-}[\mathbf Z] \to \MM^-_{\lambda^{-n}}\to 0\]
   which gives  the following long exact sequence in homology \cite[III.\S6]{Brown1982}:
   \begin{multline*}
    \ldots\to H_1(\Gamma;\MM_1^{-}[\mathbf Z]) \xrightarrow{(t-\lambda^{-n})\cdot}
    H_1(\Gamma;\MM_1^{-}[\mathbf Z]) \rightarrow 
    H_1(  \Gamma;\MM^{-}_{\lambda^{-n}})  \xrightarrow{\partial\; } \\
    H_0(\Gamma;\MM_1^{-}[\mathbf Z]) \xrightarrow{(t-\lambda^{-n})\cdot }
    H_0(\Gamma;\MM_1^{-}[\mathbf Z]) \rightarrow 
    H_0(  \Gamma;\MM^{-}_{\lambda^{-n}})  \rightarrow 0\,.
   \end{multline*}
   Thus $\Delta_1^-(\lambda^{-n})=0$ if and only if $ \ker \partial$ is non-trivial.
   
   Next we claim that 
$ \ker \partial$ is non-trivial   if and only if  $\dim H^1(\Gamma ; \MM^+_{\lambda^n})> \dim H^0(\Gamma ; \MM^+_{\lambda^n})$. 
It follows from Lemma~\ref{prop:dualityDelta0} and Equation~\eqref{eqn:V[Z]}  that
the $\mathbf C[\mathbf Z]$-module  $H_0(\Gamma;\MM_1^{-}[\mathbf Z])$ is torsion, i.e.\ it is a finite
dimensional  $\mathbf C$-vector space.
 Hence,
 by exactness,
 $\operatorname{rank} \partial =\dim H_0(\Gamma;\MM^-_{\lambda^{-n}} )$ and
 \begin{align*}
 \dim \ker \partial&= \dim  H_1(\Gamma ; \MM^-_{\lambda^{-n}}) -  \operatorname{rank} \partial
 \\
 &= \dim   H_1(\Gamma ; \MM^-_{\lambda^{-n}}) - \dim  H_0(\Gamma ; \MM^-_{\lambda^{-n}})
 \\ 
 &= \dim   H^1(\Gamma ; \MM^+_{\lambda^{n}}) - \dim  H^0(\Gamma ; \MM^+_{\lambda^{n}}),
  \end{align*}
 by Kronecker duality~\eqref{eqn:Kronecker}, which proves the claim.
  Of course the same proof applies by symmetry for the opposite signs $\pm$ and $\mp$.
 \end{proof}
 
 \begin{proof}[Proof of Theorem~\ref{thm:nec}]
By Lemmas~\ref{lem:dimh1-dimh0} and \ref{lem:delta10} we get that, if
$\rho_\lambda$ can be deformed to irreducible
representations, then $\Delta_1^+(\lambda^n)=0$ or
$\Delta_1^-(\lambda^{-n})=0$.  
Corollary~\ref{cor:Alex-sym} yields
$\Delta_1^+(\lambda^n)=\Delta_1^-(\lambda^{-n})=0$.
 \end{proof}

 \begin{remark}\label{rem:bothinequalities} Notice that in the situation of Theorem~\ref{thm:nec}, from
   Lemma~\ref{lem:delta10} and Corollary~\ref{cor:Alex-sym} we get both inequalities  
  $$\dim H^1(\Gamma ; \MM^\pm_{\lambda^{\pm n}})> \dim H^0(\Gamma ; \MM^\pm_{\lambda^{\pm n}}).
  $$
\end{remark}

   We will later need the following construction. 
  Given a 1-cochain $c\in C^1(\Gamma;\MM^+_{\lambda^n})$, i.e.\  a map 
$c\co \Gamma\to  \MM^+_{\lambda^n}$,
consider the  map 
$\rho_\lambda^c\co\Gamma\to \mathrm{SL}_n(\mathbf C)$ given by
\begin{equation}
 \label{eqn:prerep}
\rho_\lambda^c(\gamma)=
\begin{pmatrix}
	  \operatorname{Id}_a & c (\gamma) \\
	  0 & \operatorname{Id}_b
       \end{pmatrix}
       \rho_\lambda(\gamma),\qquad\forall\gamma\in\Gamma.
\end{equation}

\begin{lemma}
\label{lem:repcocycle}
The map $\rho_\lambda^c\co\Gamma\to \mathrm{SL}_n(\mathbf C)$ given by
\eqref{eqn:prerep} is a representation  if and only if $c$ is a cocycle, i.e.\
$c\in Z^1(\Gamma;   \MM^+_{\lambda^n})$.

In addition, assuming that it is a representation,  $\rho_\lambda^c$ is
conjugate to $\rho_\lambda$ if and only if $c$ is a coboundary (i.e.\ $c\in
B^1(\Gamma;   \MM^+_{\lambda^n})$), 
if and only if $\rho_\lambda^c$ is completely reducible.           
\end{lemma}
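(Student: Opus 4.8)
The plan is to verify the three equivalences directly by matrix computation, then invoke the structural results already established. First I would compute the product
$\rho_\lambda^c(\gamma_1)\rho_\lambda^c(\gamma_2)$ using the block form \eqref{eqn:prerep}. Writing $\rho_\lambda(\gamma) = \operatorname{diag}(\lambda^{b\varphi(\gamma)}\alpha(\gamma), \lambda^{-a\varphi(\gamma)}\beta(\gamma))$, the upper-triangular unipotent factor $\begin{psm} \operatorname{Id}_a & c(\gamma) \\ 0 & \operatorname{Id}_b\end{psm}$ interacts with $\rho_\lambda$ by conjugation: one finds $\rho_\lambda(\gamma_1)\begin{psm}\operatorname{Id}_a & c(\gamma_2)\\0&\operatorname{Id}_b\end{psm} = \begin{psm}\operatorname{Id}_a & \gamma_1\cdot c(\gamma_2)\\0&\operatorname{Id}_b\end{psm}\rho_\lambda(\gamma_1)$, where $\gamma_1\cdot c(\gamma_2) = \lambda^{n\varphi(\gamma_1)}\alpha(\gamma_1)c(\gamma_2)\beta(\gamma_1^{-1})$ is precisely the $\MM^+_{\lambda^n}$-action from \eqref{eq:M^+}. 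Hence $\rho_\lambda^c(\gamma_1)\rho_\lambda^c(\gamma_2) = \begin{psm}\operatorname{Id}_a & c(\gamma_1)+\gamma_1\cdot c(\gamma_2)\\0&\operatorname{Id}_b\end{psm}\rho_\lambda(\gamma_1\gamma_2)$, which equals $\rho_\lambda^c(\gamma_1\gamma_2)$ if and only if $c(\gamma_1\gamma_2) = c(\gamma_1) + \gamma_1\cdot c(\gamma_2)$, i.e.\ the cocycle condition. This gives the first equivalence.

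For the second equivalence, assume $c\in Z^1(\Gamma;\MM^+_{\lambda^n})$. Conjugating $\rho_\lambda^c$ by a block-upper-triangular unipotent matrix $g = \begin{psm}\operatorname{Id}_a & B\\0&\operatorname{Id}_b\end{psm}$ with $B\in M_{a\times b}(\mathbf C)$, a computation analogous to the one above shows $g\,\rho_\lambda^c(\gamma)\,g^{-1} = \rho_\lambda^{c'}(\gamma)$ with $c'(\gamma) = c(\gamma) + (B - \gamma\cdot B) = c(\gamma) - (\gamma\cdot B - B)$. Thus $\rho_\lambda^c$ is conjugate (via such a $g$) to $\rho_\lambda = \rho_\lambda^0$ exactly when $c$ is a coboundary. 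For the converse—that conjugacy by an \emph{arbitrary} element of $\mathrm{SL}_n(\mathbf C)$ forces $c\in B^1$—I would argue via complete reducibility: if $\rho_\lambda^c$ is conjugate to $\rho_\lambda$, then $\rho_\lambda^c$ is completely reducible (being conjugate to the completely reducible $\rho_\lambda$), and conversely if $\rho_\lambda^c$ is completely reducible then the invariant subspace $\mathbf C^a\oplus 0$ (fixed by the unipotent part, hence by all of $\rho_\lambda^c$ after projecting) has an invariant complement, which up to conjugacy splits $\rho_\lambda^c$ as $\rho_\lambda$ and forces $c$ to be cohomologous to $0$. Concretely: $\rho_\lambda^c$ always preserves $W = \mathbf C^a\oplus 0$, and an invariant complement $W'$ is the graph of some $B\co \mathbf C^b\to\mathbf C^a$; invariance of $W'$ translates exactly into $\gamma\cdot B - B = c(\gamma)$, so $c\in B^1$.

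The main obstacle I expect is the careful bookkeeping in the third equivalence—specifically, showing that complete reducibility of $\rho_\lambda^c$ implies $c\in B^1$ without circularity. The clean way is: since $\rho_\lambda^c$ restricted to the unipotent radical direction forces $W = \mathbf C^a\oplus 0$ to be the unique minimal invariant subspace on which the "new" part of $\rho_\lambda^c$ acts trivially relative to $\rho_\lambda$, complete reducibility gives an invariant complement $W'$; since $W'$ projects isomorphically onto $\mathbf C^b$ it is the graph of a linear map $B$, and the invariance condition $\rho_\lambda^c(\gamma)W' = W'$ unwinds (using the block form of $\rho_\lambda^c(\gamma)$) to the equation $c(\gamma) = \gamma\cdot B - B$ for all $\gamma$, i.e.\ $c = \delta B\in B^1(\Gamma;\MM^+_{\lambda^n})$. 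Combining with the earlier direction—$c\in B^1 \Rightarrow \rho_\lambda^c$ conjugate to $\rho_\lambda$ (which is completely reducible since $\alpha,\beta$ are irreducible, so $\rho_\lambda$ is a sum of irreducibles)—closes the cycle of three equivalences.
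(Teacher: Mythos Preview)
Your argument is correct; the first two equivalences are handled exactly as in the paper (direct matrix computation), and your treatment of the third equivalence is a genuinely different, more elementary route. The paper does not unwind the invariant-complement condition; instead it invokes the structural fact from \cite{LM85} that each fibre of the projection $R(\Gamma,\mathrm{SL}_n(\mathbf C))\to X(\Gamma,\mathrm{SL}_n(\mathbf C))$ contains a unique closed orbit, namely the orbit of a completely reducible representation. Since $\rho_\lambda^c$ and $\rho_\lambda$ visibly have the same character, complete reducibility of $\rho_\lambda^c$ then forces it to lie in the orbit of $\rho_\lambda$, hence to be conjugate to it. Your approach avoids this GIT input by observing directly that an invariant complement to $\mathbf C^a\oplus 0$ must be the graph of some $B\in M_{a\times b}(\mathbf C)$, and that graph-invariance unwinds to the coboundary equation (up to a harmless sign: the computation actually yields $c(\gamma)=B-\gamma\cdot B$, so $c=\delta(-B)$). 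The paper's route is shorter once the cited result is in hand; yours is self-contained and makes the linear-algebraic content explicit.
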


\begin{proof}
The equivalence between being a representation and the cocycle condition is a
straightforward computation, as well as the equivalence between being conjugate
to $\rho_\lambda$
 and the coboundary condition. The equivalence with complete reducibility comes
from the fact that there is a unique orbit of completely reducible representations in the
fibre of the 
map $R(\Gamma, \mathrm{SL}_n(\mathbf C))\to X(\Gamma, \mathrm{SL}_n(\mathbf C))$
\cite{LM85}.
Hence two completely reducible representations having the same character are
conjugates.
\end{proof}

The following corollary generalizes a result of G.~Burde \cite{Burde1967} and
G.~de Rham \cite{dR1967}:

\begin{cor}\label{cor:Burde-deRham}
There exists a reducible, not completely reducible representation
$\rho^c_\lambda\co\Gamma\to\mathrm{SL}_n(\mathbf C)$ such that 
$\chi_{\rho^c_\lambda}=\chi_{\rho_\lambda}$ 
if and only if $\lambda^n$ is a root of the product of twisted Alexander
polynomials $\Delta_1^+(t) \Delta_0^+(t)$.
\end{cor}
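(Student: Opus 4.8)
The plan is to combine Lemma~\ref{lem:repcocycle} with the homological computations of this section. By Lemma~\ref{lem:repcocycle}, a representation $\rho^c_\lambda$ of the form \eqref{eqn:prerep} exists which is reducible but \emph{not} completely reducible precisely when $c \in Z^1(\Gamma; \MM^+_{\lambda^n}) \smallsetminus B^1(\Gamma; \MM^+_{\lambda^n})$, and any such $\rho^c_\lambda$ automatically has the same character as $\rho_\lambda$ (since $\rho^c_\lambda$ is conjugate, after block-triangular manipulation, to a representation with the same diagonal blocks, or more directly because characters are insensitive to the unipotent part). Hence the existence of such a $\rho^c_\lambda$ is equivalent to $H^1(\Gamma; \MM^+_{\lambda^n}) \neq 0$. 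The task therefore reduces to showing that $H^1(\Gamma; \MM^+_{\lambda^n}) \neq 0$ if and only if $\lambda^n$ is a root of $\Delta_1^+(t)\,\Delta_0^+(t)$.

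For that equivalence I would argue as follows. First, by Corollary~\ref{cor:Alex-sym} (or directly Lemma~\ref{prop:dualityDelta0} applied to $\MM^-_1$, which is completely reducible by Corollary~\ref{cor:reductivity}), $H_0(X; \MM^+_1[\mathbf Z])$ is a torsion $\mathbf C[\mathbf Z]$-module, so $\Delta_0^+$ is a genuine nonzero polynomial. Now run the same long exact sequence argument as in the proof of Lemma~\ref{lem:delta10}, but for the module $\MM^+_1[\mathbf Z]$ and the element $(t-\lambda^n)$, i.e.\ use
\[
0 \to \MM^+_1[\mathbf Z] \xrightarrow{(t-\lambda^n)\cdot} \MM^+_1[\mathbf Z] \to \MM^+_{\lambda^n} \to 0.
\]
The associated long exact sequence in homology shows that $H_1(\Gamma; \MM^+_{\lambda^n}) \neq 0$ if and only if either $(t-\lambda^n)$ fails to be injective on $H_1(X; \MM^+_1[\mathbf Z])$ — equivalently $\lambda^n$ is a root of $\Delta_1^+(t)$ — or $(t-\lambda^n)$ fails to be surjective on $H_0(X; \MM^+_1[\mathbf Z])$ — equivalently, since that module is torsion, $\lambda^n$ is a root of $\Delta_0^+(t)$. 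Combining these gives: $H_1(\Gamma; \MM^+_{\lambda^n}) \neq 0$ if and only if $\lambda^n$ is a root of $\Delta_1^+(t)\,\Delta_0^+(t)$. Finally, Kronecker duality \eqref{eqn:Kronecker} identifies $H_1(\Gamma; \MM^+_{\lambda^n})$ with $H^1(\Gamma; \MM^-_{\lambda^{-n}})^*$; to land on $H^1(\Gamma;\MM^+_{\lambda^n})$ one instead applies the same argument to $\MM^-_{\lambda^{-n}}$ using $\Delta_1^-, \Delta_0^-$, and then invokes Corollary~\ref{cor:Alex-sym} to translate the roots of $\Delta_i^-(t^{-1})$ at $\lambda^{-n}$ into roots of $\Delta_i^+(t)$ at $\lambda^n$. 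Either route works; I would present whichever keeps the bookkeeping of signs cleanest.

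The main obstacle, such as it is, is purely bookkeeping: one must be careful about which of $\MM^+$ or $\MM^-$ the Alexander polynomials $\Delta_i^+$ refer to (recall $\Delta_i^+ = \Delta_i^{\alpha\otimes\beta^*}$ is associated to $\MM^+_1$ as a homology order, while cohomology of $\MM^+_{\lambda^n}$ relates by Kronecker duality to homology of $\MM^-_{\lambda^{-n}}$), and about the fact that the relevant statement is a root of a \emph{product}, which forces the two-case analysis (injectivity failure in degree $1$ versus surjectivity failure in degree $0$). No new ideas beyond the long exact sequence of Lemma~\ref{lem:delta10} and the duality corollaries already established are needed.
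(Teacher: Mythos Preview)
Your approach is essentially the paper's: reduce via Lemma~\ref{lem:repcocycle} to the nonvanishing of $H^1(\Gamma;\MM^\pm_{\lambda^{\pm n}})$, pass to homology by Kronecker duality~\eqref{eqn:Kronecker}, read off the $(t-\lambda^{\pm n})$-torsion of $H_0$ and $H_1$ from the long exact sequence of Lemma~\ref{lem:delta10}, and close with Corollary~\ref{cor:Alex-sym}. One small point of comparison: the paper's first step is phrased as a disjunction---such a representation exists iff $H^1(\Gamma;\MM^+_{\lambda^n})\neq 0$ \emph{or} $H^1(\Gamma;\MM^-_{\lambda^{-n}})\neq 0$---because a non-semisimple representation with character $\chi_{\rho_\lambda}$ could be block upper \emph{or} lower triangular; you tacitly restrict to the upper form~\eqref{eqn:prerep} and then, as you correctly note in your final paragraph, recover the other via duality. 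Since Corollary~\ref{cor:Alex-sym} makes the two conditions equivalent anyway, nothing is lost.
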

\begin{proof}
By Lemma~\ref{lem:repcocycle}, such a representation exists if and only if 
$H^1(\Gamma;   \MM^+_{\lambda^n})$ or $H^1(\Gamma;   \MM^-_{\lambda^{-n}})$ does
not vanish.
By Kronecker duality this is equivalent to saying that  $H_1(\Gamma;  
\MM^+_{\lambda^n})$ or $H_1(\Gamma;   \MM^-_{\lambda^{-n}})$ does not vanish.
Then, the long exact sequence in the proof of Lemma~\ref{lem:delta10} shows that
this is equivalent to one of
$ H_1(\Gamma;\MM_1^{\pm }[\mathbf Z])$ or $ H_0(\Gamma;\MM_1^{\pm }[\mathbf Z])$
to have $(t-\lambda^{\pm n})$-torsion. With the duality of polynomials, Corollary~\ref{cor:Alex-sym}, this proves the lemma.
\end{proof}   
 
This corollary also applies when  $\alpha=\beta=1$ and $\lambda= \pm 1$. Since $\Delta_0(t)=(t-1)$, the vanishing $\Delta_0((\pm 1)^2)=0$  corresponds to the representations
 $$
 \gamma\mapsto \pm \begin{pmatrix}
                    1 &  d(\gamma)\\
                    0 & 1
                   \end{pmatrix},
 $$
 $\forall\gamma\in\Gamma$,
 where $d\co\Gamma\to(\mathbf C,+)$ is any group morphism.

 \section{Infinitesimal deformations and cup products}
\label{sec:inf_def_cup}

Throughout this section and the next one we assume the hypothesis of
Theorem~\ref{thm:suf},
namely that $\Delta_0^+(\lambda^n)\neq 0$  and  $\lambda^n$ is a simple root of 
$\Delta_1^+$.
By Corollary~\ref{cor:Alex-sym}, we also have that $\Delta_0^-(\lambda^{-n})\neq
0$  and  $\lambda^{-n}$ is a simple root of  $\Delta_1^-$.
Thus 
 the $\mathbf C[\mathbf Z]$-module
$H_0(\Gamma ; \MM^\pm_1[\mathbf Z])$ has no $(t-\lambda^{\pm n})$-torsion and
$H_1(\Gamma ; \MM^\pm_1[\mathbf Z])$  has a single $\mathbf C[t^{\pm 1}]/(t-
\lambda^{\pm n})$-factor. Furthermore, the following proposition gives more details on the cohomology.

\begin{prop}\label{prop:dim-H-M-lambda}
If $\Delta^+_0(\lambda^{n})\neq 0$ and $\lambda^n$ is a simple root of
$\Delta_1^+$, 
  then
  \begin{itemize}
   \item[(i)] \(
  \dim H^i(\Gamma ; \MM^\pm_{\lambda^{\pm n}})=
  \begin{cases}
  1 & \text{ if $i=1,2$,}\\
  0 & \text{ otherwise.}
  \end{cases}
  \)
  \item[(ii)] The $(t-\lambda^{\pm n})$-torsion of 
 $H^q(\Gamma; \MM^\pm_1[\mathbf Z])$ is zero for $q\neq 2$ and cyclic of the form 
 $\mathbf C[\mathbf Z]/(t- \lambda^{\pm n})$ for $q=2$.
  \end{itemize}

\end{prop}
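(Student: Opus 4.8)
The plan is to describe the twisted Alexander modules $A_q^{\pm}:=H_q(X_\infty;\MM^{\pm}_1)\cong H_q(X;\MM^{\pm}_1[\mathbf Z])$ as modules over the principal ideal domain $\mathbf C[\mathbf Z]\cong\mathbf C[t^{\pm1}]$, then feed this into the Bockstein long exact sequence used in Lemma~\ref{lem:delta10} together with Kronecker duality \eqref{eqn:Kronecker} to get~(i), and into a universal coefficient argument to get~(ii). Observe first that by Corollary~\ref{cor:Alex-sym} the hypothesis is symmetric under $\pm$: we have $\Delta_0^{\pm}(\lambda^{\pm n})\neq 0$ and $\lambda^{\pm n}$ is a \emph{simple} root of $\Delta_1^{\pm}$, in particular $\Delta_1^{\pm}\not\doteq 0$.

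Next I would pin down $A_0^{\pm}$ and $A_1^{\pm}$. Since $\Delta_1^{\pm}\not\doteq 0$, the Euler characteristic argument in the proof of Theorem~\ref{thm:duality} (were $\Delta_2^{\pm}=0$, then $\Delta_1^{\pm}=0$) forces $\Delta_2^{\pm}\doteq 1$, i.e.\ $A_2^{\pm}=0$; moreover $A_q^{\pm}=0$ for $q\geq 3$ because $X$ has the homotopy type of a $2$-complex. The module $\MM^{\pm}_1$ is completely reducible (Corollary~\ref{cor:reductivity}), so Lemma~\ref{prop:dualityDelta0} applies: $A_0^{\pm}$ is a torsion $\mathbf C[\mathbf Z]$-module, finite-dimensional over $\mathbf C$, of order $\Delta_0^{\pm}$; since $\Delta_0^{\pm}(\lambda^{\pm n})\neq 0$, multiplication by $(t-\lambda^{\pm n})$ is an injective, hence bijective, endomorphism of $A_0^{\pm}$, so $A_0^{\pm}/(t-\lambda^{\pm n})A_0^{\pm}=0$ and $A_0^{\pm}$ has no $(t-\lambda^{\pm n})$-torsion. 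Finally $A_1^{\pm}$ is torsion of order $\Delta_1^{\pm}$; writing $A_1^{\pm}\cong\bigoplus_j\mathbf C[\mathbf Z]/(p_j)$ with $\prod_j p_j\doteq\Delta_1^{\pm}$, the simple-root hypothesis says exactly one $p_j$ is divisible by $(t-\lambda^{\pm n})$, and only to the first power. A short computation with these primary decompositions then shows that both the $(t-\lambda^{\pm n})$-torsion submodule of $A_1^{\pm}$ and the quotient $A_1^{\pm}/(t-\lambda^{\pm n})A_1^{\pm}$ are cyclic, isomorphic to $\mathbf C[\mathbf Z]/(t-\lambda^{\pm n})$, hence one-dimensional over $\mathbf C$.

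For~(i) I would insert this into the long exact sequence associated to the short exact sequence of $\Gamma$-modules $0\to\MM^{\pm}_1[\mathbf Z]\xrightarrow{(t-\lambda^{\pm n})\cdot}\MM^{\pm}_1[\mathbf Z]\to\MM^{\pm}_{\lambda^{\pm n}}\to 0$, exactly as in the proof of Lemma~\ref{lem:delta10}; this gives short exact sequences $0\to A_q^{\pm}/(t-\lambda^{\pm n})A_q^{\pm}\to H_q(\Gamma;\MM^{\pm}_{\lambda^{\pm n}})\to {}_{(t-\lambda^{\pm n})}A_{q-1}^{\pm}\to 0$. Plugging in the previous paragraph yields $\dim_{\mathbf C}H_q(\Gamma;\MM^{\pm}_{\lambda^{\pm n}})=1$ for $q=1,2$ and $0$ otherwise. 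Kronecker duality \eqref{eqn:Kronecker}, $H_q(X;\MM^{\pm}_{\lambda^{\pm n}})\cong H^q(X;\MM^{\mp}_{\lambda^{\mp n}})^*$, then transfers this to cohomology (one reads off the cohomology of the $-$ module from the $+$ homology computation and vice versa), giving~(i).

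For~(ii) I would invoke the universal coefficient theorem over the PID $\mathbf C[\mathbf Z]$ for the infinite cyclic cover, as in Kirk--Livingston, \S5. Since every $A_q^{\pm}$ is a torsion module, $\hom_{\mathbf C[\mathbf Z]}(A_q^{\pm},\mathbf C[\mathbf Z])=0$, and the universal coefficient sequence collapses to $H^q(\Gamma;\MM^{\pm}_1[\mathbf Z])\cong\Ext^1_{\mathbf C[\mathbf Z]}(A_{q-1}^{\pm},\mathbf C[\mathbf Z])$; as $\Ext^1_{\mathbf C[\mathbf Z]}(\mathbf C[\mathbf Z]/(p),\mathbf C[\mathbf Z])\cong\mathbf C[\mathbf Z]/(p)$, the module $H^q(\Gamma;\MM^{\pm}_1[\mathbf Z])$ is abstractly isomorphic as a $\mathbf C[\mathbf Z]$-module to $A_{q-1}^{\pm}$. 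Hence its $(t-\lambda^{\pm n})$-torsion equals that of $A_{q-1}^{\pm}$: it is zero for $q\neq 2$ (using $A_{-1}^{\pm}=0$, that $A_0^{\pm}$ has no $(t-\lambda^{\pm n})$-torsion, and $A_q^{\pm}=0$ for $q\geq 2$), and for $q=2$ it is ${}_{(t-\lambda^{\pm n})}A_1^{\pm}\cong\mathbf C[\mathbf Z]/(t-\lambda^{\pm n})$. Alternatively,~(ii) can be deduced from~(i) without the universal coefficient theorem, by running the cohomology analogue of the above Bockstein sequence and using that each $H^q(\Gamma;\MM^{\pm}_1[\mathbf Z])$ is $\mathbf C[\mathbf Z]$-torsion (flat base change to $\mathbf C(t)$ kills all the $A_q^{\pm}$, hence all the cohomology). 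The commutative-algebra inputs (primary decomposition over a PID, $\Ext^1_R(R/(p),R)\cong R/(p)$, bijectivity of $(t-\lambda)$ on a finite-dimensional module with no $(t-\lambda)$-torsion) are routine; the step I expect to require the most care is the bookkeeping of the dualities over $\mathbf C[\mathbf Z]$ for the infinite cyclic cover, since the universal coefficient and Poincar\'e--Lefschetz identifications involve both the swap $\MM^+\leftrightarrow\MM^-$ and a substitution $t\mapsto t^{-1}$, and one must check that neither disturbs the stated conclusions (it does not: the torsion in~(ii) is a single cyclic factor in degree $2$, and~(i) is a statement about dimensions).
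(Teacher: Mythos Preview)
Your proposal is correct and follows essentially the same route as the paper: analyze the $(t-\lambda^{\pm n})$-torsion of the twisted Alexander modules $A_q^{\pm}$, feed this into the Bockstein long exact sequence plus Kronecker duality for~(i), and into the universal coefficient theorem over $\mathbf C[\mathbf Z]$ for~(ii). One notational slip to fix: the UCT identification you wrote, $H^q(\Gamma;\MM^{\pm}_1[\mathbf Z])\cong\Ext^1_{\mathbf C[\mathbf Z]}(A_{q-1}^{\pm},\mathbf C[\mathbf Z])$, should carry the opposite sign on the right and the conjugate $\mathbf C[\mathbf Z]$-structure on the left (exactly the swap $\MM^+\leftrightarrow\MM^-$ and $t\mapsto t^{-1}$ you flag at the end); the paper handles this via the identity $-\lambda^{-n}t(t^{-1}-\lambda^n)=(t-\lambda^{-n})$, and once you track it through, the conclusion is unchanged.
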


\begin{proof}
%
In order to prove  the first assertion, we use  the long exact sequence  in the proof of
Lemma~\ref{lem:delta10}. The hypothesis on the twisted Alexander polynomials
gives that 
the $(t-\lambda^{\pm n})$-torsion of 
$H_i(\Gamma,  \MM_1^\pm[\mathbf Z])$ is zero for $i\neq 1$ and $t-\lambda^{\pm n}$ for
$i=1$. The long exact sequence gives
  \[
  \dim H_i(X ; \MM^\pm_{\lambda^{\pm n}})=
  \begin{cases}
  1 & \text{ if $i=1,2$,}\\
  0 & \text{ otherwise.}
  \end{cases}
  \]
 Hence the first assertion follows from Kronecker duality \eqref{eqn:Kronecker}.
 
For the second assertion, we use
the universal coefficient theorem for cohomology: for  any
{representation $\rho\co\Gamma\to\mathrm{GL}(V)$ we have:}
\begin{align}\label{eq:universal-coeff}
\overline H^q(X; V^*[\mathbf Z]) \cong & \hom_{\mathbf{C}[\mathbf{Z}]}\big( H_q(X;
V[\mathbf{Z}]) , \mathbf{C}[\mathbf{Z}]\big) \notag\\  & \qquad \oplus 
 \mathrm{Ext}_{\mathbf{C}[\mathbf{Z}]}\big( H_{q-1}(X; V[\mathbf{Z}]) ,
\mathbf{C}[\mathbf{Z}] \big)
\end{align}
where $\overline H^q(X; V^*[\mathbf Z])$ denotes the group $H^q(X; V^*[\mathbf Z])$
with the conjugate
$\mathbf{C}[\mathbf{Z}]$-module structure. For a detailed argument see
\cite[p.~638-639]{KirkLivingston99}. 
We apply \eqref{eq:universal-coeff} to the representation 
$\alpha\otimes \beta^*$ and its dual $\beta\otimes\alpha^*$.
The hypothesis on the twisted Alexander polynomials gives
that 
the $(t-\lambda^{\pm n})$-torsion of 
$H_i(\Gamma,  \MM_1^\pm[\mathbf Z])$ is zero for $i\neq 1$ and $t-\lambda^{\pm n}$ for
$i=1$.
Notice that $H_q(X; \MM^+_{1}[\mathbf{Z}])$ are torsion $\mathbf{C}[\mathbf{Z}]$-modules and,
regarding the conjugation of the action, 
$-\lambda^{-n}t(t^{-1}-\lambda^n)=(t-\lambda^{-n})$.
\end{proof}

From now on we   fix cocycles $$d_\pm\in Z^1(\Gamma;\MM^\pm_{\lambda^{\pm n}})$$ whose cohomology class do  not vanish.
Because $H^1(\Gamma;\MM^\pm_{\lambda^{\pm n}})\cong\mathbf{C}$, the elements
$d_\pm$ are unique up to adding a coboundary and up to multiplying by a non-zero scalar. Our next goal is to show that the cohomology class
of the cup product 
$\varphi\mathop{\smallcup} d_\pm$ does not vanish in $H^2(\Gamma; \MM^\pm_{\lambda^{\pm n}} )$. For that purpose we shall use the dual numbers.

\paragraph{Dual numbers.}
The \emph{algebra of dual numbers} is defined to be
$$
\mathbf C_{\varepsilon}= \mathbf C[\varepsilon ]/\varepsilon^2.
$$

Similarly define $\mathbf{C}_\varepsilon[\mathbf Z] = \mathbf{C}[\mathbf Z] \otimes_\mathbf{C} \mathbf{C}_\varepsilon$
and
\[ 
\MM^\pm_{\lambda^{\pm n}(1\pm\varepsilon)} 
= \big(\MM^\pm_1[\mathbf Z]  \otimes_{\mathbf C} \mathbf{C}_\varepsilon\big)
/\big(t-\lambda^{\pm n}(1\pm\varepsilon)\big) \,.
\]
 \begin{lemma}\label{lem:dim-H-epsilon}
  If  $\lambda^n\in \mathbf C^*$ is a simple root of $\Delta_1^+$ such that $\Delta^+_0(\lambda^{n})\neq 0$, 
  then $  \dim H^1(\Gamma ; \MM^\pm_{\lambda^{\pm n}(1\pm\varepsilon)})=1$.
   \end{lemma}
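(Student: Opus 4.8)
The plan is to realize $\MM^+_{\lambda^n(1+\varepsilon)}$ as a cokernel over the dual numbers and run a long exact sequence, in the spirit of the proofs of Lemma~\ref{lem:delta10} and Proposition~\ref{prop:dim-H-M-lambda}; the computation then reduces to linear algebra over $\mathbf C_\varepsilon$. By Corollary~\ref{cor:Alex-sym} the hypotheses of the lemma are equivalent to the corresponding statement for the sign $-$ (namely $\Delta^-_0(\lambda^{-n})\neq0$ and $\lambda^{-n}$ is a simple root of $\Delta^-_1$), so it is enough to treat the sign $+$; the sign $-$ follows by the identical argument with $\lambda^n$ and $\MM^+$ replaced by $\lambda^{-n}$ and $\MM^-$. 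Write $\mu=\lambda^n$ and $\hat M=\MM^+_1[\mathbf{Z}]\otimes_{\mathbf C}\mathbf C_\varepsilon$. First I would note that, since $\MM^+_1[\mathbf{Z}]$ is free over $\mathbf C[\mathbf{Z}]$, the module $\hat M$ is free over $\mathbf C_\varepsilon[\mathbf{Z}]\cong\mathbf C_\varepsilon[t^{\pm1}]$, and that $t-\mu(1+\varepsilon)$, being monic of degree one, is a non-zero-divisor in $\mathbf C_\varepsilon[t^{\pm1}]$; hence multiplication by $t-\mu(1+\varepsilon)$ is injective on $\hat M$, and its cokernel is $\MM^+_{\mu(1+\varepsilon)}$ by definition. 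This gives a short exact sequence of $\Gamma$-modules
\[
0\longrightarrow\hat M\xrightarrow{\;t-\mu(1+\varepsilon)\;}\hat M\longrightarrow\MM^+_{\mu(1+\varepsilon)}\longrightarrow 0 .
\]

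Then I would take the long exact cohomology sequence of this extension. As $\mathbf C_\varepsilon$ is free over $\mathbf C$ and $X$ is compact, $H^q(\Gamma;\hat M)\cong H^q(\Gamma;\MM^+_1[\mathbf{Z}])\otimes_{\mathbf C}\mathbf C_\varepsilon$ as $\mathbf C_\varepsilon[\mathbf{Z}]$-modules, and the segment around degree~$1$ reads
\[
H^1(\Gamma;\hat M)\xrightarrow{\,t-\mu(1+\varepsilon)\,}H^1(\Gamma;\hat M)\to H^1(\Gamma;\MM^+_{\mu(1+\varepsilon)})\to H^2(\Gamma;\hat M)\xrightarrow{\,t-\mu(1+\varepsilon)\,}H^2(\Gamma;\hat M),
\]
so that
\[
\dim H^1(\Gamma;\MM^+_{\mu(1+\varepsilon)})=\dim\operatorname{coker}\!\bigl(t-\mu(1+\varepsilon)\mid H^1(\Gamma;\hat M)\bigr)+\dim\ker\!\bigl(t-\mu(1+\varepsilon)\mid H^2(\Gamma;\hat M)\bigr).
\]

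The last step is to evaluate these two dimensions from the $\mathbf C[\mathbf{Z}]$-module structure of $H^\bullet(\Gamma;\MM^+_1[\mathbf{Z}])$, supplied by Proposition~\ref{prop:dim-H-M-lambda}. Each $H^q(\Gamma;\MM^+_1[\mathbf{Z}])$ is a torsion $\mathbf C[\mathbf{Z}]$-module: by the universal coefficient theorem~\eqref{eq:universal-coeff} it is assembled from $\operatorname{Hom}_{\mathbf C[\mathbf{Z}]}$ and $\operatorname{Ext}^1_{\mathbf C[\mathbf{Z}]}$ of the homology modules $H_\ast(X;\MM^-_1[\mathbf{Z}])$, all of them torsion (Lemma~\ref{prop:dualityDelta0}, together with $\Delta^-_1\neq0$ and $H_2(X_\infty;\MM^-_1)=0$), so the $\operatorname{Hom}$-terms vanish and the $\operatorname{Ext}^1$-terms are torsion. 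For a torsion $\mathbf C[\mathbf{Z}]$-module $P$, write $P\cong T\oplus Q$ with $T$ the $(t-\mu)$-primary part and $t-\mu$ invertible on $Q$; then $t-\mu(1+\varepsilon)=(t-\mu)\bigl(1-\mu\varepsilon(t-\mu)^{-1}\bigr)$ acts invertibly on $Q\otimes_{\mathbf C}\mathbf C_\varepsilon$, so only $T$ contributes to kernel and cokernel. By Proposition~\ref{prop:dim-H-M-lambda}(ii) the $(t-\mu)$-primary part of $H^1(\Gamma;\MM^+_1[\mathbf{Z}])$ vanishes, and that of $H^2(\Gamma;\MM^+_1[\mathbf{Z}])$ equals $\mathbf C[\mathbf{Z}]/(t-\mu)$ — here the \emph{simplicity} of the root enters. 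On $\bigl(\mathbf C[\mathbf{Z}]/(t-\mu)\bigr)\otimes_{\mathbf C}\mathbf C_\varepsilon\cong\mathbf C_\varepsilon$, identifying $t$ with $\mu$, multiplication by $t-\mu(1+\varepsilon)$ becomes multiplication by $-\mu\varepsilon$, whose kernel $\varepsilon\mathbf C_\varepsilon$ is one-dimensional since $\mu\neq0$. Hence the cokernel term above vanishes and the kernel term is $1$, so $\dim H^1(\Gamma;\MM^+_{\mu(1+\varepsilon)})=1$; the symmetric argument gives $\dim H^1(\Gamma;\MM^-_{\lambda^{-n}(1-\varepsilon)})=1$.

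The only point that needs care is the bookkeeping of the $\mathbf C[\mathbf{Z}]$-module structure of $H^\bullet(\Gamma;\MM^\pm_1[\mathbf{Z}])$ — especially that these modules are torsion and that the $(t-\mu)$-primary part of $H^2$ is $\mathbf C[\mathbf{Z}]/(t-\mu)$ rather than a longer cyclic module — which is exactly what Proposition~\ref{prop:dim-H-M-lambda} (via the universal coefficient theorem and the simplicity of the root) is designed to provide; everything after that is mechanical linear algebra over $\mathbf C_\varepsilon$.
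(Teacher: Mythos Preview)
Your proof is correct and follows essentially the same route as the paper: the same short exact sequence over $\mathbf C_\varepsilon[\mathbf Z]$, the same long exact cohomology sequence, the same appeal to Proposition~\ref{prop:dim-H-M-lambda}(ii) for the $(t-\lambda^{\pm n})$-primary structure of $H^q(\Gamma;\MM^\pm_1[\mathbf Z])$, and the same linear-algebra computation that multiplication by $-\lambda^{\pm n}\varepsilon$ on $\mathbf C_\varepsilon$ has one-dimensional kernel. The only differences are cosmetic---you spell out the injectivity of $t-\mu(1+\varepsilon)$ and the torsion argument in slightly more detail than the paper does.
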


\begin{proof}
Notice that $\Delta^-_0(\lambda^{-n})\neq 0$ and $\lambda^{-n}$ is a simple root of $\Delta_1^-$
by Corollary~\ref{cor:Alex-sym}.

 We have that $H^1(\Gamma ; \MM^+_1[\mathbf Z]\otimes \mathbf C_{\varepsilon})\cong H^1(\Gamma ; \MM^+_1[\mathbf Z])\otimes \mathbf C_{\varepsilon}$ since $\mathbf C_{\varepsilon}$ is a trivial $\Gamma$-module (isomorphic to $\mathbf C^2$).
As before, the short exact sequence
 \[0 \to \MM^+_1[\mathbf Z]  \otimes_{\mathbf C} \mathbf{C}_\varepsilon
 \xrightarrow{(t-\lambda^{n}(1+\varepsilon)) \cdot }
\MM^+_1[\mathbf Z]  \otimes_{\mathbf C} \mathbf{C}_\varepsilon \to \MM^+_{\lambda^{n}(1+\varepsilon)}\to 0
 \]
 gives a long exact sequence in cohomology (see \cite[III.\S6]{Brown1982}), 
 \begin{multline*}
 \cdots\to
 H^i(\Gamma ; \MM^+_1[\mathbf Z])\otimes \mathbf C_{\varepsilon}
 \xrightarrow{(t-(\lambda^{n}(1+\varepsilon)) \cdot }
 H^i(\Gamma ; \MM^+_1[\mathbf Z])\otimes \mathbf C_{\varepsilon}\\
 \to
 H^i(\Gamma ; \MM^+_{\lambda^{n}(1+\varepsilon)} )\to 
  H^{i+1}(\Gamma ; \MM^+_1[\mathbf Z])\otimes \mathbf C_{\varepsilon}\to\cdots.
\end{multline*}
Note that for $\mu\neq\lambda^n$, $\mu\in\mathbf C$,  multiplication by $t-\lambda^{n}(1+\varepsilon)$ induces an automorphism of
 $\mathbf{C}_\varepsilon[\mathbf Z] / (t-\mu)^k$.
Therefore, we are interested in the $(t-\lambda^n)$-torsion of $H^q(\Gamma ; \MM^+_1[\mathbf Z])$
described by Proposition~\ref{prop:dim-H-M-lambda}:
it vanishes for $q\neq 2$ and it is $\mathbf C[\mathbf Z]/(t- \lambda^{n})$ for $q=2$. 
Hence, multiplication by $(t-\lambda^{n}(1+\varepsilon))$ on $H^i(\Gamma ;
\MM^+_1[\mathbf Z]\otimes \mathbf C_\varepsilon)$ is an isomorphism for
$i\neq2$. In order to understand the effect of the multiplication on
$H^2(\Gamma ; \MM^+_1[\mathbf Z]\otimes \mathbf C_\varepsilon)$ it is sufficient
to consider 
 multiplication by $(t-\lambda^{n}(1+\varepsilon))$ on
 $$
 \mathbf C[\mathbf Z]/(t- \lambda^n)\otimes\mathbf C_\varepsilon \cong   \mathbf
C[\mathbf Z]/(t- \lambda^n) \oplus  \varepsilon\, \mathbf C[\mathbf Z]/(t-
\lambda^n) .
 $$

 Since $t-\lambda^n$ vanishes in this ring, multiplication by $(t-\lambda^{n}(1+\varepsilon))$ is equivalent to multiplication by $-\varepsilon\lambda^n$ on 
 $\mathbf C_\varepsilon\cong \mathbf{C}\oplus\varepsilon\,\mathbf{C} $. Therefore, its kernel and cokernel have $\mathbf C$-dimension $1$, which proves 
$\dim_{\mathbf C} H^1(\Gamma ; \MM^+_{\lambda^{n}(1+\varepsilon)}) =1$. 

By symmetry the same argument yields $\dim_{\mathbf C} H^1(\Gamma ; \MM^-_{\lambda^{-n}(1-\varepsilon)}) =1$. 
\end{proof}

\paragraph{Cup product and Bockstein homomorphism.} Let $A_{1}$, $A_{2}$ and $A_{3}$ be $\Gamma$-modules.
The cup product of two cochains $c_i \in C^1(\Gamma;A_{i})$, $i=1,2$ is the cochain
$c_1 \smallcup c_2 \in C^{2}(\Gamma;A_1\otimes A_2)$ defined by
\begin{equation}\label{eq:cupprod}
c_1 \smallcup c_2(\gamma_1,\gamma_2) := c_1(\gamma_1) \otimes
    \gamma_1\,c_2(\gamma_{2})\,.
\end{equation}
 Here $A_{1}\otimes A_{2}$ is a $\Gamma$-module via the diagonal
action.

It is possible to combine the cup product with any $\Gamma$-invariant, bilinear map
$b \co A_1\otimes A_2 \to A_3$.
So we obtain a cup product
$$
\sideset{_b}{}{\mathop{\smallcup}} \co
     C^1(\Gamma;A_{1}) \otimes C^1(\Gamma;A_{2}) \xrightarrow{\mathop{\smallcup}}
                  C^1(\Gamma;A_1\otimes A_2)\xrightarrow{b}C^{2}(\Gamma;A_3)\,. $$
                  For details see \cite[V.3]{Brown1982}.
In what follows we are mainly interested in the case where the bilinear form is simply the matrix multiplication, i.e.
\[ 
\mathbf C \otimes \MM^\pm_{\lambda^{\pm n}} \to \MM^\pm_{\lambda^{\pm n}}
\quad \text{ or } \quad
\mathfrak{sl}_a(\mathbf C)\otimes \MM^+_{\lambda^n} \to \MM^+_{\lambda^n}\,.
\]
Hence we will write simply ``$\smallcup$'' for such a cup-product when no confusion can arise.

     Let $b\co A_{1}\otimes A_{2}\to A_{3}$ be bilinear  and let
     $\tau\co A_{2}\otimes A_{1}\to A_{1}\otimes A_{2}$ be the
twist operator. Then for
        $c_i\in C^1(\Gamma;A_{i})$, $i=1,2$, we define
       the cup-product
       \[ \sideset{_{b\circ\tau}}{}{\mathop{\smallcup}}\co 
       C^1(\Gamma;A_{2}) \otimes C^1(\Gamma;A_{1}) \to
                  C^{2}(\Gamma;A_3)\,.\]
 Again we are mainly interested in matrix multiplication and we will write simply 
 ``$\sideset{_\tau}{}{\mathop{\smallcup}}$'' for such a cup-product when no confusion can arise.
 
 \begin{example}
 Let $c_a\in C^1(\Gamma ;  \mathfrak{sl}_a(\mathbf C))$ and 
 $d\in C^1(\Gamma ;  \MM^+_{\lambda^n})$ be given.
 Then 
 \begin{align*} 
 c_a\mathop{\smallcup} d (\gamma_1,\gamma_2) &= c_a(\gamma_1)\, \gamma_1 d(\gamma_2)
 =  \lambda^{n\varphi(\gamma_1)} c_a(\gamma_1) \alpha(\gamma_1) d(\gamma_2) \beta(\gamma_1)^{-1} 
  \intertext{and}
  d\sideset{_\tau}{}{\mathop{\smallcup}} c_a (\gamma_1,\gamma_2) &=
 \gamma_1 c_a(\gamma_2)\, d(\gamma_1)  =
 \alpha(\gamma_1) c_a (\gamma_2) \alpha(\gamma_1)^{-1} \, d(\gamma_1)\,.
 \end{align*}
 \end{example}
\begin{remark}\label{rem:notwist}
Note that if $z_a \in Z^1(\Gamma ;  \mathfrak{sl}_a(\mathbf C))$ and 
 $d_+\in Z^1(\Gamma ;  \MM^+_{\lambda^n})$ are cocycles then for $f\co\Gamma\to \MM^+_{\lambda^n}$
 given by $f(\gamma)= z_a(\gamma)\, d_+(\gamma)$ we have
 \[
 \delta f (\gamma_1,\gamma_2) +  z_a\mathop{\smallcup} d_+ (\gamma_1,\gamma_2) +
  d_+\sideset{_\tau}{}{\mathop{\smallcup}} z_a (\gamma_1,\gamma_2) =0
 \]
 i.e.\ $d_+\sideset{_\tau}{}{\mathop{\smallcup}} z_a\sim - z_a\mathop{\smallcup} d$
 in $C^2(\Gamma;\MM^+_{\lambda^n})$.
\end{remark}
 \begin{lemma}
   \label{lem:bockstein}
Consider the non-split exact sequence of $\Gamma$-modules
 $$
 0\to\MM^\pm_{\lambda^{\pm n}} \xrightarrow{\varepsilon\cdot} 
 \MM^\pm_{\lambda^{\pm n}(1\pm\varepsilon)} \longrightarrow 
 \MM^\pm_{\lambda^{\pm n}} \to 0\,.
 $$
 Then the image of the cohomology class represented by 
 $d_\pm$ (in  $H^1(\Gamma ;  \MM^\pm_{\lambda^{\pm n}})$) under the Bockstein
homomorphism is represented by the cup product $d_\pm\mathop{\smallcup}\varphi$
(in $H^2(\Gamma ;  \MM^\pm_{\lambda^{\pm n}})$).
   \end{lemma}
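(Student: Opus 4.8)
The plan is to compute the Bockstein homomorphism $\delta^*\co H^1(\Gamma;\MM^\pm_{\lambda^{\pm n}})\to H^2(\Gamma;\MM^\pm_{\lambda^{\pm n}})$ associated to the given extension by explicitly lifting a cocycle and measuring the failure of the lift to be a cocycle. I will write out the argument for the $+$ sign; the $-$ case is identical by symmetry. First I would describe the three terms concretely: an element of $\MM^+_{\lambda^n(1+\varepsilon)}=\bigl(\MM^+_1[\mathbf Z]\otimes_{\mathbf C}\mathbf C_\varepsilon\bigr)/(t-\lambda^n(1+\varepsilon))$ can be uniquely represented, modulo the relation, by $A+\varepsilon B$ with $A,B\in\MM^+_1$, and the $\Gamma$-action of $\gamma$ sends $A$ to $\lambda^{n\varphi(\gamma)}\alpha(\gamma)A\beta(\gamma)^{-1}$ — i.e.\ precisely the action on $\MM^+_{\lambda^n}$ — but the relation $t\equiv\lambda^n(1+\varepsilon)$ means that when we normalise $t^{\varphi(\gamma)}$ back to $\lambda^{n\varphi(\gamma)}$ we pick up an extra factor $(1+\varepsilon)^{\varphi(\gamma)}\equiv 1+\varphi(\gamma)\varepsilon$. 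So for $\gamma\in\Gamma$ acting on $A+\varepsilon B$ one gets $\gamma\cdot(A+\varepsilon B)=\gamma A+\varepsilon(\gamma B+\varphi(\gamma)\,\gamma A)$, where $\gamma A$ denotes the action on $\MM^+_{\lambda^n}$. The sub-$\Gamma$-module $\varepsilon\cdot\MM^+_{\lambda^n}$ and the quotient are both $\MM^+_{\lambda^n}$, and the sequence is non-split exactly because of that $\varphi(\gamma)$-term.

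Next I would run the snake/Bockstein recipe. Take a cocycle $d_+\in Z^1(\Gamma;\MM^+_{\lambda^n})$ representing the given class, and lift it set-theoretically to the cochain $\gamma\mapsto d_+(\gamma)+\varepsilon\cdot 0 = d_+(\gamma)$ in $C^1(\Gamma;\MM^+_{\lambda^n(1+\varepsilon)})$ (using the chosen splitting of vector spaces). Its coboundary $\delta$ in the middle module is, for $\gamma_1,\gamma_2\in\Gamma$,
\[
\delta d_+(\gamma_1,\gamma_2)=\gamma_1\cdot d_+(\gamma_2)-d_+(\gamma_1\gamma_2)+d_+(\gamma_1),
\]
where now $\gamma_1\cdot d_+(\gamma_2)$ is computed in $\MM^+_{\lambda^n(1+\varepsilon)}$, hence equals $\gamma_1 d_+(\gamma_2)+\varepsilon\,\varphi(\gamma_1)\,\gamma_1 d_+(\gamma_2)$ with $\gamma_1 d_+(\gamma_2)$ the action in $\MM^+_{\lambda^n}$. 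Since $d_+$ is a cocycle for $\MM^+_{\lambda^n}$, the $\varepsilon$-free part vanishes, leaving $\delta d_+(\gamma_1,\gamma_2)=\varepsilon\cdot\varphi(\gamma_1)\,\gamma_1 d_+(\gamma_2)$. Dividing by $\varepsilon$ to land in the sub-module, the Bockstein of $[d_+]$ is represented by the $2$-cochain $(\gamma_1,\gamma_2)\mapsto \varphi(\gamma_1)\,\gamma_1 d_+(\gamma_2)$. Comparing with the cup-product formula \eqref{eq:cupprod}, $d_+\smallcup\varphi(\gamma_1,\gamma_2)=d_+(\gamma_1)\otimes\gamma_1\varphi(\gamma_2)=\varphi(\gamma_2)\,d_+(\gamma_1)$ after applying scalar multiplication $\mathbf C\otimes\MM^+\to\MM^+$, while $\varphi\smallcup d_+(\gamma_1,\gamma_2)=\varphi(\gamma_1)\,\gamma_1 d_+(\gamma_2)$; since $\varphi$ is a \emph{homomorphism} (a cocycle with trivial coefficients) these two cup products are cohomologous up to sign — indeed $\varphi\smallcup d_+ + d_+\smallcup\varphi = \delta(\varphi\cdot d_+)$ by the standard coboundary identity for cup products, where $(\varphi\cdot d_+)(\gamma)=\varphi(\gamma)d_+(\gamma)$. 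Hence the Bockstein image equals the class of $d_+\smallcup\varphi$, as claimed (matching the paper's sign convention via this identity).

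The only genuinely delicate point is bookkeeping the $\Gamma$-module structure on $\MM^+_{\lambda^n(1+\varepsilon)}$ correctly — in particular that passing from the variable $t$ to the scalar $\lambda^n$ is what produces the $\varphi(\gamma)$-twist, and that this is exactly the discrepancy detected by the Bockstein. I expect that to be the main obstacle: one must be careful that the isomorphism $\MM^+_1[\mathbf Z]\otimes\mathbf C_\varepsilon/(t-\lambda^n(1+\varepsilon))\cong\MM^+_{\lambda^n}\oplus\varepsilon\MM^+_{\lambda^n}$ is only an isomorphism of \emph{vector spaces}, not of $\Gamma$-modules, and to track how the chosen splitting interacts with the action. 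Everything else — the Bockstein recipe, the cup-product coboundary identity, and the symmetric treatment of the $-$ sign — is routine. I would also remark that the non-splitness of the sequence is automatic: if it split as $\Gamma$-modules the Bockstein would be zero, whereas $d_+\smallcup\varphi$ need not vanish (and in the next sections it is shown not to), consistent with the claim.
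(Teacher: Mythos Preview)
Your proof is correct and follows essentially the same approach as the paper: both lift $d_+$ to $d_+ + \varepsilon\cdot 0$, compute the coboundary in $\MM^+_{\lambda^n(1+\varepsilon)}$ to obtain $\varepsilon\,\varphi(\gamma_1)\,\gamma_1 d_+(\gamma_2)$, and identify this with $\varphi\smallcup d_+$. You are in fact slightly more careful than the paper in two respects: you spell out explicitly that the vector-space splitting $\MM^+_{\lambda^n(1+\varepsilon)}\cong\MM^+_{\lambda^n}\oplus\varepsilon\MM^+_{\lambda^n}$ is not $\Gamma$-equivariant and record the resulting twist, and you address the passage from $\varphi\smallcup d_+$ to $d_+\smallcup\varphi$ via the coboundary identity (the paper's proof ends at $\varphi\smallcup d_+$ and leaves this graded-commutativity step implicit).
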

\begin{proof}
In order to calculate the Bockstein homomorphism
$\bock \co H^1(\Gamma ;  \MM^+_{\lambda^n})\to H^2(\Gamma ;  \MM^+_{\lambda^n})$
we proceed as follows (according to the snake lemma): given a cocycle $d_+ \in
Z^1(\Gamma ;  \MM^+_{\lambda^n})$  we choose a cochain
$\tilde d_+ \in  C^1(\Gamma; \MM^+_{\lambda^n(1+\varepsilon)})$ which projects
onto $d_+$
and then we calculate $\delta_\varepsilon \tilde d_+\in C^2(\Gamma ;
\MM^+_{\lambda^n(1+\varepsilon)})$ where $\delta_\varepsilon$ denotes the
coboundary operator of 
$C^*(\Gamma ; \MM^+_{\lambda^n(1+\varepsilon)})$.
Since $d_+$ is a cocycle we obtain $ \delta_\varepsilon \tilde d_+ =
\varepsilon\cdot z$ for a 2-cocycle
$z\in Z^2(\Gamma ;  \MM^+_{\lambda^n})$,
which represents the image of the Bockstein map.
By abusing notation, we also denote the map constructed in this way by
$\bock \co Z^1(\Gamma ;  \MM^+_{\lambda^n})\to Z^2(\Gamma ;  \MM^+_{\lambda^n})$,
even if it is only well defined in cohomology. In particular
$\bock(d_+)  \sim z$. 
In order to calculate  $ z \in Z^2(\Gamma ; \MM^{+}_{\lambda^n})$ 
we choose
$\tilde d_+ = d_+ +\varepsilon\, 0$:
\begin{align*}
\delta_\varepsilon \tilde d_+ (\gamma_1,\gamma_2) &=
\gamma_1\, \tilde d_+(\gamma_2) - \tilde d_+(\gamma_1\gamma_2) + \tilde d_+(\gamma_1)\\
&=\lambda^{ n\varphi(\gamma_1)}(1+\varepsilon\varphi(\gamma_1)) \alpha(\gamma_1)
d_+(\gamma_2) \beta(\gamma_1)^{-1}- d_+(\gamma_1\gamma_2) + d_+(\gamma_1)\\
&= \varepsilon \, \varphi(\gamma_1) \,\gamma_1 d_+(\gamma_2) = 
\varepsilon\cdot \varphi\mathop{\smallcup} d_+(\gamma_1,\gamma_2)\,.
\end{align*}
Therefore, $\bock (d_+)\sim  \varphi \smallcup d_+$. The
calculation for 
$\bock (d_-)\sim \varphi\smallcup d_-$ is similar.
\end{proof}

   \begin{cor}
   \label{cor:productnonzero}
    Assume $\dim_{\mathbf C} H^1(\Gamma;\MM^{\pm}_{\lambda^{\pm n}})=1$,
    $H^0(\Gamma;\MM^{\pm}_{\lambda^{\pm n}})=0$ and let $d_\pm\in Z^1(\Gamma; \MM^\pm_{\lambda^{\pm n}})$ be not cohomologous to zero. 
    
    Then 
    $\dim H^1(\Gamma;\MM^\pm _{\lambda^{\pm n}(1\pm\varepsilon)})=1$ iff the cup product $\varphi\smallcup d_{\pm}$ does not vanish in $H^2(\Gamma ; \MM^\pm_{\lambda^{\pm n}})$. 
   \end{cor}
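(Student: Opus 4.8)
The plan is to use the long exact sequence in cohomology associated with the non-split extension
\[
0\to\MM^\pm_{\lambda^{\pm n}}\xrightarrow{\varepsilon\cdot}\MM^\pm_{\lambda^{\pm n}(1\pm\varepsilon)}\to\MM^\pm_{\lambda^{\pm n}}\to 0
\]
from Lemma~\ref{lem:bockstein}, together with the identification of the Bockstein homomorphism $\bock$ with the cup product $d_\pm\mapsto\varphi\smallcup d_\pm$ proved there. First I would write down the relevant segment of the long exact sequence around degree~$1$:
\[
H^0(\Gamma;\MM^\pm_{\lambda^{\pm n}})\to H^1(\Gamma;\MM^\pm_{\lambda^{\pm n}})\xrightarrow{\varepsilon\cdot}H^1(\Gamma;\MM^\pm_{\lambda^{\pm n}(1\pm\varepsilon)})\to H^1(\Gamma;\MM^\pm_{\lambda^{\pm n}})\xrightarrow{\bock}H^2(\Gamma;\MM^\pm_{\lambda^{\pm n}}).
\]
By hypothesis $H^0(\Gamma;\MM^\pm_{\lambda^{\pm n}})=0$ and $\dim H^1(\Gamma;\MM^\pm_{\lambda^{\pm n}})=1$, so the map $\varepsilon\cdot\colon H^1(\Gamma;\MM^\pm_{\lambda^{\pm n}})\to H^1(\Gamma;\MM^\pm_{\lambda^{\pm n}(1\pm\varepsilon)})$ is injective, and the cokernel of $\varepsilon\cdot$ injects (via the projection) into $H^1(\Gamma;\MM^\pm_{\lambda^{\pm n}})$ with image exactly $\Ker\bock$.

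From exactness, $\dim H^1(\Gamma;\MM^\pm_{\lambda^{\pm n}(1\pm\varepsilon)})=\dim\image(\varepsilon\cdot)+\dim\Ker\bock=1+\dim\Ker\bock$, where $\bock$ is the map $H^1(\Gamma;\MM^\pm_{\lambda^{\pm n}})\to H^2(\Gamma;\MM^\pm_{\lambda^{\pm n}})$ sending the class of $d_\pm$ to the class of $\varphi\smallcup d_\pm$. Since $H^1(\Gamma;\MM^\pm_{\lambda^{\pm n}})$ is one-dimensional and $d_\pm$ generates it, $\Ker\bock=0$ precisely when $\varphi\smallcup d_\pm$ is nonzero in $H^2(\Gamma;\MM^\pm_{\lambda^{\pm n}})$, and $\Ker\bock$ is one-dimensional precisely when $\varphi\smallcup d_\pm$ vanishes in cohomology. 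Therefore $\dim H^1(\Gamma;\MM^\pm_{\lambda^{\pm n}(1\pm\varepsilon)})=1$ if and only if $\varphi\smallcup d_\pm$ does not vanish in $H^2(\Gamma;\MM^\pm_{\lambda^{\pm n}})$, which is the claim. (Here one uses that, up to coboundary, $d_\pm\smallcup\varphi$ and $\varphi\smallcup d_\pm$ represent the same class, as is standard for the cup product with a one-dimensional class coming from $H^1(\Gamma;\mathbf C)$; this only affects the statement by a sign and does not change vanishing.)

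I do not expect a serious obstacle here: the argument is a formal diagram chase once Lemma~\ref{lem:bockstein} is in hand. The one point that needs a little care is matching the Bockstein of the given extension with the cup product $\varphi\smallcup d_\pm$ with the correct normalization, but this is exactly the content of Lemma~\ref{lem:bockstein}, so it can be quoted directly. A second minor point is ensuring that the connecting map in the long exact sequence is genuinely the Bockstein associated with this short exact sequence of coefficient modules (rather than some twist of it), which again follows from the explicit description in the proof of Lemma~\ref{lem:bockstein}. With those identifications granted, the corollary is immediate.
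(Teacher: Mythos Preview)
Your argument is correct and is essentially the same as the paper's: both use the long exact sequence associated to the extension $0\to\MM^\pm_{\lambda^{\pm n}}\xrightarrow{\varepsilon\cdot}\MM^\pm_{\lambda^{\pm n}(1\pm\varepsilon)}\to\MM^\pm_{\lambda^{\pm n}}\to 0$, invoke $H^0=0$ to get injectivity of $\varepsilon\cdot$ on $H^1$, and then apply Lemma~\ref{lem:bockstein} to identify the Bockstein with $\varphi\smallcup d_\pm$. Your dimension bookkeeping $\dim H^1(\ldots)=1+\dim\Ker\bock$ just makes explicit the step the paper leaves to the reader.
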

   
\begin{proof}
 Consider the non-split exact sequence of $\Gamma$-modules
\[
 0\to\MM^\pm_{\lambda^{\pm n}} \xrightarrow{\varepsilon\cdot} 
 \MM^\pm_{\lambda^{\pm n}(1\pm\varepsilon)} \longrightarrow 
 \MM^\pm_{\lambda^{\pm n}} \to 0
 \]
 and the corresponding long exact sequence in cohomology:
 \begin{multline}\label{eqn:suite}
 0\to H^1(\Gamma ; \MM^\pm_{\lambda^{\pm n}}) \xrightarrow{\varepsilon\cdot}
 H^1(\Gamma ;  \MM^\pm_{\lambda^{\pm n}(1\pm\varepsilon)}) \to \\
  H^1(\Gamma ;  \MM^\pm_{\lambda^{\pm n}}) \xrightarrow\bock  H^2(\Gamma ; \MM^\pm_{\lambda^{\pm n}})\,.
\end{multline}

 The lemma then follows from the sequence \eqref{eqn:suite}, as by Lemma~\ref{lem:bockstein}
 $\bock(d_\pm)\sim \varphi\mathop{\smallcup}d_\pm$ and
 $\dim H^1(\Gamma ; \MM^\pm_{\lambda^{\pm n}})=1$.
 \end{proof}
 Combining  Proposition~\ref{prop:dim-H-M-lambda}, Lemma~\ref{lem:dim-H-epsilon}, and Corollary~\ref{cor:productnonzero}, we deduce:
 \begin{cor}
  \label{cor:cupnonzero}
  Under the hypothesis of Theorem~\ref{thm:suf}, the cup product $\varphi\smallcup d_{\pm}$ does not vanish in 
  $H^2(\Gamma ; \MM^\pm_{\lambda^{\pm n}})$. 
 \end{cor}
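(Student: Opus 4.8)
The plan is to deduce Corollary~\ref{cor:cupnonzero} by chaining together the three preceding results, all of which apply precisely under the hypothesis of Theorem~\ref{thm:suf}. First I would record that under that hypothesis, Proposition~\ref{prop:dim-H-M-lambda}(i) gives $\dim_{\mathbf C} H^1(\Gamma; \MM^\pm_{\lambda^{\pm n}}) = 1$ and $\dim_{\mathbf C} H^2(\Gamma; \MM^\pm_{\lambda^{\pm n}}) = 1$, while $H^0(\Gamma; \MM^\pm_{\lambda^{\pm n}}) = 0$ (the vanishing of $H^0$ is the ``$i=0$ otherwise'' case of the same proposition). In particular the nonzero cocycles $d_\pm \in Z^1(\Gamma; \MM^\pm_{\lambda^{\pm n}})$ fixed just before the paragraph on dual numbers are not coboundaries, so they have nonzero cohomology class in the one-dimensional space $H^1(\Gamma; \MM^\pm_{\lambda^{\pm n}})$.

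Next I would invoke Lemma~\ref{lem:dim-H-epsilon}, which under the same hypothesis (simple root of $\Delta_1^+$, $\Delta_0^+(\lambda^n)\neq 0$) yields $\dim_{\mathbf C} H^1(\Gamma; \MM^\pm_{\lambda^{\pm n}(1\pm\varepsilon)}) = 1$. Then Corollary~\ref{cor:productnonzero} applies verbatim: its hypotheses $\dim_{\mathbf C} H^1(\Gamma;\MM^{\pm}_{\lambda^{\pm n}})=1$ and $H^0(\Gamma;\MM^{\pm}_{\lambda^{\pm n}})=0$ are exactly what we checked, and $d_\pm$ is not cohomologous to zero; so the equivalence in that corollary lets us pass from ``$\dim H^1(\Gamma;\MM^\pm_{\lambda^{\pm n}(1\pm\varepsilon)})=1$'' to ``$\varphi \smallcup d_\pm$ does not vanish in $H^2(\Gamma; \MM^\pm_{\lambda^{\pm n}})$.''

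There is essentially no obstacle here — this corollary is purely a bookkeeping step that combines Proposition~\ref{prop:dim-H-M-lambda}, Lemma~\ref{lem:dim-H-epsilon}, and Corollary~\ref{cor:productnonzero}. The only point requiring a word of care is making sure the ``$\pm$/$\mp$'' conventions line up: the $+$ statement uses $\lambda^n$ a simple root of $\Delta_1^+$ with $\Delta_0^+(\lambda^n)\neq 0$, and the $-$ statement follows because Corollary~\ref{cor:Alex-sym} transfers these hypotheses to $\Delta_1^-$ and $\Delta_0^-$ at $\lambda^{-n}$ (this transfer is already built into the statements of Proposition~\ref{prop:dim-H-M-lambda} and Lemma~\ref{lem:dim-H-epsilon}). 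Thus the proof is a single sentence: by Proposition~\ref{prop:dim-H-M-lambda}, Lemma~\ref{lem:dim-H-epsilon}, and Corollary~\ref{cor:productnonzero}, the cup product $\varphi \smallcup d_\pm$ is nonzero in $H^2(\Gamma; \MM^\pm_{\lambda^{\pm n}})$.
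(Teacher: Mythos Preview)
Your proposal is correct and matches the paper's approach exactly: the paper simply states that the corollary follows by combining Proposition~\ref{prop:dim-H-M-lambda}, Lemma~\ref{lem:dim-H-epsilon}, and Corollary~\ref{cor:productnonzero}. Your write-up supplies the verification that the hypotheses of Corollary~\ref{cor:productnonzero} are met, which is precisely the intended bookkeeping.
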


 \section{A non completely reducible representation $\rho^+$}
 \label{sec:regrho}
 
 In this section we construct $\rho^+\in R(\Gamma,\mathrm{SL}_n(\mathbf{C})$ with the same character as $\rho_\lambda$ 
 but that is not completely reducible. We show that $\rho^+$ is a smooth point of $R(\Gamma,\mathrm{SL}_n(\mathbf{C}))$ and that it can be deformed
 to irreducible representations. This proves Theorem~\ref{thm:suf}, because the orbit by conjugation of $\rho^+$ accumulates to $\rho_ \lambda$.

Assume throughout this section that the hypothesis of Theorem~\ref{thm:suf}  hold true. Namely (using Corollary~\ref{cor:Alex-sym}),
$\Delta_0^{\pm n}(\lambda^{\pm n})\neq 0$ and $\lambda^{\pm n}$ is a simple root of $\Delta_1^\pm$.    
Recall we have fixed $d_+\in Z^1(\Gamma ; \MM^+_{\lambda^n})$ a cocycle not homologous to zero. 
Let 
$$
\rho^+=\begin{pmatrix}
	  \operatorname{Id}_a & d_+ \\
	  0 & \operatorname{Id}_b
       \end{pmatrix}
       \rho_\lambda.
$$
By Lemma~\ref{lem:repcocycle} $\rho^+$ is not completely reducible, hence it is not conjugate to $\rho_\lambda$, even if it has the same character.
We shall prove that $\rho^+$ is a regular point of $R(\Gamma , \mathrm{SL}_n(\mathbf C))$ and that the local dimension is
$\dim \mathrm{SL}_n(\mathbf C) + n-1= n^2+n-2$. Then we will argue that the reducible representations around $\rho_\lambda$ form a Zariski
closed algebraic set of dimension $n^2+n-3$, which will prove Theorem~\ref{thm:suf}.

Let $P^+=\left(
	\begin{smallmatrix}
	 * & * \\
	 0 & *
	\end{smallmatrix}\right)
\subset \mathrm{SL}_n(\mathbf C)$ be the maximal parabolic subgroup that preserves $\mathbf C^a\oplus 0$. Its Lie algebra is denoted by $\mathfrak{p}^+\subset \mathfrak{sl}_n(\mathbf C)$.
We have two short exact sequences of $\Gamma$-modules via the action of $\Ad\rho^+$:
\begin{equation}
\label{eqn:exactp} 
0\to\MM^+_{\lambda^n}\to\mathfrak{p}^+\to \mathcal{D}\to 0,
\end{equation}
where 
\begin{equation}
 \label{eqn:D}
 \mathcal{D}=\mathfrak{sl}_a(\mathbf C )\oplus\mathfrak{sl}_b(\mathbf C )\oplus\mathbf C ,
\end{equation}
and
\begin{equation}
\label{eqn:exactsl}
0\to\mathfrak{p}^+\to \mathfrak{sl}_n(\mathbf C) \to \MM^-_{\lambda^{-n}}\to 0 .
\end{equation}
We will use the corresponding long exact sequences in cohomology to compute $H^1(\Gamma ; \mathfrak{sl}_n(\mathbf C)_{\Ad\rho^+})$. 
The first step is the following lemma.

\begin{lemma}
 \label{lem:h0p}
 $H^0(\Gamma ;  \mathfrak{p}^+)=0$.
\end{lemma}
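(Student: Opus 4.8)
The exact sequence \eqref{eqn:exactp} of $\Gamma$-modules gives a long exact sequence in cohomology beginning with
\[
0\to H^0(\Gamma;\MM^+_{\lambda^n})\to H^0(\Gamma;\mathfrak{p}^+)\to H^0(\Gamma;\mathcal D)\xrightarrow{\;\delta\;}H^1(\Gamma;\MM^+_{\lambda^n})\to\cdots.
\]
Since $\Delta_0^+(\lambda^n)\neq 0$, the module $H_0(\Gamma;\MM^+_1[\mathbf Z])$ has no $(t-\lambda^n)$-torsion, so by the long exact sequence of Lemma~\ref{lem:delta10} (together with Kronecker duality) we get $H^0(\Gamma;\MM^+_{\lambda^n})=0$. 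Thus the first task reduces to computing $H^0(\Gamma;\mathcal D)$ and showing the connecting map $\delta$ is injective on it.

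First I would compute $H^0(\Gamma;\mathcal D)$ using the splitting \eqref{eqn:D}: since $\alpha$ and $\beta$ are irreducible, Schur's lemma (Remark~\ref{rem:irreducibleH0}) gives $H^0(\Gamma;\mathfrak{sl}_a(\mathbf C)_{\Ad\alpha})=H^0(\Gamma;\mathfrak{sl}_b(\mathbf C)_{\Ad\beta})=0$, while the central summand $\mathbf C$ carries the trivial action, so $H^0(\Gamma;\mathbf C)=\mathbf C$. Hence $H^0(\Gamma;\mathcal D)\cong\mathbf C$, generated by the diagonal element $\mathrm e=\bigl(\begin{smallmatrix}b\,\mathrm{Id}_a&0\\0&-a\,\mathrm{Id}_b\end{smallmatrix}\bigr)$ (viewed in $\mathcal D$).

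The crux is therefore to show $\delta(\mathrm e)\neq 0$ in $H^1(\Gamma;\MM^+_{\lambda^n})$; equivalently, that the central invariant does not lift to an $\mathrm{Ad}\rho^+$-invariant element of $\mathfrak p^+$. I would compute $\delta$ explicitly from the cocycle $d_+$: the section of $\mathfrak p^+\to\mathcal D$ sending $\mathrm e$ to the matrix $\bigl(\begin{smallmatrix}b\,\mathrm{Id}_a&0\\0&-a\,\mathrm{Id}_b\end{smallmatrix}\bigr)\in\mathfrak p^+$ is not $\Gamma$-equivariant for the twisted action $\Ad\rho^+$, because $\rho^+$ differs from $\rho_\lambda$ by the upper-triangular unipotent factor built from $d_+$; conjugating the (block-diagonal-invariant) matrix $\mathrm e$ by $\bigl(\begin{smallmatrix}\mathrm{Id}_a&d_+(\gamma)\\0&\mathrm{Id}_b\end{smallmatrix}\bigr)$ produces an off-diagonal term, and a short computation with the commutator $[\mathrm e,\cdot]$ should show that $\delta(\mathrm e)$ is cohomologous to a nonzero multiple of the class of $d_+$ (the point being that $\mathrm e$ acts on $\MM^+_{\lambda^n}$ as multiplication by the nonzero scalar $a+b=n$). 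Since $[d_+]\neq 0$ by our standing choice of $d_+$ and since $\dim H^1(\Gamma;\MM^+_{\lambda^n})=1$ by Proposition~\ref{prop:dim-H-M-lambda}, this gives $\delta(\mathrm e)\neq 0$, hence $\delta$ is injective and the long exact sequence forces $H^0(\Gamma;\mathfrak p^+)=0$.

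**Main obstacle.** The only nontrivial point is the explicit identification of $\delta(\mathrm e)$ with (a nonzero multiple of) the class $[d_+]$: one must carefully unwind the definition of the connecting homomorphism for the twisted action $\Ad\rho^+$ — i.e.\ choose a set-theoretic lift, apply the twisted coboundary, and read off the resulting $\MM^+_{\lambda^n}$-valued $1$-cocycle — and verify it equals $n\,d_+$ up to a coboundary. This is a routine but slightly delicate bookkeeping computation; everything else is a formal consequence of the cohomology vanishing results already established (Remark~\ref{rem:irreducibleH0}, Lemma~\ref{lem:delta10}, Proposition~\ref{prop:dim-H-M-lambda}).
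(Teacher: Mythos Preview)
Your proposal is correct and follows essentially the same route as the paper: both use the long exact sequence of \eqref{eqn:exactp}, identify $H^0(\Gamma;\mathcal D)\cong\mathbf C$ via the diagonal generator, and show the connecting map sends this generator to $n\,d_+$ (up to sign and coboundary), hence is injective. Your argument is slightly more detailed in justifying $H^0(\Gamma;\MM^+_{\lambda^n})=0$ and the invariants of $\mathcal D$, and your observation that $\mathrm e$ acts on $\MM^+_{\lambda^n}$ by the scalar $n$ is exactly what underlies the paper's snake-lemma computation.
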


\begin{proof}
The long exact sequence 
 associated to \eqref{eqn:exactp} starts with
 \[
 0= H^0(\Gamma ;  \MM^+_{\lambda^n})\to H^0(\Gamma ;  \mathfrak{p}^+)\to
 H^0(\Gamma ;  \mathcal D)\xrightarrow{\bock}H^1(\Gamma ;  \MM^+_{\lambda^n})\,.
 \]
The group $H^0(\Gamma ;   \mathcal{D})\cong \mathbf C$ is generated by
the invariant element 
 $
 \left(
	\begin{smallmatrix}
	 - b\textrm{Id}_a& 0 \\
	 0 & a \textrm{Id}_b
	\end{smallmatrix}\right)\in \mathcal{D}^\Gamma
 $. A similar calculation as in the proof of Lemma~\ref{lem:bockstein} using the snake lemma gives
 $\bock \left(
	\begin{smallmatrix}
	 - b\textrm{Id}_a& 0 \\
	 0 & a \textrm{Id}_b
	\end{smallmatrix}\right) \sim n\,d_+$.
	Therefore $\bock\co  H^0(\Gamma ;  \mathcal D)\to H^1(\Gamma ;  \MM^+_{\lambda^n})$
	is injective and hence $H^0(\Gamma ;  \mathfrak{p}^+)=0$.
\end{proof}

 We continue the long exact sequence in cohomology associated to \eqref{eqn:exactp}:
$$
0\to\mathbf C\to H^1(\Gamma ; \MM^+_{\lambda^n})\to  H^1(\Gamma ; \mathfrak{p}^+)\to H^1(\Gamma ; \mathcal{D})\xrightarrow{\bock}H^2(\Gamma ; \MM^+_{\lambda^n}).
$$
Since $H^i(\Gamma ; \MM^+_{\lambda^n})\cong \mathbf C$ for $i=1,2$ by Proposition~\ref{prop:dim-H-M-lambda}, it shortens to
$$
0\to H^1(\Gamma ; \mathfrak{p}^+)\to H^1(\Gamma ; \mathcal{D})\xrightarrow{\bock} H^2(\Gamma ; \MM^+_{\lambda^n}).
$$

Next we aim to compute $\bock\co H^1(\Gamma ; \mathcal{D})\to H^2(\Gamma ; \MM^+_{\lambda^n})$. For this we use the decomposition \eqref{eqn:D}.
Every element in $H^1(\Gamma ; \mathcal{D})$ is represented by a cocycle 
\begin{equation}
 \label{eqn:diagonalcocycle}
 \vartheta=
\begin{pmatrix}
 z_a & 0\\
 0 & z_b
\end{pmatrix}
+z \varphi 
\begin{pmatrix}
    -b \operatorname{Id}_a & 0 \\
    0 & a \operatorname{Id}_b
\end{pmatrix}
\end{equation}
where $z_a\in Z^1(\Gamma ;  \mathfrak{sl}_a(\mathbf{C}))$,
$z_b\in Z^1(\Gamma ;  \mathfrak{sl}_b(\mathbf{C}))$, and $z\in \mathbf C$.

\begin{lemma}
 \label{lemma:connecting} 
 For a cocycle $\vartheta\in Z^1(\Gamma ; \mathcal{D})$ as in \eqref{eqn:diagonalcocycle}, 
 $$
 \bock(\vartheta) \sim z_a \smallcup d_+ + d_+\smallcup z_b + z \, n\, d_+\smallcup\varphi\,.
 $$
\end{lemma}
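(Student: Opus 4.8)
The plan is to compute the Bockstein connecting homomorphism $\bock\co H^1(\Gamma ; \mathcal{D})\to H^2(\Gamma ; \MM^+_{\lambda^n})$ associated to the exact sequence \eqref{eqn:exactp} directly at the cochain level, exactly as in the proof of Lemma~\ref{lem:bockstein}. Given a diagonal cocycle $\vartheta$ as in \eqref{eqn:diagonalcocycle}, viewed as a cochain with values in $\mathfrak p^+$ (using the block-upper-triangular embedding $\mathcal D\hookrightarrow\mathfrak p^+$ that is \emph{not} $\Gamma$-equivariant, since the action on $\mathfrak p^+$ mixes the $\MM^+$-block with the diagonal block), we must measure the failure of $\vartheta$ to be a cocycle for the $\Ad\rho^+$-action. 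That failure is a $2$-cochain with values in $\MM^+_{\lambda^n}$, and its cohomology class is $\bock(\vartheta)$.

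First I would recall that $\rho^+(\gamma)=\begin{psm} \mathrm{Id}_a & d_+(\gamma)\\ 0 & \mathrm{Id}_b\end{psm}\rho_\lambda(\gamma)$, so $\Ad\rho^+(\gamma)$ acts on a diagonal-plus-offdiagonal matrix by conjugation by this product. The key computation is: if $D(\gamma)=\begin{psm}z_a(\gamma) & 0\\ 0 & z_b(\gamma)\end{psm}+z\,\varphi(\gamma)\begin{psm}-b\,\mathrm{Id}_a & 0\\ 0 & a\,\mathrm{Id}_b\end{psm}$ is the diagonal part, then its image in $\mathfrak p^+$ under $\Ad\rho^+$ picks up an off-diagonal correction coming from the commutator of $D$ with the unipotent part $\begin{psm}\mathrm{Id}_a & d_+(\gamma)\\ 0 & \mathrm{Id}_b\end{psm}$. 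Writing out $\delta\vartheta(\gamma_1,\gamma_2)=\gamma_1\cdot\vartheta(\gamma_2)-\vartheta(\gamma_1\gamma_2)+\vartheta(\gamma_1)$ with the $\Ad\rho^+$-action, the diagonal parts cancel because $z_a$, $z_b$ and $z\varphi$ are themselves cocycles for the diagonal ($\Ad\rho_\lambda$) action; what survives in the off-diagonal $\MM^+_{\lambda^n}$-slot is precisely the cross-terms. Those cross-terms are $\lambda^{n\varphi(\gamma_1)}\alpha(\gamma_1)z_a(\gamma_1)\cdots$ type expressions, which by the definitions of the cup products in \eqref{eq:cupprod} and Remark~\ref{rem:notwist} assemble, up to the coboundary $\delta f$ with $f(\gamma)=(\text{diagonal block acting on }d_+(\gamma))$, into $z_a\smallcup d_+ - d_+\sideset{_\tau}{}{\smallcup}z_b$ plus the scalar contribution; using Remark~\ref{rem:notwist} to replace $d_+\sideset{_\tau}{}{\smallcup}z_b$ by $-z_b\smallcup d_+$... but note the roles of $\alpha$ and $\beta$ differ, so one actually gets $z_a\smallcup d_+ + d_+\smallcup z_b$ (the $z_b$-term naturally appears on the right because $z_b$ acts by $\beta$-conjugation on the right factor of $\MM^+$). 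Finally the scalar term: multiplication of $d_+$ by the diagonal scalar $z\varphi(\gamma)\,\mathrm{diag}(-b\,\mathrm{Id}_a,a\,\mathrm{Id}_b)$ produces, in the $(a\times b)$-block, the factor $(-b)-(a)\cdot(-1)$... rather $(-b)-a$ acting as scalar, i.e. $-(a+b)=-n$ — matching the coefficient $n$ in the claimed formula up to sign absorbed by $\doteq$-type normalization of $d_+\smallcup\varphi$ versus $\varphi\smallcup d_+$ and the earlier computation $\bock\,\mathrm{diag}(-b\,\mathrm{Id}_a,a\,\mathrm{Id}_b)\sim n\,d_+\smallcup\varphi$ implicit in Lemma~\ref{lem:h0p}.

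The main obstacle, I expect, is bookkeeping the non-equivariant splitting $\mathcal D\to\mathfrak p^+$ correctly: one must fix a set-theoretic (linear, but not $\Gamma$-linear) lift, compute $\delta_{\mathfrak p^+}$ of the lifted cochain, and carefully extract the $\MM^+$-component, making sure the coboundary $\delta f$ (with $f(\gamma)$ equal to the off-diagonal entry of the conjugate of the diagonal block, i.e. a quadratic expression in $z_a, z_b, z$ and $d_+$) is correctly identified and discarded — this is where the twist operator $\tau$ and Remark~\ref{rem:notwist} do the work of turning $d_+\sideset{_\tau}{}{\smallcup}z_a$-type terms into honest cup products. A secondary subtlety is tracking the scalar coefficient $n=a+b$ through the normalization conventions for $\varphi\smallcup d_+$ versus $d_+\smallcup\varphi$ (they agree up to sign/coboundary since $\varphi$ is a homomorphism into a trivial module), so that the coefficient in the statement comes out as exactly $n$. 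Once the cochain identity is established, the lemma is immediate since passing to cohomology classes is routine.
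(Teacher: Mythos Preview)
Your approach is exactly the paper's: lift $\vartheta$ to $\tilde\vartheta\in C^1(\Gamma;\mathfrak p^+)$ via the non-equivariant block inclusion $\mathcal D\hookrightarrow\mathfrak p^+$, compute $\delta^+\tilde\vartheta$ under $\Ad\rho^+$, note the diagonal part vanishes, read off the $\MM^+_{\lambda^n}$-entry, and then apply Remark~\ref{rem:notwist} to turn $-d_+\sideset{_\tau}{}{\smallcup}z_a$ into $z_a\smallcup d_+$.

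Two small corrections to your bookkeeping. First, no auxiliary coboundary $\delta f$ is needed before Remark~\ref{rem:notwist}: since $\tilde\vartheta$ has zero $\MM^+$-component and $\rho_\lambda$ is block-diagonal, the off-diagonal part of $\delta^+\tilde\vartheta(\gamma_1,\gamma_2)$ comes \emph{only} from the term $\gamma_1\cdot\tilde\vartheta(\gamma_2)=\Ad_{U(\gamma_1)}\Ad_{\rho_\lambda(\gamma_1)}\tilde\vartheta(\gamma_2)$, and equals exactly
\[
-\gamma_1 z_a(\gamma_2)\,d_+(\gamma_1) + d_+(\gamma_1)\,\gamma_1 z_b(\gamma_2) + z\,n\,d_+(\gamma_1)\,\varphi(\gamma_2),
\]
which is already $-d_+\sideset{_\tau}{}{\smallcup}z_a + d_+\smallcup z_b + z\,n\,d_+\smallcup\varphi$ on the nose. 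Second, your scalar arithmetic has a sign slip: the off-diagonal of $\Ad_{U}\,\mathrm{diag}(-b\,\mathrm{Id}_a,a\,\mathrm{Id}_b)$ is $-(-b)d_+ + d_+\cdot a = (a+b)d_+ = +n\,d_+$, not $-n\,d_+$, so no sign absorption is required.
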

\begin{proof}
As in Lemma~\ref{lem:bockstein} we compute $\bock(\vartheta)$ by using the snake lemma.
Namely, let $\delta^+$ be the coboundary operator of $C^*(\Gamma ; \mathfrak p^+)$, and
let $\tilde\vartheta\in C^1(\Gamma ; \mathfrak p^+)$ be the composition of $\vartheta$ with the inclusion 
$\mathcal{D}\hookrightarrow\mathfrak{p}^+$. Then
\[
\delta^+\, \tilde\vartheta (\gamma_1,\gamma_2)=
\begin{pmatrix}
0 & -\gamma_1 z_a(\gamma_2)\, d_+(\gamma_1) + d_+(\gamma_1)\, \gamma_1z_b(\gamma_2) + 
z\, n\, d_+(\gamma_1)\,\varphi(\gamma_2)\\
0 & 0
\end{pmatrix}
\]
and hence $ \bock(\vartheta) \sim
-d_+ \sideset{_\tau}{}{\smallcup} z_a+ d_+\smallcup z_b+ z \, n\, d_+\smallcup\varphi$.

Finally, Remark~\ref{rem:notwist} proves the lemma.
\end{proof}

 Since $\varphi\smallcup d_{\pm}$ is not cohomologous to zero by Corollary~\ref{cor:cupnonzero},
and since $ H^2(\Gamma ; \MM^+_{\lambda^n})\cong\mathbf C$ by Proposition~\ref{prop:dim-H-M-lambda}, we deduce:

\begin{cor}
\label{cor:h1p+}
 The cohomology group $H^1(\Gamma ; \mathfrak{p}^+)\cong \mathbf C^{n-2}$ is
naturally identified to the kernel of the rank one map:
$$
H^1(\Gamma ; \mathcal D)\cong H^1(\Gamma ; \mathfrak{sl}_a(\mathbf C)) \oplus H^1(\Gamma ; \mathfrak{sl}_b(\mathbf C)) \oplus \mathbf C
\xrightarrow{\bock} H^2(\Gamma ; \MM^+_{\lambda^n})\cong\mathbf C.
$$
\end{cor}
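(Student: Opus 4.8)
The plan is to extract $H^1(\Gamma;\mathfrak p^+)$ directly from the segment of the long exact sequence already derived,
\[
0\to H^1(\Gamma ; \mathfrak{p}^+)\to H^1(\Gamma ; \mathcal{D})\xrightarrow{\bock} H^2(\Gamma ; \MM^+_{\lambda^n})\cong\mathbf C ,
\]
which identifies $H^1(\Gamma;\mathfrak p^+)$ with $\ker\bock$. So two things remain: compute $\dim H^1(\Gamma;\mathcal D)$, and check that $\bock$ is surjective (hence of rank one).

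For the first, I would use the decomposition \eqref{eqn:D} of $\mathcal D$ as a direct sum of $\Gamma$-modules, which gives
\[
H^1(\Gamma;\mathcal D)\cong H^1(\Gamma;\mathfrak{sl}_a(\mathbf C)_{\Ad\alpha})\oplus H^1(\Gamma;\mathfrak{sl}_b(\mathbf C)_{\Ad\beta})\oplus H^1(\Gamma;\mathbf C).
\]
Since $\alpha$ and $\beta$ are infinitesimally regular, the first two summands have dimensions $a-1$ and $b-1$ respectively; and $H^1(\Gamma;\mathbf C)\cong\hom(\Gamma,\mathbf C)\cong\mathbf C$ because $\Gamma^{\ab}\cong\mathbf Z$, a generator being (the complexification of) $\varphi$. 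Hence $\dim H^1(\Gamma;\mathcal D)=(a-1)+(b-1)+1=n-1$.

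For the surjectivity of $\bock$, I would invoke Lemma~\ref{lemma:connecting}: taking the cocycle $\vartheta$ of \eqref{eqn:diagonalcocycle} with $z_a=0$, $z_b=0$ and $z\neq 0$ yields $\bock(\vartheta)\sim z\,n\,(d_+\smallcup\varphi)$. Up to sign, $d_+\smallcup\varphi$ equals $\varphi\smallcup d_+$ — this is graded commutativity of the cup product on degree-one classes, the coefficient pairing $\mathbf C\otimes\MM^+_{\lambda^n}\to\MM^+_{\lambda^n}$ (scalar multiplication) being symmetric — and $\varphi\smallcup d_+\neq 0$ in $H^2(\Gamma;\MM^+_{\lambda^n})\cong\mathbf C$ by Corollary~\ref{cor:cupnonzero}. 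Thus $\bock$ maps onto a one-dimensional space, so it has rank one and $\dim\ker\bock=\dim H^1(\Gamma;\mathcal D)-1=n-2$; combined with the exact sequence this gives the natural identification $H^1(\Gamma;\mathfrak p^+)\cong\ker\bock\cong\mathbf C^{n-2}$. The only delicate point — a mild one — is matching the class $d_+\smallcup\varphi$ furnished by Lemma~\ref{lemma:connecting} with the class $\varphi\smallcup d_+$ controlled by Corollary~\ref{cor:cupnonzero}; since this is just graded commutativity and only affects an overall sign, it does not interfere with the (non)vanishing argument, and the rest is bookkeeping with the long exact sequence and the additivity of $H^1$ over \eqref{eqn:D}.
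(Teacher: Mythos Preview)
Your argument is correct and essentially the same as the paper's: both use the displayed short exact sequence to identify $H^1(\Gamma;\mathfrak p^+)$ with $\ker\bock$, then invoke Lemma~\ref{lemma:connecting} together with the nonvanishing of $\varphi\smallcup d_+$ (Corollary~\ref{cor:cupnonzero}) and $\dim H^2(\Gamma;\MM^+_{\lambda^n})=1$ to see that $\bock$ has rank one. You are simply more explicit than the paper in spelling out $\dim H^1(\Gamma;\mathcal D)=n-1$ via infinitesimal regularity and in noting that $d_+\smallcup\varphi$ and $\varphi\smallcup d_+$ differ only by a sign.
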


Next we consider the long exact sequence corresponding to \eqref{eqn:exactsl}:
\begin{equation}
 \label{eqn:exactpslm}
0\to H^1(\Gamma ; \mathfrak{p}^+)\to 
H^1(\Gamma ; \mathfrak{sl}_n(\mathbf C)_{\Ad\rho^+}) \to H^1(\Gamma ; \MM^-_{\lambda^{-n}}). 
\end{equation}
Hence 
\begin{equation}
\label{eqn:ineqdimsl}
\dim H^1(\Gamma ; \mathfrak{sl}_n(\mathbf C)_{\Ad\rho^+})\leq \dim H^1(\Gamma ; \mathfrak{p}^+) + \dim H^1(\Gamma ; \MM^-_{\lambda^{-n}})= n-2+1=n-1.
\end{equation}
On the other hand we  apply Poincar\' e duality to the long exact sequence of the pair $(X,\partial X)$ (see \eqref{eqn:pair})
and we obtain as in Equation~\eqref{eqn:dimH^1partialX}:
\begin{equation}\label{eqn:upper-bound-dimH1}
\dim H^1(\partial X;\mathfrak{sl}_n(\mathbf C)_{\Ad\rho^+}) \leq 2 \dim H^1(\Gamma ; \mathfrak{sl}_n(\mathbf C)_{\Ad\rho^+})\leq 2( n-1).
\end{equation}

\begin{prop}
 \label{prop:rho+smooth}
 $\rho^+$ is a regular point of $R(\Gamma,\mathrm{SL}_n(\mathbf C))$ of dimension $n^2+n-2$.
\end{prop}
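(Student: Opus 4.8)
The strategy is to bound $\dim_{\rho^+} R(\Gamma,\mathrm{SL}_n(\mathbf C))$ from below by $n^2+n-2$ and $\dim T_{\rho^+} R(\Gamma,\mathrm{SL}_n(\mathbf C)) = \dim Z^1(\Gamma;\mathfrak{sl}_n(\mathbf C)_{\Ad\rho^+})$ from above by $n^2+n-2$; once both estimates are in place, Definition~\ref{def:regular} gives regularity and the local dimension.

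First I would compute the tangent space. Since $\rho^+$ is reducible, $H^0(\Gamma;\mathfrak{sl}_n(\mathbf C)_{\Ad\rho^+})$ need not vanish, so I would identify it: the parabolic exact sequence \eqref{eqn:exactsl} together with Lemma~\ref{lem:h0p} ($H^0(\Gamma;\mathfrak p^+)=0$) and the injection $H^0(\Gamma;\mathfrak{sl}_n(\mathbf C)_{\Ad\rho^+})\hookrightarrow H^0(\Gamma;\MM^-_{\lambda^{-n}})=0$ (the last by Proposition~\ref{prop:dim-H-M-lambda}(i)) forces $H^0(\Gamma;\mathfrak{sl}_n(\mathbf C)_{\Ad\rho^+})=0$. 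Hence $\dim B^1(\Gamma;\mathfrak{sl}_n(\mathbf C)_{\Ad\rho^+}) = n^2-1 - \dim H^0 = n^2-1$. Combined with \eqref{eqn:ineqdimsl}, which gives $\dim H^1(\Gamma;\mathfrak{sl}_n(\mathbf C)_{\Ad\rho^+})\le n-1$, I get
\[
\dim Z^1(\Gamma;\mathfrak{sl}_n(\mathbf C)_{\Ad\rho^+}) = \dim H^1 + \dim B^1 \le (n-1) + (n^2-1) = n^2+n-2.
\]

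Next I would produce the lower bound on the local dimension. Here the key input is a smoothness/dimension result for a larger parameter space. Concretely, I would consider the restriction map $R(\Gamma,\mathrm{SL}_n(\mathbf C))\to R(\pi_1(\partial X),\mathrm{SL}_n(\mathbf C))$; by Lemma~\ref{lem:H1-Z+Z} and the estimate \eqref{eqn:upper-bound-dimH1}, the relevant boundary cohomology has dimension exactly $2(n-1)$, so $\varrho=\rho^+|_{\partial X}$ is a regular point of $R(\mathbf Z\oplus\mathbf Z,\mathrm{SL}_n(\mathbf C))$. I would then invoke the regularity theorem of \cite{HM14} (in the spirit of Proposition~\ref{prop:MichaelOuardia}) together with the half-lives-half-dies phenomenon to conclude that $\rho^+$ lies on a component of $R(\Gamma,\mathrm{SL}_n(\mathbf C))$ of dimension at least $n^2+n-2$: the point is that since the boundary is as regular as possible and the obstructions to integrating tangent vectors (controlled by cup products into $H^2$, via Goldman \cite{Goldman1984}) have been analyzed through the parabolic filtration, the lower bound $\dim_{\rho^+} R \ge \dim\mathrm{SL}_n(\mathbf C) + (n-1) = n^2+n-2$ holds. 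Since this matches the tangent-space upper bound, chaining the inequalities
\[
n^2+n-2 \le \dim_{\rho^+} R(\Gamma,\mathrm{SL}_n(\mathbf C)) \le \dim T_{\rho^+} R(\Gamma,\mathrm{SL}_n(\mathbf C)) = \dim Z^1 \le n^2+n-2
\]
forces equality throughout, proving $\rho^+$ is regular of dimension $n^2+n-2$.

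The main obstacle is the lower bound: the upper bound on $Z^1$ is essentially bookkeeping with the long exact sequences \eqref{eqn:exactp}, \eqref{eqn:exactsl} and the cohomology computed in Proposition~\ref{prop:dim-H-M-lambda} and Corollary~\ref{cor:h1p+}, but getting $\dim_{\rho^+}R \ge n^2+n-2$ requires genuinely knowing that enough tangent directions integrate. I expect the cleanest route is to quote the smoothness result of \cite{HM14} applied to a representation whose peripheral restriction is regular, so that the scheme-theoretic tangent space equals the Zariski tangent space and the local dimension is forced to be maximal; the verification that $\rho^+|_{\partial X}$ is regular (via \eqref{eqn:upper-bound-dimH1} and Lemma~\ref{lem:H1-Z+Z}) is the crucial hypothesis check. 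One must also make sure that when $H^1(\Gamma;\mathfrak{sl}_n(\mathbf C)_{\Ad\rho^+})$ is strictly smaller than $n-1$ the argument still produces a component of the stated dimension — this is where the inequality \eqref{eqn:ineqdimsl} being an equality (which I would extract from the fact that, by Corollary~\ref{cor:cupnonzero}, the relevant Bockstein maps are as nonzero as possible) becomes important.
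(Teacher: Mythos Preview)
Your proposal contains all the right ingredients and is essentially the paper's approach, but you are working much harder than necessary on the lower bound and your final worry is unfounded. The paper's proof is two lines: combining Lemma~\ref{lem:H1-Z+Z} with \eqref{eqn:upper-bound-dimH1} gives the sandwich
\[
2(n-1)\;\le\;\dim H^1(\partial X;\mathfrak{sl}_n(\mathbf C)_{\Ad\rho^+})\;\le\;2\dim H^1(\Gamma;\mathfrak{sl}_n(\mathbf C)_{\Ad\rho^+})\;\le\;2(n-1),
\]
so $\dim H^1(\Gamma;\mathfrak{sl}_n(\mathbf C)_{\Ad\rho^+})=n-1$ \emph{exactly}; this is precisely the definition of infinitesimal regularity, and Proposition~\ref{prop:MichaelOuardia} is then a black box yielding regularity and local dimension $n^2+n-2-\dim H^0=n^2+n-2$ (using your correct computation $H^0=0$). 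You noticed that the boundary cohomology equals $2(n-1)$ but did not read off the immediate consequence for $H^1(\Gamma)$; once you do, there is no need to discuss peripheral regularity as a separate hypothesis, half-lives-half-dies, Goldman obstructions, or the parabolic filtration for \emph{this} proposition --- Proposition~\ref{prop:MichaelOuardia} requires only infinitesimal regularity, nothing about $\rho^+|_{\partial X}$. In particular your concern that $\dim H^1$ might be strictly less than $n-1$ cannot arise: the sandwich rules it out, so the appeal to Corollary~\ref{cor:cupnonzero} to force equality in \eqref{eqn:ineqdimsl} is superfluous here (that equality is recorded afterwards in Remark~\ref{rem:pexact} as a byproduct, and the Goldman-obstruction argument is saved for Lemma~\ref{lemma:p+smooth}).
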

\begin{proof}
The dimension inequality of Lemma~\ref{lem:H1-Z+Z} and the inequality \eqref{eqn:upper-bound-dimH1}
yield $\dim H^1(\Gamma;\mathfrak{sl}_n(\mathbf C)_{\Ad\rho^+})=n-1$, and we apply Proposition~\ref{prop:MichaelOuardia}.
\end{proof}

Before proving that the irreducible component of $R(\Gamma,\mathrm{SL}_n(\mathbf C))$ containing  $\rho^+$ also contains irreducible representations, we need a remark and two lemmas.

\begin{remark}
\label{rem:pexact}
 It follows from the proof of Proposition~\ref{prop:rho+smooth} that Inequalities~\eqref{eqn:ineqdimsl} and~\eqref{eqn:upper-bound-dimH1} are equalities,
 therefore  \eqref{eqn:exactpslm} becomes  a short exact sequence:
 \[
0\to H^1(\Gamma ; \mathfrak{p}^+)\to H^1(\Gamma ; \mathfrak{sl}_n(\mathbf C)_{\Ad\rho^+}) \to H^1(\Gamma ; \MM^-_{\lambda^{-n}})\to 0.
\]
 \end{remark}

\begin{lemma}
\label{lemma:p+smooth}
The representation  $\rho^+$ is a smooth point of $R(\Gamma, P^+)$.
 \end{lemma}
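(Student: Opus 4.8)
The plan is to show that $\rho^+$ is a regular point of $R(\Gamma,P^+)$, i.e.\ that $\dim Z^1(\Gamma;\mathfrak p^+)$ equals the local dimension of $R(\Gamma,P^+)$ at $\rho^+$; then Weil's inequality \eqref{eq:regular-point}, applied to the algebraic group $P^+$ in place of $\mathrm{SL}_n(\mathbf C)$, forces $\rho^+$ to be a smooth point. The first step is to compute $\dim Z^1(\Gamma;\mathfrak p^+)$. From \eqref{eqn:exactp} and Lemma~\ref{lem:h0p} we have $H^0(\Gamma;\mathfrak p^+)=0$, and we have already identified $H^1(\Gamma;\mathfrak p^+)\cong\mathbf C^{n-2}$ in Corollary~\ref{cor:h1p+}. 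Hence
\[
\dim Z^1(\Gamma;\mathfrak p^+)=\dim H^1(\Gamma;\mathfrak p^+)+\dim B^1(\Gamma;\mathfrak p^+)
=(n-2)+\bigl(\dim\mathfrak p^+-\dim H^0(\Gamma;\mathfrak p^+)\bigr)=(n-2)+\dim\mathfrak p^+,
\]
and $\dim\mathfrak p^+=\dim\mathfrak{sl}_n(\mathbf C)-ab=n^2-1-ab$ since $\mathfrak{sl}_n/\mathfrak p^+\cong\MM^-_{\lambda^{-n}}$ has dimension $ab$. So $\dim Z^1(\Gamma;\mathfrak p^+)=n^2-3-ab+n$.

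The second step is to bound the local dimension of $R(\Gamma,P^+)$ at $\rho^+$ from below by this same number, which (together with Weil's upper bound) will finish the proof. For this I would use the projection $R(\Gamma,P^+)\to R(\Gamma,\mathcal D)$ induced by the quotient $P^+\surto P^+/R_u(P^+)\cong \mathrm{S}(\mathrm{GL}_a\times\mathrm{GL}_b)$, whose kernel-action fibres are built from cocycles valued in $\MM^+_{\lambda^n}$; equivalently, one can directly produce an $(n^2-3-ab+n)$-dimensional family of representations into $P^+$ near $\rho^+$. Concretely: the target $R(\Gamma,\mathcal D)$ is smooth at the diagonal part of $\rho_\lambda$ of local dimension $\dim Z^1(\Gamma;\mathcal D)$, because $\alpha$ and $\beta$ are irreducible and infinitesimally regular (hence regular points of their respective representation varieties, by Corollary~\ref{cor:infinitesimalregularity}), and the central $\mathbf C^*$ factor contributes an extra dimension; over each such diagonal representation the set of extensions to $P^+$ is an affine space modelled on $Z^1(\Gamma;\MM^+_{\lambda^n})$ (of dimension $ab+1$ here, using Proposition~\ref{prop:dim-H-M-lambda} and $H^0(\Gamma;\MM^+_{\lambda^n})=0$), and one checks the total comes out to $\dim Z^1(\Gamma;\mathfrak p^+)$. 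Thus $\dim_{\rho^+}R(\Gamma,P^+)\geq\dim Z^1(\Gamma;\mathfrak p^+)\geq\dim_{\rho^+}R(\Gamma,P^+)$, so equality holds and $\rho^+$ is a regular, hence smooth, point of $R(\Gamma,P^+)$.

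The main obstacle I anticipate is making the dimension count over the fibres of $R(\Gamma,P^+)\to R(\Gamma,\mathcal D)$ genuinely rigorous rather than merely a tangent-space heuristic: one must exhibit an actual $Z^1(\Gamma;\MM^+_{\lambda^n})$-worth of honest representations deforming $\rho^+$ inside $P^+$ (not just infinitesimal ones), and argue that the projection to $R(\Gamma,\mathcal D)$ is, near $\rho^+$, an affine-bundle-like map of the expected relative dimension. A clean way around this is to invoke the vanishing of obstructions: since $P^+$ has unipotent radical $\MM^+_{\lambda^n}$ (abelian here), and since we have explicit control of $H^2(\Gamma;\MM^+_{\lambda^n})\cong\mathbf C$ together with the fact—established via Corollary~\ref{cor:h1p+} and the Bockstein computation of Lemma~\ref{lemma:connecting}—that the relevant obstruction map is surjective with kernel exactly $H^1(\Gamma;\mathfrak p^+)$, every cocycle in $Z^1(\Gamma;\mathfrak p^+)$ that survives to $H^1(\Gamma;\mathfrak p^+)$ is unobstructed. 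Combined with Goldman's integrability results (as used elsewhere in the paper) this upgrades the tangent-space equality to a statement about the variety, giving smoothness of $\rho^+$ in $R(\Gamma,P^+)$ of local dimension $\dim Z^1(\Gamma;\mathfrak p^+)$.
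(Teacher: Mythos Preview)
Your dimension computation of $Z^1(\Gamma;\mathfrak p^+)$ is correct, but the second step has a real gap. You claim that over \emph{each} nearby diagonal representation the fibre of $R(\Gamma,P^+)\to R(\Gamma,L)$ is an affine space of dimension $ab+1$. This is false: $\dim Z^1(\Gamma;\MM^+_{(\lambda')^n})=ab+1$ only at the special point where $\Delta_1^+((\lambda')^n)=0$; for generic $\lambda'$ near $\lambda$ one has $H^1(\Gamma;\MM^+_{(\lambda')^n})=0$ and the fibre has dimension $ab$. With your fibre count of $ab+1$ over a base of dimension $a^2+b^2+n-3$, you would get a family of dimension $\dim Z^1(\Gamma;\mathfrak p^+)+1$, contradicting Weil's upper bound. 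If instead you use the correct generic fibre dimension $ab$, you obtain the right number $a^2+b^2+ab+n-3$, but then you still owe an argument that $\rho^+$ itself lies on a component of this dimension: $\rho^+$ sits in the \emph{exceptional} fibre (the one of dimension $ab+1$, corresponding to a cocycle $d_+$ that is \emph{not} a coboundary), and it is exactly this point that is in doubt.

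Your fallback via obstruction theory does not close the gap either, because you conflate the Bockstein map $H^1(\Gamma;\mathcal D)\to H^2(\Gamma;\MM^+_{\lambda^n})$ of Lemma~\ref{lemma:connecting} (which computes $H^1(\Gamma;\mathfrak p^+)$) with Goldman's obstruction map $Z^1(\Gamma;\mathfrak p^+)\to H^2(\Gamma;\mathfrak p^+)$, $\vartheta\mapsto[\vartheta\smallcup\vartheta]$ (and its higher-order successors). These are different maps with different targets; knowing the kernel of the Bockstein says nothing about the vanishing of the cup-bracket obstructions in $H^2(\Gamma;\mathfrak p^+)$, which is generally nonzero here. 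The paper's proof proceeds along a different line that you did not identify: it \emph{uses} the already-established regularity of $\rho^+$ in $R(\Gamma,\mathrm{SL}_n(\mathbf C))$ (Proposition~\ref{prop:rho+smooth}), so that all Goldman obstructions vanish in $H^2(\Gamma;\mathfrak{sl}_n(\mathbf C)_{\Ad\rho^+})$; then naturality of the obstructions (as $\mathfrak p^+\subset\mathfrak{sl}_n(\mathbf C)$ is a Lie subalgebra and a $\Gamma$-submodule) together with the injection $H^2(\Gamma;\mathfrak p^+)\hookrightarrow H^2(\Gamma;\mathfrak{sl}_n(\mathbf C)_{\Ad\rho^+})$ coming from Remark~\ref{rem:pexact} forces the obstructions to vanish in $H^2(\Gamma;\mathfrak p^+)$ as well, after which Artin's theorem gives smoothness.
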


\begin{proof}
The key tool here is the vanishing of Goldman's obstructions to integrability
\cite{Goldman1984},
which relies on the naturality of these obstructions and the vanishing
for $\mathfrak{sl}_n(\mathbf C)$. This vanishing was not explicit in the proof
of Proposition~\ref{prop:rho+smooth}, because here
we used Proposition~\ref{prop:MichaelOuardia} 
from \cite{HM14}, but it is used in~\cite{HM14}.

By Remark~\ref{rem:pexact}, the long 
 exact sequence in cohomology associated to \eqref{eqn:exactsl} yields an
injection
$$
0\to H^2(\Gamma ; \mathfrak{p}^+)\to H^2(\Gamma ; \mathfrak{sl}_n(\mathbf C)_{\Ad\rho^+}).
$$
Now Goldman's obstructions to integrability are natural for the inclusion
 $\mathfrak{p}^+\to \mathfrak{sl}_n(\mathbf C)$. In addition, the obstructions
of a cocycle in $\mathfrak{p}^+$ remain in  $\mathfrak{p}^+$, 
  because $\mathfrak{p}^+\to \mathfrak{sl}_n(\mathbf C)$ is a subalgebra (closed
under the Lie bracket) and a $\Gamma$-submodule of $ \mathfrak{sl}_n(\mathbf
C)$.
 Since $\rho^+$ is a smooth point of $R(\Gamma, \operatorname{SL}_n(\mathbf
C))$,
 for any cocycle in $Z^1(\Gamma;  \mathfrak{p}^+)$ the infinite sequence of
obstructions to integrability in $H^2(\Gamma ; \mathfrak{sl}_n(\mathbf C)_{\Ad\rho^+})$
vanish, so the infinite 
 sequence of obstructions to integrability in  $H^2(\Gamma ;    \mathfrak{p}^+ )$
also vanish. This establishes that any infinitesimal deformation is formally
integrable
 and it follows from Artin's theorem \cite{Artin1968} that it is actually
integrable, which proves the lemma.  
\end{proof}

Let $1\leq k\leq n$ and let $R_k\subset R(\Gamma,\mathrm{SL}_n(\mathbf C))$ denote the
subset of representations $\rho$ such that $\rho(\Gamma)$ preserves a $k$-dimensional subspace of $\mathbf C^n$.
\begin{lemma} \label{lem:invariant-subspace-closed}
For all $1\leq k\leq n$, the subset $R_k\subset R(\Gamma,\mathrm{SL}_n(\mathbf C))$ is Zariski-closed.
\end{lemma}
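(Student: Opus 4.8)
The plan is to realize $R_k$ as the image of a projective morphism, so that closedness follows from the properness of the Grassmannian. Concretely, let $\mathrm{Gr}(k,n)$ denote the Grassmannian of $k$-dimensional subspaces of $\mathbf C^n$, which is a projective variety. Inside the product $R(\Gamma,\mathrm{SL}_n(\mathbf C))\times \mathrm{Gr}(k,n)$ consider the incidence set
$$
\widetilde R_k = \{ (\rho, W) \mid \rho(\gamma)\, W = W \text{ for all } \gamma\in\Gamma \}.
$$
First I would check that $\widetilde R_k$ is Zariski-closed in $R(\Gamma,\mathrm{SL}_n(\mathbf C))\times \mathrm{Gr}(k,n)$. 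Since $\Gamma$ is finitely generated, say by $\gamma_1,\dots,\gamma_s$, the condition ``$\rho(\gamma)W=W$ for all $\gamma$'' is equivalent to the finitely many conditions ``$\rho(\gamma_j)W\subseteq W$'' for $j=1,\dots,s$; and for a fixed linear map $g$, the locus $\{W\in\mathrm{Gr}(k,n)\mid gW\subseteq W\}$ is closed (in Plücker coordinates the condition that $W$ be $g$-invariant is polynomial, e.g.\ expressed via the induced action of $g$ on $\wedge^k\mathbf C^n$ fixing the line $\wedge^k W$, or via the vanishing of the composite $W\hookrightarrow \mathbf C^n \xrightarrow{g} \mathbf C^n \twoheadrightarrow \mathbf C^n/W$). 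Each such condition varies algebraically with $\rho$ as well, since $\rho\mapsto \rho(\gamma_j)$ is a regular map $R(\Gamma,\mathrm{SL}_n(\mathbf C))\to \mathrm{SL}_n(\mathbf C)$. Hence $\widetilde R_k$ is closed.

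Next, let $\pi\co R(\Gamma,\mathrm{SL}_n(\mathbf C))\times \mathrm{Gr}(k,n)\to R(\Gamma,\mathrm{SL}_n(\mathbf C))$ be the first projection. By construction $R_k = \pi(\widetilde R_k)$. Since $\mathrm{Gr}(k,n)$ is a complete (projective) variety, the projection $\pi$ is a closed map; therefore the image $\pi(\widetilde R_k)=R_k$ of the closed set $\widetilde R_k$ is Zariski-closed in $R(\Gamma,\mathrm{SL}_n(\mathbf C))$. This is exactly the statement of the lemma.

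The only genuinely substantive point is the verification that $\widetilde R_k$ is closed, and within that the claim that $\{W \mid gW\subseteq W\}$ is closed in $\mathrm{Gr}(k,n)$ with algebraic dependence on $g$; everything else is the standard fact that projection along a complete variety is closed (properness of $\mathrm{Gr}(k,n)\to \mathrm{Spec}\,\mathbf C$, or over $\mathbf C$ simply that the image of a closed set under projection from a projective factor is closed). One clean way to make the incidence condition manifestly algebraic: cover $\mathrm{Gr}(k,n)$ by the standard affine charts, in each of which $W$ is the column span of a matrix $\begin{psm} I_k \\ A \end{psm}$ with $A$ an $(n-k)\times k$ matrix of coordinates; then $gW\subseteq W$ becomes a polynomial system in the entries of $A$ and of $g$, and one checks these patch together. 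I would also remark that no irreducibility or reducedness hypothesis on $R(\Gamma,\mathrm{SL}_n(\mathbf C))$ is needed, and that the argument applies verbatim to any finitely generated $\Gamma$ and any $\mathrm{GL}_n$ or $\mathrm{SL}_n$.
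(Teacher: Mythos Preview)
Your proof is correct and is essentially the same argument as the paper's: both exploit the completeness of $\mathrm{Gr}(k,n)\cong \mathrm{SL}_n(\mathbf C)/P(k)$ to conclude that a projection is closed. The only difference is packaging---the paper writes $R_k=\mathrm{SL}_n(\mathbf C)\cdot R(\Gamma,P(k))$ and invokes completeness of $\mathrm{SL}_n(\mathbf C)/P(k)$, whereas you build the incidence variety $\widetilde R_k\subset R(\Gamma,\mathrm{SL}_n(\mathbf C))\times\mathrm{Gr}(k,n)$ explicitly; these are the same object up to the identification $(\rho,gP(k))\leftrightarrow(\rho,g\cdot(\mathbf C^k\times\{0\}))$.
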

\begin{proof}
The assertion is clear for $k=n$ since $R_n= R(\Gamma,\mathrm{SL}_n(\mathbf C))$.
Hence suppose that $1\leq k<n$ and let 
$P(k)\subset \mathrm{SL}_n(\mathbf C)$ denote the parabolic subgroup which preserves 
$\mathbf C^k\times\{0\}\subset\mathbf C^n$. The set
$R(\Gamma,P(k))\subset R(\Gamma,\mathrm{SL}_n(\mathbf C))$ is Zariski-closed since it is 
given by a finite number of equations. Moreover, we have 
\[
R_k = \mathrm{SL}_n(\mathbf C)\cdot R(\Gamma,P(k)) = \mathrm{SL}_n(\mathbf C)/P(k) \cdot R(\Gamma,P(k))
\]
since $P(k)$ preserves $R(\Gamma,P(k))$. Finally, $R_k$
is Zariski-closed since the quotient $\mathrm{SL}_n(\mathbf C)/P(k) $ is complete
(see \cite[\S 0.15]{Humphreys1995}).
\end{proof}
 
 \begin{lemma}\label{lem:no-invariant-subspace}
The unique proper invariant subspace of $\rho^+(\Gamma)$ is $\mathbf C^a\times\{0\}$.
\end{lemma}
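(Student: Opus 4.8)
The plan is to classify directly the nonzero proper $\rho^+(\Gamma)$-invariant subspaces of $\mathbf C^n$, using the irreducibility of $\alpha$ and $\beta$ together with the fact that the cohomology class of $d_+$ in $H^1(\Gamma;\MM^+_{\lambda^n})$ is nonzero. First I would record the two structural facts that drive the argument: the subspace $\mathbf C^a\times\{0\}$ is invariant because the lower-left block of $\rho^+(\gamma)$ vanishes for every $\gamma$; and the subrepresentation it carries, $\gamma\mapsto\lambda^{b\varphi(\gamma)}\alpha(\gamma)$, as well as the quotient representation on $\mathbf C^n/(\mathbf C^a\times\{0\})\cong\mathbf C^b$, namely $\gamma\mapsto\lambda^{-a\varphi(\gamma)}\beta(\gamma)$, are both irreducible, since twisting an irreducible representation by the character $\lambda^{\varphi}$ preserves irreducibility.

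Next, given a nonzero proper invariant subspace $W$, I would run a short case analysis using the projection $\pi\co\mathbf C^n\to\mathbf C^b$ onto the last $b$ coordinates. Since $\pi(W)$ is a submodule of the irreducible quotient, it is either $0$ or all of $\mathbf C^b$. If $\pi(W)=0$ then $W\subseteq\mathbf C^a\times\{0\}$, and irreducibility of the subrepresentation forces $W=\mathbf C^a\times\{0\}$, which is what we want. If $\pi(W)=\mathbf C^b$, then $W\cap(\mathbf C^a\times\{0\})$ is a submodule of the irreducible module $\mathbf C^a\times\{0\}$, hence is $0$ or everything; the latter would give $W=\mathbf C^n$, contradicting properness, so $W\cap(\mathbf C^a\times\{0\})=0$ and $\pi|_W$ is an isomorphism onto $\mathbf C^b$. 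Thus $W$ is the graph of some linear map $f\in M_{a\times b}(\mathbf C)$.

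The one nonroutine step is to derive a contradiction from this graph case, and this is where I expect the real work (such as it is) to lie. Writing out the invariance condition that $\rho^+(\gamma)$ sends $(f(w),w)$ into $W$ — that is, that the first coordinate of the image equals $f$ applied to its second coordinate — and then simplifying (multiply through by $\lambda^{a\varphi(\gamma)}$, use $a+b=n$, and substitute $w\mapsto\beta(\gamma)^{-1}w$), I expect to arrive at the identity $d_+(\gamma)=f-\lambda^{n\varphi(\gamma)}\alpha(\gamma)\,f\,\beta(\gamma^{-1})$ for all $\gamma\in\Gamma$, i.e.\ $d_+(\gamma)=f-\gamma\cdot f$ in the module $\MM^+_{\lambda^n}$. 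This exhibits $d_+$ as a coboundary, contradicting the hypothesis that $[d_+]\neq 0$; hence the graph case is impossible and $W=\mathbf C^a\times\{0\}$ is forced. The only point requiring care is the bookkeeping with the three distinct character twists $\lambda^{b\varphi}$, $\lambda^{-a\varphi}$, and $\lambda^{n\varphi}$ occurring in the blocks of $\rho^+$ and in the action on $\MM^+_{\lambda^n}$.
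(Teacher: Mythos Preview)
Your argument is correct and follows essentially the same route as the paper: use irreducibility of the sub- and quotient representations to reduce to the case of an invariant complement, and then obtain a contradiction with $[d_+]\neq 0$. The only cosmetic difference is that the paper phrases the first part via orbit spans $\langle\rho^+(\Gamma)v\rangle$ of single vectors and then, for the contradiction, appeals to Lemma~\ref{lem:repcocycle} (complete reducibility of $\rho_\lambda^c$ $\Leftrightarrow$ $c$ is a coboundary), whereas you work directly with an invariant subspace $W$ and unwind the coboundary identity $d_+(\gamma)=f-\gamma\cdot f$ by hand. Both are the same computation; your version is simply more self-contained at this spot.
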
 

\begin{proof}
We compute the possible non-zero invariant subspaces of $\rho^+(\Gamma)$ by taking a non-zero vector $v\in\mathbf C^n$
and considering the linear span of its orbit $\langle\rho^+(\Gamma) v\rangle$. When $v\in \mathbf C^a\times\{0\}$, then 
 $\langle\rho^+(\Gamma)v\rangle= \mathbf C^a\times\{0\}$ because $\alpha$ is irreducible. So  we assume that the projection of $v$ to the quotient
 $\mathbf C^n/ \mathbf C^a\times\{0\}$ does not vanish, and since $\beta$ is irreducible, the projection of the linear span $\langle\rho^+(\Gamma)v\rangle$
 is the whole $\mathbf C^n/ \mathbf C^a\times\{0\}$. In particular the dimension of $\langle\rho^+(\Gamma)v \rangle$ is at least $b$. Notice that
 $\dim_{\mathbf C} \langle\rho^+(\Gamma)v\rangle = b$ cannot occur, because this would yield a direct sum $
  \mathbf C^n=\mathbf C^a\times\{0\} \oplus \langle\rho^+(\Gamma)v\rangle$;  by Lemma~\ref{lem:repcocycle} this would contradict the choice of $\rho^+$
  and the non-triviality of the cohomology class of $d_ +$. Therefore 
   $\dim_{\mathbf C} \langle\rho^+(\Gamma)v \rangle > b$, so that 
 $\langle\rho^+(\Gamma) v \rangle$ contains at least a nontrivial vector in $\mathbf C^a\times\{0\}$ (the kernel of the projection). Irreducibility of $\alpha$ gives now 
   $\langle\rho^+(\Gamma)v\rangle=\mathbf C^n$.
\end{proof}

Let $S$ be the component of $R(\Gamma,\mathrm{SL}_n(\mathbf C))$ that contains $\rho^+$. In particular,
$\dim S= n^2+n-2$. We  show that $S$ contains irreducible representations.

\begin{prop}
  \label{prop:rho+irreducible}
 $S$ contains irreducible representations.
\end{prop}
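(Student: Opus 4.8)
The plan is to argue by contradiction: suppose every representation in $S$ is reducible, hence $S\subset R_1\cup\cdots\cup R_{n-1}$ in the notation of Lemma~\ref{lem:invariant-subspace-closed}. Since $S$ is irreducible and each $R_k$ is Zariski-closed, $S$ would be contained in a single $R_k$ for some $1\le k\le n-1$; thus $S=\mathrm{SL}_n(\mathbf C)\cdot (S\cap R(\Gamma,P(k)))$. First I would estimate $\dim S$ from above under this assumption. A representation in $R(\Gamma,P(k))$ near $\rho^+$ has block-upper-triangular form, so its diagonal blocks give two representations $\alpha'\co\Gamma\to\mathrm{GL}_k$ and $\beta'\co\Gamma\to\mathrm{GL}_{n-k}$ (with compatible determinants), and the off-diagonal block is a cocycle; by semicontinuity the dimension of this set is controlled by the dimension at $\rho^+$ itself. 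Using Lemma~\ref{lem:no-invariant-subspace}, the only invariant subspace of $\rho^+(\Gamma)$ is $\mathbf C^a\times\{0\}$, so the only relevant value is $k=a$, and $S\cap R_a$ near $\rho^+$ is exactly $\mathrm{SL}_n(\mathbf C)\cdot R(\Gamma,P^+)$ near $\rho^+$.

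Next I would compute the local dimension of $\mathrm{SL}_n(\mathbf C)\cdot R(\Gamma,P^+)$ at $\rho^+$. By Lemma~\ref{lemma:p+smooth}, $\rho^+$ is a smooth point of $R(\Gamma,P^+)$, and the Zariski tangent space there is $Z^1(\Gamma;\mathfrak p^+)$, so
\[
\dim_{\rho^+} R(\Gamma,P^+)=\dim Z^1(\Gamma;\mathfrak p^+)=\dim H^1(\Gamma;\mathfrak p^+)+\dim B^1(\Gamma;\mathfrak p^+).
\]
By Corollary~\ref{cor:h1p+}, $\dim H^1(\Gamma;\mathfrak p^+)=n-2$, and by Lemma~\ref{lem:h0p}, $H^0(\Gamma;\mathfrak p^+)=0$, so $\dim B^1(\Gamma;\mathfrak p^+)=\dim\mathfrak p^+$. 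Sweeping by the conjugation action adds at most $\dim(\mathrm{SL}_n(\mathbf C)/P^+)=\dim\mathfrak{sl}_n(\mathbf C)-\dim\mathfrak p^+$ (the quotient is complete, and the stabilizer of $R(\Gamma,P^+)$ contains $P^+$), so
\[
\dim_{\rho^+}\big(\mathrm{SL}_n(\mathbf C)\cdot R(\Gamma,P^+)\big)\le (n-2)+\dim\mathfrak p^+ +\big(\dim\mathfrak{sl}_n(\mathbf C)-\dim\mathfrak p^+\big)=n-2+(n^2-1)=n^2+n-3.
\]
This is strictly less than $\dim S=n^2+n-2$ from Proposition~\ref{prop:rho+smooth}, a contradiction. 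Hence $S$ cannot be contained in $R_a$, and therefore $S$ contains irreducible representations.

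The main obstacle is the bookkeeping around the conjugation sweep: one must make sure that passing from $R(\Gamma,P^+)$ to its $\mathrm{SL}_n(\mathbf C)$-saturation does not overshoot the dimension count, i.e.\ that the coset space $\mathrm{SL}_n(\mathbf C)/P^+$ contributes exactly its dimension and no fiber collapses are missed or double-counted. Completeness of $\mathrm{SL}_n(\mathbf C)/P^+$ (cited in Lemma~\ref{lem:invariant-subspace-closed}) guarantees the image is closed, and the fact that $P^+$ preserves $R(\Gamma,P^+)$ gives the correct fibered-product description $R_a\cong \mathrm{SL}_n(\mathbf C)\times_{P^+}R(\Gamma,P^+)$ locally, from which the dimension bound $\dim R_a\le \dim(\mathrm{SL}_n(\mathbf C)/P^+)+\dim R(\Gamma,P^+)$ follows cleanly. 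Once that is in place, the strict inequality $n^2+n-3<n^2+n-2$ does all the work.
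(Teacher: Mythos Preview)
Your proof is correct and follows essentially the same approach as the paper's: both argue by contradiction, use Lemma~\ref{lem:no-invariant-subspace} to reduce to the parabolic $P^+=P(a)$, invoke Lemma~\ref{lemma:p+smooth} together with Corollary~\ref{cor:h1p+} and Lemma~\ref{lem:h0p} to compute $\dim_{\rho^+}R(\Gamma,P^+)=n-2+\dim\mathfrak p^+$, and then bound the $\mathrm{SL}_n(\mathbf C)$-sweep by $n^2+n-3$ via the coset $\mathrm{SL}_n(\mathbf C)/P^+$, contradicting $\dim S=n^2+n-2$. The only cosmetic difference is that you work globally (using irreducibility of $S$ to land in a single $R_k$) while the paper works with Zariski neighborhoods of $\rho^+$; the content is the same.
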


\begin{proof} 
We prove the proposition by contradiction, hence assume that there is a
Zariski  neighborhood $U\subset S\subset R(\Gamma,\mathrm{SL}_2(\mathbf C))$ of $\rho^+$
so that all representations in $U$ are reducible.
By  Lemmas~\ref{lem:invariant-subspace-closed} and \ref{lem:no-invariant-subspace}, the choice of the $U$ can be
made so that the representations in $U$ have only an $a$-dimensional invariant subspace.

In particular every representation in $U$ is conjugate to a representation in $P^+=P(a)$. 
Therefore
given any Zariski neighborhood $U^+\subset R(\Gamma,P^+)$ of $\rho^+$, $U$ can be chosen
so that every representation in $U$ is conjugate to a representation in $U^+$.
As $\rho^+$ is a smooth point of $R(\Gamma, P^+)$  by
Lemma~\ref{lemma:p+smooth}, $\rho^+$ is contained in a single irreducible
component $S^+$ of $R(\Gamma, P^+)$, and we may chose $U^+\subset S^+$. 
This
yields the inclusion 
$$
U\subset \mathrm{SL}_n(\mathbf C)\cdot U^+\subset \mathrm{SL}_n(\mathbf C)\cdot
S^+.
$$
Now we reach the contradiction by computing dimensions. 
Using that  $P^+$ stabilizes $S^+$:
\begin{equation*}
\dim U \leq \dim (\mathrm{SL}_n(\mathbf C)\cdot S^+)\leq \dim
(\mathrm{SL}_n(\mathbf C)/P^+) + \dim S^+,\\
\end{equation*}
where  $ \dim (\mathrm{SL}_n(\mathbf C)/P^+) = n^2-1-\dim \mathfrak{p}^+$, and
\[
\dim S^+ = \dim H^1(\Gamma ; \mathfrak{p}^+)+\dim \mathfrak{p}^+- \dim
H^0(\Gamma ; \mathfrak{p}^+) = n-2+\dim \mathfrak{p}^+ -0.
\]
This yields
$
 \dim U\leq n^2+ n-3$, contradicting Proposition~\ref{prop:rho+smooth},
 which asserts that  $\dim U = \dim S= n^2+n-2$.
\end{proof}

\section{The neighborhood of $\chi_\lambda$}
\label{sec:nbhood}

The aim of this section is to prove Theorem~\ref{thm:str} i.e.\ we determine the local structure 
of the character variety $X(\Gamma,\mathrm{SL}_n(\mathbf
C))$ at  $\chi_\lambda$, the character of the representation $\rho_\lambda$ given by \eqref{eq:rho_lambda}.
For this
purpose we will identify the quadratic cone of $X(\Gamma,\mathrm{SL}_n(\mathbf
C))$  
at  $\chi_\lambda$  by means of algebraic
obstructions to integrability. Moreover, we will describe 
these obstructions geometrically.

Before discussing the components of the variety of characters, we need to discuss the
components of the variety of representations. 
In Section~\ref{sec:regrho} we
have constructed $S$ a component of 
$R(\Gamma,\mathrm{SL}_n(\mathbf C))$ of dimension $n^2+n-2$
that contains $\rho^+$ and irreducible representations 
(Proposition~\ref{prop:rho+smooth} and Proposition~\ref{prop:rho+irreducible}).

Next we discuss a component of reducible representations.
The representation variety $R(\Gamma,\mathrm{SL}_n(\mathbf C))$
contains
\[
R(\Gamma,\mathrm{SL}_a(\mathbf C))\times  R(\Gamma,\mathrm{SL}_b(\mathbf C))\times R(\Gamma, \mathbf C^*)
\]
where the inclusion is given by 
$$
(\alpha',\beta',\lambda')\mapsto ((\lambda')^{b\varphi} \otimes \alpha')\oplus 
( (\lambda')^{-a\varphi} )\otimes \beta').
$$
By our hypothesis on infinitesimal regularity, $\alpha\in R(\Gamma,\mathrm{SL}_a(\mathbf C))$ and
$\beta\in R(\Gamma,\mathrm{SL}_b(\mathbf C))$ are smooth points which are contained in a unique components $V_\alpha\subset R(\Gamma,\mathrm{SL}_a(\mathbf C))$ and
$V_\beta\subset R(\Gamma,\mathrm{SL}_b(\mathbf C))$ respectively.
Hence we obtain an embedding
$$
 V_\alpha \times  V_\beta \times R(\Gamma, \mathbf C^*) \hookrightarrow
 R(\Gamma,\mathrm{SL}_n(\mathbf C))
$$

\begin{lemma}
\label{lemma:T}
There exists a unique component $T$ of $R(\Gamma,\mathrm{SL}_n(\mathbf C))$ that contains 
$$
 V_\alpha \times  V_\beta \times R(\Gamma, \mathbf C^*).
$$
Moreover, we have $\dim T= n^2+n-3$.
\end{lemma}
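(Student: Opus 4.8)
The plan is to mimic the smoothness analysis already carried out for $\rho^+$, but this time at a point of the subvariety $V_\alpha\times V_\beta\times R(\Gamma,\mathbf C^*)$. First I would pick a convenient representative in this subvariety, namely $\rho_\lambda$ itself (or a generic $\lambda$), and compute the Zariski tangent space $Z^1(\Gamma;\mathfrak{sl}_n(\mathbf C)_{\Ad\rho_\lambda})$ using the decomposition~\eqref{eqn:decomposition} and the dimension count~\eqref{eq:dimZ1}. Since $\alpha$ and $\beta$ are irreducible and infinitesimally regular, and (under the hypothesis of Theorem~\ref{thm:suf}, via Proposition~\ref{prop:dim-H-M-lambda}) $\dim H^1(\Gamma;\MM^\pm_{\lambda^{\pm n}})=1$ while $\dim H^0(\Gamma;\MM^\pm_{\lambda^{\pm n}})=0$, Equation~\eqref{eq:dimZ1} gives $\dim Z^1(\Gamma;\mathfrak{sl}_n(\mathbf C)_{\Ad\rho_\lambda})=n^2+n-3+1-0+1-0=n^2+n-1$. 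On the other hand, the dimension of $V_\alpha\times V_\beta\times R(\Gamma,\mathbf C^*)$ is $(a^2+a-2)+(b^2+b-2)+1=n^2+n-3-2ab$... so this naive bound is far from sharp, and one must instead bound the dimension of the component $T$ from above by a separate argument.

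The cleaner route, which I would actually follow, is: (1) show $V_\alpha\times V_\beta\times R(\Gamma,\mathbf C^*)$ is irreducible of dimension $(a^2+a-2)+(b^2+b-2)+1$, hence lies in a unique component $T$; this irreducibility follows from Proposition~\ref{prop:MichaelOuardia}/Corollary~\ref{cor:infinitesimalregularity} (each factor is irreducible: $V_\alpha$ and $V_\beta$ are the unique components through the infinitesimally regular $\alpha,\beta$, and $R(\Gamma,\mathbf C^*)\cong(\mathbf C^*)^{b_1}=\mathbf C^*$ is irreducible). (2) Bound $\dim T$ from above by $n^2+n-3$. For this I would look at a generic point of $T$, argue it is a reducible representation of the block-diagonal form up to conjugacy — using that $T$ is the component through $\rho_\lambda$ and a limit/semicontinuity argument that the stabilizer type is generically that of $\rho_\lambda$ — and then compute $\dim T \le \dim(\mathrm{SL}_n(\mathbf C)/L) + \dim(V_\alpha\times V_\beta\times R(\Gamma,\mathbf C^*))$ where $L$ is the stabilizer. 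Here $\dim(\mathrm{SL}_n(\mathbf C)/L)$ is the codimension of the Levi $L=\mathrm{S}(\mathrm{GL}_a\times\mathrm{GL}_b)$, which is $2ab$, and $\dim(V_\alpha\times V_\beta\times R(\Gamma,\mathbf C^*))=(a^2+a-2)+(b^2+b-2)+1$; adding and using $2ab+a^2+b^2=n^2$ gives $n^2+a+b-3=n^2+n-3$. To make this rigorous I would invoke Lemma~\ref{lem:invariant-subspace-closed} (the reducible locus $R_a$ is closed) together with the fact that $T$ accumulates only onto representations preserving an $a$-dimensional subspace, so $T\subset \mathrm{SL}_n(\mathbf C)\cdot R(\Gamma,P^+)$ and the $P^+$-orbit of the block-diagonal subvariety inside $R(\Gamma,P^+)$ has the asserted dimension.

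(3) Finally, match the two bounds: $\dim T\ge \dim(V_\alpha\times V_\beta\times R(\Gamma,\mathbf C^*))$ is clear, but this is smaller than $n^2+n-3$, so the lower bound must also come from the $\mathrm{SL}_n(\mathbf C)$-orbit: $T\supset \mathrm{SL}_n(\mathbf C)\cdot(V_\alpha\times V_\beta\times R(\Gamma,\mathbf C^*))$, whose dimension is at least $2ab+(a^2+a-2)+(b^2+b-2)+1=n^2+n-3$ once we check the generic fibre of the action map has the expected dimension $\dim L=a^2+b^2-1$ (this uses that $\alpha,\beta$ are irreducible so the stabilizer of a generic block-diagonal representation is exactly $L$). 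Combining, $\dim T=n^2+n-3$. The main obstacle I anticipate is step~(2): controlling a \emph{generic} point of the component $T$ — a priori $T$ could contain representations that are reducible but not conjugate into $P^+$, or whose stabilizer jumps — so one needs the closedness of $R_a$ (Lemma~\ref{lem:invariant-subspace-closed}) and an argument, analogous to Lemma~\ref{lem:no-invariant-subspace}, pinning down which invariant subspaces can occur near the block-diagonal locus; equivalently, one must rule out that $T$ coincides with or is strictly larger than expected by comparing with the component $S$ of Section~\ref{sec:regrho}.
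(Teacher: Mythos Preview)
Your lower bound (step~3) is correct and matches the paper's: the orbit $\mathrm{SL}_n(\mathbf C)\cdot(V_\alpha\times V_\beta\times R(\Gamma,\mathbf C^*))$ has dimension at least $n^2+n-3$, by the fibre computation you sketch.

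The genuine gap is in steps~(1) and~(2). In step~(1), irreducibility of $V_\alpha\times V_\beta\times R(\Gamma,\mathbf C^*)$ only shows it lies in \emph{some} component of $R(\Gamma,\mathrm{SL}_n(\mathbf C))$; it could a priori lie in the intersection of several (indeed $\rho_\lambda$ itself lies in both $S$ and $T$). In step~(2), your upper-bound argument is circular: you want to conclude that a generic point of $T$ is reducible and conjugate into $P^+$, but at this stage you have no control over $T$ beyond the fact that it contains the block-diagonal locus --- nothing yet excludes $T=S$, in which case the generic point would be irreducible. Invoking Lemma~\ref{lem:invariant-subspace-closed} does not help, because you first need to know $T\subset R_a$, which is exactly the issue.

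The paper avoids this entirely by a simple perturbation trick you overlooked. Since $\lambda^n$ is a \emph{simple} root of $\Delta_1^+$ and $\Delta_0^+(\lambda^n)\neq 0$, for any $\lambda'\neq\lambda$ sufficiently close to $\lambda$ one has $\Delta_q^+((\lambda')^n)\neq 0$ for $q=0,1$, hence $H^q(\Gamma;\MM^\pm_{(\lambda')^{\pm n}})=0$ for $q=0,1$ (by the argument of Proposition~\ref{prop:dim-H-M-lambda}). Plugging this into \eqref{eq:dimZ1} at $\rho_{\lambda'}$ gives $\dim Z^1(\Gamma;\mathfrak{sl}_n(\mathbf C)_{\Ad\rho_{\lambda'}})=n^2+n-3$ on the nose. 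Combined with your lower bound $\dim_{\rho_{\lambda'}} R\geq n^2+n-3$, this forces $\rho_{\lambda'}$ to be a \emph{regular} point of $R(\Gamma,\mathrm{SL}_n(\mathbf C))$; hence it is smooth, lies in a unique component $T$, and $\dim T=n^2+n-3$. Uniqueness and the upper bound come for free from regularity --- no analysis of a generic point of $T$ is needed. The moral: do the cohomology computation at the nearby $\rho_{\lambda'}$ where the off-diagonal blocks contribute nothing, not at $\rho_\lambda$ where $\dim Z^1=n^2+n-1$ is too large.
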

\begin{proof}
By the hypothesis of Theorem~\ref{thm:str} we have
$\Delta_0^{\alpha\otimes\beta^*}(\lambda^n)\neq0$ and $\lambda^n$ is a simple root of 
$\Delta_1^{\alpha\otimes\beta^*}(t)$. Hence for all $\lambda'\neq\lambda$ which are
sufficiently close to $\lambda$ we have
$\Delta_q^{\alpha\otimes\beta^*}((\lambda')^n)\neq0$ for $q=0,1$.
Hence, by the argument in the proof of Proposition~\ref{prop:dim-H-M-lambda} we obtain
$H^q(\Gamma;\MM^\pm_{(\lambda')^{\pm n}})=0$ for $q=0,1$.

Now consider the representation 
\[
\rho_{\lambda'}= 
((\lambda')^{b\varphi}\otimes \alpha)\oplus ((\lambda')^{-a\varphi}\otimes \beta)\in V_\alpha \times  V_\beta \times R(\Gamma, \mathbf C^*)
\]
and the corresponding decomposition of $\mathfrak{sl}_n(\mathbf C)_{\Ad \rho_{\lambda'}}$   as   $\Gamma$-module:
$$
\mathfrak{sl}_n(\mathbf C)_{\Ad \rho_{\lambda'}}= 
\mathfrak{sl}_a(\mathbf C)_{\Ad \alpha}\oplus
\mathfrak{sl}_b(\mathbf C)_{\Ad \beta} \oplus \mathbf{C}\oplus
\MM^+_{(\lambda')^n}\oplus\MM^-_{(\lambda')^{-n}}.
$$
Hence 
\begin{align*}
\dim Z^1(\Gamma ; \mathfrak{sl}_n(\mathbf C)_{\Ad \rho_{\lambda'}}) &=
\dim H^1(\Gamma ; \mathfrak{sl}_n(\mathbf C)_{\Ad \rho_{\lambda'}}) +
\dim B^1(\Gamma ; \mathfrak{sl}_n(\mathbf C)_{\Ad \rho_{\lambda'}}) \\
&=
a-1+b-1+1 + n^2 -1 -1 = n^2+n-3.
\end{align*}
On the other hand, for the  $\mathrm{SL}_n(\mathbf C)$-orbit of 
$ V_\alpha \times  V_\beta \times R(\Gamma, \mathbf C^*)$
we have:
\[ \mathrm{SL}_n(\mathbf C)\cdot ( V_\alpha \times  V_\beta \times R(\Gamma, \mathbf C^*) )=
\mathrm{SL}_n(\mathbf C)/P^+ \cdot ( U^+ \cdot V_\alpha \times  V_\beta \times R(\Gamma, \mathbf C^*))
\]
where $U^+= \big\{ \big(
\begin{smallmatrix} \mathrm{id}_a &X \\ 0 & \mathrm{id_b} \end{smallmatrix}
\big)\;\big|\; X\in \MM^+_{(\lambda')^{n}}\big\}$. Now the action of $U^+$ on
$V_\alpha \times  V_\beta \times R(\Gamma, \mathbf C^*)$ is generically free since
$H^0(\Gamma;\MM^+_{(\lambda')^{n}})=0$ and hence
\begin{align*}
\dim \mathrm{SL}_n(\mathbf C)\cdot ( V_\alpha \times  V_\beta \times R(\Gamma, \mathbf C^*) )
&\geq ab+ ab +a^2+a -2 +b^2+b-2+1\\ &= n^2+n-3 \,.
\end{align*}
Therefore, $\rho_{\lambda'}$ is a smooth point of $R(\Gamma , \mathrm{SL}_n(\mathbf C))$ which is contained in a unique $n^2+n-3$-dimensional component 
$T$. Note that $T$ is the Zariski closure of the orbit 
$\mathrm{SL}_n(\mathbf C)\cdot ( V_\alpha \times  V_\beta \times R(\Gamma, \mathbf C^*)) $.
\end{proof}

Let $Y$ and $Z$ denote the components of the character variety that contain the
characters of $S$ and $T$ respectively. We have
$\dim Y=\dim S-\dim \mathrm{SL}_n(\mathbf C)= n-1$. In addition 
$\dim Z\geq a-1+b-1+1= n-1$ 
since $T$ contains  $V_\alpha \times  V_\beta \times R(\Gamma, \mathbf C^*) $. 
Notice that the generic dimension of the orbit of 
$(\alpha', \beta', \lambda')\in V_\alpha \times  V_\beta \times R(\Gamma, \mathbf C^*)$
is $n^2-2$. Hence, $\dim Z\leq \dim T - (n^2-2) = n-1$.
Hence $\dim Z=n-1$ and  $\dim T= n^2+n-3$.

Let $Z_\alpha\subset X(\Gamma,\mathrm{SL}_a(\mathbf C))$ and $Z_\beta\subset X(\Gamma,\mathrm{SL}_b(\mathbf C))$ denote the irreducible components that contain the 
respective projections of $V_\alpha$ and $V_\beta$. We have a commutative diagram
\[
\begin{CD}
V_\alpha \times  V_\beta \times R(\Gamma, \mathbf C^*) @>>>  
T\subset R(\Gamma,\mathrm{SL}_n(\mathbf C)) \\
@VVV @VVV\\
Z_\alpha\times Z_\beta\times \mathbf{C}^* @>>> Z\subset  X(\Gamma,\mathrm{SL}_n(\mathbf C))
\end{CD}
\]
The top row is injective but not the bottom one, as conjugation can realize permutations of rows and columns. In general those permutations are difficult to describe, 
but if we restrict to \new{irreducible} characters, this is simpler.

\begin{lemma}
\label{lemma:Z}
There exists a Zariski dense subset  $ \mathring{ Z}\subset Z$ such that:
\begin{itemize}
 \item[--] If  $Z_\alpha\neq Z_\beta$ (in particular if $a\neq b$), then  $ \mathring{ Z}\cong Z_\alpha^{irr}\times Z_\beta^{irr}\times \mathbf C^*$.
\item[--] If $Z_\alpha=Z_\beta$, then  $ \mathring{ Z}\cong Z_\alpha^{irr}\times Z_\beta^{irr}\times \mathbf C^*/\!\sim $,
where the relation is defined by 
$(\chi_a,\chi_b,\lambda)\sim (\chi_b,\chi_a,\lambda^{-1})$, for $(\chi_a,\chi_b,\lambda)\in Z_\alpha^{irr}\times Z_\beta^{irr}\times \mathbf C^*$.
\end{itemize}
\end{lemma}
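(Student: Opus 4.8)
The plan is to realize $Z$ (up to a Zariski dense subset) as the image under the algebraic quotient map $R(\Gamma,\mathrm{SL}_n(\mathbf C))\to X(\Gamma,\mathrm{SL}_n(\mathbf C))$ of the orbit $\mathrm{SL}_n(\mathbf C)\cdot (V_\alpha\times V_\beta\times R(\Gamma,\mathbf C^*))$, which by Lemma~\ref{lemma:T} is Zariski dense in $T$. First I would set $\mathring Z$ to be the set of characters in $Z$ of representations $\rho_{(\alpha',\beta',\lambda')}=((\lambda')^{b\varphi}\otimes\alpha')\oplus((\lambda')^{-a\varphi}\otimes\beta')$ with both $\alpha'$ and $\beta'$ \emph{irreducible} (and the corresponding characters lying in $Z_\alpha^{irr}$, $Z_\beta^{irr}$); since irreducibility is a Zariski open condition and $V_\alpha$, $V_\beta$ contain irreducible representations, $\mathring Z$ is Zariski dense in $Z$. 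The content is then to compute exactly when two such representations have the same character, i.e.\ are conjugate in $\mathrm{SL}_n(\mathbf C)$ (for semisimple representations, equality of characters is equivalent to conjugacy, by \cite{LM85}).

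The key step is the following conjugacy analysis. Suppose $\rho_{(\alpha',\beta',\lambda')}$ and $\rho_{(\alpha'',\beta'',\lambda'')}$ are conjugate, with all four of $\alpha',\beta',\alpha'',\beta''$ irreducible. Both are direct sums of two irreducible $\Gamma$-modules, namely $\mathbf C^a_{(\lambda')^{b\varphi}\otimes\alpha'}$ and $\mathbf C^b_{(\lambda')^{-a\varphi}\otimes\beta'}$, respectively the primed-primed analogues. By Jordan--H\"older / the Krull--Schmidt property for semisimple modules, the two unordered pairs of isotypic summands must coincide. There are two cases. If the first summands are isomorphic to each other and likewise the second summands, then $(\lambda')^{b\varphi}\otimes\alpha'\cong(\lambda'')^{b\varphi}\otimes\alpha''$ forces (comparing determinants, since $\alpha',\alpha''$ land in $\mathrm{SL}_a$) $(\lambda')^{ab}=(\lambda'')^{ab}$ up to the appropriate root of unity absorbed into redefining $\alpha''$; a careful bookkeeping of the scalar $\lambda'$ versus $\lambda''$ and the $\mathrm{SL}$-normalization shows $\lambda'=\lambda''$ and then $\alpha'\cong\alpha''$, $\beta'\cong\beta''$, i.e.\ $(\chi_{\alpha'},\chi_{\beta'},\lambda')=(\chi_{\alpha''},\chi_{\beta''},\lambda'')$. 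The cross case — first summand of one isomorphic to the second summand of the other — can only occur when $a=b$ and forces $\mathbf C^a_{(\lambda')^{a\varphi}\otimes\alpha'}\cong\mathbf C^a_{(\lambda'')^{-a\varphi}\otimes\beta''}$ and symmetrically; again a determinant/scalar computation yields $\lambda''=(\lambda')^{-1}$, $\chi_{\alpha''}=\chi_{\beta'}$, $\chi_{\beta''}=\chi_{\alpha'}$. This cross case produces a genuine new identification precisely when $Z_\alpha=Z_\beta$ (so that $(\chi_b,\chi_a,\lambda^{-1})$ again lies in $Z_\alpha^{irr}\times Z_\beta^{irr}\times\mathbf C^*$); when $Z_\alpha\neq Z_\beta$ the cross identification cannot be realized within $\mathring Z$, so the parametrization is injective. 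Finally, surjectivity of $Z_\alpha^{irr}\times Z_\beta^{irr}\times\mathbf C^*\to\mathring Z$ (resp.\ of the quotient by $\sim$) is by construction, and that this map is an isomorphism of varieties — not merely a bijection — follows because the top arrow $V_\alpha\times V_\beta\times R(\Gamma,\mathbf C^*)\to T$ is a morphism, $P^+$ acts on $T$ with the reducible locus as a union of $\mathrm{SL}_n$-translates, and on the irreducible-summand locus the fibres of $R\to X$ are single closed orbits of constant dimension $n^2-2$, so the induced map on quotients is \'etale onto its image; combined with normality of $Z_\alpha^{irr}\times Z_\beta^{irr}\times\mathbf C^*$ (or of its $\mathbf Z/2$-quotient in the symmetric case, cf.\ the trefoil computation in Section~\ref{section:example}) one upgrades the bijection to an isomorphism.

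The main obstacle I expect is the scalar bookkeeping in the conjugacy analysis: keeping precise track of how the $\mathbf C^*$-factor $\lambda'$ interacts with the $\mathrm{SL}_a$- and $\mathrm{SL}_b$-normalizations when one permutes the two blocks, so that the relation comes out exactly as $(\chi_a,\chi_b,\lambda)\sim(\chi_b,\chi_a,\lambda^{-1})$ and not some twisted variant; in particular one must check that no extra root-of-unity ambiguity survives after imposing that everything sits inside $\mathrm{SL}_n(\mathbf C)$. A secondary technical point is justifying that the set-theoretic bijection is an isomorphism of varieties rather than just a homeomorphism, for which the \'etale-quotient argument above, together with Luna's slice theorem as used elsewhere in the paper, should suffice.
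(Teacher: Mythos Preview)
Your approach is exactly the one the paper takes: define $\mathring Z$ as the image of $Z_\alpha^{irr}\times Z_\beta^{irr}\times\mathbf C^*$, observe it is Zariski dense because the irreducible locus is dense in $V_\alpha\times V_\beta$, and then analyse the fibres by appealing to uniqueness of the semisimple representative of a character. The paper's own proof is only a few lines long and does not attempt the \'etale/variety-isomorphism upgrade you sketch at the end; it treats the symbol $\cong$ essentially as a bijection.

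Where you go beyond the paper is in the conjugacy analysis, and there your flagged concern is not just a technical nicety but a genuine gap---one that the paper's proof shares. Your assertion that ``careful bookkeeping \dots\ shows $\lambda'=\lambda''$'' does not follow from the determinant comparison: from $(\lambda')^{b\varphi}\otimes\alpha'\cong(\lambda'')^{b\varphi}\otimes\alpha''$ and $(\lambda')^{-a\varphi}\otimes\beta'\cong(\lambda'')^{-a\varphi}\otimes\beta''$ one only obtains $(\lambda'/\lambda'')^{ab}=1$. Writing $\zeta=\lambda'/\lambda''$, the pair $(\alpha',\beta',\lambda')$ and $(\zeta^{b\varphi}\otimes\alpha',\zeta^{-a\varphi}\otimes\beta',\zeta^{-1}\lambda')$ give literally the same representation in $\mathrm{SL}_n(\mathbf C)$, and the twisted characters $\chi_{\zeta^{b\varphi}\otimes\alpha'}$, $\chi_{\zeta^{-a\varphi}\otimes\beta'}$ may very well remain in $Z_\alpha^{irr}$, $Z_\beta^{irr}$. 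The trefoil example in Section~\ref{section:example} already exhibits this: with $a=2$, $b=1$, taking $\zeta=-1$ gives $(\chi_{\alpha_s},\lambda)$ and $(\chi_{\alpha_{2i-s}},-\lambda)$ mapping to the \emph{same} character in $\mathring Z$ (this is precisely the relation $X_\lambda=X_{-\lambda}$ discussed there), so the map from $Z_\alpha^{irr}\times Z_\beta^{irr}\times\mathbf C^*$ to $\mathring Z$ is $2$-to-$1$ even though $Z_\alpha\neq Z_\beta$. Thus the identity-case injectivity you (and the paper) claim fails in general; the correct statement must quotient by the residual action of the group of $ab$-th roots of unity that keep the twisted characters inside $Z_\alpha^{irr}\times Z_\beta^{irr}$. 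For the local purposes of Theorem~\ref{thm:str} this does not matter---the map is \'etale near $(\chi_\alpha,\chi_\beta,\lambda)$---but as a global description of $\mathring Z$ your argument, like the paper's, is incomplete on exactly the point you yourself singled out as the main obstacle.
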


Here $Z_\alpha^{irr}$ denotes the set of irreducible characters in $Z_\alpha$. We use similar notation for other components of 
charaters and representations.

\begin{proof}
Recall from the proof of Proposition~\ref{lemma:T} that $T$ is the Zariski closure of the orbit 
$\mathrm{SL}_n(\mathbf C)\cdot ( V_\alpha \times  V_\beta \times R(\Gamma, \mathbf C^*)) $.
As $V_\alpha^{irr}$ and $V_\beta^{irr}$ are dense in $V_\alpha$ and $V_\beta$, 
$\mathrm{SL}_n(\mathbf C)\cdot ( V_\alpha^{irr} \times  V_\beta^{irr} \times R(\Gamma, \mathbf C^*)) $
is dense in $T$. Its projection $ \mathring{ Z}$ to $Z$ is the image of 
$Z_\alpha^{irr}\times Z_\beta^{irr}\times \mathbf C^*$,
which is Zariski dense. 
To determine this image, we use that each point in $X(\Gamma,\mathrm{SL}_n(\mathbf C))$ is the character of a semi-simple
representation, unique up to conjugation \cite{LM85}. This uniqueness implies that for $Z_\alpha\neq Z_\beta$ this is an injective map,
and for $Z_{\alpha}=Z_{\beta}$ we quotient by the permutation of components,  with the corresponding
transformation for $\lambda$.
\end{proof}

 \begin{remark}
When $a=b=1$,  then $Z_\alpha=Z_\beta$ consists of a single point and $Z$ is the quotient of $\mathbf C^*$
by the involution $\lambda\mapsto 1/\lambda$. Hence $Z\cong\mathbf C$ and it is the variety of abelian
characters in $\SL(\mathbf C)$. The ring of functions
invariant by this involution  is generated by $\lambda+1/\lambda$, i.e. the trace of a diagonal 
matrix with eigenvalues $\lambda$ and $1/\lambda$ (corresponding to the  character evaluated
at a meridian).
   \end{remark}

%

We aim to show that $S$ and $T$ are the only components that contain
$\rho_\lambda$. For this purpose we consider the quadratic cone
$Q(\rho_\lambda)$ which is 
defined by the vanishing of an obstruction to integrability of $1$-cocycles. Let
$$
[.\smallcup.]\co 
H^1(\Gamma ; \mathfrak{sl}_n(\mathbf C)_{\Ad\rho_\lambda})\otimes H^1(\Gamma ;
\mathfrak{sl}_n(\mathbf C)_{\Ad\rho_\lambda})
\to H^2(\Gamma ; \mathfrak{sl}_n(\mathbf C)_{\Ad\rho_\lambda})
$$
denote the \emph{cup-bracket} which is the combination of the cup-product with
the Lie bracket
$\mathfrak{sl}_n(\mathbf C)\otimes \mathfrak{sl}_n(\mathbf
C)\xrightarrow{[\,.\,,\,.\,]} \mathfrak{sl}_n(\mathbf C)$.
The quadratic cone $Q(\rho_\lambda)\subset Z^1(\Gamma ;  \mathfrak{sl}_n(\mathbf
C)_{\Ad\rho_\lambda})$ is defined by
\[
Q(\rho_\lambda)=\{ \vartheta\in Z^1(\Gamma ;  \mathfrak{sl}_n(\mathbf C)_{\Ad\rho_\lambda}) \mid
[\vartheta\smallcup\vartheta]\sim0\}.
\]
Goldman showed \cite{Goldman1984} that if $\vartheta\in Z^1(\Gamma ; 
\mathfrak{sl}_n(\mathbf C)_{\Ad\rho_\lambda})$ is integrable then the cup-bracket
$[\vartheta\smallcup\vartheta]$ is a coboundary.
In what follows we will compute the projections of this obstruction,  for
 the projections
$$
\operatorname{pr}_\pm\co H^2(\Gamma ;  \mathfrak{sl}_n(\mathbf C)_{\Ad\rho_\lambda})\to H^2(\Gamma
; \MM^\pm_{\lambda^{\pm n}}).
$$
Here we use the decomposition of $\Gamma$-modules:
$$
\mathfrak{sl}_n(\mathbf C)_{\Ad\rho_\lambda}=\mathcal{D} \oplus
\MM^+_{\lambda^n}\oplus\MM^-_{\lambda^{-n}} = \mathfrak{sl}_a(\mathbf C)\oplus
\mathfrak{sl}_b(\mathbf C) \oplus \mathbf{C}\oplus
\MM^+_{\lambda^n}\oplus\MM^-_{\lambda^{-n}}.
$$
Recall that $\Gamma$ acts of $\mathfrak{sl}_n(\mathbf C)$,
$\mathfrak{sl}_a(\mathbf C)$ and 
$\mathfrak{sl}_b(\mathbf C)$ via the adjoint representation
$\mathrm{Ad}\,\rho_\lambda$,
$\mathrm{Ad}\,\alpha$ and $\mathrm{Ad}\,\beta$ respectively.
For the rest of this section we will understand these modules with this action.
Recall also that, by the hypothesis of Theorem~\ref{thm:str}, 
$\Delta_0^+(\lambda^n)\neq 0$ and $\lambda ^n$ is a simple root of $\Delta_1^+$. 
By Proposition~\ref{prop:dim-H-M-lambda} we have $\dim H^1(\Gamma ;
\MM^\pm_{\lambda^{\pm n}}) =1$
and we fix $d_\pm\in Z^1(\Gamma ; \MM^\pm_{\lambda^{\pm n}})$ which represent 
non-trivial cohomology classes.

Every element in $H^1(\Gamma ;  \mathfrak{sl}_n(\mathbf{C}))$ is represented by a cocycle
\begin{equation}
 \label{eqn:cocycle}
 \vartheta=
\begin{pmatrix}
 z_a & u_+ d_+\\
 u_- d_- & z_b
\end{pmatrix}
+z \varphi 
\begin{pmatrix}
    -b \operatorname{Id}_a & 0 \\
    0 & a \operatorname{Id}_b
\end{pmatrix},
\end{equation}
where $z_a\in Z^1(\Gamma ; \mathfrak{sl}_a(\mathbf{C}))$, $z_b\in Z^1(\Gamma ; \mathfrak{sl}_b(\mathbf{C}))$ and $u_\pm,  z\in\mathbf C$.

\begin{lemma}
 \label{lem:projections}
 For $\vartheta\in Z^1(\Gamma ; \mathfrak{sl}_n(\mathbf{C}))$ as in \eqref{eqn:cocycle}:
 \begin{align*}
  \operatorname{pr}_+[\vartheta\smallcup\vartheta] & \sim 2 u_+ (z_a\smallcup d_+ + d_+\smallcup z_b + z\, n\, d_+\smallcup \varphi), \\
  \operatorname{pr}_-[\vartheta\smallcup\vartheta] & \sim 2 u_- ( d_-\smallcup z_a + z_b\smallcup d_- - z\, n \, d_-\smallcup \varphi),
 \end{align*}
where $\sim$ denotes being  cohomologous.
\end{lemma}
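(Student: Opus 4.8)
The plan is to compute the cup-bracket entirely at the level of $2$-cochains from the explicit block shape of $\vartheta$ in \eqref{eqn:cocycle}, and only at the very end to pass to cohomology by means of the Leibniz identity of Remark~\ref{rem:notwist}.

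First I would set up notation: write $\vartheta(\gamma)$ in block form, with diagonal blocks $P(\gamma)=z_a(\gamma)-z\,b\,\varphi(\gamma)\,\mathrm{Id}_a$ and $S(\gamma)=z_b(\gamma)+z\,a\,\varphi(\gamma)\,\mathrm{Id}_b$, and with off-diagonal blocks $u_+d_+(\gamma)$ (upper right) and $u_-d_-(\gamma)$ (lower left). Recall from Section~\ref{sec:twisted-cohomology} that under $\Ad\rho_\lambda$ the two diagonal blocks transform by $\Ad\alpha$ and $\Ad\beta$, so that the scalar matrices $\mathrm{Id}_a$ and $\mathrm{Id}_b$ are $\Gamma$-fixed inside their respective blocks, while the upper-right and lower-left blocks carry the actions of $\MM^+_{\lambda^n}$ and $\MM^-_{\lambda^{-n}}$. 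Since $[\vartheta\smallcup\vartheta](\gamma_1,\gamma_2)=[\vartheta(\gamma_1),\gamma_1\vartheta(\gamma_2)]$ and the upper-right block of a matrix commutator $[A,B]$ equals $A_{11}B_{12}+A_{12}B_{22}-B_{11}A_{12}-B_{12}A_{22}$, the cochain $\operatorname{pr}_+[\vartheta\smallcup\vartheta]$ is visibly linear in $u_+$ — each summand contains exactly one off-diagonal factor — and symmetrically $\operatorname{pr}_-[\vartheta\smallcup\vartheta]$ is linear in $u_-$.

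Next I would carry out the substitution for $\operatorname{pr}_+$: inserting the block data and expanding $P$ and $S$ gives, after factoring out $u_+$, eight $2$-cochains valued in $\MM^+_{\lambda^n}$. Comparing each with the definitions of the plain and the twisted cup products, and using that $\Ad\alpha$ and $\Ad\beta$ fix the identity matrices, they are identified as $z_a\smallcup d_+$, $d_+\smallcup z_b$, $-\,d_+\sideset{_\tau}{}{\smallcup}z_a$, $-\,z_b\sideset{_\tau}{}{\smallcup}d_+$, together with $z\,n$ times $(d_+\smallcup\varphi-\varphi\smallcup d_+)$, where $a+b=n$ merges the two scalar contributions. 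Then I would invoke the Leibniz rule in the form of Remark~\ref{rem:notwist}: for cocycles $w_1\in Z^1(\Gamma;A_1)$ and $w_2\in Z^1(\Gamma;A_2)$ with a $\Gamma$-invariant bilinear pairing $\mu\colon A_1\otimes A_2\to A_3$, the $1$-cochain $\gamma\mapsto\mu(w_1(\gamma),w_2(\gamma))$ has coboundary $-(w_1\smallcup w_2)-(w_2\sideset{_\tau}{}{\smallcup}w_1)$. Applied to the pairs $(z_a,d_+)$, $(d_+,z_b)$ and $(\varphi,d_+)$ this yields $-\,d_+\sideset{_\tau}{}{\smallcup}z_a\sim z_a\smallcup d_+$, $-\,z_b\sideset{_\tau}{}{\smallcup}d_+\sim d_+\smallcup z_b$, and $\varphi\smallcup d_+\sim -\,d_+\smallcup\varphi$ — for the last pair the twist is immaterial since $\mathbf C$ is the trivial module and scalar multiplication commutes. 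Substituting, the eight terms collapse to $2u_+(z_a\smallcup d_+ + d_+\smallcup z_b + z\,n\,d_+\smallcup\varphi)$, which is the first formula. The computation of $\operatorname{pr}_-$ is word-for-word the same after interchanging $a\leftrightarrow b$ and $+\leftrightarrow-$; the one difference is that the scalar block in position $(2,2)$ contributes $+a\,\varphi\,\mathrm{Id}_b$, which flips the sign of the $\varphi$-term and produces $2u_-(d_-\smallcup z_a + z_b\smallcup d_- - z\,n\,d_-\smallcup\varphi)$.

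The only genuine obstacle is the bookkeeping in this middle step: because the bilinear pairings (the matrix multiplications and the scalar multiplication) are suppressed from the cup-product notation, one must correctly decide, for each of the eight cochains coming out of the commutator, whether it represents a plain or a twisted cup product, which cocycle occupies the ``first'' slot, and the attached sign — the terms $-(\gamma_1z_a(\gamma_2))d_+(\gamma_1)=-\,(d_+\sideset{_\tau}{}{\smallcup}z_a)(\gamma_1,\gamma_2)$ and $-(\gamma_1d_+(\gamma_2))z_b(\gamma_1)=-\,(z_b\sideset{_\tau}{}{\smallcup}d_+)(\gamma_1,\gamma_2)$ being the ones to watch. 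Once this dictionary is in place the cohomological simplification is routine and both displayed identities drop out.
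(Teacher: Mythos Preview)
Your proposal is correct and follows exactly the route the paper takes: the paper's proof consists of the single sentence ``direct calculation of $[\vartheta\smallcup\vartheta](\gamma_1,\gamma_2)=[\vartheta(\gamma_1),\gamma_1\vartheta(\gamma_2)]$ and Remark~\ref{rem:notwist},'' and you have carried out precisely that computation in full detail, including the correct identification of the twisted cup products and the use of the Leibniz relation to replace them.
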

\begin{proof}
The lemma follows from a direct calculation of 
\[[\vartheta\smallcup\vartheta](\gamma_1,\gamma_2) = [\vartheta(\gamma_1), \gamma_1\vartheta(\gamma_2)]\]
and Remark~\ref{rem:notwist}.
\end{proof}

In order to understand the cup products appearing in  Lemma~\ref{lem:projections} we  introduce the complex number 
$l_{\pm }(z_a,z_b)\in\mathbf C$. Consider
 a one-parameter analytical deformation $s\mapsto\alpha_s\oplus\beta_s$ of $\alpha\oplus\beta$ in  $V_\alpha \times  V_\beta $ tangent to $(z_a,z_b)$. 
 Notice that the coefficients of the twisted Alexander polynomial  $\Delta_1^{\alpha_s\otimes\beta^*_s}$ depend analytically on $s$.
 By the implicit function theorem and since $\lambda^n$ is a simple root of
 $\Delta_1^{\alpha\otimes\beta^*}$, there is an analytical path $s\mapsto r_s^+$ of roots of   $\Delta_1^{\alpha_s\otimes\beta^*_s}$ with $r_0^+=\lambda^n$.
Similarly there is a path $s\mapsto r_s^-$ of roots of $\Delta_1^{\beta_s\otimes\alpha^*_s}$ with $r_0^-=\lambda^{-n}$.
 We define
 $$
 l_\pm(z_a,z_b)=\left.\frac{d \phantom{s}}{ds}\right\vert_{s=0}\log r_s^\pm.
 $$
\begin{lemma} \label{lem:cup-and-log}
The following relations hold in $ Z^1(\Gamma ; \MM^\pm_{\lambda^{\pm n}})$:
 \label{lem:cupsandlogarithms}
 \begin{eqnarray*}
 z_a\smallcup d_+ + d_+\smallcup z_b & \sim &- l_+(z_a,z_b)\,  d_+\smallcup \varphi, \\
 d_-\smallcup z_a + z_b\smallcup d_-  & \sim &- l_-(z_a,z_b)\,   d_- \smallcup \varphi.
 \end{eqnarray*}
\end{lemma}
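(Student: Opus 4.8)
The idea is to compute both sides of the desired relations inside the twisted Alexander module, using the deformation $s\mapsto \alpha_s\oplus\beta_s$ tangent to $(z_a,z_b)$ to produce a concrete $1$-cocycle. First I would recall that by Proposition~\ref{prop:dim-H-M-lambda} we have $\dim H^1(\Gamma;\MM^+_{\lambda^n})=1$ and that $\lambda^n$ is a simple root of $\Delta_1^{\alpha\otimes\beta^*}$, so that both $d_+\smallcup\varphi$ and each of the cocycles $z_a\smallcup d_+$, $d_+\smallcup z_b$ represent classes in the one-dimensional space $H^2(\Gamma;\MM^+_{\lambda^n})$; hence there \emph{is} a scalar $c_+\in\mathbf C$ with $z_a\smallcup d_+ + d_+\smallcup z_b\sim -c_+\, d_+\smallcup\varphi$, and the content of the lemma is that $c_+=l_+(z_a,z_b)$.

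The main step is to identify $c_+$ via the Bockstein homomorphism for the family, mimicking Lemma~\ref{lem:bockstein} and Lemma~\ref{lemma:connecting}. Consider the deformed modules $\MM^+_{(\lambda^n)_s}$ coming from $\alpha_s\otimes\beta_s^*$ evaluated at the root $r_s^+$ with $r_0^+=\lambda^n$; differentiating at $s=0$ produces a first-order family over $\mathbf C_\varepsilon=\mathbf C[\varepsilon]/\varepsilon^2$, namely $\MM^+_{\lambda^n(1+\varepsilon\,l_+)}$ twisted by the infinitesimal deformation $(z_a,z_b)$ of $\alpha\oplus\beta$. The associated short exact sequence of $\Gamma$-modules
\[
0\to\MM^+_{\lambda^n}\to (\text{first-order family})\to\MM^+_{\lambda^n}\to 0
\]
has a Bockstein which, by the snake-lemma computation exactly as in Lemma~\ref{lem:bockstein}, sends the class of $d_+$ to $\bigl(z_a\smallcup d_+ + d_+\smallcup z_b\bigr) + l_+(z_a,z_b)\, d_+\smallcup\varphi$: the two cup-product terms come from the variation of $\alpha,\beta$ (the $\varepsilon$-part of $\alpha_s$ is $z_a$, that of $\beta_s$ is $z_b$, and the twist by $\beta^{-1}$ contributes the $d_+\smallcup z_b$ term via Remark~\ref{rem:notwist}), while the $l_+$-term comes from the variation $t\mapsto\lambda^n(1+\varepsilon l_+)$ of the eigenvalue, exactly as in Lemma~\ref{lem:bockstein}. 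On the other hand $d_+$ remains a nonzero cohomology class of $H^1(\Gamma;\MM^+_{r_s^+})$ for \emph{all} $s$ near $0$ (since $r_s^+$ stays a simple root of $\Delta_1^{\alpha_s\otimes\beta_s^*}$, so $\dim H^1$ stays $1$ by the argument of Proposition~\ref{prop:dim-H-M-lambda}); equivalently the first-order family still has $\dim H^1=1$, so by the long exact sequence the Bockstein of $d_+$ must vanish. Setting the above expression to zero gives $z_a\smallcup d_+ + d_+\smallcup z_b\sim -l_+(z_a,z_b)\, d_+\smallcup\varphi$, which is the first relation; the second follows by the same argument applied to $\beta\otimes\alpha^*$ and $r_s^-$, with the sign bookkeeping of the twist as in Lemma~\ref{lemma:connecting}.

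The step I expect to be the main obstacle is making rigorous the claim that $d_+$ survives as a nonzero $1$-cocycle along the family, i.e.\ that the analytic deformation of modules can be promoted to an actual first-order deformation over $\mathbf C_\varepsilon$ with a chosen cocycle representative, and that the Bockstein for that family really is the naive snake-lemma expression computed termwise. Concretely one must (i) choose compatible cocycle lifts so that the coboundary of the $\varepsilon$-lift of $d_+$ is computed, (ii) track that the variation of the representation $\alpha\oplus\beta$ enters exactly as the cup products $z_a\smallcup d_+$ and $d_+\smallcup z_b$ — here Remark~\ref{rem:notwist} and the computation in Example preceding it are the key input — and (iii) confirm that the eigenvalue variation contributes the $\varphi$-cup term with coefficient $l_+$, which is precisely the derivative $\frac{d}{ds}\log r_s^+$ by definition. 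Once these identifications are in place, equating the total Bockstein with zero is immediate, and the lemma follows.
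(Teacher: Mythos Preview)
Your overall strategy---building an $\varepsilon$-family that simultaneously deforms $(\alpha,\beta)$ in the direction $(z_a,z_b)$ and the eigenvalue $\lambda^n$ in the direction $l_+(z_a,z_b)$, then computing the Bockstein of $[d_+]$ for this family---is sound and can be made into a proof, but there is a genuine error in your argument for why this Bockstein vanishes. You write that since $\dim H^1$ stays equal to $1$ along the analytic family, ``equivalently the first-order family still has $\dim H^1=1$, so by the long exact sequence the Bockstein of $d_+$ must vanish.'' This has the implication backwards. From the long exact sequence (with $H^0(\Gamma;\MM^+_{\lambda^n})=0$) one has $\dim_{\mathbf C} H^1(V_\varepsilon)=1+\dim\ker\bock$, so $\dim_{\mathbf C} H^1(V_\varepsilon)=1$ is equivalent to $\bock$ being \emph{injective}, i.e.\ $\bock([d_+])\neq 0$; this is exactly the computation in Corollary~\ref{cor:productnonzero} for the family that varies only $\lambda$. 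What you actually need is $\dim_{\mathbf C} H^1(V_\varepsilon)=2$, equivalently that $[d_+]$ lifts to $H^1(\Gamma;V_\varepsilon)$. Note also that the sentence ``$d_+$ remains a nonzero cohomology class of $H^1(\Gamma;\MM^+_{r_s^+})$'' does not type-check: $d_+$ is a cocycle only for the module at $s=0$.

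The honest content of the missing step is the existence of an analytic (or even first-order) family of cocycles $d_s$ deforming $d_+$; then $d_+ + \varepsilon\,d_0'$ is the lift you want. One can extract this from the fact that $\dim H^1=1$ is constant along the analytic family (so cohomology forms a line bundle and admits a local nowhere-vanishing section), but that is an argument you have not supplied, and it is precisely the point you yourself flag as the main obstacle. The paper takes a different route that sidesteps this issue: knowing from Lemma~\ref{lemma:connecting} that the Bockstein for the sequence $0\to\MM^+_{\lambda^n}\to\mathfrak{p}^+\to\mathcal{D}\to 0$ kills a diagonal cocycle $\zeta$ with the correct $z$-coefficient, one lifts $\zeta$ to $Z^1(\Gamma;\mathfrak{p}^+)$ (Corollary~\ref{cor:h1p+}); then the \emph{smoothness of $R(\Gamma,P^+)$ at $\rho^+$} (Lemma~\ref{lemma:p+smooth}) allows one to integrate this cocycle to an honest analytic path $s\mapsto\rho_s$ of representations into $P^+$. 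Writing $\rho_s$ in block form produces the deformed cocycles $d_s$ for free (nonzero by semi-continuity), and Lemma~\ref{lem:delta10} then forces $\lambda_s^n$ to be a root of $\Delta_1^{\alpha_s\otimes\beta_s^*}$, which identifies the coefficient with $-l_+(z_a,z_b)$. Your route would stay within the toolkit of Section~\ref{sec:inf_def_cup} and is conceptually cleaner once fixed; the paper's route trades that for the already-established regularity of $\rho^+$ in the parabolic representation variety, which makes the existence of $d_s$ automatic.
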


\begin{proof}
We know that $ z_a\smallcup d_+ + d_+\smallcup z_b$ is cohomologous to  $x\, d_+\smallcup\varphi$ for some $x\in\mathbf C$, as $H^2(\Gamma ; \mathcal{M}^+_\lambda)\cong \mathbf C$ 
and
$d_+\smallcup \varphi\not\sim 0$ (see Proposition~\ref{prop:dim-H-M-lambda} and Corollary~\ref{cor:cupnonzero}). Hence 
 by Lemma~\ref{lemma:connecting} the cocycle
\begin{equation*}
\zeta=
\begin{pmatrix}
 z_a & 0\\
 0 & z_b
\end{pmatrix}
+\frac xn \varphi 
\begin{pmatrix}
    - b \operatorname{Id}_a & 0 \\
    0 & a \operatorname{Id}_b
\end{pmatrix} \in Z^1(\Gamma ; \mathcal{D})
\end{equation*}
satisfies $\bock(\zeta)\sim 0$ where $\bock\co H^1(\Gamma ; \mathcal{D})\xrightarrow{\bock}H^2(\Gamma ; \MM^+_{\lambda^n})$ is the Bockstein operator of the exact cohomology sequence associated to \eqref{eqn:exactp}.

Furthermore, by Lemma~\ref{cor:h1p+} $\zeta$ is cohomologous to the restriction of a cocycle $\zeta^+\in Z^1(\Gamma ; \mathfrak{p}^+)$.
As  $\rho^+$ is a smooth point of $R(\Gamma, P^+)$ (Lemma~\ref{lemma:p+smooth}), we may consider a one parameter path $s\mapsto \rho_s$ in $R(\Gamma,P^+)$ tangent to $\zeta^+$ at $\rho^+$, that we write as:
$$
\rho_s =\begin{pmatrix}
                1 & d_s \\
                0 & 1
               \end{pmatrix}
\begin{pmatrix}
  \alpha_s \lambda_s^{b \varphi } & 0 \\
  0 & \beta_s \lambda_s^{-a \varphi }
\end{pmatrix}.
$$
In particular, by the definition of $\zeta$ we have that $s\mapsto\alpha_s$ is a deformation of $\alpha$ tangent to $z_a$,
 $s\mapsto\beta_s$ is a deformation of $\beta$ tangent to $z_b$, and
$\lambda_s=\lambda (1-\frac{x}{n} s + o(s^2) )$. By semi-continuity $d_s$ is a cocycle not cohomologous to zero because $d_0=d_+$, 
hence by Lemma~\ref{lem:delta10} we obtain
$\Delta_1^{\alpha_s\otimes \beta^*_s}(\lambda_s^n)=0$. Therefore, as 
$$
-\frac xn= \frac{\lambda_0'}{\lambda_0} =\left.\frac{d\phantom{s}}{ds}\right\vert_{s=0}\log \lambda_s,
$$
$x$ equals minus the derivative of the logarithm of the root of $\Delta_1^{\alpha_s\otimes \beta^*_s}$.
\end{proof}

Lemmas~\ref{lem:projections} and \ref{lem:cup-and-log} give:
\begin{cor}
\label{cor:pr_pm}
 For $\vartheta\in Z^1(\Gamma ; \mathfrak{sl}_n(\mathbf{C}))$ as in \eqref{eqn:cocycle}:
 $$
   \operatorname{pr}_\pm[\vartheta\smallcup\vartheta]  \sim 2 u_\pm ( -l_\pm(z_a,z_b) \pm  z\, n )\,  d_\pm\smallcup \varphi.
 $$
\end{cor}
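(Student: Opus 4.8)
The plan is to derive Corollary~\ref{cor:pr_pm} as a purely formal consequence of the two preceding lemmas, with no further cohomological input needed. I would start from Lemma~\ref{lem:projections}, which says that for a cocycle $\vartheta$ written in the normal form \eqref{eqn:cocycle}, the projections of the obstruction satisfy $\operatorname{pr}_+[\vartheta\smallcup\vartheta]\sim 2u_+(z_a\smallcup d_+ + d_+\smallcup z_b + z\,n\,d_+\smallcup\varphi)$ and $\operatorname{pr}_-[\vartheta\smallcup\vartheta]\sim 2u_-(d_-\smallcup z_a + z_b\smallcup d_- - z\,n\,d_-\smallcup\varphi)$ in $H^2(\Gamma;\MM^\pm_{\lambda^{\pm n}})$.

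Next I would plug in the identities supplied by Lemma~\ref{lem:cup-and-log}: in $Z^1(\Gamma;\MM^+_{\lambda^n})$ one has $z_a\smallcup d_+ + d_+\smallcup z_b \sim -l_+(z_a,z_b)\,d_+\smallcup\varphi$, and in $Z^1(\Gamma;\MM^-_{\lambda^{-n}})$ one has $d_-\smallcup z_a + z_b\smallcup d_- \sim -l_-(z_a,z_b)\,d_-\smallcup\varphi$. Substituting and collecting the coefficient of the generator $d_\pm\smallcup\varphi$ of the one-dimensional group $H^2(\Gamma;\MM^\pm_{\lambda^{\pm n}})$ --- one-dimensional by Proposition~\ref{prop:dim-H-M-lambda}, with $d_\pm\smallcup\varphi$ non-trivial by Corollary~\ref{cor:cupnonzero} --- gives $\operatorname{pr}_+[\vartheta\smallcup\vartheta]\sim 2u_+(-l_+(z_a,z_b)+z\,n)\,d_+\smallcup\varphi$ and $\operatorname{pr}_-[\vartheta\smallcup\vartheta]\sim 2u_-(-l_-(z_a,z_b)-z\,n)\,d_-\smallcup\varphi$, which are unified by the single formula $\operatorname{pr}_\pm[\vartheta\smallcup\vartheta]\sim 2u_\pm(-l_\pm(z_a,z_b)\pm z\,n)\,d_\pm\smallcup\varphi$ of the statement.

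Since the whole argument is a substitution of one lemma into the other, there is no genuine obstacle: Lemma~\ref{lem:projections} has already done the cocycle-level computation with the twist operator (via Remark~\ref{rem:notwist}), and Lemma~\ref{lem:cup-and-log} has already carried out the deformation argument identifying the cup products with $l_\pm$. The only point that needs a moment's care is the bookkeeping of signs and of the scalars $u_\pm$ and $z$, so that the $+z\,n$ term lands in $\operatorname{pr}_+$ and the $-z\,n$ term in $\operatorname{pr}_-$, consistently with the $\pm$ in the conclusion.
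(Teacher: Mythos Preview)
Your proposal is correct and matches the paper's own argument: the paper simply states that Lemmas~\ref{lem:projections} and \ref{lem:cup-and-log} give Corollary~\ref{cor:pr_pm}, which is exactly the substitution you carry out. Your additional remarks about $H^2(\Gamma;\MM^\pm_{\lambda^{\pm n}})$ being one-dimensional with generator $d_\pm\smallcup\varphi$ are accurate but not strictly needed, since the relation $\sim$ is preserved under the linear substitution regardless.
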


Since $\Delta_1^{\alpha_s\otimes \beta^*_s}(t)=\Delta_1^{\beta_s\otimes \alpha^*_s}(1/t)$ by Corollary~\ref{cor:Alex-sym},
$$
l_+(z_a,z_b)= - l_-(z_a,z_b).
$$
Hence the vanishing of the obstructions to integrability of Corollary~\ref{cor:pr_pm} is equivalent to 
\begin{equation}
 \label{eq:obstruction}
 u_{+} (-l_+(z_a,z_b)+z\, n)=0 \quad \text{ and } \quad  u_{-} (-l_+(z_a,z_b)+z\, n)=0.
\end{equation}
Since $z$ can be interpreted as the derivative of the logarithm of $\lambda$, we view  $-l_+(z_a,z_b)+z\, n$
as the derivative of the difference between the logarithm of the root of the Alexander polynomial and  
the logarithm of $\lambda^n$.

Recall that by \eqref{eqn:cocycle} every cocycle  
$\vartheta\in Z^1(\Gamma ;  \mathfrak{sl}_n(\mathbf{C}))$ is of the form
\begin{equation}
\label{eqn:vartheta} 
 \vartheta=
\begin{pmatrix}
 z_a & u_+ d_+ + b_+\\
 u_- d_- + b_- & z_b
\end{pmatrix}
+z \varphi 
\begin{pmatrix}
    -b \operatorname{Id}_a & 0 \\
    0 & a \operatorname{Id}_b
\end{pmatrix},
\end{equation}
where $z_a\in Z^1(\Gamma ; \mathfrak{sl}_a(\mathbf{C}))$ and
$z_b\in Z^1(\Gamma ; \mathfrak{sl}_b(\mathbf{C}))$ are cocycles, 
$u_\pm,  z\in\mathbf C$, and
$b_\pm\in B^1(\Gamma; \MM^\pm_{\lambda^{\pm n}})$ are coboundaries.
  Notice 
that this formula differs from \eqref{eqn:cocycle} because here the coboundaries are also considered.

\begin{prop}
 \label{prop:tangentST} The Zariski tangent spaces at $\rho_\lambda$ are:
  \begin{eqnarray*}
   T_{\rho_\lambda} S & = & \{\vartheta\in Z^1(\Gamma ;  \mathfrak{sl}_n(\mathbf C))\mid -l_+(z_a,z_b)+z\, n=0\}, \\
   T_{\rho_\lambda} T & = & \{\vartheta\in Z^1(\Gamma ;  \mathfrak{sl}_n(\mathbf C))\mid u_+=u_-=0 \},
  \end{eqnarray*}
using the notation of \eqref{eqn:vartheta} for a cocycle 
$\vartheta\in Z^1(\Gamma ;  \mathfrak{sl}_n(\mathbf C))$.
  In particular $S$ and $T$ are smooth and  transverse at $\rho_{\lambda}$.
\end{prop}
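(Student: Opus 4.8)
The plan is to identify the two tangent spaces explicitly, check that they are genuine tangent spaces (not just subspaces of the Zariski tangent space containing them), and then verify that they intersect in the expected codimension so as to conclude smoothness and transversality.

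First I would compute $\dim Z^1(\Gamma;\mathfrak{sl}_n(\mathbf C)_{\Ad\rho_\lambda})$. By Equation~\eqref{eq:dimZ1} together with Proposition~\ref{prop:dim-H-M-lambda}(i) (which gives $\dim H^1(\Gamma;\MM^\pm_{\lambda^{\pm n}})=1$) and the vanishing $H^0(\Gamma;\MM^\pm_{\lambda^{\pm n}})=0$ (Proposition~\ref{prop:dim-H-M-lambda}(i) again), we get
\[
\dim Z^1(\Gamma;\mathfrak{sl}_n(\mathbf C)_{\Ad\rho_\lambda}) = n^2+n-3+1-0+1-0 = n^2+n-1.
\]
Next, using the description \eqref{eqn:vartheta} of a general cocycle $\vartheta$, I would observe that the two linear functionals $\vartheta\mapsto u_+$ and $\vartheta\mapsto u_-$ cut out $T_{\rho_\lambda}T$, while the single linear functional $\vartheta\mapsto -l_+(z_a,z_b)+zn$ cuts out the candidate for $T_{\rho_\lambda}S$ (note $l_+$ is linear in $(z_a,z_b)$, being the derivative along a path, so these are indeed linear conditions). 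Both candidate subspaces are therefore cut out inside $Z^1$ by linear equations, of codimensions $2$ and (at most) $1$ respectively.

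The core of the argument is to show these candidate subspaces actually coincide with $T_{\rho_\lambda}S$ and $T_{\rho_\lambda}T$. For $T$: the component $T$ is the Zariski closure of $\mathrm{SL}_n(\mathbf C)\cdot(V_\alpha\times V_\beta\times R(\Gamma,\mathbf C^*))$ (Lemma~\ref{lemma:T}), so its tangent space at $\rho_\lambda$ is spanned by the tangent directions to $V_\alpha\times V_\beta\times R(\Gamma,\mathbf C^*)$ (which contribute exactly the $z_a$, $z_b$, $z$ parts) together with the tangent to the conjugation orbit (which contributes $B^1(\Gamma;\mathfrak{sl}_n(\mathbf C)_{\Ad\rho_\lambda})$, i.e.\ all the coboundary parts $b_\pm$ and the diagonal coboundaries); all of these have $u_+=u_-=0$, and a dimension count $\dim T = n^2+n-3$ shows this is all of $T_{\rho_\lambda}T$. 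For $S$: I would use that $\rho^+\in S$ is a smooth point with $\dim S = n^2+n-2$ (Proposition~\ref{prop:rho+smooth}), that $S$ contains irreducible representations (Proposition~\ref{prop:rho+irreducible}), and that the conjugation orbit of $\rho^+$ accumulates to $\rho_\lambda$; hence $\dim_{\rho_\lambda}S = n^2+n-2$, so $T_{\rho_\lambda}S$ has dimension at least $n^2+n-2$. On the other hand, by Corollary~\ref{cor:pr_pm} and Goldman's theorem, any integrable cocycle $\vartheta$ satisfies $u_\pm(-l_+(z_a,z_b)+zn)=0$; the directions in $S$ that are genuinely integrable and tangent to irreducible (hence with $u_\pm$ not both forced to vanish) must satisfy $-l_+(z_a,z_b)+zn=0$, giving $T_{\rho_\lambda}S\subseteq\{-l_+(z_a,z_b)+zn=0\}$, a subspace of dimension $n^2+n-2$. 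Equality of dimensions forces $T_{\rho_\lambda}S$ to be exactly this hyperplane.

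Finally, smoothness and transversality: since $\dim T_{\rho_\lambda}S = n^2+n-2 = \dim_{\rho_\lambda}S$ and $\dim T_{\rho_\lambda}T = n^2+n-3 = \dim_{\rho_\lambda}T$, both $S$ and $T$ are smooth at $\rho_\lambda$. For transversality, $T_{\rho_\lambda}S\cap T_{\rho_\lambda}T$ is cut out in $Z^1$ by the three functionals $u_+$, $u_-$, $-l_+(z_a,z_b)+zn$; since on the locus $u_+=u_-=0$ the third functional is $-l_+(z_a,z_b)+zn$ and this is not identically zero there (one can vary $z$ freely), the three functionals are linearly independent, so $\dim(T_{\rho_\lambda}S\cap T_{\rho_\lambda}T) = n^2+n-1-3 = n^2+n-4$, and $\dim T_{\rho_\lambda}S + \dim T_{\rho_\lambda}T - \dim(T_{\rho_\lambda}S\cap T_{\rho_\lambda}T) = (n^2+n-2)+(n^2+n-3)-(n^2+n-4) = n^2+n-1 = \dim Z^1$, which is transversality inside the ambient Zariski tangent space. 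The main obstacle I anticipate is the inclusion $T_{\rho_\lambda}S\subseteq\{-l_+(z_a,z_b)+zn=0\}$: one must argue carefully that although $\rho_\lambda$ itself is a reducible (singular-looking) point, every tangent vector to $S$ at $\rho_\lambda$ arising as a limit of tangent vectors along $S$ satisfies the obstruction equation, which requires knowing the obstruction is the \emph{only} one (this is where $\dim S = n^2+n-2$, forcing the quadratic cone to be a hyperplane, is essential) rather than merely one among several.
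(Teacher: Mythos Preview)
Your overall strategy matches the paper's, but there is a gap in your argument for $T_{\rho_\lambda}T$, and your argument for $T_{\rho_\lambda}S$, while close, is not articulated cleanly enough to close.

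For $T$: you exhibit tangent vectors (from $V_\alpha\times V_\beta\times R(\Gamma,\mathbf C^*)$ and from conjugation) spanning $\{u_+=u_-=0\}$ inside $T_{\rho_\lambda}T$, then write ``a dimension count $\dim T = n^2+n-3$ shows this is all of $T_{\rho_\lambda}T$.'' But $\dim T = n^2+n-3$ gives only the \emph{lower} bound $\dim T_{\rho_\lambda}T \geq n^2+n-3$; without an upper bound you are assuming the smoothness you want to prove. You never invoke the obstruction \eqref{eq:obstruction} for $T$, and that is precisely what supplies the missing containment.

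The clean argument, which the paper uses symmetrically for $S$ and $T$, is this. The quadratic cone cut out by \eqref{eq:obstruction} is the union of two \emph{linear} subspaces, $\{u_+=u_-=0\}$ and $\{-l_+(z_a,z_b)+zn=0\}$; a linear subspace of $Z^1$ contained in this union must lie entirely in one of the two pieces. For $T_{\rho_\lambda}T$ the paper observes that deforming $\lambda$ alone (keeping $\alpha,\beta$ fixed) gives a tangent vector in $T_{\rho_\lambda}T$ with $-l_+(0,0)+zn\neq 0$, so $T_{\rho_\lambda}T\not\subseteq\{-l_++zn=0\}$ and hence $T_{\rho_\lambda}T\subseteq\{u_+=u_-=0\}$. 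For $T_{\rho_\lambda}S$ the paper exhibits the path $s\mapsto\bigl(\begin{smallmatrix}1&s\,d_+\\0&1\end{smallmatrix}\bigr)\rho_\lambda$, which lies in $S$ (it is in the orbit closure of $\rho^+$) and whose tangent has $u_+\neq 0$, forcing $T_{\rho_\lambda}S\subseteq\{-l_++zn=0\}$. Your own dimension bound $\dim T_{\rho_\lambda}S\geq n^2+n-2>\dim\{u_\pm=0\}$ would also suffice for this last step, but your phrasing via ``directions tangent to irreducible'' does not make the linear-subspace-in-a-union argument explicit. Once both containments are established, the dimension counts finish the proof exactly as you indicate.
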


\begin{proof}
 First at all, notice that $u_+$ is not identically zero on $ T_{\rho_\lambda} S$, by considering the tangent vector to the 
 one parameter path 
 $$ s\mapsto
 \begin{pmatrix}
  1 & s\, d_+ \\
  0 & 1
 \end{pmatrix}
\rho_{\lambda}\,.
 $$
 Then Equation~\eqref{eq:obstruction} implies $-l_+(z_a,z_b)+z\, n=0$ on
$T_{\rho_\lambda}S$.
 Furthermore, we know that $\dim S= n^2+n-2$ and, by Equation~\eqref{eq:dimZ1}
and Proposition~\ref{prop:dim-H-M-lambda}, the dimension of
$Z^1(\Gamma;\mathfrak{sl}_n(\mathbf C)_{\Ad\rho_\lambda})$ is $n^2+n-1$. 
 This shows the equality for $T_{\rho_\lambda}S$ and proves that $\rho_\lambda$
is a smooth point of $S$.
 
 We follow the same lines to prove the equality for $T_{\rho_\lambda}T$. Notice
that
$-l_+(z_a,z_b)+z\, n$ is not identically zero on $T_{\rho_\lambda}T$, by
considering deformations of 
$\lambda$ that keep $\alpha$ and $\beta$ constant.
 Hence  
 $u_+=u_-=0$ on $T_{\rho_\lambda}T$. Moreover, $\dim T=n^2+n-3$.
\end{proof}

We next compute the tangent space to character varieties at $\chi_\lambda$. 
Since the representation $\rho_\lambda$ is completely reducible, its orbit by
conjugation is closed, hence we can apply Luna's slice theorem 
as in   \cite{BenAbdelghani2002} or \cite[Section~9]{HP05}. As a consequence of
the slice theorem, since the centralizer of $\rho_\lambda$ is $\mathbf C^*$:
$$
T_{\chi_\lambda} X(\Gamma, \operatorname{SL}_n(\mathbf C))\cong H^1(\Gamma ; 
\mathfrak{sl}_n(\mathbf C)) \sslash\mathbf C^*.
$$
The action of $\mathbf C^*$ can be seen on the coordinates $u_\pm$: an element
$\varsigma\in \mathbf C^*$ maps $u_\pm$ to
$\varsigma^{\pm n}u_\pm$. Hence  we define a new coordinate 
$$
u=u_+ u_-
$$ 
and the obstructions \eqref{eq:obstruction} become
\begin{equation}
 \label{eq:obstructioncohom}
 u\, (-l_+(z_a,z_b)+z\, n)=0.
\end{equation}

Notice that even if $z_a$ and $z_b$ are cocycles,
the logarithmic derivative $-l_+(z_a,z_b)$ only depends on the cohomology class   of
$(z_a,z_b)$ in $H^1(\Gamma;  \mathfrak{sl}_a(\mathbf C)\oplus \mathfrak{sl}_b(\mathbf C))$. In addition,
$z$ is the scalar that describes a cohomology class  
$z\,\varphi\in H^1(\Gamma;\mathbf C) =Z^1(\Gamma;\mathbf C)\cong\mathbf C$.
Similarly for $u_{\pm}\in\mathbf C$ and the cohomology class $u_\pm [d_{\pm}]\in H^1(\Gamma;\MM^\pm_{\lambda^{\pm n}})\cong\mathbf C$. Thus we have the following:

\begin{remark} 
The obstruction in \eqref{eq:obstructioncohom} is well defined in $H^1(\Gamma ;  \mathfrak{sl}_n(\mathbf C)) \sslash\mathbf C^*$. 
\end{remark}

\begin{cor}
 \label{cor:TangenYZ} The Zariski tangent spaces to $Y$ and $Z$ are: 
  \begin{eqnarray*}
   T_{\chi_\lambda} Y & = & \{[\vartheta]\in   T_{\rho_\lambda} X(\Gamma, \operatorname{SL}_n(\mathbf C)) \mid -l_+(z_a,z_b)+z\, n=0\},\\
   T_{\chi_\lambda} Z & = & \{ [\vartheta]\in    T_{\rho_\lambda} X(\Gamma, \operatorname{SL}_n(\mathbf C))  \mid u=0 \}.
  \end{eqnarray*}
  In particular $Y$ and $Z$ are smooth and  transverse at $\chi_{\lambda}$.
\end{cor}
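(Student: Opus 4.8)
The plan is to deduce the statement from the already-established Proposition~\ref{prop:tangentST} by pushing everything through Luna's slice theorem. Recall that the slice theorem (applied, as in \cite{BenAbdelghani2002}, because $\rho_\lambda$ is completely reducible and hence has a closed orbit) identifies a Zariski neighbourhood of $\chi_\lambda$ in $X(\Gamma,\operatorname{SL}_n(\mathbf C))$ with a neighbourhood of $0$ in the GIT quotient $H^1(\Gamma;\mathfrak{sl}_n(\mathbf C))\sslash\mathbf C^*$, the centralizer of $\rho_\lambda$ being $\mathbf C^*$. The components $S$ and $T$ of $R(\Gamma,\operatorname{SL}_n(\mathbf C))$ through $\rho_\lambda$ are $\operatorname{SL}_n(\mathbf C)$-invariant, so their images $Y$ and $Z$ correspond, under the slice, to the images of $T_{\rho_\lambda}S$ and $T_{\rho_\lambda}T$ in the quotient; concretely $T_{\chi_\lambda}Y$ and $T_{\chi_\lambda}Z$ are obtained from the descriptions in Proposition~\ref{prop:tangentST} by passing to $\mathbf C^*$-invariants.

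First I would record that the $\mathbf C^*$-action on $H^1(\Gamma;\mathfrak{sl}_n(\mathbf C))$, in the coordinates of \eqref{eqn:vartheta}, fixes $z_a,z_b,z$ and scales $u_\pm$ by $\varsigma^{\pm n}$; hence the invariant ring is generated by $z_a,z_b,z$ and $u=u_+u_-$, exactly as set up before the statement. The condition $-l_+(z_a,z_b)+z\,n=0$ defining $T_{\rho_\lambda}S$ only involves the invariant coordinates $z_a,z_b,z$, so it descends verbatim to give the claimed description of $T_{\chi_\lambda}Y$. The condition $u_+=u_-=0$ defining $T_{\rho_\lambda}T$ descends to $u=0$ on the quotient; one must check that no extra tangent directions are created in passing to the quotient, i.e.\ that the locus $\{u=0\}$ in $H^1\sslash\mathbf C^*$ is precisely the image of $\{u_+=u_-=0\}$ — this is immediate since $u=u_+u_-$ vanishes on the invariant-theoretic image iff $u_+u_-=0$, and on the reducible slice the relevant component indeed has $u_+=u_-=0$ by Proposition~\ref{prop:tangentST}. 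Smoothness of $Y$ and $Z$ at $\chi_\lambda$ follows because $S$ and $T$ are smooth at $\rho_\lambda$ and, $\rho_\lambda$ being completely reducible, the slice map is \'etale onto its image, so smoothness is preserved.

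For transversality I would argue as follows. Inside $T_{\rho_\lambda}X(\Gamma,\operatorname{SL}_n(\mathbf C))\cong H^1(\Gamma;\mathfrak{sl}_n(\mathbf C))\sslash\mathbf C^*$ the two tangent spaces are cut out respectively by $\{-l_+(z_a,z_b)+z\,n=0\}$ and $\{u=0\}$; since $l_+$ depends only on $z_a,z_b$ and $u$ is an independent coordinate, these two linear-type conditions are genuinely independent, so $T_{\chi_\lambda}Y$ and $T_{\chi_\lambda}Z$ span $T_{\chi_\lambda}X(\Gamma,\operatorname{SL}_n(\mathbf C))$; equivalently their intersection has codimension two, matching $\dim Y=\dim Z=n-1$ and $\dim(Y\cap Z)=n-2$. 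The main obstacle I expect is bookkeeping in the slice theorem: one must be careful that the quadratic cone governing $X(\Gamma,\operatorname{SL}_n(\mathbf C))$ at $\chi_\lambda$ is exactly the $\mathbf C^*$-quotient of the quadratic cone $Q(\rho_\lambda)$ computed via the obstructions $\operatorname{pr}_\pm[\vartheta\smallcup\vartheta]$ (Corollary~\ref{cor:pr_pm}), and that these obstructions — after introducing $u=u_+u_-$ in \eqref{eq:obstructioncohom} — are not merely necessary but suffice to define the local structure, which is what licenses the identification of $T_{\chi_\lambda}Y,T_{\chi_\lambda}Z$ with the two branches of $\{u\,(-l_+(z_a,z_b)+z\,n)=0\}$. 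Everything else is a direct transcription of Proposition~\ref{prop:tangentST}.
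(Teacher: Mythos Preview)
Your approach is correct and reaches the same conclusion, but it differs from the paper's. You descend the descriptions of $T_{\rho_\lambda}S$ and $T_{\rho_\lambda}T$ through Luna's slice, arguing that the defining equations pass to $\mathbf C^*$-invariants and that smoothness is inherited because the slice map is \'etale. The paper instead reruns the argument of Proposition~\ref{prop:tangentST} directly at the character-variety level: it exhibits the explicit cocycle $\vartheta=\begin{psm}0&d_+\\d_-&0\end{psm}$, notes (via Proposition~\ref{prop:tangentST}, since $z_a=z_b=z=0$) that it lies in $T_{\rho_\lambda}S$, and observes that its image in $H^1\sslash\mathbf C^*$ has $u=u_+u_-=1\neq0$; then the obstruction~\eqref{eq:obstructioncohom} together with the dimension count $\dim Y=n-1$ pins down $T_{\chi_\lambda}Y$, and the argument for $Z$ is verbatim from Proposition~\ref{prop:tangentST}. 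Your descent is conceptually cleaner but, as you yourself flag, leans on the bookkeeping that $T_{\chi_\lambda}Y$ genuinely equals the GIT quotient of $T_{\rho_\lambda}S/B^1$ by $\mathbf C^*$ --- this needs smoothness of $S$ at $\rho_\lambda$ plus a second application of the slice theorem to the $\mathbf C^*$-action on the intersection of the slice with $S$. The paper's approach sidesteps that bookkeeping by redoing the obstruction-plus-dimension argument, at the price of needing the explicit witness cocycle.
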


\begin{proof}
 The  proof is similar to Proposition~\ref{prop:tangentST}: we need to show that $u$ does not vanish on the Zariski tangent space to $Y$ and $-l_+(z_a,z_b)+z\, n$ does not vanish on
  the Zariski tangent space to
 $Z$. For the first assertion, we start with the cocycle \[\vartheta = \begin{pmatrix}
 0 & d_+ \\
 d_- & 0
\end{pmatrix} \in Z^1(\Gamma; \mathfrak{sl}_n(\mathbf C))\,.
\]
Following the notation of \eqref{eqn:vartheta}, since $z_a$, $z_b$ and $z$ vanish for this cocycle, 
Proposition~\ref{prop:tangentST} implies that
$\vartheta \in T_{\rho_\lambda} S$. In particular the projection of its cohomology class $\vartheta$ 
in 
$H^1(\Gamma ;  \mathfrak{sl}_n(\mathbf C)) \sslash \mathbf C^*$ is a vector tangent to $Y$ for which 
$u\neq 0$.
The proof that $-l_+(z_a,z_b)+z\, n$ is not identically zero on $ T_{\chi_\lambda} Z$ is the same as in Proposition~\ref{prop:tangentST}.
Then one concludes by using the dimension estimates.
\end{proof}

Notice that Corollary~\ref{cor:TangenYZ} and the computations of dimensions yield that 
$\chi_\lambda$ is a smooth point of both $Y$ and $Z$, and that $Y$ and $Z$ intersect transversally at $\chi_\lambda$.
In particular their intersection is a variety of dimension $n-2$. Since characters in this intersection must satisfy the condition on Alexander polynomials, we
have:

\begin{cor}
 \label{cor:interYZ} There is a neighborhood $\chi_\lambda\in U\subset  X(\Gamma, \operatorname{SL}_n(\mathbf C))$ such that
 $$
 (Y\pitchfork Z)\cap U=\Big\{ (\chi_{\alpha'},\chi_{\beta'},\lambda')\in Z 
 \cap U\:\big|\: \Delta^{\alpha'\otimes{(\beta')}^*}_1\big((\lambda')^n\big)=0\Big\}.
 $$
\end{cor}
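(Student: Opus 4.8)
The plan is to combine the description of the tangent space $T_{\chi_\lambda}Z$ from Corollary~\ref{cor:TangenYZ} with the identification $\mathring Z \cong Z_\alpha^{irr}\times Z_\beta^{irr}\times\mathbf C^*$ (or its quotient) from Lemma~\ref{lemma:Z}, and to read the condition $-l_+(z_a,z_b)+z\,n=0$ as a transversality condition along $Z$. First I would recall that by Corollary~\ref{cor:TangenYZ} the intersection $Y\pitchfork Z$ is smooth of dimension $n-2$ near $\chi_\lambda$, and its tangent space inside $T_{\chi_\lambda}Z$ is exactly the hyperplane $\{-l_+(z_a,z_b)+z\,n=0\}$ (since on $T_{\chi_\lambda}Z$ one has $u=0$, and $Y\pitchfork Z$ is cut out inside $Z$ by the remaining obstruction). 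So locally $Y\pitchfork Z$ is the zero locus in $Z$ of a function whose differential at $\chi_\lambda$ is $(z_a,z_b,z)\mapsto -l_+(z_a,z_b)+z\,n$.

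Next I would identify the function on $\mathring Z$ whose differential is this linear form. By definition $l_+(z_a,z_b)$ is the logarithmic derivative at $\chi_\lambda$ of the analytic branch $r^+_s$ of roots of $\Delta_1^{\alpha_s\otimes\beta_s^*}$ through $\lambda^n$, computed along a deformation tangent to $(z_a,z_b)$; and $z$ is the logarithmic derivative of $\lambda'$ along the $\mathbf C^*$-factor. Hence the function
\[
F(\chi_{\alpha'},\chi_{\beta'},\lambda') = \log r^+(\chi_{\alpha'},\chi_{\beta'}) - \log\bigl((\lambda')^n\bigr),
\]
defined and analytic on a neighborhood of $\chi_\lambda$ in $\mathring Z$ (using the implicit function theorem and that $\lambda^n$ is a \emph{simple} root of $\Delta_1^{\alpha\otimes\beta^*}$), has differential at $\chi_\lambda$ equal to $-l_+(z_a,z_b)+z\,n$ — note the sign: $dF = d\log r^+ - n\,d\log\lambda' = l_+(z_a,z_b) - n z$, which vanishes iff $-l_+(z_a,z_b)+z\,n=0$. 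Since $dF\neq 0$ at $\chi_\lambda$ (it is not identically zero on $T_{\chi_\lambda}Z$, by the argument in Corollary~\ref{cor:TangenYZ}), the zero locus $\{F=0\}$ is a smooth hypersurface in $Z$ near $\chi_\lambda$ of dimension $n-2$, with the same tangent space at $\chi_\lambda$ as $Y\pitchfork Z$. But $\{F=0\}$ is precisely the locus where $r^+(\chi_{\alpha'},\chi_{\beta'})=(\lambda')^n$, i.e.\ where $(\lambda')^n$ is the distinguished root of $\Delta_1^{\alpha'\otimes(\beta')^*}$ near $\lambda^n$, which is the condition $\Delta_1^{\alpha'\otimes(\beta')^*}((\lambda')^n)=0$ for $(\chi_{\alpha'},\chi_{\beta'},\lambda')$ in a small enough neighborhood.

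Finally I would argue that two smooth subvarieties of $Z$ of the same dimension $n-2$ through $\chi_\lambda$ with the same Zariski tangent space must coincide on a neighborhood; this upgrades the equality of tangent spaces to an honest equality $(Y\pitchfork Z)\cap U = \{(\chi_{\alpha'},\chi_{\beta'},\lambda')\in Z\cap U \mid \Delta_1^{\alpha'\otimes(\beta')^*}((\lambda')^n)=0\}$. One small point to handle carefully is the case $Z_\alpha=Z_\beta$, where $\mathring Z$ is a quotient by the involution $(\chi_a,\chi_b,\lambda)\sim(\chi_b,\chi_a,\lambda^{-1})$; but this involution fixes $\chi_\lambda$ only if $\chi_\alpha=\chi_\beta$ and $\lambda=\pm1$, which is excluded since $\lambda^n$ is a root of $\Delta_1^+$ forces $\lambda^n\neq 1$ in the relevant situation (and in any case $\Delta_1^{\alpha'\otimes(\beta')^*}$ is invariant under the involution up to $t\mapsto t^{-1}$ by Corollary~\ref{cor:Alex-sym}, so the defining condition descends to the quotient). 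The main obstacle I anticipate is not conceptual but bookkeeping: making the analytic branch $r^+(\chi_{\alpha'},\chi_{\beta'})$ genuinely well defined on a neighborhood in the (possibly singular-looking, but actually smooth here) variety $Z$, matching the sign conventions in the logarithmic derivatives, and checking that "same tangent space $+$ same dimension $+$ smooth $\Rightarrow$ locally equal" applies — all of which is routine given the smoothness statements already established in Corollary~\ref{cor:TangenYZ}.
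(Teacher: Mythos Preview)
Your argument has a genuine gap in the final step. The principle ``two smooth subvarieties of $Z$ of the same dimension through $\chi_\lambda$ with the same Zariski tangent space must coincide locally'' is false: already in $\mathbf C^2$ the curves $\{y=0\}$ and $\{y=x^2\}$ are smooth, one-dimensional, pass through the origin with the same tangent line, and do not coincide. Matching tangent spaces is only a first-order condition; it cannot by itself force two hypersurfaces to agree.

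The paper closes this gap not by comparing tangent spaces but by establishing an actual \emph{inclusion}: any character in $Y\cap Z$ near $\chi_\lambda$ satisfies the Alexander condition. Indeed, such a character has the form $(\chi_{\alpha'},\chi_{\beta'},\lambda')$ with $\alpha',\beta'$ irreducible (for nearby points), and since it lies in $Y$ the corresponding semi-simple representation $\rho_{\lambda'}$ lies in the component $S$, which contains irreducible representations. Theorem~\ref{thm:nec} then forces $\Delta_1^{\alpha'\otimes(\beta')^*}\bigl((\lambda')^n\bigr)=0$. This gives $(Y\pitchfork Z)\cap U\subseteq\{\Delta_1^{\alpha'\otimes(\beta')^*}((\lambda')^n)=0\}\cap U$. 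Now the part of your argument that \emph{is} correct --- that the right-hand side is a smooth hypersurface in $Z$ of dimension $n-2$ near $\chi_\lambda$, via the simple-root hypothesis and the implicit function theorem --- combines with the fact that $Y\pitchfork Z$ is also smooth and irreducible of dimension $n-2$ (Corollary~\ref{cor:TangenYZ}) to upgrade the inclusion to an equality: an inclusion of irreducible varieties of the same dimension is an equality.

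So keep your analysis of the function $F$ and its differential (it shows the Alexander locus is smooth of the right dimension), but replace the tangent-space coincidence argument with the inclusion coming from Theorem~\ref{thm:nec}.
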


 \section{An example}
 \label{section:example}

Let $K\subset S^3$ be the trefoil knot and $\Gamma=\pi_1(S^3\setminus\mathcal N(K))$. 
We use the presentation
$$
\Gamma\cong \langle x,y\mid x^2=y^3\rangle,
$$
in particular the center is the cyclic group generated by $z=x^2=y^3$. The abelianization map $\varphi\co\Gamma\to\mathbf Z$ satisfies $\varphi(x)=3$, $\varphi(y)=2$ and a meridian of the trefoil is given by $m=xy^{-1}$.

\begin{lemma}
\label{lemma:irrereps}
Every irreducible representation in $R (\Gamma,\mathrm{SL}_2(\mathbf C)) $ is conjugate to  $\alpha_s$, where 
\begin{equation}
\label{eq:rhos}
\alpha_s(x)=\begin{pmatrix}
           i & 0\\
           s & -i
          \end{pmatrix}
\quad\textrm{ and }\quad
\alpha_s(y)=\begin{pmatrix}
           \eta & \bar\eta-\eta\\
           0 & \bar \eta
          \end{pmatrix},
\end{equation}
for a unique $s\in\mathbf C$ and for $\eta\in\mathbf C$ a primitive sixth root of unity. Moreover, $\alpha_s$ is irreducible if and only if $s\neq 0,2i$. 
\end{lemma}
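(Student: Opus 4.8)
The plan is to exploit the central element $z=x^2=y^3$. A point of $R(\Gamma,\mathrm{SL}_2(\mathbf C))$ is the same as a pair $X=\rho(x)$, $Y=\rho(y)$ with $X^2=Y^3$, and $\rho(z)=X^2=Y^3$ commutes with the whole image. If $\rho$ is irreducible, Schur's lemma forces $\rho(z)=\pm I$. First I would rule out $\rho(z)=I$: then $X^2=I$ and $\det X=1$, so the minimal polynomial of $X$ divides $t^2-1$, hence $X$ is diagonalizable with eigenvalues in $\{1,-1\}$, and $\det X=1$ leaves only $X=\pm I$; but then $\rho(\Gamma)=\langle X,Y\rangle$ is generated by $Y$ together with a central element, hence abelian, contradicting irreducibility. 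Therefore $\rho(z)=-I$, i.e.\ $X^2=Y^3=-I$.

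Next I would pin down the eigenvalues. From $X^2=-I$ and $\det X=1$ the minimal polynomial of $X$ divides $t^2+1$, so $X$ is diagonalizable with eigenvalues $\{i,-i\}$, in particular $\tr X=0$. From $Y^3=-I$ the minimal polynomial of $Y$ divides $t^3+1=(t+1)(t^2-t+1)$, so $Y$ is diagonalizable with eigenvalues among the cube roots of $-1$, namely $-1$ and the two primitive sixth roots of unity $\eta,\bar\eta$ (which satisfy $\eta\bar\eta=1$ and $\eta+\bar\eta=1$); the condition $\det Y=1$ leaves only $\{\eta,\bar\eta\}$ or $\{-1,-1\}$, and $Y=-I$ is excluded exactly as above. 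So $X$ and $Y$ are diagonalizable with the prescribed eigenvalues, and since the only proper nonzero $\rho(\Gamma)$-invariant subspaces of $\mathbf C^2$ would be common eigenlines of $X$ and $Y$, irreducibility of $\rho$ is equivalent to saying that $X$ and $Y$ share no eigenline.

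For the normal form I would consider the three projective lines $\ell_1$ (the $\eta$-eigenline of $Y$), $\ell_2$ (the $(-i)$-eigenline of $X$) and $\ell_3$ (the $\bar\eta$-eigenline of $Y$). They are pairwise distinct: $\ell_1\neq\ell_3$ since $\eta\neq\bar\eta$, and $\ell_2$ differs from $\ell_1$ and $\ell_3$ because $\rho$ is irreducible. As $\mathrm{PGL}_2(\mathbf C)$ is sharply $3$-transitive on $\mathbf P^1(\mathbf C)$, there is a unique element carrying $(\ell_1,\ell_2,\ell_3)$ to $(\langle e_1\rangle,\langle e_2\rangle,\langle(1,1)\rangle)$; conjugating $\rho$ by a lift of it to $\mathrm{SL}_2(\mathbf C)$ (conjugation by $\pm I$ being trivial), the eigenprojection formula $Y=\eta P_1+\bar\eta P_2$, with $P_1,P_2$ the projections onto $\langle e_1\rangle$ and $\langle(1,1)\rangle$, gives $Y=\begin{psm}\eta&\bar\eta-\eta\\0&\bar\eta\end{psm}$, while $X$ has $(-i)$-eigenline $\langle e_2\rangle$, trace $0$ and determinant $1$, hence $X=\begin{psm}i&0\\s&-i\end{psm}$ for some $s\in\mathbf C$. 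Thus $\rho$ is conjugate to $\alpha_s$. Since the stabilizer of an ordered triple of distinct points in $\mathrm{PGL}_2(\mathbf C)$ is trivial, no conjugation freedom is left, so $s$ is determined by the conjugacy class of $\rho$ (it is recovered from the cross-ratio of the four eigenlines of $X$ and $Y$), which gives uniqueness.

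Finally, each $\alpha_s$ really is a representation, since $\alpha_s(x)^2=-I=\alpha_s(y)^3$ and both matrices have determinant $1$; and $\alpha_s$ is reducible iff $\alpha_s(x)$ and $\alpha_s(y)$ share an eigenline. The eigenlines of $\alpha_s(y)$ are $\langle e_1\rangle$ and $\langle(1,1)\rangle$, and those of $\alpha_s(x)$ are $\langle e_2\rangle$ (eigenvalue $-i$) and $\langle(2i,s)\rangle$ (eigenvalue $i$); comparing these four lines, a coincidence occurs precisely when $\langle(2i,s)\rangle=\langle e_1\rangle$, i.e.\ $s=0$, or $\langle(2i,s)\rangle=\langle(1,1)\rangle$, i.e.\ $s=2i$. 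Hence $\alpha_s$ is irreducible exactly when $s\neq0,2i$. The only step needing genuine care is the normalization: one must check that the standard placement of the three eigenlines yields precisely the off-diagonal entry $\bar\eta-\eta$ and the lower-triangular shape of $X$, and, conceptually, that the three chosen lines are distinct — which is exactly where the irreducibility hypothesis is used.
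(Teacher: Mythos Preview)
Your proof is correct and follows essentially the same strategy as the paper's: use Schur's lemma to force $\rho(z)=-I$, determine the eigenvalues of $X$ and $Y$, normalize three of the four eigenlines via the sharply $3$-transitive action of $\mathrm{PGL}_2(\mathbf C)$ on $\mathbf P^1$, and let the position of the fourth eigenline give the parameter $s$. Your write-up is somewhat more explicit (spelling out the $3$-transitivity, the eigenprojection formula for $Y$, and the cross-ratio interpretation of $s$), but the underlying argument is the same.
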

\begin{proof} 
Let $\alpha\co\Gamma\to\mathrm{SL}_2(\mathbf C)$ be an irreducible representation. Then by Schur's lemma
$\alpha(x^2)=\alpha(y^3)$ lies in the center $\{\pm \mathrm{id}_2\}$ of $\mathrm{SL}_2(\mathbf C)$.
If we had $\alpha(x)^2=\mathrm{id}_2$, then we would get $\alpha(x)=\pm\mathrm{id}_2$ and
$\alpha$ would be reducible, hence $\alpha(x)^2=\alpha(y^3)=  -\mathrm{id}_2$. Furthermore, as $\alpha(y)\neq -\mathrm{id}_2$,
the eigenvalues of $\alpha (y)$ are primitive sixth roots of unity.
The eigenspaces of $\alpha(x)$ 
and $\alpha(y)$ determine four points in $\mathbf P^1$. These four points are distinct since $\alpha$ is irreducible and by conjugation we can assume that $E_{\alpha(x)} (-i) = [0:1]$ is the point at infinity, $E_{\alpha(y)} (\eta) = [1 :0 ]$ and $E_{\alpha(y)} (\bar \eta) = [1 : 1 ]$. The last eigenspace
$E_{\alpha(x)} (-i) = [2i: s]= [1: -i s/2]$ determines the representation $\alpha$ up to conjugation.
Hence there exists $s\in\mathbf C$ such that $\alpha$ is conjugate to $\alpha_s$.
Moreover, the eigenspace
$E_{\alpha_s(x)} (-i)$ coincides with an eigenspace of $\alpha_s(y)$ if and only if $s\in\{0,2i\}$. 
\end{proof}

For any representation $\alpha\in R(\Gamma,\mathrm{SL}_2(\mathbf C))$ we consider the induced action on $\mathbf C^2$, as well as the action $\alpha\otimes t^\varphi$ on $\mathbf C^2[t^{\pm 1}]$.
We aim to compute the twisted Alexander polynomials $\Delta_0^\alpha$ and $\Delta_1^\alpha$, the  orders for the homology of $\alpha\otimes t^\varphi$. The quotient $\Delta_1^{\alpha_s}/ \Delta_0^{\alpha_s}$ has been calculated in a different way in \cite[Example~4.3]{Kitano-Morifuji2012}.

\begin{lemma}
 \label{lem:Delta1trefoil}
 For any irreducible $\alpha\in R(\Gamma,\mathrm{SL}_2(\mathbf C))$, we have
 $$
 \Delta_0^\alpha\doteq 1\ \text{ and }\  \Delta_1^\alpha\doteq t^2+1.
 $$
 \end{lemma}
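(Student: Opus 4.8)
The plan is to compute the twisted Alexander modules directly from the two-generator one-relator presentation $\Gamma\cong\langle x,y\mid x^2=y^3\rangle$ using Fox calculus, which is well suited since the knot exterior of the trefoil deformation-retracts onto a two-cell CW-complex with one $0$-cell, two $1$-cells (for $x,y$) and one $2$-cell (for the relator $r=x^2y^{-3}$). Writing $\Phi\co\mathbf Z[\Gamma]\to M_2(\mathbf C[t^{\pm1}])$ for the ring map induced by $\alpha\otimes t^\varphi$, the chain complex computing $H_*(X_\infty;\mathbf C^2)$ is
\[
0\longrightarrow \mathbf C[t^{\pm1}]^2 \xrightarrow{\ \partial_2\ } \mathbf C[t^{\pm1}]^2\oplus\mathbf C[t^{\pm1}]^2 \xrightarrow{\ \partial_1\ } \mathbf C[t^{\pm1}]^2\longrightarrow 0,
\]
where $\partial_2$ is given by the $2\times 2$ block matrix $\bigl(\Phi(\partial r/\partial x)\ \ \Phi(\partial r/\partial y)\bigr)$ and $\partial_1$ by $\bigl(\begin{smallmatrix}\Phi(x-1)\\ \Phi(y-1)\end{smallmatrix}\bigr)$. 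First I would record the Fox derivatives: $\partial r/\partial x = 1+x$ and $\partial r/\partial y = -x^2y^{-1}(1+y+y^2) = -(y^3)y^{-1}(1+y+y^2)$, so $\Phi(\partial r/\partial x)=I_2+\Phi(x)$ and $\Phi(\partial r/\partial y)=-\Phi(y^2)-\Phi(y)-I_2$ (using $\alpha(x)^2=\alpha(y)^3=-I_2$ from the proof of Lemma~\ref{lemma:irrereps}, and $\varphi(x^2)=6=\varphi(y^3)$).

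Next I would compute $\Delta_0^\alpha$: it is the order of $H_0(X_\infty;\mathbf C^2)\cong\mathbf C^2[t^{\pm1}]/\mathrm{im}\,\partial_1$, equivalently of $\mathrm{coker}\,\partial_1$. Since $\alpha$ is irreducible with $\dim V>1$, Lemma~\ref{prop:dualityDelta0} (equation \eqref{eq:irred-Delta0}) already gives $\Delta_0^\alpha\doteq 1$ — so this half is free, and I would just cite it rather than redo the Fox-calculus computation. For $\Delta_1^\alpha$: by the standard relation between twisted Alexander polynomials for a two-generator one-relator knot group (Wada's invariant), one has $\Delta_1^\alpha/\Delta_0^\alpha \doteq \det\bigl(\Phi(\partial r/\partial x)\bigr)/\det\bigl(\Phi(y-1)\bigr)$ up to units; combined with $\Delta_0^\alpha\doteq 1$ this gives $\Delta_1^\alpha \doteq \det\bigl(I_2+\alpha(x)t^3\bigr)/\det\bigl(\alpha(y)t^2-I_2\bigr)$. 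Now $\alpha(x)$ has eigenvalues $\pm i$, so $\det(I_2+\alpha(x)t^3) = (1+it^3)(1-it^3)=1+t^6$; and $\alpha(y)$ has eigenvalues $\eta,\bar\eta$ with $\eta$ a primitive sixth root of unity, i.e.\ $\eta+\bar\eta=1$ and $\eta\bar\eta=1$, so $\det(\alpha(y)t^2-I_2)=(\eta t^2-1)(\bar\eta t^2-1)=t^4-t^2+1$. Hence $\Delta_1^\alpha\doteq (1+t^6)/(t^4-t^2+1)=t^2+1$, since $t^6+1=(t^2+1)(t^4-t^2+1)$.

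The main obstacle — and the only genuinely delicate point — is justifying the quotient formula $\Delta_1^\alpha/\Delta_0^\alpha\doteq\det\Phi(\partial r/\partial x)/\det\Phi(y-1)$, i.e.\ that the relevant square submatrices of $\partial_2$ and $\partial_1$ have the advertised determinants and that no extraneous torsion is lost. The cleanest route is to invoke Kitano's theorem identifying Reidemeister torsion with the ratio $\Delta_1^\alpha/\Delta_0^\alpha$ (already cited in the proof of Theorem~\ref{thm:duality}), for which Wada's formula computes the torsion of a two-generator one-relator presentation as exactly $\det\Phi(\partial r/\partial x)\cdot\det\Phi(y-1)^{-1}$ provided $\det\Phi(y-1)\neq 0$ — and $\det(\alpha(y)t^2-I_2)=t^4-t^2+1\neq 0$ confirms this. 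One should also check $\alpha(x)\neq\pm I_2$ and $\alpha(y)\neq\pm I_2$, which holds because $\alpha$ is irreducible (the eigenvalues computed above are distinct). Finally I would note the answer is independent of the parameter $s$, consistent with the statement, and that the computation did not use irreducibility beyond what is needed for $\Delta_0^\alpha\doteq 1$ and $\alpha(x),\alpha(y)$ being non-central.
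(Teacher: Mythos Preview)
Your proof is correct and complete, but it follows a different route from the paper's own argument. The paper exploits the amalgamated product decomposition $\Gamma\cong\langle x\rangle *_{\langle z\rangle}\langle y\rangle$ and the associated Mayer--Vietoris sequence in homology with coefficients $\mathbf C^2[t^{\pm1}]$: since the cyclic factors have torsion $H_0$ (with orders $(t^6+1)^2$, $t^6+1$, and $t^4-t^2+1$ for $\langle z\rangle$, $\langle x\rangle$, $\langle y\rangle$ respectively) and vanishing $H_1$, the sequence collapses to a short exact sequence from which $\Delta_1^\alpha$ is read off as the quotient $(t^6+1)^2/\bigl((t^6+1)(t^4-t^2+1)\bigr)=t^2+1$. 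Your Fox-calculus/Wada approach lands on exactly the same quotient $(1+t^6)/(t^4-t^2+1)$ but via Kitano's torsion identification --- this is in fact the computation the paper explicitly attributes to Kitano--Morifuji just before the lemma. Your method is lighter in that it avoids setting up Mayer--Vietoris, at the cost of invoking the torsion/Wada machinery; the paper's method is more self-contained. One small slip: your intermediate expression $\partial r/\partial y=-x^2y^{-1}(1+y+y^2)$ should read $-x^2y^{-3}(1+y+y^2)$; after applying $\Phi$ and using $\alpha(x^2y^{-3})=I_2$ your stated conclusion $\Phi(\partial r/\partial y)=-\Phi(y^2)-\Phi(y)-I_2$ is nonetheless correct, and in any case you never use this block in the Wada quotient.
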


\begin{proof}
First, we have $\Delta_0^\alpha\doteq 1$
since $\alpha$ is irreducible and $\dim \mathbf C^2>1$ (see \eqref{eq:irred-Delta0} in the proof of Lemma~\ref{prop:dualityDelta0}).

In order to calculate $\Delta_1^\alpha$ we will use the amalgamated product structure of $\Gamma$
$$
\Gamma\cong \langle x\rangle *_{\langle z\rangle} \langle y\rangle
$$
and the corresponding Mayer-Vietoris exact sequence in group homology \cite[VII.9]{Brown1982}.
We start computing some of the terms.
Since $\langle z\rangle\cong \mathbf Z$, the groups
 $H_q(\langle z\rangle, \mathbf C^2[t^{\pm 1}]_{\alpha\otimes t^\varphi})$ are the homology groups of the complex
 \[ 0\to \mathbf C^2[t^{\pm 1}]\xrightarrow{z-1} \mathbf C^2[t^{\pm 1}] \to 0\,.\]
 Hence a presentation matrix of 
 $H_0(\langle z\rangle, \mathbf C^2[t^{\pm 1}]_{\alpha\otimes t^\varphi})$ is
 $$
\begin{pmatrix}
           - t^6 -1& 0\\
           0 & - t^6-1
\end{pmatrix}.
 $$
Similarly, the respective presentation matrices for $H_0(\langle x\rangle, \mathbf C^2[t^{\pm 1}]_{\alpha\otimes t^\varphi})$ and $H_0(\langle y\rangle, \mathbf C^2[t^{\pm 1}]_{\alpha\otimes t^\varphi})$ 
are
 $$
\begin{pmatrix}
           i t^3 -1& 0\\
           0 & - i t^3-1
\end{pmatrix}
\quad\text{ and }\quad
\begin{pmatrix}
            e^{\frac{\pi i}{3}} t^2 -1& 0\\
           0 &  e^{\frac{-\pi i}{3}} t^2-1
\end{pmatrix}.
 $$
Since  $H_0(\langle x\rangle, \mathbf C^2[t^{\pm 1}]_{\alpha\otimes t^\varphi})$ and $H_0(\langle y\rangle, \mathbf C^2[t^{\pm 1}]_{\alpha\otimes t^\varphi})$ are torsion modules
it follows that $H_1(\langle x\rangle, \mathbf C^2[t^{\pm 1}]_{\alpha\otimes t^\varphi})\cong H_1(\langle y\rangle, \mathbf C^2[t^{\pm 1}]_{\alpha\otimes t^\varphi}) =0$.
Hence Mayer-Vietoris gives
a short exact sequence 
\begin{multline*}
0\to H_1(\Gamma ;  \mathbf C^2[t^{\pm 1}]_{\alpha\otimes t^\varphi})\to H_0(\langle z\rangle, \mathbf C^2[t^{\pm 1}]_{\alpha\otimes t^\varphi})
\to \\ H_0(\langle x\rangle, \mathbf C^2[t^{\pm 1}]_{\alpha\otimes t^\varphi}) \oplus H_0(\langle y\rangle, \mathbf C^2[t^{\pm 1}]_{\alpha\otimes t^\varphi})\to 0.
 \end{multline*}
Using this sequence and the presentation matrices:
 $$
 \Delta_1^\alpha= \frac{(t^6+1)^2}
{(t^3+i)(t^3-i)(t^2-e^{\frac{\pi i}{3}})(t^2-e^{\frac{-\pi i}{3}})}= t^2+1.
 $$
\end{proof}

It follows that Theorems~\ref{thm:suf} and~\ref{thm:str} apply for $\alpha$ irreducible and $\lambda\in\mathbf C$ satisfying $\lambda^6=-1$. Namely
Theorem~\ref{thm:suf} yields:

\begin{cor} When $\alpha\in R(\Gamma,\mathrm{SL}_2(\mathbf C))$ is irreducible and $\lambda^6=-1$,
 $$
(\lambda^{\varphi}\otimes \alpha)\oplus (\lambda^{-2\varphi}\otimes \mathbf{1})\co\Gamma\to \mathrm{SL}_3(\mathbf C)
$$
can be deformed to irreducible representations.
\end{cor}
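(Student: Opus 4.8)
The plan is to verify that the hypotheses of Theorem~\ref{thm:suf} hold for the representation $\rho_\lambda = (\lambda^{\varphi}\otimes\alpha)\oplus(\lambda^{-2\varphi}\otimes\mathbf 1)$, where $a=2$, $b=1$, $n=3$, and then invoke that theorem directly. First I would record that here $\beta=\mathbf 1$ is the trivial one-dimensional representation, so it is irreducible and (being trivial) infinitesimally regular, while $\alpha$ is irreducible by assumption; moreover $\alpha$ is infinitesimally regular by the computation $\dim H^1(\Gamma;\mathfrak{sl}_2(\mathbf C)_{\Ad\alpha})=1$ that follows from Lemma~\ref{lem:Delta1trefoil} together with the Euler characteristic / Poincar\'e duality bookkeeping for the trefoil exterior (one checks $\dim H^1 = \dim H^0 + 1 = 1$ since $\alpha$ irreducible forces $H^0=0$). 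So the standing assumptions of the paper are satisfied.

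Next I would identify the relevant twisted Alexander polynomials. Since $\beta=\mathbf 1$, the representation $\alpha\otimes\beta^*$ is just $\alpha$ itself acting on $M_{2\times 1}(\mathbf C)\cong\mathbf C^2$, so $\Delta_0^+ \doteq \Delta_0^\alpha$ and $\Delta_1^+\doteq\Delta_1^\alpha$. By Lemma~\ref{lem:Delta1trefoil} we have $\Delta_0^+\doteq 1$ and $\Delta_1^+\doteq t^2+1$. Now $n=3$, so the hypothesis of Theorem~\ref{thm:suf} asks that $\Delta_0^+(\lambda^3)\neq 0$ — which is automatic since $\Delta_0^+\doteq 1$ — and that $\lambda^3$ be a simple root of $\Delta_1^+(t)=t^2+1$. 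The roots of $t^2+1$ are $\pm i$, each simple, so I need $\lambda^3\in\{i,-i\}$, i.e. $\lambda^6=-1$; this is exactly the hypothesis of the corollary. Hence both conditions of Theorem~\ref{thm:suf} are met.

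Finally, Theorem~\ref{thm:suf} then yields that $\rho_\lambda$ can be deformed to irreducible representations, which is the assertion. I do not anticipate a genuine obstacle here: the only mildly nontrivial point is confirming that $\alpha$ is infinitesimally regular, i.e. that $\dim H^1(\Gamma;\mathfrak{sl}_2(\mathbf C)_{\Ad\alpha})=a-1=1$, but this is a standard consequence of $\Delta_1^\alpha\doteq t^2+1$ having degree matching the expected cohomology dimension — more precisely, one can also cite that the character of an irreducible $\mathrm{SL}_2(\mathbf C)$-representation of a torus knot group is a smooth point of the character curve, which is the content of infinitesimal regularity via Proposition~\ref{prop:infinitesimalregularitycharacter}. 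Once that is in place the corollary is a one-line application of Theorem~\ref{thm:suf} with $(a,b,n)=(2,1,3)$ and $\lambda^6=-1$.
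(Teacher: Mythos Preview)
Your proposal is correct and follows exactly the paper's approach: the corollary is stated immediately after Lemma~\ref{lem:Delta1trefoil} as a direct application of Theorem~\ref{thm:suf}, using that $\beta=\mathbf 1$ makes $\Delta_i^+\doteq\Delta_i^\alpha$, so $\Delta_0^+\doteq 1$ and $\lambda^3$ is a simple root of $\Delta_1^+\doteq t^2+1$ precisely when $\lambda^6=-1$. Your verification of the standing hypotheses (in particular the infinitesimal regularity of $\alpha$) is more explicit than the paper, which simply takes it for granted in this example; the cleanest justification is indeed via the one-dimensionality of $X_0(\Gamma,\mathrm{SL}_2(\mathbf C))$ established in Lemma~\ref{lemma:irrereps}.
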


To illustrate Theorem~\ref{thm:str}, we discuss next the variety of characters.

\paragraph{Varieties of characters}
The variety of characters $X(\Gamma,\mathrm{SL}_2(\mathbf C))$ has two
components, the abelian one and the one that contains irreducible
representations,
denoted by $X_0  (\Gamma,\mathrm{SL}_2(\mathbf C)) $. Let
$\chi_s\in 
X_0  (\Gamma,\mathrm{SL}_2(\mathbf C)) $ denote the character of $\alpha_s$
defined in \eqref{eq:rhos}.
The following is well known but we provide a proof for completeness and
because it is quite straightforward from Lemma~\ref{lemma:irrereps}.

\begin{lemma}
 \label{lem:isocharacters}
The map $s\mapsto \chi_s$ defines an isomorphism 
$
\mathbf C\cong X_0  (\Gamma,\mathrm{SL}_2(\mathbf C))
$.
\end{lemma}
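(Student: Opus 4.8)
The plan is to use Lemma~\ref{lemma:irrereps}, which already provides, for each irreducible $\alpha \in R(\Gamma,\mathrm{SL}_2(\mathbf C))$, a unique parameter $s \in \mathbf C \setminus \{0, 2i\}$ with $\alpha$ conjugate to $\alpha_s$. This gives a well-defined, injective set-theoretic map $\chi_s \mapsto s$ on the characters of irreducible representations. So the bulk of the work is to promote this to an isomorphism of algebraic varieties between $\mathbf C$ and $X_0(\Gamma,\mathrm{SL}_2(\mathbf C))$. First I would check that $s \mapsto \chi_s$ is a morphism: the entries of $\alpha_s(x)$ and $\alpha_s(y)$ are polynomial in $s$, so $s \mapsto \alpha_s$ is a polynomial map $\mathbf C \to R(\Gamma,\mathrm{SL}_2(\mathbf C))$, and composing with the (regular) quotient map $R \to X$ gives a morphism $\mathbf C \to X_0(\Gamma,\mathrm{SL}_2(\mathbf C))$.

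Next I would identify the image with all of $X_0$. The values $s = 0$ and $s = 2i$ give reducible (abelian) representations whose characters lie in $X_0$ as well (they are limits of irreducible characters), so the morphism $\mathbf C \to X_0$ is in fact surjective onto $X_0$: every irreducible character equals some $\chi_s$ with $s \notin \{0,2i\}$ by Lemma~\ref{lemma:irrereps}, and the two exceptional characters are picked up at $s = 0, 2i$; since $X_0$ is irreducible and the closure of the irreducible characters, this accounts for everything. To get injectivity of the morphism $\mathbf C \to X_0$ I would exhibit a regular function on $X_0$ that, pulled back to $\mathbf C$, is a coordinate: the natural candidate is the trace function $\tau_w \colon \chi \mapsto \chi(w)$ for a suitable word $w \in \Gamma$ (e.g. $w$ a product of $x$ and $y$), computed on $\alpha_s$. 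One computes $\operatorname{tr}\alpha_s(w)$ as an explicit polynomial in $s$; if one can choose $w$ so that this polynomial has degree $1$ in $s$ (or more generally is injective on $\mathbf C$), then $\chi_s \mapsto \operatorname{tr}\alpha_s(w)$ is the inverse morphism, proving $s \mapsto \chi_s$ is an isomorphism. A convenient choice is to take $w$ involving the meridian $m = xy^{-1}$ or the product $xy$, since $\alpha_s(x)$ contributes the off-diagonal entry $s$ linearly.

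The step I expect to be the main obstacle is the last one: verifying that some explicit trace function is a polynomial of degree exactly one (hence a global coordinate) rather than merely separating points set-theoretically. Set-theoretic injectivity already follows from Lemma~\ref{lemma:irrereps}, but to conclude an \emph{isomorphism of varieties} one needs a regular inverse, and that requires the trace polynomial in $s$ to be linear — a bijective polynomial $\mathbf C \to \mathbf C$ of higher degree would not suffice in characteristic zero anyway, so linearity is really what is needed. The computation is a routine multiplication of two $2\times 2$ matrices with entries depending on $s$ and on the fixed root of unity $\eta$, so I would carry it out for $w = xy$, read off that $\operatorname{tr}\alpha_s(xy)$ is affine in $s$ with nonzero leading coefficient, and conclude. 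An alternative, if no single trace works out to be linear, is to use that $X_0$ is a curve with a known smooth point structure and that the morphism $\mathbf C \to X_0$ is a bijective morphism from a smooth curve, then invoke that $X_0$ is normal (or directly that the morphism is an immersion by checking the differential via $H^1$) to upgrade bijectivity to isomorphism; but the direct trace computation is cleaner and is surely what the authors intend.
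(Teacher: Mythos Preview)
Your proposal is correct and follows essentially the same approach as the paper: the paper defines the regular map $f(s)=\chi_s$, computes that $\chi_s(m)=i\bar\eta+s(\bar\eta-\eta)$ for the meridian $m=xy^{-1}$ is affine in $s$ with nonzero slope, and uses this to build a regular inverse $g$ with $g\circ f=\operatorname{id}_{\mathbf C}$, then concludes $f\circ g=\operatorname{id}_{X_0}$ by density of $X^{irr}$. The only minor stylistic difference is that the paper bypasses your explicit surjectivity discussion via the density argument (once $g\circ f=\operatorname{id}$ and $f\circ g$ agrees with the identity on the dense set $X^{irr}$, it agrees everywhere), which is slightly slicker than separately checking that $s=0,2i$ land in $X_0$.
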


\begin{proof} 
Using Lemma~\ref{lemma:irrereps}, the regular map 
$f\co\mathbf C\to X_0(\Gamma,\mathrm{SL}_2(\mathbf C))$ given by $f(s)= \chi_s$  restricts to a
bijection between $\{s\in\mathbf C\mid s\neq 0, 2i\}$ and the set of characters of irreducible representations 
$X^{irr}\subset X_0(\Gamma,\mathrm{SL}_2(\mathbf C))$.
A direct calculation gives for the meridian $m=xy^{-1}$ that $\chi_s(m)= i\bar\eta+ s(\bar\eta-\eta)$ is a linear function in $s$.
Hence there exists a regular map 
$g\co X_0(\Gamma,\mathrm{SL}_2(\mathbf C))\to \mathbf C$ such that
$g\circ f = \operatorname{id}_\mathbf{C}$. 
Since the image of $f$ contains $X^\mathit{irr}$, $f\circ g\circ f=f$
implies
$$
f\circ g\vert_ {X^\mathit{irr} }=  \operatorname{id}_{X^\mathit{irr} }.
$$
Both $f$ and $g$ are regular morphisms (defined on the whole variety,
not only on an open subset), hence
density yields:
$$
f\circ g=  \operatorname{id}_{X_0(\Gamma,\mathrm{SL}_2(\mathbf C))}
$$
establishing the isomorphism.
\end{proof}

For any $\lambda\in\mathbf C^*$ the map 
 $\alpha\mapsto (\lambda^{\varphi}\otimes
 \alpha)\oplus (\lambda^{-2\varphi}\otimes\mathbf{1})$
induces an embedding 
$$
i_{\lambda}:  X_0  (\Gamma,\mathrm{SL}_2(\mathbf C))\to
X(\Gamma,\mathrm{SL}_3(\mathbf C)).
$$
Let $X_{\lambda}=i_\lambda  (X_0  (\Gamma,\mathrm{SL}_2(\mathbf C)))$ denote its
image, that consists of characters of reducible representations.
We know that when $\lambda^6=-1$ $X_\lambda$ is contained in a two dimensional
component that contains irreducible characters.
Before describing the global structure of $X(\Gamma,\mathrm{SL}_3(\mathbf C))$,
we discuss the incidence between the  $X_\lambda$ when $\lambda^6=-1$.

Let
$
\tilde \sigma\co R(\Gamma,\mathrm{SL}_2(\mathbf C))\to \ 
R(\Gamma,\mathrm{SL}_2(\mathbf C))
$ be
the involution such that
$$
\tilde \sigma(\alpha)(x)= -\alpha(x)\quad \text{ and }\quad \tilde
\sigma(\alpha)(y)= \alpha(y),
$$
for every $\alpha \in R(\Gamma,\mathrm{SL}_2(\mathbf C))$, namely $\tilde
\sigma(\alpha)= (-1)^\varphi\otimes\alpha$. Denote  by $\sigma$ the induced
involution on $X_0(\Gamma,\mathrm{SL}_2(\mathbf C))$.
A straightforward computation gives
\[
 \tilde \sigma(\alpha)\mapsto (\lambda^{\varphi}\otimes
 \tilde\sigma(\alpha))\oplus (\lambda^{-2\varphi}\otimes\mathbf{1})= 
 ((-\lambda)^{\varphi}\otimes
 \alpha)\oplus (\lambda^{-2\varphi}\otimes\mathbf{1})
\]
and hence $i_\lambda\circ \sigma  =  i_{-\lambda}$.
It follows that $X_\lambda=X_{-\lambda}$. Notice also that 
$\tilde \sigma(\alpha_s)$ is conjugate to $\alpha_{2i-s}$. 

\begin{lemma}
\label{lem:threelines}
 For $\lambda\neq \pm \lambda'$ satisfying $\lambda^6=(\lambda')^6= -1$,
 $X_\lambda$ and $X_{\lambda'}$ intersect at a single point
$i_\lambda({\chi_s})$, with $s\in\{0,2i\}$.
In particular  $X_\lambda\cap X_{\lambda'}$ is the character of a diagonal
representation.
\end{lemma}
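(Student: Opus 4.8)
The plan is to understand the character $i_\lambda(\chi_s)$ concretely: it is the character of the completely reducible representation $(\lambda^\varphi\otimes\alpha_s)\oplus(\lambda^{-2\varphi}\otimes\mathbf 1)$ into $\mathrm{SL}_3(\mathbf C)$. Since each point of the character variety is the character of a unique semisimple representation up to conjugation \cite{LM85}, a point of $X_\lambda\cap X_{\lambda'}$ corresponds to a pair $(\lambda,\chi_s)$, $(\lambda',\chi_{s'})$ with the two semisimple representations $(\lambda^\varphi\otimes\alpha_s)\oplus(\lambda^{-2\varphi}\otimes\mathbf 1)$ and $((\lambda')^\varphi\otimes\alpha_{s'})\oplus((\lambda')^{-2\varphi}\otimes\mathbf 1)$ conjugate. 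First I would argue that, when $\alpha_s$ is irreducible, the semisimple representation $(\lambda^\varphi\otimes\alpha_s)\oplus(\lambda^{-2\varphi}\otimes\mathbf 1)$ has a canonical decomposition into irreducible summands, namely a $2$-dimensional irreducible piece and a $1$-dimensional piece; hence a conjugacy between two such representations must match up the $1$-dimensional summands and the $2$-dimensional summands separately. Matching the $1$-dimensional summands forces $\lambda^{-2\varphi}=(\lambda')^{-2\varphi}$ as characters of $\Gamma$, i.e.\ $\lambda^2=(\lambda')^2$, and since $\lambda^6=(\lambda')^6=-1$ this gives $\lambda=\pm\lambda'$, contrary to hypothesis. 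Therefore at least one of $\alpha_s$, $\alpha_{s'}$ must be reducible, i.e.\ $s\in\{0,2i\}$ or $s'\in\{0,2i\}$ by Lemma~\ref{lemma:irrereps}.

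Next I would pin down what the reducible $\alpha_s$ look like. For $s\in\{0,2i\}$, the representation $\alpha_s$ is reducible; being a reducible $\mathrm{SL}_2(\mathbf C)$-representation of $\Gamma$ with $\alpha_s(x)^2=\alpha_s(y^3)=-\mathrm{id}_2$ and eigenvalues of $\alpha_s(y)$ primitive sixth roots of unity, its semisimplification is a sum of two characters $\mu^\varphi\oplus\mu^{-\varphi}$ of $\Gamma$ with $\mu$ a primitive twelfth root of unity (so that $\mu^3=\pm i$ on $x$, $\mu^2=$ sixth root of unity on $y$). Consequently $i_\lambda(\chi_s)$ for $s\in\{0,2i\}$ is the character of a \emph{diagonal} representation of $\Gamma$ into $\mathrm{SL}_3(\mathbf C)$, namely $\mathrm{diag}(\lambda\mu^\varphi,\lambda\mu^{-\varphi},\lambda^{-2})^\varphi$-type — all three summands one-dimensional. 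This already proves the last sentence of the lemma, granted the intersection point has this form.

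Finally I would show the intersection is exactly one point. The candidate diagonal characters arising from $i_\lambda(\chi_0)$ and $i_\lambda(\chi_{2i})$ are governed by the finitely many choices of twelfth root of unity $\mu$; using $\tilde\sigma(\alpha_s)\cong\alpha_{2i-s}$ and $i_\lambda\circ\sigma=i_{-\lambda}$, I would identify which diagonal character each $(\lambda,s)$ with $\lambda^6=-1$, $s\in\{0,2i\}$ produces, and check that as $\lambda$ ranges over the roots of $t^6=-1$ modulo $\pm1$, together with the two choices of $s$, these diagonal characters are distinct across distinct pairs $\{\lambda,-\lambda\}$ except that each such $X_\lambda$ passes through a common point shared only with one other $X_{\lambda'}$. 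More carefully: a diagonal $\mathrm{SL}_3(\mathbf C)$-character is determined by the unordered triple of eigenvalue-characters, and I would solve the equation matching $\{\lambda\mu^\varphi,\lambda\mu^{-\varphi},\lambda^{-2\varphi}\}=\{\lambda'(\mu')^\varphi,\lambda'(\mu')^{-\varphi},(\lambda')^{-2\varphi}\}$ as multisets of homomorphisms $\Gamma\to\mathbf C^*$, i.e.\ of pairs of values on $x,y$. The main obstacle I anticipate is this last bookkeeping step: ruling out all the "crossed" matchings (where, say, $\lambda^{-2\varphi}$ equals $\lambda'(\mu')^{\varphi}$ rather than $(\lambda')^{-2\varphi}$) and verifying that the surviving matchings leave precisely one common point, rather than zero or several — this requires carefully using $\lambda^6=(\lambda')^6=-1$, $\lambda\neq\pm\lambda'$, and the precise order of the roots of unity $\mu$ involved, but it is a finite check once set up.
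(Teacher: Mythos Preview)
The paper does not give a proof of this lemma; it is stated and then the text continues directly with ``This gives a configuration of three lines\ldots'' So there is nothing to compare against line by line.

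Your approach is correct in outline and is the natural direct argument. Two small points. First, your Step~1 concludes only that \emph{at least one} of $\alpha_s,\alpha_{s'}$ is reducible; you should also note that if exactly one were irreducible then one semisimple representation would have an irreducible $2$-dimensional summand while the other would split as $1+1+1$, so they could not be conjugate. Hence both are reducible, i.e.\ $s,s'\in\{0,2i\}$. Second, your ``finite check'' at the end is genuinely finite and short: for each of the six pairs $(\lambda,s)$ with $\lambda\in\{e^{i\pi/6},i,e^{5\pi i/6}\}$ and $s\in\{0,2i\}$ you write down the unordered triple of $1$-dimensional characters of $\Gamma$ and verify that each triple occurs for exactly two pairs $(\lambda,s)$ and $(\lambda',s')$ with $\lambda'\neq\pm\lambda$. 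Using $i_\lambda\circ\sigma=i_{-\lambda}$ and $\sigma(\chi_s)=\chi_{2i-s}$ halves the work.

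Although the paper omits a proof here, its subsequent global description effectively recovers the lemma by a different route: Theorem~\ref{thm:trefoil} identifies the component $X$ with $\mathbf C^2$ via the parameters $(s,t)$ of $\rho_{s,t}$, Lemma~\ref{lem:(s,t)-irreductible} shows the reducible locus is the union of the three affine lines $\{s=0\}$, $\{t=0\}$, $\{s+t=2\}$, and Remark~\ref{rem:intersectionoflines} identifies their pairwise intersections as characters of diagonal representations. Your direct character-matching argument is more local and self-contained (it does not require first building the isomorphism $X\cong\mathbf C^2$), while the paper's later approach gives the global picture of three lines in the plane for free, from which the pairwise single intersections are immediate.
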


This gives a configuration of three lines $X_{e^{\pi i/6 }}$, $X_{i}$, $X_{e^{5
\pi i /6 }}$, that intersect pairwise at one point.
We shall prove that there is a  single component of
$X(\Gamma,\mathrm{SL}_3(\mathbf C))$ that contains irreducible representations,
and we shall describe how the three lines meet in this component.

\paragraph{Irreducible characters in $X(\Gamma,\mathrm{SL}_3(\mathbf C))$}
Let $\rho\in R(\Gamma,\mathrm{SL}_3(\mathbf C))$ be an irreducible representation. We denote
$ \rho(x)=A $ and $\rho(y)=B$. The matrix $A^2=B^3$ is a central element of $\mathrm{SL}_3(\mathbf C)$ because $\rho$ is irreducible.
The center of 
$\mathrm{SL}_3(\mathbf C)$ consists of three diagonal matrices
$\{\mathrm{id}_3,\omega\, \mathrm{id}_3, \omega^2\,\mathrm{id}_3\}$, where $\omega^2+\omega+1=0$. 

\begin{lemma}
 $A^2=B^3=\mathrm{id}_3$.
\end{lemma}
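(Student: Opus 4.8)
The plan is to rule out the two non-trivial possibilities for the central element $A^2=B^3$. Write $A^2=B^3=\zeta\,\mathrm{id}_3$ with $\zeta\in\{1,\omega,\omega^2\}$ (using that $A^2=B^3$ is central, as already noted), and argue by contradiction that $\zeta\neq 1$ is incompatible with irreducibility. The guiding observation is that if $\zeta\neq 1$ then \emph{both} $A$ and $B$ are forced to have a $2$-dimensional eigenspace, and any two planes in $\mathbf C^3$ meet non-trivially, so their intersection is a proper, non-zero subspace invariant under $\langle A,B\rangle=\rho(\Gamma)$.

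First I would note that neither $A$ nor $B$ is a scalar matrix: a scalar matrix lies in the centre of $\mathrm{SL}_3(\mathbf C)$, so if, say, $A$ were scalar then $\rho(\Gamma)$ would be generated by a central element together with a single further element, hence abelian, hence reducible on $\mathbf C^3$. Then, assuming $\zeta\neq 1$ (so in particular $\zeta\neq 0$): the minimal polynomial of $A$ divides $X^2-\zeta$, which has two distinct roots $\pm\mu$, so $A$ is diagonalizable with eigenvalues in $\{\mu,-\mu\}$; being non-scalar and of size $3$, one eigenvalue occurs with multiplicity $2$, so $A$ has a $2$-dimensional eigenspace $P$. Similarly the minimal polynomial of $B$ divides $X^3-\zeta$, which has three distinct roots $\nu,\omega\nu,\omega^2\nu$ with $\nu^3=\zeta$; if all three occurred, each would have multiplicity one, forcing $\det B=\nu^3\omega^3=\zeta\neq 1$, contradicting $B\in\mathrm{SL}_3(\mathbf C)$, so (using non-scalarity again) exactly two of the three values occur, one with multiplicity $2$, and $B$ has a $2$-dimensional eigenspace $Q$.

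To conclude, $\dim P=\dim Q=2$ inside $\mathbf C^3$ gives $P\cap Q\neq\{0\}$, and since $P$ is $A$-invariant and $Q$ is $B$-invariant, $P\cap Q$ is a proper, non-zero, $\rho(\Gamma)$-invariant subspace, contradicting irreducibility; hence $\zeta=1$ and $A^2=B^3=\mathrm{id}_3$. The one place to be careful is the determinant computation for $B$ that excludes three simple eigenvalues — this is exactly where the $\mathrm{SL}_3$ rather than $\mathrm{GL}_3$ hypothesis enters — but otherwise the argument is elementary linear algebra and I do not anticipate a serious obstacle.
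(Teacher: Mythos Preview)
Your proof is correct and follows essentially the same approach as the paper: both rule out $\zeta\neq 1$ by showing that $A$ and $B$ would each acquire a $2$-dimensional eigenspace (using $\det B=1$ to exclude three simple eigenvalues for $B$), whose intersection is then a proper invariant subspace. You have simply filled in a few more details, such as the explicit non-scalarity argument and the determinant computation $\det B=\nu^3\omega^3=\zeta$.
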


\begin{proof}
We need to exclude the cases $A^2=B^3=\omega \,  \mathrm{id}_3$ or equal to 
$\omega^2\,\mathrm{id}_3$.
Seeking  a contradiction, assume $A^2=B^3= \omega \,  \mathrm{id}_3$. The
equality $A^2=  \omega \,  \mathrm{id}_3$ implies that
one eigenvalue of $A$ has multiplicity at least two. Of course multiplicity
three is not compatible with irreducibility, thus
$A$ has a two-dimensional eigenspace. On the other hand, 
$B^3- \omega \,  \mathrm{id}_3=0$ combined with $\det(B)=1$
yields that the minimal polynomial of $B$ has also degree   two.
Hence $B$ has also a two dimensional eigenspace.
The intersection of the two dimensional eigenspaces of $A$ and $B$ 
is a proper invariant subspace, contradicting irreducibility.
The same argument applies to $A^2=B^3= \omega^2\,\mathrm{id}_3$.
\end{proof}

By the discussion in the proof of the previous lemma, the minimal polynomial of
$A$ is  $A^2-\mathrm{id}_3=0$ and the minimal polynomial of $B$
is $B^3-  \mathrm{id}_3=0$.
Therefore, the matrices $A$ and $B$ are conjugate
to
$$
A\sim \begin{pmatrix}
    1  & &\\
    & -1 & \\
    & & -1
\end{pmatrix}
\quad\textrm{ and }\quad
B\sim \begin{pmatrix}
    1  & &\\
    & \omega & \\
    & & \omega^2
\end{pmatrix},
$$
where $\omega^2+\omega+1=0$.
The corresponding eigenspaces are the plane  $E_A(-1)$ and the lines $E_A(1)$, 
$E_B(1)$, $E_B(\omega)$, and  $E_B(\omega^2)$.
The eigenspaces determine the representation, as they determine the matrices $A$
and $B$, that have fixed eigenvalues. Of course $E_A(1)\cap E_A(-1)=0$ 
and $E_B(1)$, $E_B(\omega)$, and  $E_B(\omega^2)$ are also in general position.
Since $\rho$ is irreducible, the five eigenspaces are in general position.
For instance  $E_A(1)\cap (E_B(1)\oplus E_B(\omega))=0$, because otherwise
$  E_B(1)\oplus E_B(\omega)=  E_A(1)\oplus (E_A(-1)\cap (E_B(1)\oplus
E_B(\omega)))$ would be a proper invariant subspace.

In order to parametrize the conjugacy classes of the irreducible representations, we fix some normalizations of those eigenspaces. The invariant lines correspond to fixed points in
the projective plane $\mathbf{P}^2$. The first normalization is that $E_A(-1)$ corresponds to the line at infinity, so that the 4 invariant lines are points in the
affine plane 
$\mathbf C^2$ in general position.  We further fix the three fixed points of $B$, corresponding to an affine frame. Then the fourth point (the line  $E_A(1)$) is a point in 
$\mathbf C^2$ that does not lie in the affine lines spanned  by any two of the fixed points of $B$.
This gives rise to the subvariety 
$\{\rho_{s,t}\in R(\Gamma,\mathrm{SL}_3(\mathbf C))\mid (s,t)\in\mathbf{C}^2\}$, where
the representation $\rho_{s,t}$ is given by  
\begin{equation}
\label{eq:rhost}
\rho_{s,t}(x)=\begin{pmatrix}
           1 & 0 &0\\
           s & -1 & 0\\
           t & 0 &-1
          \end{pmatrix}
\quad\textrm{ and }\quad
\rho_{s,t}(y)=\begin{pmatrix}
           1 & \omega -1 & \omega^2 -1\\
           0 & \omega & 0\\
           0 & 0 & \omega^2
          \end{pmatrix}\,.
\end{equation}
Here $\omega$ is a primitive 3rd root of unity, i.e.\ $\omega^2 + \omega +1 =0$.
The eigenspaces of $B$ determine points of $\mathbf{P}^2$:
\[ E_B(1) = [ 1:0:0],\ E_B(\omega) = [ 1:1:0], \text{ and }E_B(\omega^2) =
[1:0:1]. \]
The eigenspaces of $A$ determine a projective line (at infinity) and a point:
\[ E_A(-1)= \langle [0:1:0], [0:0:1]\rangle \text{ and }E_A(1) = [2:s:t].\]

\begin{lemma} \label{lem:(s,t)-irreductible} For $(s,t)\in \mathbf C^2$, the
representation $\rho_{s,t}$ is reducible
if and only if $s=0$, $t=0$, or $s+t =2$.
\end{lemma}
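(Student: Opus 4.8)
The plan is to translate reducibility of $\rho_{s,t}$ into the existence of a common invariant line or plane for the two matrices $A=\rho_{s,t}(x)$ and $B=\rho_{s,t}(y)$, and then to read everything off from the eigenspace data recorded just before the statement, working projectively in $\mathbf{P}^2$. First I would note that $A^2=\rho_{s,t}(x^2)=\rho_{s,t}(y^3)=B^3=\mathrm{id}_3$, so $A$ is diagonalizable with eigenvalues $\{1,-1\}$ and $B$ is diagonalizable with the three \emph{distinct} eigenvalues $\{1,\omega,\omega^2\}$. Since $\rho_{s,t}(\Gamma)$ is generated by $A$ and $B$, the representation is reducible if and only if $A$ and $B$ share an invariant subspace of dimension $1$ or $2$; equivalently, the group they generate has a common fixed point in $\mathbf{P}^2$ (the invariant-line case) or a common invariant projective line (the invariant-plane case). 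Here one uses the standard fact that every invariant subspace of a diagonalizable operator is a direct sum of subspaces of eigenspaces, so that the $A$-invariant planes are $E_A(-1)$ together with the planes $E_A(1)\oplus L$ for $L\subset E_A(-1)$ a line, and similarly for $B$.

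For the common fixed point: the fixed points of $B$ in $\mathbf{P}^2$ are exactly the three points $E_B(1)=[1:0:0]$, $E_B(\omega)=[1:1:0]$, $E_B(\omega^2)=[1:0:1]$ (distinct eigenvalues), none of which lies on the line at infinity $E_A(-1)=\{x=0\}$; while the fixed points of $A$ are the point $E_A(1)=[2:s:t]$ together with all of $E_A(-1)$. Hence $A$ and $B$ have a common fixed point if and only if $E_A(1)$ coincides with one of $E_B(1),E_B(\omega),E_B(\omega^2)$, which happens precisely for $(s,t)\in\{(0,0),(2,0),(0,2)\}$.

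For the common invariant projective line: the $B$-invariant projective lines are the three sides of the triangle with vertices $E_B(1),E_B(\omega),E_B(\omega^2)$, namely $\{z=0\}$, $\{y=0\}$ and $\{x-y-z=0\}$, and none of these is the line at infinity. On the other hand, since $E_A(1)=[2:s:t]$ has nonzero first coordinate, the $A$-invariant projective lines are $E_A(-1)=\{x=0\}$ together with \emph{every} line through $[2:s:t]$. Therefore one of the three $B$-invariant lines is $A$-invariant if and only if $[2:s:t]$ lies on it, i.e. if and only if $t=0$, $s=0$, or $2-s-t=0$, respectively. Combining the two cases (and absorbing the three exceptional points $(0,0),(2,0),(0,2)$ into these three lines), $\rho_{s,t}$ is reducible if and only if $s=0$, $t=0$, or $s+t=2$; since every step is an equivalence this proves the lemma.

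The argument is essentially bookkeeping, and the only point that needs a little care is the inventory of $A$- and $B$-invariant lines and planes in $\mathbf{P}^2$: one must check that diagonalizability forces each invariant subspace to be built from eigenspaces, and then match these with the eigenspace coordinates $E_A(\pm1),E_B(1),E_B(\omega),E_B(\omega^2)$ already computed. I do not expect any genuine obstacle beyond keeping these incidences straight.
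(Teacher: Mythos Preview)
Your proof is correct and follows essentially the same projective approach as the paper: both arguments reduce reducibility to the incidence of $E_A(1)=[2:s:t]$ with the three $B$-invariant projective lines $\{z=0\}$, $\{y=0\}$, $\{x-y-z=0\}$. The paper's proof is terser and jumps directly to this criterion, while you carefully separate the common-fixed-point case from the common-invariant-line case before observing that the former is absorbed in the latter; this extra care is harmless and makes the argument more transparent.
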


\begin{proof} 
The representation $\rho_{s,t}$ is constructed so that the points in $\mathbf{P}^2$ fixed by
$B=\rho_{s,t}(x)$ and 
the line  $E_A(-1)\subset \mathbf{P}^2 $ are fixed. So $\rho_{s,t}$ is reducible
iff the projective 
point $E_A(1)$ belongs to one of the lines spanned by two of the fixed points of
$B$. 
This condition is equivalent to one of the three equations $s=0$, $t=0$ or $s+t
=2$, one for each line. 
\end{proof}

It follows from the proof that, when $E_A(1)$ equals one of the fixed projective
points of $B$, then $A$ preserves also the two lines through that point that
are $B$-invariant. More precisely, we have:

\begin{remark}
\label{rem:intersectionoflines}
If two of the equations $\{s=0\}$, $\{t=0\}$ and $\{s+t=2\}$ hold true, then
$\rho_{s,t}$ preserves a complete flag in $\mathbf C^3$
and therefore it is conjugate to an upper triangular representation. Notice that
it has the same character as a diagonal representation.
\end{remark}

\begin{lemma}\label{lem:R3_1-irreducible}
Let $R^\mathit{irr}\subset R(\Gamma,\mathrm{SL}_3(\mathbf C))$ denote the subset
of irreducible representations. Then the Zariski closure 
$\overline{R^\mathit{irr}}\subset R(\Gamma,\mathrm{SL}_3(\mathbf C))$ is an
irreducible affine variety.
\end{lemma}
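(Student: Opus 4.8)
The plan is to realize $\overline{R^{\mathit{irr}}}$ as the Zariski closure of the image of an irreducible variety under a morphism, using the explicit family $\rho_{s,t}$ of \eqref{eq:rhost}.

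First I would check that \eqref{eq:rhost} defines a representation for \emph{every} $(s,t)\in\mathbf C^2$, not only for the irreducible ones: the matrix $\rho_{s,t}(x)$ is a lower triangular involution, so $\rho_{s,t}(x)^2=\mathrm{id}_3$, while $\rho_{s,t}(y)$ is triangular with eigenvalues $1,\omega,\omega^2$, so $\rho_{s,t}(y)^3=\mathrm{id}_3$; hence the defining relation $x^2=y^3$ of $\Gamma$ is respected, and $\det\rho_{s,t}(x)=\det\rho_{s,t}(y)=1$. Viewing $R(\Gamma,\mathrm{SL}_3(\mathbf C))$ inside $\mathrm{SL}_3(\mathbf C)\times\mathrm{SL}_3(\mathbf C)$ via $\rho\mapsto(\rho(x),\rho(y))$, we thus obtain a morphism
\[
\Phi\co \mathrm{SL}_3(\mathbf C)\times\mathbf C^2\longrightarrow R(\Gamma,\mathrm{SL}_3(\mathbf C)),\qquad (g,s,t)\longmapsto g\,\rho_{s,t}\,g^{-1},
\]
since its coordinates are polynomials in $s$, $t$ and in the entries of $g$ and of $g^{-1}$.

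Next I would show $R^{\mathit{irr}}=\Phi\bigl(\mathrm{SL}_3(\mathbf C)\times U\bigr)$, where $U=\{(s,t)\in\mathbf C^2\mid s\neq0,\ t\neq0,\ s+t\neq2\}$. The inclusion ``$\supseteq$'' holds because for $(s,t)\in U$ the representation $\rho_{s,t}$ is irreducible by Lemma~\ref{lem:(s,t)-irreductible}, and irreducibility is conjugation-invariant. The inclusion ``$\subseteq$'' is exactly the content of the eigenspace analysis preceding the statement: any irreducible $\rho\in R(\Gamma,\mathrm{SL}_3(\mathbf C))$ satisfies $\rho(x)^2=\rho(y)^3=\mathrm{id}_3$, so $\rho(x)$ is conjugate to $\operatorname{diag}(1,-1,-1)$ and $\rho(y)$ to $\operatorname{diag}(1,\omega,\omega^2)$, and the five relevant eigenspaces are in general position; normalizing those eigenspaces puts $\rho$ into the form $\rho_{s,t}$ for some $(s,t)\in\mathbf C^2$, and since the resulting $\rho_{s,t}$ is then irreducible, Lemma~\ref{lem:(s,t)-irreductible} forces $(s,t)\in U$.

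Finally, $\mathrm{SL}_3(\mathbf C)\times U$ is irreducible, being the product of the irreducible group $\mathrm{SL}_3(\mathbf C)$ with the nonempty Zariski-open (hence irreducible) subset $U$ of $\mathbf C^2$. The image of an irreducible space under a continuous map is irreducible, so $R^{\mathit{irr}}=\Phi(\mathrm{SL}_3(\mathbf C)\times U)$ is an irreducible (constructible) subset of $R(\Gamma,\mathrm{SL}_3(\mathbf C))$, and consequently $\overline{R^{\mathit{irr}}}$ is irreducible. The only step carrying genuine content is the reverse inclusion above, namely that the two-parameter family $\{\rho_{s,t}\}$ already meets every conjugacy class of irreducible representations; everything else is formal once the explicit parametrization is in place, so I expect no serious obstacle beyond bookkeeping with the eigenspace normalization.
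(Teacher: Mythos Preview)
Your argument is correct and follows the same strategy as the paper: both exhibit $\overline{R^{\mathit{irr}}}$ as the closure of the image of the irreducible variety $\mathrm{SL}_3(\mathbf C)\times\mathbf C^2$ (or an open piece of it) under the conjugation morphism $(g,s,t)\mapsto g\,\rho_{s,t}\,g^{-1}$. The only cosmetic difference is that the paper uses the full $\mathbf C^2$ and sandwiches the image between $R^{\mathit{irr}}$ and $\overline{R^{\mathit{irr}}}$ (noting that each $\rho_{s,t}$ is a limit of irreducibles), whereas you restrict to the open set $U$ and obtain $R^{\mathit{irr}}$ on the nose; either way the conclusion follows.
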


\begin{proof}
The variety $\mathbf{C}^2\times \mathrm{SL}_3(\mathbf C)$ is irreducible and the map
$\kappa\co\mathbf{C}^2\times \mathrm{SL}_3(\mathbf C)\to 
R(\Gamma,\mathrm{SL}_3(\mathbf C))$ given by $\kappa(s,t,D)= D\rho_{s,t} D^{-1}$
is a regular map. The image of $\kappa$ contains the irreducible representations
and every representation in the image of $\kappa$ is the limit of irreducible
representations. Hence
\[
R^\mathit{irr}\subset \kappa\big(\mathbf{C}^2\times \mathrm{SL}_3(\mathbf
C)\big)\subset
\overline{R^\mathit{irr}}
\]
and $\overline{\kappa\big(\mathbf{C}^2\times \mathrm{SL}_3(\mathbf C)\big)}=
\overline{R^\mathit{irr}}$ follows. Now the assertion of the lemma follows since
the closure of the image of an irreducible variety under a regular map is
irreducible.
\end{proof}

\begin{thm} 
\label{thm:trefoil}The GIT quotient
$X=\overline{R^\mathit{irr}}\sslash\mathrm{SL}(3,\mathbf{C})$ is isomorphic to
$\mathbf{C}^2$.
Moreover, the Zariski open subset $R^\mathit{irr}$ is
$\mathrm{SL}(3,\mathbf{C})$-invariant and its
GIT quotient is isomorphic to the complement of three affine lines in general
position in $\mathbf{C}^2$.
\end{thm}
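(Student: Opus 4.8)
The plan is to work directly with the explicit family $\rho_{s,t}$ of \eqref{eq:rhost}. I would first identify an explicit set of $\mathrm{SL}_3(\mathbf C)$-invariant coordinates on $\overline{R^{\mathit{irr}}}$. Since every irreducible representation is conjugate to some $\rho_{s,t}$ by Lemma~\ref{lem:R3_1-irreducible} and the discussion preceding it (the five eigenspaces $E_A(1),E_A(-1),E_B(1),E_B(\omega),E_B(\omega^2)$ determine $\rho$ up to conjugacy, and the normalization fixing the line at infinity and an affine frame for $B$ pins down $(s,t)$ uniquely), the map $\kappa\co \mathbf C^2\times\mathrm{SL}_3(\mathbf C)\to \overline{R^{\mathit{irr}}}$ is dominant and generically a principal bundle over the open set of irreducibles. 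Passing to the GIT quotient, I would show $(s,t)$ is intrinsically recoverable from the character $\chi_\rho$: for instance $\operatorname{tr}\rho(xy^{-1})$ and $\operatorname{tr}\rho((xy^{-1})^2)$, or traces of $\rho(x)\rho(y)$ and $\rho(x)\rho(y)^2$, are affine-linear (resp.\ low-degree polynomial) functions of $(s,t)$; computing these explicitly from \eqref{eq:rhost} I expect to get two functions whose common level sets are single points, giving a regular map $X=\overline{R^{\mathit{irr}}}\sslash\mathrm{SL}_3(\mathbf C)\to\mathbf C^2$. Conversely $s\mapsto\chi_{\rho_{s,t}}$, $t$ fixed (and vice versa), gives a regular section $\mathbf C^2\to X$. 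As in the proof of Lemma~\ref{lem:isocharacters}, one then argues by density: the two regular maps compose to the identity on the dense open locus of irreducible characters, hence everywhere, establishing $X\cong\mathbf C^2$.

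For the second assertion, $R^{\mathit{irr}}$ is $\mathrm{SL}_3(\mathbf C)$-invariant because irreducibility is a conjugation-invariant property, and it is Zariski-open in $\overline{R^{\mathit{irr}}}$ by standard facts (irreducibility of an $n$-dimensional representation is an open condition on the representation variety). Under the isomorphism $X\cong\mathbf C^2$ of the first part, the character $\chi_{\rho_{s,t}}$ of a reducible $\rho_{s,t}$ corresponds to the point $(s,t)$, and by Lemma~\ref{lem:(s,t)-irreductible} the reducible locus among the $\rho_{s,t}$ is exactly $\{s=0\}\cup\{t=0\}\cup\{s+t=2\}$, a union of three affine lines in general position. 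It remains to check that no irreducible representation has a character lying on one of these three lines, and that distinct reducible $\rho_{s,t}$ on this locus are not conjugate to irreducibles — this follows since each point of $X$ is the character of a unique semisimple representation (\cite{LM85}), and a character on one of the three lines is, by Remark~\ref{rem:intersectionoflines} and the analysis preceding Lemma~\ref{lem:(s,t)-irreductible}, the character of a diagonal or block-diagonal (hence reducible, non-irreducible) representation. Therefore the GIT quotient of $R^{\mathit{irr}}$ is $\mathbf C^2$ minus those three lines.

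The main obstacle I anticipate is the second step of the first paragraph: exhibiting concrete trace functions that separate the parameters $(s,t)$ and verifying they extend to honest regular functions on the quotient (not merely on the irreducible locus). Once a pair of coordinate functions is found and shown to be polynomial in $(s,t)$ with invertible "Jacobian" behaviour — i.e.\ the induced map $\mathbf C^2\to\mathbf C^2$ on parameters is an isomorphism of affine varieties — the rest is the density/section argument already used for $\mathrm{SL}_2$ in Lemma~\ref{lem:isocharacters}, together with bookkeeping of which reducible characters occur. A secondary technical point is confirming that $\overline{\kappa(\mathbf C^2\times\mathrm{SL}_3(\mathbf C))}$ has the expected dimension so that the quotient is genuinely two-dimensional and $\kappa$ is generically a torsor under $\mathrm{SL}_3(\mathbf C)$ modulo the (trivial, since $\rho_{s,t}$ irreducible for generic $(s,t)$) stabilizer; this uses that irreducible representations have $0$-dimensional stabilizer in $\mathrm{PSL}_3$, equivalently $\dim\kappa^{-1}(\text{point})=\dim\mathrm{SL}_3(\mathbf C)$ generically.
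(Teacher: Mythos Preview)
Your approach is essentially the paper's: define $f\co\mathbf C^2\to X$ by $(s,t)\mapsto\chi_{\rho_{s,t}}$, find invariant trace functions giving a regular inverse $g\co X\to\mathbf C^2$, and conclude by the density argument of Lemma~\ref{lem:isocharacters}. The paper resolves the ``main obstacle'' you flag by taking the pair $\big(\operatorname{tr}\rho(m),\,\operatorname{tr}\rho(m^{-1})\big)$ for the meridian $m=xy^{-1}$, which turns out to be affine-linear in $(s,t)$ with invertible linear part, so $g\circ f=\operatorname{id}_{\mathbf C^2}$ is immediate; your suggestion $\operatorname{tr}\rho(m^2)$ would also work since $\operatorname{tr}(A^2)=(\operatorname{tr} A)^2-2\operatorname{tr}(A^{-1})$ in $\mathrm{SL}_3$.
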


\begin{proof}
By Lemma~\ref{lem:R3_1-irreducible}
the affine algebraic set
$\overline{R^\mathit{irr}}$ is irreducible. Since it is
$\mathrm{SL}(3,\mathbf{C})$-invariant, the GIT quotient  
$t\co\overline{R^\mathit{irr}}\to X$ exists and $X$ is also an irreducible
affine algebraic variety. Let $X^\mathit{irr}\subset X$ denote the projection of
$R^\mathit{irr}$,
which is Zariski open, in particular dense.

Consider the regular morphism $f\co \mathbf C^2\to X$ that maps  $(s,t)\in
\mathbf{C}^2$ to the character $\chi_{\rho_{s,t}}$.
By construction, the image of $f$ contains $X^\mathit{irr}$, because
$\rho_{s,t}$ realizes every irreducible representation up to conjugacy.

There is also a regular morphism
$ R(\Gamma,\mathrm{SL}_3(\mathbf C))\to \mathbf{C}^2$
given by
\[ \rho\mapsto \big(\mathrm{tr}\,\rho(m), \mathrm{tr}\,\rho(m^{-1})\big)\] 
where $m=xy^{-1}$ is a meridian of the trefoil knot, which induces (after
restriction)  a regular map 
$X \to \mathbf{C}^2$, by invariance. A direct computation gives:
\begin{equation}\label{eq:trace_meridian}
\begin{pmatrix}
\mathrm{tr}\,\rho_{s,t}(m)\\ \mathrm{tr}\,\rho_{s,t}(m^{-1}) 
\end{pmatrix}
=
\begin{pmatrix} 2\\2 \end{pmatrix}
 +
 \begin{pmatrix} 
 \omega^2-1 & \omega-1\\
\omega-1  & \omega^2-1
\end{pmatrix}
 \begin{pmatrix} s\\ t \end{pmatrix}\,.
\end{equation}%
Thus, after composing with a linear map, we have a regular morphism $g\co
X\to\mathbf{C}^2$ that satisfies
$$
g\circ f= \operatorname{id}_{\mathbf C^2}.
$$
Since the image of $f$ contains $X^\mathit{irr}$, $f\circ g\circ f=f$
implies
$$
f\circ g\vert_ {X^\mathit{irr} }=  \operatorname{id}_{X^\mathit{irr} }.
$$
Both $f$ and $g$ are regular morphisms (defined on the whole variety,
not only on an open subset), hence
density yields
$$
f\circ g=  \operatorname{id}_{X  },
$$
establishing the isomorphism.
\end{proof}
 
\begin{remark}
 It follows from Theorem~\ref{thm:trefoil} that the set of reducible characters in
$X\cong \mathbf C^2$ consists of three lines that intersect pairwise.
 Those are characters of representations $(\lambda^{-\varphi}\otimes\alpha)\oplus
(\lambda^{2\varphi}\otimes\mathbf 1)$, with $\alpha\in R(\Gamma,\operatorname{SL}_2(\mathbf C))$
irreducible except at the intersection points,
 that correspond to diagonal representations.
 
 Notice also that there is a  symmetry of  order three, as the center of
$\operatorname{SL}_3(\mathbf C)$ has order three. The symmetry group
is generated by
$$
\begin{array}{rcl}
 R(\Gamma,\mathrm{SL}_3(\mathbf{C})) & \to &  R(\Gamma,\mathrm{SL}_3(\mathbf{C})) \\
 \alpha & \mapsto &    \omega ^{\varphi}\otimes
	      \alpha
\end{array}
$$
where $\omega$ is a primitive  3rd root of unity.
This symmetry maps the character with coordinates $(s,t)$ to $(2-s-t, s)$, i.e.\ $\mathrm{tr}(\rho (m^{\pm 1}))$ to $\omega^{\pm 1}\mathrm{tr}(\rho ( m^{\pm 1}))$. Its
fixed point has coordinates $s=t=2/3$ (i.e.\ $ \mathrm{tr}(\rho ( m^{\pm 1}))=0$) 
and corresponds to the character of an irreducible metabelian representation.
This irreducible metabelian representation is obtained by composing the surjection
$\Gamma\twoheadrightarrow A_4$ with the $3$-dimensional irreducible representation of $A_4$
(see \cite{Serre1978}).
Note that irreducible, metabelian representations of knot groups into $\mathrm{SL}_n(\mathbf C)$ were studied by H.~Boden and S.~Friedl in a series of papers
\cite{Boden-Friedl2008, Boden-Friedl2011,Boden-Friedl2013,Boden-Friedl2014}.
\end{remark}

\begin{remark}
It is possible to combine any representation $\rho\co\Gamma\to\mathrm{SL}_2(\mathbf C)$ with the
irreducible $3$-dimensional rational representation of $r_3\co\mathrm{SL}_2(\mathbf{C})\to \mathrm{SL}_3(\mathbf{C})$ of
$\mathrm{SL}_2(\mathbf C)$ (for more details see \cite{Springer1977} and \cite{HM14}).
This induces a regular map 
\[ (r_3)_*\co  X_0  (\Gamma,\mathrm{SL}_2(\mathbf C))\to
X(\Gamma,\mathrm{SL}_3(\mathbf C))\,.\]
It follows from \cite[Prop.~3.1]{HM14} that the image of $(r_3)_*$ is contained in the component 
$X\subset X(\Gamma,\mathrm{SL}_3(\mathbf C))$. Notice that for every matrix $A\in\mathrm{SL}_2(\mathbf C)$ the equality
$\tr(r_3(A))=\tr(r_3(A)^{-1})$ holds. Then Equation~\eqref{eq:trace_meridian} implies  that the image of
$(r_3)_*$ is contained in the diagonal $\{s=t\}\subset \mathbf C^2\cong X$.
Moreover, the map $(r_3)_*$ factors through $X_0  (\Gamma,\mathrm{PSL}_2(\mathbf C))$ since 
$\Ker(r_3)=\{\pm\mathrm{id}\}$ is the center of $\mathrm{SL}_2(\mathbf C)$.
Hence $(r_3)_*$ is a 
two-fold branched covering onto its image.  The branching set is the character of the binary dihedral representation  
$d_6\co\Gamma\to D_6\subset\mathrm{SL}_2(\mathbf{C})$. Notice also that the restriction of 
$r_3$ onto $D_6$ becomes reducible, $r_3\circ d_6 \sim \rho_{1,1}$, since dihedral groups have only one and two-dimensional irreducible representations
(see \cite{Serre1978}).
\end{remark}

\begin{remark}
The same argument as in Theorem~\ref{thm:trefoil} applies to torus knots $T(p, 2)$, $p$ odd, to prove that the variety of irreducible
$\mathrm{SL}_3(\mathbf{C})$-characters 
consist of $(p-1)(p-2)/2$ disjoint components isomorphic to $\mathbf C^2$ and the components of reducible characters. 
\end{remark}

\bibliographystyle{alpha}

\bigskip

\textsc{Universit\'e Clermont Auvergne, Université Blaise Pascal, 
Laboratoire de Math\'ematiques, BP 10448, F-63000 Clermont-Ferrand.\\
CNRS, UMR 6620, LM, F-63171 Aubi\`ere, FRANCE},
heusener@math.univ-bpclermont.fr.

\smallskip

\textsc{Departament de Matem\`atiques, Universitat Aut\`onoma de
Barcelona, 08193 Cerdanyola del Vall\`es, Spain},  porti@mat.uab.cat.

\end{document}